\documentclass{amsart}
\usepackage{amsmath,amssymb,mathrsfs,euscript,color,mathabx,stmaryrd,mathabx}
\usepackage[colorlinks]{hyperref}
\usepackage[shortlabels]{enumitem}
\usepackage{pdfsync}
\usepackage[backgroundcolor=green!40, linecolor=green!40]{todonotes}
\usepackage[utf8]{inputenc}                                    
\usepackage[T1]{fontenc}

\input{xypic}
\usepackage{tikz-cd}

\usepackage[margin=1.2in]{geometry}

\newcounter{makeconstant}
\newenvironment{makeconstant}%
{\refstepcounter{makeconstant}}%
{}

\newcounter{nonumber}

\theoremstyle{plain}
\newtheorem{theorem}[equation]{Theorem}
\newtheorem{lemma}[equation]{Lemma}
\newtheorem{corollary}[equation]{Corollary}
\newtheorem{proposition}[equation]{Proposition}

\newtheorem{conjecture}[equation]{Conjecture}

\theoremstyle{definition}
\newtheorem{example}[equation]{Example}
\newtheorem{definition}[equation]{Definition}

\newtheorem{remark}[equation]{Remark}
\newtheorem{remarks}[equation]{Remarks}

\newtheorem*{remark*}{Remark}

\numberwithin{equation}{section} 
\def\A{{\rm A}}
\def\B{{\rm B}}
\def\C{{\rm C}}
\def\D{{\rm D}}
\def\E{{\rm E}}
\def\F{{\rm F}}
\def\G{{\rm G}}
\def\H{{\mathrm{H}}}
\def\I{{\rm I}}
\def\J{{\rm J}}
\def\K{{\rm K}}
\def\L{{\rm L}}
\def\M{{\rm M}}
\def\N{{\rm N}}
\def\O{{\rm O}}
\def\P{{\rm P}}

\def\R{{\rm R}}

\def\T{{\rm T}}
\def\U{{\rm U}}
\def\V{{\rm V}}
\def\W{{\rm W}}

\def\Z{{\rm Z}}

\def\Cc{\mathscr{C}}

\def\Ee{\mathscr{E}}

\def\Qq{\mathscr{Q}}

\def\ee{\mathrm{e}}

\def\kk{\mathfrak{k}}

\def\so{\mathsf{o}}

\def\b{\beta}

\def\e{\varepsilon}

\def\vphi{\varphi}

\def\k{\kappa}
\def\l{\lambda}

\def\o{\mathfrak{o}}
\def\p{\mathfrak{p}}
\def\s{\sigma}
\def\t{\theta}

\def\La{\Lambda}

\def\Th{\Theta}

\def\({\left(}
\def\){\right)}
\def\>{\geqslant}
\def\<{\leqslant}
\def\le{\leqslant}

\def\Hom{\operatorname{Hom}}
\def\End{\operatorname{End}}
\def\Aut{\operatorname{Aut}}

\def\GL{\operatorname{GL}}
\def\Gal{\operatorname{Gal}}

\def\id{\operatorname{id}}

\def\Ind{\operatorname{Ind}}

\def\ind{\operatorname{ind}}

\def\dim{\operatorname{dim}}

\def\cusp{\operatorname{cusp}}

\def\diag{\operatorname{diag}}

\def\gcd{\operatorname{gcd}}

\def\st{\operatorname{st}}

\def\Sp{\operatorname{Sp}}

\def\SO{\operatorname{SO}}

\def\SL{\operatorname{SL}}

\def\tG{{\widetilde{\G}}}

\def\tM{{\widetilde{\M}}}
\def\tP{{\widetilde{\P}}}

\definecolor{dorange}{RGB}{255,140,0}
\definecolor{dgreen}{RGB}{0,205,10}

\def\ignore#1{\relax}

\def\be{{\mathrm e}}

\def\rmf{{\mathrm f}}
\def\ee{{\mathbf e}}
\def\ff{{\mathbf f}}
\def\bdots{\mathinner{\mkern1mu\raise1pt\hbox{.}\mkern2mu\raise4pt\hbox{.}
           \mkern2mu\raise7pt\vbox{\kern7pt\hbox{.}}\mkern1mu}}

\def\presuper#1#2%
  {\mathop{}%
   \mathopen{\vphantom{#2}}^{#1}%
   \kern-\scriptspace%
   #2}
\def\presub#1#2%
  {\mathop{}%
   \mathopen{\vphantom{#2}}_{#1}%
   \kern-\scriptspace%
   #2}

\def\ft{{\mathfrak{t}}} %%%% Or other notation?
\newcommand{\bext}{\mathrm{beta}}
\newcommand{\ext}{\mathrm{ext}}

\usepackage{bbm}

\def\cV{\mathscr{V}}
\def\ft{\mathfrak{t}}

\def\fs{\mathfrak{s}}
\def\kk{\mathrm{k}}
\def\BK{\mathrm{BK}}
\def\cE{\mathscr{E}}
\def\Rep{\mathrm{Rep}}

\def\cW{\mathscr{W}}
\def\presuper#1#2%
  {\mathop{}%
   \mathopen{\vphantom{#2}}^{#1}%
   \kern-\scriptspace%
   #2}
\newcommand{\SAut}{\text{SAut}}
\def\cSE{\mathscr{S\!E}}
\def\endo{\operatorname{endo}}

   \theoremstyle{definition}

\def\Div{\mathtt{Div}}
\def\Herm{\mathtt{Herm}}

\usepackage{subfiles}

\title{Cuspidal endo-support and strong beta extensions}
\date{\today}
\author{David Helm}
\address{David Helm, Department of Mathematics, Imperial College, London, SW7 2AZ, United Kingdom.}
\email{d.helm@imperial.ac.uk }
\author{Robert Kurinczuk}
\address{Robert Kurinczuk, School of Mathematics and Statistics, University of Sheffield, Sheffield, S3 7RH, United Kingdom.}
\email{robkurinczuk@gmail.com}
\author{Daniel Skodlerack}
\address{Daniel Skodlerack, Institute of Mathematical Sciences, ShanghaiTech University,
201210, Pudong New District, Shanghai, China}
\email{dskodlerack@shanghaitech.edu.cn}
\author{Shaun Stevens}
\address{Shaun Stevens, School of Mathematics, University of East Anglia, Norwich, NR4 7TJ, United~Kingdom.}
\email{shaun.stevens@uea.ac.uk}
\subjclass[2010]{22E50; 11F70}

\begin{document}
\begin{abstract}
Let~$\G$ be an inner form of a general linear group or classical group over a non-archimedean local field of residual characteristic~$p$, {assumed odd in the classical case}. We prove that every smooth representation of~$\G$ over an algebraically closed field~$\R$ of characteristic~$\ell\neq p$ contains a maximal semisimple character, i.e., one for which the point in the building of the corresponding centralizer is a vertex. Further, for every endo-parameter adapted to~$\G$, we define its support, which leads also to the notion of cuspidal endo-support of an irreducible representation, and we relate this to its cuspidal support. We also introduce beta extensions for strong facets in the building of a centralizer, and {show these are sufficient for the construction of types}. These results are used in a subsequent paper to decompose the category of smooth~$\R$-representations of~$\G$.
\end{abstract}
\maketitle

\setcounter{tocdepth}{1}
\tableofcontents

\section{Introduction}

{Over the last thirty years, for inner forms of general linear and classical groups over a non-archimedean local field, }the theory of semisimple characters and the more conceptual theory of endo-classes and endo-parameters has been developed: starting with simple characters {of general linear groups} in~\cite{BK93} by Bushnell--Kutzko, extended to 
lattice sequences in~\cite{BH96} by Bushnell--Henniart, {where the authors introduce endo-classes of simple characters}; semisimple characters for classical groups {with $p$ odd} were introduced in~\cite{St05} by Stevens {with their endo-classes and endo-parameters considered by Kurinczuk--Skodlerack--Stevens in \cite{KSS}}; for inner forms of classical groups this theory developed {with $p$ odd} in~\cite{SkodInnerFormII} by Skodlerack; and for inner forms of general linear groups by S\'echerre in~\cite{SecherreCh}, {Broussous--S\'echerre--Stevens \cite{MR2889743},} and 
Skodlerack in~\cite{SkodInnerFormI}.  {Semisimple characters for the exceptional group~$\G_2$ over a non-archimedean local field of residual characteristic greater than~$3$ have also been introduced by Blasco--Blondel \cite{Blasco2012}.  }

Semisimple characters have {an inductive} arithmetic construction which makes it possible to compute intertwining between them,  a property crucial for their use in {the construction} of types for cuspidal representations \cite{BK93,SecherreStevensIV,St08,SkodlerackCuspQuart}
and Bernstein blocks \cite{MR1711578,SecherreStevensVI,MiSt,SkodlerackYe} in the cases of forms of general linear groups, and inner forms of classical groups { with~$p$ odd}.  
{They play an important role in work towards describing an explicit local Langlands correspondence for general linear groups, {following Bushnell--Henniart's seminal work, including in \cite{BH96,MR1990931,MR3664814}}; and have many applications, including to: the Jacquet--Langlands correspondence {as considered in \cite{MR4000000}}; and to studying distinguished representations{, see for example \cite{MR4275472,MR4009674,MR4867166}}. }

In this work we prepare the technical requisites for the application of semisimple characters to block decompositions for classical groups over~$\mathbb{Z}[1/p]$-algebras. We fix~$\R$ an algebraically closed field of characteristic~$l$ and~$\F$ a non-archimedian local field of residue characteristic~$p\neq l$. Let~$\G$ be 
an inner form of a general linear group defined over~$\F$, or the connected component of a classical group defined~$\F$ or a quadratic subextension of~$\F$ (in the unitary case) with odd~$p$. 

%%%%% Analyse category R_R(G) using endo-parameters %%% Construction of endo-factor
\subsection{$m$-realizations and exhaustion in endo-factors}
{We use semisimple characters to} analyse the category~$\Rep_\R(\G)$ of smooth~$\R$-representations of~$\G$.   
Given an endo-parameter~$\ft$ for~$\G$, i.e., an intertwining class of a semisimple $\R$-valued character defined on a compact open subgroup of~$\G$,  we introduce the full subcategory~$\Rep_\R(\ft)$ of~$\Rep_\R(\G)$ consisting of all representations which are generated 
by semisimple characters of class~$\ft$, called endo-factor for~$\ft$ in~$\Rep_\R(\G)$. We call such representations~\emph{of class} $\ft$. 

Note that~$\Rep_\R(\ft)$ is not obviously {a factor of~$\Rep_\R(\G)$ (and this is not needed in this paper)}, but we will see in {an accompanying paper \cite{HKSSBlocks}} in a more general setting that~$\Rep_\R(\ft)$ is indeed a factor of~$\Rep_\R(\G)$. 

The main theorem of the current paper states that we can find a nice collection of semisimple characters which exhausts~$\Rep_\R(\G)$; this is important for constructing finitely generated projective generators of~$\Rep_{\R}(\ft)$. 
These characters are chosen more {sophisticatedly}: A semisimple character~$\theta$ is parametrized by a pair~$(\Lambda,\beta)$ consisting of a semisimple element~$\b$ of the Lie algebra of~$\G$ and a lattice sequence~$\Lambda$ which corresponds to a point in the
 Bruhat-Tits building~$\mathfrak{B}(\G)$ of~$\G$ which lies in the image of a canonical embedding~$j_\b:\ \mathfrak{B}(\G_\b)\hookrightarrow\mathfrak{B}(\G)$ of the building of the centralizer~$\G_\b$. 
 We show we can choose the characters~$\theta$ such that its point in~$\mathfrak{B}(\G_\b)$ defines a maximal parahoric subgroup in~$\G_\b$. 
 Such semisimple characters are called~\emph{$m$-realizations of~$\ft$}. 
 
 \begin{theorem}[see Theorem~\ref{thm43}]\label{thmExhaustionEndofactor}
 Let~$\ft$ be an endo-parameter for~$\G$. Then, every representation in~$\Rep_\R(\ft)$ is generated by the sum of the isotypic components of all~$m$-realizations of~$\ft$. 
 \end{theorem}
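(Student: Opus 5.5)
We reduce the statement to a claim about subrepresentations. Let $\pi\in\Rep_\R(\ft)$, and let $\pi'$ be the subrepresentation generated by the $\theta$-isotypic components $\pi^\theta$, where $\theta$ runs over all $m$-realizations of~$\ft$. By definition of $\Rep_\R(\ft)$, $\pi$ is generated by the isotypic components of semisimple characters of class~$\ft$. It therefore suffices to prove the following: if $\theta$ is any semisimple character of class~$\ft$, attached to a pair $(\Lambda,\b)$ with point $x\in\mathfrak{B}(\G_\b)$, then every vector $v\in\pi^\theta$ lies in the $\G$-span of $\bigoplus_{\theta_m}\pi^{\theta_m}$, with $\theta_m$ ranging over $m$-realizations. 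Conjugating by~$\G$, we may keep $\b$ fixed. We may also move $x$ within $\mathfrak{B}(\G_\b)$, since the transfer of $\theta$ along points of $j_\b(\mathfrak{B}(\G_\b))$ stays in the same class~$\ft$.

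The key tool is a comparison between semisimple characters at a point $x$ and at a vertex $y$ lying in the closure of the facet of~$x$; we may also take $y$ to be a vertex of some chamber containing $x$ in its closure. Write $\theta_x,\theta_y$ for the transfers, defined on $H^1_x,H^1_y$. The standard relations (as in Bushnell--Kutzko and Stevens) are
\[
H^1_y\subseteq H^1_x\cdot(\text{something in }J^1_x),\qquad J^1_y\subseteq J^1_x\,(P^1_{x}\cap \G_\b)\ \text{etc.}
\]
together with the fact that $\theta_x$ and $\theta_y$ agree on $H^1_x\cap H^1_y$. In the cleanest configuration, with $x$ in a facet whose closure contains the vertex $y$ so that the $\G_\b$-parahoric of $x$ lies in that of~$y$, we aim to show: given $v\in\pi^{\theta_x}$, the space $W=\langle J^1_x\cdot v\rangle$ is a representation of $J^1_x$ on which $H^1_x$ acts by $\theta_x$. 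We then consider the $\ell$-modular representation of the pro-$p$ group $H^1_y(J^1_x\cap H^1_y)$ on $W$. Since $p\neq\ell$, this representation is semisimple, and averaging over the pro-$p$ group $H^1_y$ gives an idempotent $e_{\theta_y}$. We want $e_{\theta_y}\pi\neq0$, and moreover want $v$ itself to be recovered from $\G$-translates of $e_{\theta_y}$-vectors.

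For the generation statement we run the argument the other way. Choose $y$ a vertex and a chamber $C$ of $\mathfrak{B}(\G_\b)$ whose closure contains both $x$ and $y$, and let $z$ be a point of $C$. Intersection-of-characters arguments show $\pi^{\theta_z}\supseteq$ a generating part of $\pi^{\theta_x}$, since $H^1_z\subseteq H^1_x J^1$ with $\theta_z$ extending $\theta_x$ on the intersection and $J^1_x/H^1_x$ acting via the Heisenberg representation. This gives $\pi^{\theta_x}\subseteq\langle J^1_x\pi^{\theta_z}\rangle$. Symmetrically, we aim to show $\pi^{\theta_z}\subseteq\langle J^1_z\,\pi^{\theta_y}\rangle$ for the vertex $y$; here the Heisenberg/Stone--von Neumann theory for $J^1_y/H^1_y$ restricted to the subgroup $J^1_z\cap J^1_y$ should force every $\theta_z$-isotypic vector to be generated by $\theta_y$-isotypic ones. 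Chaining these inclusions yields $\pi^{\theta}\subseteq$ the span of the $m$-realization components.

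The main obstacle is the second inclusion, going from the chamber point~$z$ to the vertex~$y$. The group $H^1_y$ is not contained in $H^1_z$: it is larger in the $\G_\b$-direction, and only the pro-unipotent radical of the $y$-parahoric of $\G_\b$ is comparable. So $\theta_z$-vectors need not be $\theta_y$-isotypic, only $\theta_y$-isotypic up to the action of the pro-$p$ radical $P^1_y(\G_\b)$ modulo $P^1_z(\G_\b)$. The plan is to use the decomposition $H^1_y=H^1_z\cap H^1_y\cdot (\text{part in }P^1_y(\G_\b))$ up to $J^1$-factors, together with semisimplicity of pro-$p$ actions in characteristic $\ell\neq p$. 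With these we project $\theta_z$-vectors onto components on which $H^1_y$ acts by a character $\theta_y\chi$, where $\chi$ is a character trivial on $H^1_z\cap H^1_y$ factoring through $\G_\b$. The further claim needed is that each such twist $\theta_y\chi$ is again a semisimple character of class~$\ft$ at~$y$, hence an $m$-realization. Checking that claim, including the disconnected/classical-group subtleties such as the self-duality requirements when $p$ is odd, is where the real work lies.
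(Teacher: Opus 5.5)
\emph{There is a genuine gap: the key step is missing, and the first link of your chain is false.}

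\textbf{The chain through a chamber point.} Your chain $x\to z\to y$ passes through a point $z$ of a chamber, and its first link fails. Passing from $x$ into a chamber shrinks the parahoric subgroup of $\G_{\varphi(\beta)}$, and isotypic components can disappear. Already for $\R=\mathbb{C}$, $\G=\GL_2(\F)$ and $\beta=0$, take $\pi=\ind_{\F^\times\GL_2(\o_\F)}^{\G}\tilde\tau$, with $\tilde\tau$ extending the inflation of a cuspidal $\tau$ of $\GL_2(\kk_\F)$. It contains the trivial character of $\P_1(x)=1+\varpi_\F\M_2(\o_\F)$. But by Mackey, since $\tau$ has no invariants under the unipotent radical of a Borel subgroup, it has no non-zero vectors fixed by the pro-$p$ radical $\P_1(z)$ of an Iwahori subgroup. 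So $\pi^{\theta_x}\neq0=\pi^{\theta_z}$, and both $\pi^{\theta_x}\subseteq\langle\J^1_x\,\pi^{\theta_z}\rangle$ and $\H^1_z\subseteq\H^1_x\J^1_x$ fail.

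For the same reason you may not weaken ``a vertex in the closure of the facet of $x$'' to ``a vertex of some chamber containing $x$''. For classical groups the vertices of a chamber need not be conjugate, and a cuspidal representation built at one vertex does not contain the transfer to the others.

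\textbf{The link you call the main obstacle.} This is the direction in which containment is automatic. Your comparison there is also backwards: $\H^1_y\cap\G_{\varphi(\beta)}=\P_1(y_{\varphi(\beta)})\subseteq\P_1(z_{\varphi(\beta)})$, so $y$ is the smaller one in the centralizer direction. The twisting device is both unnecessary and unsound. Nothing guarantees that $\theta_y\chi$ is a semisimple character of endo-parameter $\ft$. Twisting by characters attached to the centralizer typically changes the endo-class: compare the block restrictions $\theta_0\psi_{i\lambda}$ in Remark~\ref{remNoIntertwiningImpliesEssConjugacy}, which all lie in $\Cc_h(\Lambda_0,\beta_0)$ but are pairwise not endo-equivalent.

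\textbf{What is missing.} You need to compare Heisenberg representations after inducing them to the pro-$p$ radical of the finer lattice sequence, not $\H^1$-groups. The paper proceeds as follows.
\begin{enumerate}
\item Reduce to irreducible $\pi$. Isotypic functors for pro-$p$ groups are exact since $\ell\neq p$, so the quotient by the span of the $m$-isotypic parts is again of class $\ft$ and contains no $m$-realization. Hence only non-vanishing is needed, and your per-vector formulation is more than required.
\item Let $\Lambda_M$ correspond to a vertex in the closure of the strong facet of $\Lambda_{\varphi(\beta)}$. The hereditary orders of $\Lambda$ and $\Lambda_M$ need not be nested. So \cite[Lemma~2.8]{St08} is used to find $\Lambda'$ with $\P(\Lambda'_{\varphi(\beta)})^\circ=\P(\Lambda_{\varphi(\beta)})^\circ$ and $\P(\Lambda')^\circ\subseteq\P(\Lambda_M)^\circ$.
\item \cite[Lemma~2.10]{St08} gives a path $\Lambda=\Lambda_0,\ldots,\Lambda_t=\Lambda'$ with constant centralizer parahoric and successively nested hereditary orders. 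Along it, $\ind_{\J^1_{i-1}}^{\P_1(\Lambda_i)}\eta_{i-1}\simeq\ind_{\J^1_i}^{\P_1(\Lambda_i)}\eta_i$ is irreducible (or the same with $\P_1(\Lambda_{i-1})$, according to the inclusion), by \cite[Proposition~3.7]{St08} and Proposition~\ref{propHeisenbergPairs}. The Heisenberg-pair extension is trivial here because the centralizer part does not move. Hence $\pi^{\eta_{i-1}}\neq0$ if and only if $\pi^{\eta_i}\neq0$.
\item At the last step, $\ind_{\J'^1}^{\P_1(\Lambda')}\eta'\simeq\ind_{\P_1(\Lambda'_{\varphi(\beta)})\J^1_M}^{\P_1(\Lambda')}\hat\eta_M$ is irreducible, with $\hat\eta_M$ extending $\eta_M$. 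Frobenius reciprocity then gives $\pi^{\theta_M}=\pi^{\eta_M}\neq0$ for the untwisted transfer $\theta_M$, which is an $m$-realization.
\end{enumerate}
Moving from a finer point to a coarser one in the closure of its facet always preserves containment. The reverse move, which your chain makes first, does not.
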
 
 
 {For classical and general linear groups, this refines a Theorem of Dat \cite{Dat09}, which says the category is generated by the sum of the isotypic components of all semisimple characters of endo-parameter~$\ft$.}  The main advantage {of this improvement to}~$m$-realizations, is that {$m$-realizations} satisfy an important finiteness property. In general, there are infinitely many~$\G$-conjugacy classes of~$m$-realizations of~$\ft$, but if we take for each~$m$-realization an Iwahori decomposition into account 
 then we can obtain a coarser equivalence relation to obtain a finite number of equivalence classes:
  The idempotents of~$\F[\b]$ define a Levi subgroup~$\M(\b,\G)$ of~$\G$. We call two~$m$-realizations~$\theta,\ \theta'$ of~$\ft$ \emph{essentially $\G$-conjugate} if their restrictions~$\theta|_{\M(\b,\G)}$ and~$\theta'|_{\M(\b',\G)}$ are conjugate in~$\G$. 
We obtain the following Theorem:

\begin{theorem}[see Theorem \ref{thmEssentiallyConjugateClasses}]\label{thmFiniteness}
Let~$\ft$ be an endo-parameter for~$\G$. There are only finitely many essential~$\G$-conjugacy classes of~$m$-realizations of~$\ft$. 
\end{theorem}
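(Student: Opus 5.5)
We reduce the finiteness to three inputs: finitely many orbits of the relevant data for the Levi $\M(\b,\G)$, finitely many conjugacy classes of vertices in the building of $\G_\b$, and the rigidity of semisimple characters of fixed endo-class on a fixed group.

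First we fix combinatorial data. An endo-parameter $\ft$ determines, up to finitely many choices, the "shape" of any semisimple pair $(\Lambda,\b)$ realizing it. This shape consists of the number of simple blocks, the degrees $[\F[\b_i]:\F]$, the multiplicities $\dim_{\F[\b_i]} V^i$, and, in the classical case, which blocks are self-dual and which come in dual pairs. The $\G$-conjugacy classes of the splittings $V=\bigoplus_i V^i$ of this shape are finite in number: for inner forms of $\GL$ this is just the dimension vector, and in the classical case it uses the finiteness of isometry classes of hermitian forms over the finitely many fields $\F[\b_i]$ (or over $\F$). So we may fix the splitting, and with it the Levi subgroup $\M=\M(\b,\G)$, up to conjugacy.

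Next, for a fixed splitting, we note that $\theta|_{\M}$ is a product of semisimple characters $\theta_i$ on the blocks $\G^i=\Aut(V^i)$, or on the unitary-type blocks. Their endo-classes are determined by $\ft$. The Levi $\M$ is the centralizer of the idempotents, so $\M\cap\G_\b=\G_\b$. It follows that $\theta|_\M$ is a semisimple character of $\M$ for the pair $(\Lambda|_\M,\b)$, and the $m$-realization hypothesis says its point in $\mathfrak{B}(\G_\b)$ is a vertex. We then apply the "translation" property of semisimple characters (Stevens, Skodlerack): given $\b$, two semisimple characters of the same endo-class and the same point in $\mathfrak{B}(\G_\b)$ are conjugate by an element of the normaliser, after transfer to a common lattice sequence. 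In addition, the set of $\b$ giving a fixed endo-class on fixed $V^i$ is, up to conjugacy by the automorphism group, controlled by finitely many embeddings of $\F[\b_i]$. Within each such class we use the $\G_\b$-action: vertices of $\mathfrak{B}(\G_\b)$ fall into finitely many $\G_\b$-orbits, since $\G_\b$ is a product of reductive groups with finitely many orbits of vertices, i.e.\ of types of maximal parahorics. Finally, the character itself is determined up to finitely many possibilities by its endo-class at a given point. This step uses transfer and the fact that simple characters with equal endo-class on the same lattice data intertwine, hence are conjugate, via the intertwining-implies-conjugacy theorem.

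Combining these steps, $\theta|_\M$ ranges over finitely many $\G$-orbits, which proves the theorem. We expect the main obstacle to be the classical case. There the essential restriction must be compared with $\G$-conjugacy rather than with conjugacy by $\M$ or by the full isometry group, and the blocks coming in dual pairs introduce $\GL$-type factors. We will first try to reduce to the self-dual blocks and appeal to the results of \cite{KSS} on the finiteness of realizations of an endo-parameter on a fixed hermitian space, together with Witt's theorem to move splittings by elements of $\G$ rather than of the full unitary group. The possible index-$2$ issue arising for special orthogonal groups only multiplies the count by a bounded factor.
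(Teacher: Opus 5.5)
Your outline (finitely many positions, then finitely many characters at each position) has the right shape, but the two steps that carry the weight do not hold as you state them. First, you never legitimately fix $\beta$. Your claim that the possible $\beta$ are ``controlled by finitely many embeddings of $\F[\beta_i]$'' does not hold: the elements supporting realizations of one endo-parameter form infinitely many conjugacy classes. Already for $h=0$ and a scalar $\beta\in\F$ with $\nu_\F(\beta)<0$, one has $\Cc(\Lambda,\beta)=\Cc(\Lambda,\beta+c)$ for all $c\in\o_\F$. So ``finitely many $\G_\beta$-orbits of vertices'' bounds nothing until you know that every realization of $\ft$ is $\G$-conjugate to one parametrized by a single fixed $\varphi(\beta)$. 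That is Proposition~\ref{propFixedEmbedding}, which you would need to supply. The paper sidesteps this by bounding the $\G$-orbits of the points $[\Lambda']\in\mathfrak{B}_{red}(\G)$ directly. Their barycentric coordinates on each block have denominator $2e(\E_i/\F)$, which is fixed by $\ft$, and the self-dual blocks are then glued with Lemma~\ref{lemTranslatesOfSelfdualLatticeSeq}.

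Second, and more seriously, your rigidity claim is false. For fixed $\beta$, two semisimple characters with the same endo-parameter at the same point of $\mathfrak{B}(\G_\beta)$ need not be essentially $\G$-conjugate. Remark~\ref{remNoIntertwiningImpliesEssConjugacy} exhibits two $m$-realizations $\zeta^{(1)},\zeta^{(2)}\in\Cc_h(\Upsilon,\beta)$ of the same endo-parameter, all of whose blocks are self-dual (so $\M(\beta,\G)=\G$), which are not $\G$-conjugate. Their matching permutes the blocks cyclically and sends a block sitting at a special vertex to one sitting at a non-special vertex. Restricting to $\M$ does not rescue you. In the classical case all self-dual blocks lie in the single classical factor of $\M(\beta,\G)$, so $\theta|_{\M}$ is not a product of simple characters over the blocks, and $\beta$ alone does not determine which endo-class sits on which block. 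The missing ingredient is a finite invariant recording this. For $m$-realizations $\theta'$ on a fixed $\Lambda$ intertwining a fixed $\theta$, this is the residual embedding $\zeta_{\theta'}:k_\E\hookrightarrow\mathfrak{a}_0/\mathfrak{a}_1$. Equal residual embeddings force $\G$-conjugacy, by the intertwining-implies-conjugacy theorems of \cite{SkodInnerFormI}, \cite{SkodInnerFormII}, \cite{SkSt} and \cite{KSS}, and there are only finitely many such embeddings. Your last paragraph defers exactly this self-dual multi-block situation to an unspecified finiteness statement in \cite{KSS}. As written, the classical case, which is where the difficulty lies, remains unproved.
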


%%%%%%%% Cuspidal support for endp-parameters
\subsection{Cuspidal support for endo-parameters}
From the theory of covers, an irreducible cuspidal representations~$\pi$ of~$\G$ only contains semisimple characters in~$\G$ for which the point in the building of the centralizer~$\G_\b$ is a vertex in the strong simplicial structure and the split rank of the 
centre of~$\G$ is equal to the split rank of the centre of~$\G_\b$. The first property {can be realized for any} endo-parameter, but the second {cannot; those endo-parameters for which it is possible} are precisely the ones {whose} endo-factor contains an irreducible cuspidal representation. 
We call them \emph{cuspidal endo-parameters}. 

For a general endo-parameter~$\ft$ we describe the endo-factor~$\Rep_\R(\ft)$ using a cuspidal endo-parameter on a Levi subgroup of~$\G$ as follows:
Given a realization~$\theta$ of~$\ft$, as~$\b$ generates a product~$\F[\b]$ of field extensions, it defines a Levi subgroup~$\M_c$ of~$\G$, which is determined up to conjugacy by~$\ft$. 
We then obtain a cuspidal endo-parameter~$\ft_{\M_c}$ on~$\M_c$ via restriction of~$\theta$ to~$\M_c$. The~$\G$-conjugacy class of~$(\M_c,\ft_{\M_c})$ is called \emph{cuspidal endo-support~$\cusp(\endo(\pi))$} of any irreducible representation~$\pi$ of~$\Rep_\R(\ft)$. 

We call {the~$\G$-conjugacy class~$(\M,\fs)_\G$ of} a Levi subgroup~$\M$ of~$\G$ and an endo-parameter on~$\M$ a \emph{sub-endo-parameter} for~$\G$. There is a natural ordering on the set of sub-endo-parameters for~$\G$, see Definition~\ref{defSubEndopar}.  We collect the results of Section~\ref{secEndoSupport} in the following theorem:

\begin{theorem}\label{thmCuspidalEndosupport}
{
 Let~$\pi\in\Rep_{\R}(\ft)$. 
 \begin{enumerate}
 \item The cuspidal endo-support~$\cusp(\ft)$ of~$\pi$ is a maximal cuspidal sub-endo-parameter for~$\G$. 
 \item Every maximal cuspidal sub-endo-parameter for~$\G$ lies in the cuspidal endo-support of some irreducible~$\R$-representation of~$\G$. 
 \item Writing~$\cusp(\pi)$ for the inertial cuspidal support of~$\pi$, we have
 \[
 \endo(\cusp(\pi))\le \cusp(\endo(\pi)),
 \]
 where~$\endo(\cusp(\pi))$ is the sub-endo-parameter for~$\G$ determined by the endo-parameter of~$\cusp(\pi)$.
 \end{enumerate} 
}
\end{theorem}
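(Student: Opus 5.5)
The plan is to prove the three parts in order, working throughout with a fixed realization $\theta$ of $\ft$ attached to a pair $(\Lambda,\beta)$, where $\F[\beta]=\bigoplus_i \F[\beta_i]$, and with the Levi subgroup $\M_c=\M(\beta,\G)$ cut out by the idempotents of $\F[\beta]$.

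For (1), first check that $\cusp(\ft)$ is well defined. Two realizations of $\ft$ intertwine, and intertwining of semisimple characters gives a matching of their simple blocks (the matching theorem for semisimple characters). This matching can be realized by an element of $\G$ that conjugates one idempotent decomposition to the other, so the pairs $(\M_c,\theta|_{\M_c})$ agree up to $\G$-conjugacy.

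Next, $\ft_{\M_c}$ is cuspidal on $\M_c$. The centralizer of $\beta$ in $\M_c$ is $\G_\beta$, because $\M_c$ already contains all of $\G_\beta$. The centre of $\M_c$ has the same split rank as the centre of $\G_\beta$: in the general linear case each block contributes one $\GL$ factor, and in the classical case the self-dual blocks contribute anisotropic factors. Taking a vertex of $\mathfrak{B}(\G_\beta)$ then realizes the cuspidality criterion recalled above.

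Maximality comes next. Suppose $(\M_c,\ft_{\M_c})\le(\M',\fs')$ with $\fs'$ cuspidal. By Definition~\ref{defSubEndopar}, $\M'\supseteq\M_c$ up to conjugacy and $\fs'$ restricts to $\ft_{\M_c}$. Choose a realization $\theta'$ of $\fs'$ with element $\beta'$. The centre of $\M'$ must then have the same split rank as the centre of $\M'_{\beta'}$. However, $\beta'$ has the same blocks as $\beta$ after restriction, so $\M'_{\beta'}\supseteq \G_\beta$ has centre of split rank at least that of $\M_c$. This forces $\M'=\M_c$.

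For (2), start with a maximal cuspidal $(\M,\fs)$. Take a realization $(\Lambda_\M,\beta_\M)$ of $\fs$ in $\M$, and regard $\beta_\M$ as an element of $\operatorname{Lie}\G$. The realization in $\M$ extends to a semisimple character of $\G$ along a self-dual lattice sequence adapted to the Levi. Call its endo-parameter $\ft$; it is compatible with $\fs$.

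Now apply (1) to $\ft$: $\cusp(\ft)$ is maximal cuspidal and lies above $(\M,\fs)$, so maximality gives equality. Since $\ft$ is realized by a character of a compact open subgroup, $\Ind$ of that character is nonzero. It has an irreducible subquotient, obtained via a finitely generated subrepresentation and Zorn's lemma, and this subquotient contains $\theta$, hence lies in $\Rep_\R(\ft)$.

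For (3), let $\pi\hookrightarrow \Ind_\P^\G\sigma$ up to subquotient, with $\sigma$ cuspidal on $\M$. The representation $\sigma$ contains a semisimple character $\theta_\M$ of $\M$ with cuspidal endo-parameter. Using the cover/Iwahori decomposition (Theorem~\ref{thmExhaustionEndofactor} together with $m$-realizations), Jacquet-module compatibility shows that $\pi$ contains a semisimple character $\theta$ of $\G$ whose restriction to $\M$ is $\theta_\M$. Hence $\M\supseteq\M_c(\theta)$ up to conjugacy, $\theta|_{\M_c}$ realizes $\cusp(\endo(\pi))$, and the restriction relation gives $\endo(\cusp(\pi))\le\cusp(\endo(\pi))$.

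I expect the main obstacle to be the transfer step in (3): showing that the endo-parameter of $\pi$ is the one induced from $\theta_\M$. For this I would first use the sound parabolic decomposition $H^1=(H^1\cap\bar U)(H^1\cap\M)(H^1\cap U)$ to compare $\theta$-isotypic parts with $\theta_\M$-isotypic parts of Jacquet modules, which is the standard argument from the theory of covers.
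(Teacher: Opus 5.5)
Your proposal has two genuine gaps, both in the structural part of the argument.

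\textbf{First gap: the definition of $\M_c$.} Your identification $\M_c=\M(\varphi(\beta),\G)$ fails in exactly the case the paper's definition is built for. Suppose $\G$ is special orthogonal and some $i_0\in\I^\sigma$ has $\beta_{i_0}=0$, with $h|_{\V_{i_0}}$ two-dimensional of Witt index $1$. Then the centre of $\G_{\varphi(\beta)}$ acquires the split factor $\SO(1,1)(\F)\simeq\F^\times$. So your claim that self-dual blocks contribute only anisotropic factors is false there, $\C(\G_{\varphi(\beta)})/\C(\M(\varphi(\beta),\G))$ is non-compact, and $\ft_{\M(\varphi(\beta),\G)}$ is not cuspidal. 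The paper's $\M_c$ splits $\V_{i_0}$ into two isotropic lines. With that definition, cuspidality of $\ft_{\M_c}$ is Remark~\ref{remCuspEndopar}.

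\textbf{Second gap: maximality is not enough.} Parts (2) and (3) need more than maximality of $\cusp(\ft)$ in $\cSE^\circ(\G)$. They need that $\cusp(\ft)$ dominates \emph{every} element of $\cSE^\circ(\G,\ft)$: if $\fs$ is cuspidal on $\M$ with $\ind_\M^\G\fs=\ft$, then up to conjugacy $\M\subseteq\M_c$ and $\ind_\M^{\M_c}\fs=\ft_{\M_c}$. Maximality alone does not rule out $(\M,\fs)$ lying under some other maximal element.
\begin{itemize}
\item In (2) you assert that $\cusp(\ft)$ lies above $(\M,\fs)$ without proof.
\item In (3) you assert $\M\supseteq\M_c(\theta)$. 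This is the wrong inclusion: for an unramified principal series of $\GL_n(\F)$, $\M$ is the diagonal torus while $\M_c=\G$. It is also given no justification.
\end{itemize}

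This domination is the real content of Proposition~\ref{propEndosuppbyMaxcuspEP}. The argument runs as follows.
\begin{enumerate}
\item Extend a realization of $\fs$ to $\theta$ on $\G$ with $\M\subseteq\M(\varphi(\beta),\G)$, using that $\fs$ is cuspidal.
\item Reduce to $\I=\I^\sigma$.
\item If $\C(\G_{\varphi(\beta)})/\C(\G)$ is compact, then $\M_c=\G$ and there is nothing to prove.
\item Otherwise, take the simple central idempotent $e$ of the minimal Levi $\tM\supseteq\M$ with $e1_{i_0}\neq0$. Cuspidality of $\fs$ rules out $e1_{i_0}=1_{i_0}$, which forces $1_{i_0}=e+\sigma_h(e)$. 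This says exactly that $\M\subseteq\M_c$.
\end{enumerate}

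With this in hand, the rest goes through.
\begin{itemize}
\item Maximality in $\cSE^\circ(\G)$ follows formally from transitivity of $\ind$.
\item Part (2) is as you say.
\item Part (3) follows at once from $\endo(\pi)=\ind_\M^\G\fs$.
\end{itemize}
For that last identity, your Jacquet-lemma sketch is a reasonable self-contained substitute for the paper's citation of \cite[Theorem~5.9(ii)]{HKSSBlocks}. It uses a character $\theta$ decomposed with respect to $(\M,\P)$ with $\theta|_\M=\theta_\M$, together with surjectivity of $\pi^\theta\to r^\G_\P(\pi)^{\theta_\M}$. Theorem~\ref{thmExhaustionEndofactor} plays no role there.
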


%%%%%%% beta extensions for the strong simplicial structure 
\subsection{beta extensions for the strong simplicial structure}\label{secBetaStrong}
{An important step in the construction of cuspidal representations and types, are the beta extensions associated to a semisimple character parametrized by~$(\Lambda,\beta)$.  In earlier works,~\cite{St08} and~\cite{SkodlerackCuspQuart}, beta }extensions have been constructed for classical groups using the weak simplicial structure of the building~$\mathfrak{B}(\G_\b)$. But to unify results for types with those in depth zero for reductive groups 
it is important to establish the theory of~$\b$-extensions using the strong simplicial structure of~$\mathfrak{B}(\G_\b)$. For a given semisimple character~$\theta$ of~$\G$, parametrized by~$(\Lambda,\b)$, a~$\b$-extension~$\kappa$ for~$\theta$ is an extension of the Heisenberg representation
of~$\theta$.  Not every extension can be chosen:
\begin{itemize}
\item For constructing a type,~$\kappa$ needs to have large intertwining, for example if~$\Lambda$ comes from a strong vertex of~$\mathfrak{B}(\G_\b)$ one requires that the restriction of~$(\J,\kappa)$ to any pro-$p$-Sylow subgroup of its domain~$\J$ is intertwined by the whole centralizer~$\G_\b$. 
\item To be able to compare different types, a~$\b$-extension for~$\Lambda$ should be constructed compatibly to a~$\b$-extension of a lattice sequence~$\Lambda_0$ coming from a strong vertex of~$\mathfrak{B}(\G_\b)$ in a chamber containing~$j_\b^{-1}(\Lambda)$.
\end{itemize}
We verify the construction of~beta extensions in Section~\ref{subsecBeta} and we recall the construction of types for Bernstein components, {showing these types can be constructed with beta extensions defined using the strong simplicial structure}.  

%%%%%%%%%%%%%%%% Structure and remarks on the proof
\subsection{Structure of the paper} We now explain the structure of the paper and we give remarks on the proof of Theorem~\ref{thmExhaustionEndofactor}. In Section~\ref{secPreliminaries} we mainly introduce the description of~$\G$ and its parahoric subgroups. In Section~\ref{secEPs}
we recall the notion of semisimple characters and we recall the notion of endo-parameter, i.e., the invariant for the intertwining class of a semisimple character. In Section~\ref{Section:m-realizations} we introduce~$m$-realizations of an endo-parameter~$\ft$ and we introduce an equivalence relation 
called~\emph{essential-$\G$-conjugacy} on the set of~$m$-realizations of~$\ft$ and  we prove that the number of essential-$\G$-conjugacy classes of~$m$-realizations of~$\ft$ is finite, see Theorem~\ref{thmEssentiallyConjugateClasses}. This uses the numerical rigidity of endo-parameters, related to the classification of pearl embeddings by Broussous--Grabitz~\cite{BroussousGrabitz}. In Section~\ref{secESing} we define endo-factors and we prove the finiteness theorem {(Theorem~\ref{thmExhaustionEndofactor})}. {The main idea is that,} starting with a semisimple character of class~$\ft$ in an irreducible representation~$\pi$, we go along a path in~$\mathfrak{B}(\G_\b)$ and use compatibility to show that~$\pi$ contains a transfer of~$\theta$ to a strong vertex. 
After introducing beta extensions for the strong structure and recalling results on types for Bernstein blocks in Section~\ref{secBetaStrong}, we introduce the concept of cuspidal endo-support in Section~\ref{secEndoSupport}.  

%%%%%%%%%%%%%%%%%%%%%%%%%%%%%
\subsection{Acknowledgements}
The first author was partially supported by EPSRC New Horizons grant EP/V018744/1. The second author was supported by EPSRC grant EP/V001930/1 and the Heilbronn Institute for Mathematical Research. The third author was supported by a Shanghai 2021 `Science and Technology Innovation Action Plan' Natural Science Foundation Project grant.  The fourth author was supported by EPSRC grants EP/H00534X/1 and EP/V061739/1.  %We thank Jean-Fran\c cois Dat, Johannes Girsch, Thomas Lanard, Vanessa Miemietz, Peter Schneider, and Vincent S\'echerre for useful conversations.

%%%%%%%%%%%%%%%%%%%%%%%%%%%%%
\section{Preliminaries}\label{secPreliminaries}
%%%%%%%%%%%%%%%%%%%%%%%%%%%%%

%%%%%%%%%%%%%%%%%%%%%%%%%%%%%
\subsection{Notation}
For a non-archimedean local skew-field~$\D$  we write~$\o_\D$ for the ring of integers of~$\D$,~$\p_\D$ for the unique maximal ideal of~$\o_\D$, and~$\kk_\D$ for the residue field~$\o_\D/\p_\D$.  

%%%%%%%%%%%%%%%%%%%%%%%%%%%%%
\subsection{Smooth~$\R$-representations}\label{abstractsmoothreps}
Let~$\R$ be an algebraically closed field of characteristic~$\ell$ and~$\H$ a locally profinite group.  We call a smooth representation of~$\H$ on an~$\R$-module an \emph{$\R$-representation}.  The basic theory of~$\R$-representations is developed in Vign\'eras' book \cite{Vig96}.  In particular,~$\R$-representations of~$\H$ form an abelian category, we denote by~$\Rep_{\R}(\H)$, \cite[I 4.2]{Vig96}; and for~$\R$-representations~$\pi_1,\pi_2$ of~$\H$ we write~$\Hom_{\R[\H]}(\pi_1,\pi_2)$ for the~$\R$-module of morphisms~$\pi_1\rightarrow\pi_2$ in this category.  We call an~$\R$-representation \emph{irreducible} if its only~$\R$-subrepresentations are~$0$ and itself; in particular,~$0$ is \emph{not} considered an irreducible~$\R$-representation.  A simple application of Zorn's lemma shows that every non-zero finitely generated~$\R$-representation of~$\H$ has an irreducible quotient.

%%%%%%%%%%%%%%%%%%%%%%%%%%%%%
\subsection{Forms of classical groups}\label{subsecClassicalGroups}
We fix~$\F/\F_\so$, a Galois extension of non-archimedean local fields of degree one or two with residual characteristic~$p$ different from~$l$, and~$\varepsilon\in\{\pm,0\}$, such that
\newcommand{\caseGL}{\textbf{(\I)}}
\newcommand{\caseClassical}{\textbf{(\I\I)}}
\begin{itemize}
 \item[\caseGL]~$\F=\F_\so$ if~$\varepsilon=0$ and
 \item[\caseClassical]~$2\not\mid p$ if~$\varepsilon\neq 0$.
\end{itemize}
We write~$\overline{\phantom{ll}}$ for the generator of the of the Galois group of~$\F/\F_\so$. 

We denote by~$\Div(\F/\F_\so,\varepsilon)$ the set of pairs~$(\D,(\overline{\phantom{ll}})_\D)$ consisting of a skew-field~$\D$ of finite degree~$d$ with center~$\F$ and an~$\F_\so$-linear endomorphism~$(\overline{\phantom{ll}})_\D$ of~$\D$ extending~$\overline{\phantom{ll}}$ such that
\begin{itemize}
 \item $(\overline{\phantom{ll}})_\D$ is~$\id_\D$ in Case~\caseGL~and
 \item $(\overline{\phantom{ll}})_\D$ is an orthogonal or unitary anti-involution on~$\D$ in Case~\caseClassical. Note that in this case~$\D$ has at most degree~$2$, and if~$\D$ has degree~$2$ then~$\F=\F_\so$. 
\end{itemize}
We will still write~$\overline{\phantom{ll}}$ for~$(\overline{\phantom{ll}})_\D$ if there is no cause of confusion. 

A~\emph{Hermitian space for~$(\F/\F_\so,\varepsilon)$} is a pair~$(\V,h)$ together with a pair~$(\D,(\overline{\phantom{ll}})_\D)\in\Div(\F/\F_\so,\varepsilon)$ such that~$\V$ is a finite dimensional right~$\D$-vector space and~$h$ is an~$\varepsilon$-hermitian form~$h:\V\times\V\rightarrow \D$ with respect to~$\overline{\phantom{ll}}$, in particular~$h$ is the zero map in Case~\caseGL~and non-degenerate in Case~\caseClassical.
We write~$\Herm(\F/\F_\so,\varepsilon)$ for the set of Hermitian spaces for~$(\F/\F_\so,\varepsilon)$. 

We fix a pair~$(\D,(\overline{\phantom{ll}})_\D)\in\Div(\F/\F_\so,\varepsilon)$ and a Hermitian space~$(\V,h)$ for~$(\F/\F_\so,\varepsilon)$. 

We consider the following subgroups of~$\tG:=\GL_\D(\V)$:
the group
\[\U(\V,h)=\{g\in\GL_\D(\V):h(gv,gw)=h(v,w)\text{ for all }v,w\in\V\}\]
of isometries of~$(\V,h)$.  
and
\[\G:=\left\{\begin{array}{ll}
\U(\V,h)\cap\SL_\F(\V) & \text{if}\ h\neq 0\ \text{and}\ \D=\F=\F_\so\ \text{and}\ \varepsilon=+\\
\U(\V,h)& \text{otherwise.}
           \end{array}\right.
\]
 
Therefore~$\G$ will be the set of rational points of an~$\F_\so$-form of a general linear, symplectic, or special orthogonal group. 
Note that in the Case~\caseClassical~if~$\D\neq\F$ then every element of~$\U(\V,h)$ has reduced norm~$1$ over~$\F$. 
 
We let~$\Sigma=\langle \sigma\rangle$ denote an abstract cyclic group of order~$2$, which we will let act on various objects. The element~$\sigma$  acts trivially on~$\tG$ if~$h=0$, and as the inverse of the adjoint anti-involution of~$h$ if~$h$ is non-zero.  In particular,
\[
\U(\V,h)=\tG^\Sigma.
\]
We denote the set of~$\D$-endomorphisms of~$\V$ by~$\A$. The square root of the~$\F$-dimension of~$\A$ is called the~\emph{$\F$-degree} 
(or for short just the degree) of~$\A$, denoted by~$\deg_\F(\A)$.

%%%%%%%%%%%%%%%%%%%%%%%%%%%%%
\subsection{Parahoric subgroups for forms of classical groups}\label{parahoricssec}
For inner forms of (products of) classical $p$-adic groups and general linear groups, we use the (self-dual)~$\o_{\F}$-lattice function model of the Bruhat--Tits building and use the following notation for parahoric subgroups:  for~$\Lambda$ an~$\o_{\F}$-lattice function in~$\V$ we write~$\P(\Lambda)$ for the compact open subgroup stabilizing~$\Lambda$, and write~$\P(\Lambda)^{\circ}$ for the parahoric subgroup associated to~$\Lambda$ with pro-$p$ unipotent radical~$\P_1(\Lambda)$. We write~$\P^{\st}(\Lambda)$ for the fixator of the minimal facet~$\overline{\Lambda}$ containing~$\Lambda$, with respect to the strong simplicial structure.  

{Associated to an~$\o_{\F}$-lattice function in~$\V$, we also have the hereditary~$\o_{\F}$-order~$\mathfrak{a}_0(\Lambda)$ in~$\A$, with the filtration~$(\mathfrak{a}_n)_{n\in\mathbb Z}$ of~$\A$ by~$\o_{\F}$-lattices, where~$\mathfrak{a}_1(\Lambda)$ is the Jacobson radical. Then~$\P^1(\Lambda)=(1+\mathfrak{a}_1(\Lambda))\cap\G$.}

%%%%%%%%%%%%%%%%%%%%%%%%%%%%%
\section{Endo-parameters}\label{secEPs}
%%%%%%%%%%%%%%%%%%%%%%%%%%%%%

We recall the theory of self-dual semisimple characters, endo-equivalence, and endo-parameters.  We adopt slightly different notation and terminology to \cite{KSS} and \cite{SkodInnerFormI}, because we wish to later consider inner forms of $p$-adic classical groups and general linear groups simultaneously and we can by-pass some of the technicalities as we will not need the full theory (we only use the special case of \emph{full} semisimple characters).  

%%%%%%%%%%%%%%%%%%%%%%%%%%%%%
\subsection{Semisimple characters and intertwining for~$\tG$, following~\cite{KSS} and~\cite{SkodInnerFormI}}\label{semisimplechartG}
We fix an algebraic closure~$\overline{\F}$ of~$\F$ {and an additive character~$\psi_{\F}$ of conductor~$\mathfrak{p}_{\F}$.}

\begin{definition}
Let~$\beta=\sum_{i\in \I}\beta_i$ be a finite sum with~$\beta_i\in\overline{\F}$ such that the minimal polynomials  of~$\b_i$ and~$\b_j$ over~$\F$ differ for~$i\neq j$.  We call~$\beta$ \emph{full semisimple} if it satisfies a technical condition: its \emph{critical exponent}~$k_\F(\beta)<0$ is negative (for the definition of~$k_\F(\beta)$ in this generality see \cite[\S{8.1}]{KSS}).   
\end{definition}

Let~$\beta=\sum_{i\in \I}\beta_i$ be a full semisimple element. We set~$\E=\F[\b]$, a semisimple~$\F$-algebra,~$\E=\bigoplus_{i\in \I}\E_i$ with~$\E_i=\F[\beta_i]$ fields.  Let~$\mathscr{Q}(\beta)$ denote the class of all triples~$(\V,\varphi,\Lambda)$ where
\begin{enumerate}
\item $\V$ is a right~$\D$-vector space;
\item $\varphi:\E\hookrightarrow \A$ is an~$\F$-embedding;
\item $\Lambda$ is a~${\o_{\varphi(\E)}}$-$\o_\D$-lattice sequence.  
\end{enumerate}

Following the approach of Bushnell, Henniart, and Kutzko, as in \cite{KSS}, we associate to~$(\V,\varphi,\Lambda)\in \mathscr{Q}(\beta)$ a compact open pro-$p$ subgroup~$\widetilde{\H}^{1}(\varphi(\beta),\Lambda)$ of~$\tG=\Aut_\D(\V)$ and a set~$\Cc(\Lambda,\varphi(\beta))$ of characters of~$\widetilde{\H}^{1}(\varphi(\beta),\Lambda)$ which we call \emph{semisimple characters for}~$\tG$.   Note that, this is a special case of the construction; these characters are called \emph{full semisimple characters} in \cite{KSS} and~\cite{SkodInnerFormI}.   

We let~$\Cc(\beta)=\bigcup_{(\V,\varphi,\Lambda)\in \mathscr{Q}(\beta)}\Cc(\Lambda,\varphi(\beta))$ and, for fixed~$\V$, we put~$\Cc(\beta,\V)=\bigcup_{(\V,\varphi,\Lambda)\in \mathscr{Q}(\beta)}\Cc(\Lambda,\varphi(\beta))$. Then the collection of all semisimple characters for~$\tG=\Aut_\D(\V)$ is~$\Cc(\V)=\bigcup_{\beta}\Cc(\beta{,\V})$, where the union is over all full semisimple elements~$\beta$.

\begin{remark}
In~\cite[Definition 6.6]{SkodInnerFormI}, the third author defines the endo-equivalence class~$\mathfrak{E}$ of a semisimple \emph{stratum}. If one of the strata in~$\mathfrak{E}$ is of the form~$[\Lambda,n,0,\varphi(\beta)]$, for~$(\V,\varphi,\Lambda)\in\Qq(\b)$ then for every stratum~$\Delta=[\Lambda',n,0,\gamma]\in\mathfrak{E}$ there is a tuple~$(\V',\varphi',\Lambda')\in\Qq(\beta)$ such that~$\Delta$ is equivalent (as strata) to~$[\Lambda',n,0,\varphi'(\beta)]$, by~\cite[Proposition 4.30]{SkodInnerFormI} and~\cite[Theorem 6.6]{SkodInnerFormII}. 
\end{remark}

Corresponding to the decomposition~$\beta=\sum_{i\in \I}\beta_i$, we have:
\begin{enumerate}[$\bullet$]
\item a decomposition~$\V=\bigoplus_{i\in\I} \V_i$ (called the \emph{associated splitting} of~$\vphi(\b)$) 
\item a decomposition~$\Lambda=\bigoplus_{i\in\I}\Lambda_i$, where~$\Lambda_i(k)=\Lambda(k)\cap\V_i$ (we say that the decomposition of~$\V$ \emph{splits}~$\Lambda$);
\item endomorphisms rings~$\A_i:=\End_\D(\V_i)$ and~$\B_i=\End_{\E_i\otimes\D}(\V_i)$;
\item natural embeddings~$\widetilde{\H}^1(\varphi(\beta_i),\Lambda_i)\hookrightarrow \widetilde{\H}^1(\varphi(\beta),\Lambda)$, and restriction maps
\begin{align*}
\Cc(\Lambda,\varphi(\beta))&\rightarrow \Cc(\Lambda_i,\varphi(\beta_i)),\quad \theta\mapsto \theta_i:=\theta\mid_{\widetilde{\H}^1(\varphi(\beta_i),\Lambda_i)}.
\end{align*}
\end{enumerate}
We call the~$\theta_i$ the \emph{simple block restrictions} of~$\theta$.  

Given~$(\V,\varphi,\Lambda),(\V',\varphi',\Lambda')\in \mathscr{Q}(\beta)$, there are natural bijections
\[\tau_{\Lambda',\Lambda,\varphi',\varphi}:\Cc(\Lambda,\varphi(\beta))\rightarrow \Cc(\Lambda',\varphi'(\beta)),\]
called \emph{transfer maps} (\cite[Lemma 9.3]{KSS} and~\cite[\S6.2]{SkodInnerFormI}), and we collect semisimple characters into families following \cite{BH96,KSS}:

\begin{definition}
A \emph{pss-character supported on}~$\beta$ is a function~$\Theta:\mathscr{Q}(\beta)\rightarrow \Cc(\beta)$ whose values are related by transfer:
\[\Theta(\V',\varphi',\Lambda')=\tau_{\Lambda',\Lambda,\varphi',\varphi}\Theta(\V,\varphi,\Lambda).\]
We call a value of~$\Theta$ a \emph{realization}; thus, by definition,~$\Theta$ is determined by any one of its realizations.
\end{definition}

Let~$\beta'$ be another full semisimple element and set~$\E'=\F[\b']$.  Let~$\Theta,\Theta'$ be pss-characters supported on~$\beta,\beta'$ respectively. 
 
\begin{definition}\label{def:endo-e}
We say that~$\Theta$ and~$\Theta'$ are \emph{endo-equivalent} if there exist realizations of~$\Theta$ and~$\Theta'$ on a common~$\F$-vector space~$\V$ which intertwine in~$\tG$.
\end{definition} 

By~\cite[Theorem~9.9]{KSS} and~\cite[Theorem~6.18]{SkodInnerFormI}, endo-equivalence is an equivalence relation and we call the equivalence classes \emph{semisimple endo-classes}. If~$\theta$ is a realization of a pss-character~$\Theta$ then we define the endo-class of~$\theta$ to be the endo-class of~$\Theta$. The \emph{degree} of a semisimple endo-class is~$[\F[\b]:\F]$ where~$\b$ is any full semisimple element which supports a pss-character of this endo-class; this is well-defined by~\cite[Proposition~6.2]{KSS} because we are dealing only with full semisimple characters -- it is for this reason that we don't need to assume the degrees of the pss-characters are equal in Definition~\ref{def:endo-e}.

We call a semisimple character or endo-class \emph{simple} if it is defined by a semisimple element~$\beta$ which generates a field extension~$\F[\b]/\F$ (i.e.~$\I$ is a singleton in the notation above), by~\cite[Theorem 9.9(i)]{KSS} this is well defined. Write~$\cE(\F)$ for the set of all simple endo-classes of simple characters for inner forms of general linear groups over~$\F$. 

\begin{definition}\label{defEndoparGL}
An \emph{endo-parameter~$\ft$ for~$\tG$} is a formal sum~$\ft=\sum_{c\in\cE(\F)}m_{c}c$, with~$m_{c}\in\mathbb{Z}^{\geqslant 0}$, satisfying~$\sum_{c\in\cE(\F)}m_{c}\deg(c)=\deg_\F(\A)$ and~$d\mid m_c\deg(c)$, for~$c\in\cE(\F)$.
\end{definition}

By \cite[Theorem 12.9]{KSS} and~\cite[Theorem 7.2]{SkodInnerFormI}, the set of intertwining classes of semisimple characters for~$\tG$ is in canonical bijection with the set of  endo-parameters for~$\tG$.  The bijection is given as follows: let~$\theta$ be a semisimple character for~$\tG$, let~$\theta_i$ be the simple block restrictions of~$\theta$, and let~$c_i$ be the endo-classes of~$\theta_i$.  Then we map the intertwining class of~$\theta$ to the endo-parameter~$\ft_{\theta}$ defined by~$\ft_{\theta}=\sum_{i\in\I}\deg_{\E_i}(\B_i)c_i$.  

%%%%%%%%%%%%%%%%%%%%%%%%%%%%%
\subsection{Semisimple characters and intertwining for~$\G$, following~\cite[\S8]{KSS} and~\cite[\S6.1]{SkodInnerFormII}
}\label{semisimplecharG}

We now expand the scope of the last section to include inner forms of classical groups and define endo-parameters for~$h$.  In contrast to previous works, we include semisimple characters for inner forms of general linear groups in this framework as the special case where~$h=0$ (i.e., Case~\caseGL), to allow us to state our results in future sections uniformly. 

Recall that we have fixed~$\F/\F_\so$ and~$\e\in\{\pm,0\}$. {We also assume that the additive character~$\psi_{\F}$ is~$\sigma$-invariant.}
\begin{definition}
We say that a full semisimple element~$\beta$ is \emph{$\e$-self-dual}, or just~\emph{self-dual}, if the generator~$\overline{\phantom{ll}}$ of~$\Gal(\F/\F_\so)$ extends to an involution~$\overline{\phantom{ll}}$ on~$\F[\beta]$ such that~$\overline{\beta}=(-1)^\e\beta$.  Note that, when~$\e=0$ this imposes no extra condition on a full semisimple element.
\end{definition}
Let~$\beta$ be an~$\e$-self-dual full semisimple element.  Then the involution~$\overline{\phantom{ll}}$ induces an action of~$\sigma$ on the indexing set~$\I$ of~$\beta=\sum_{i\in\I}\beta_i$, which decomposes as
\[\I=\I_+\cup\I^\sigma\cup\I_-\]
with~$\I^\sigma$ the~$\sigma$-fixed orbits,~$\I_+$ a set of representatives for the the orbits of size~$2$, and~$\I_-=\sigma(\I_+)$.  We write~$\E=\F[\beta]$ and~$\E_\so$ for the set of~$\overline{\phantom{ll}}$-fixed points on~$\E$.

In this case, as in~\cite{KSS} or similarly in~\cite[\S7.1]{SkodInnerFormII}, we let~$\mathscr{Q}_{\F/\F_\so,\e}(\beta)$ denote the class of all triples~$((\V,h),\varphi,\Lambda)$ where
\begin{enumerate}[$\bullet$]
\item $(\V,h)$ is an~$\e$-hermitian space for~$(\F/\F_\so,\e)$,
\item $(\V,\varphi,\Lambda)\in \mathscr{Q}(\beta)$, 
\item and~$\varphi,\Lambda$ are $h$-\emph{self-dual}, see~\cite[\S 7.2]{KSS} and~\cite[Definition 4.1]{SkodInnerFormII}. 
\end{enumerate}
Note that this does not give any extra condition on~$\varphi,\Lambda$ if~$h$ is zero so that~$\mathscr{Q}_{\F/\F_\so,0}(\beta)=\mathscr{Q}(\beta)$. 

If~$((\V,h),\varphi,\Lambda)\in\mathscr{Q}_{\F/\F_\so,\e}(\beta)$, then~$\widetilde{\H}^1(\varphi(\beta),\Lambda)$ is~$\Sigma$-stable and, with~$\G=\G_h$ the associated classical group, we write
\[
\H^1(\varphi(\beta),\Lambda)=\widetilde{\H}^1(\varphi(\beta),\Lambda)\cap \G.
 \]
 When~$h=0$, we have~$\H^1(\varphi(\beta),\Lambda)=\widetilde{\H}^1(\varphi(\beta),\Lambda)$.
 
The group~$\Sigma$ acts on~$\mathscr{C}(\Lambda,\varphi(\beta))$ with fixed points~$\mathscr{C}(\Lambda,\varphi(\beta))^\Sigma$ -- the set of \emph{self-dual semisimple characters} -- and we define the set of characters~$\mathscr{C}_h(\Lambda,\varphi(\beta))$ of~$\H^1(\varphi(\beta),\Lambda)$ by restriction from~$\mathscr{C}(\Lambda,\varphi(\beta))^\Sigma$.  
A self-dual semisimple character is called an~\emph{elementary character}
if the set~$\I^\sigma\cup \I_+$ has cardinality one. 

For fixed~$(\V,h)$ and associated classical group~$\G$, we write:
\begin{itemize}
 \item $\Cc(h)$ for the union over all $h$-self dual full semisimple elements~$\beta$, of all the sets~$\Cc(\Lambda,\varphi(\b))^\Sigma$ such that 
 $((\V,h),\varphi,\Lambda)$ is an element of~$\mathscr{Q}_{\F/\F_\so,\e}(\beta)$; 
 \item $\Cc_-(h)$ for the union over all $h$-self dual full semisimple elements~$\beta$, of all the sets~$\Cc_h(\Lambda,\varphi(\b))$ such that 
 $((\V,h),\varphi,\Lambda)$ is an element of~$\mathscr{Q}_{\F/\F_\so,\e}(\beta)$. 
\end{itemize}
We call the elements of~$\Cc_-(h)$ \emph{semisimple characters for~$\G$} (this depends only on the group~$\G$, not on~$h$). It is useful to have the notion of a \emph{parametrization} of a semisimple character: 

\begin{definition}\label{defParametrization}
 For a character~${\t}\in\Cc_h(\Lambda,\varphi(\b))$ we call the data~$((\V,h),\varphi,\Lambda,\b)$ a~\emph{parametrization}  of ${\t}$. 
 Having chosen  a parametrization~$((\V,h),\varphi,\Lambda,\b)$ for~${\t}$ we define the restrictions of~${\t}$ by
 \[({\t})_i:=\left\{\begin{array}{ll}
                     {\t}|_{\H^1(\varphi(\b),\Lambda)\cap\Aut_\D(\V_i)} &,i\in\I^\sigma \\
                     {\t}|_{\H^1(\varphi(\b),\Lambda)\cap(\Aut_\D(\V_i)\times\Aut_\D(\V_{\sigma(i)}))} &,i\in\I_+\cup\I_-\\
                    \end{array}\right.
\]
To~$\varphi(\b)$ is attached the following Levi subgroup of~$\G$. 
\[
\M(\varphi(\b),\G):=\left\{\begin{array}{ll}
                               \G\cap(\Aut_\D(\sum_{i\in\I^\sigma}\V_i)\times\prod_{i\in \I_+\cup\I_-}\Aut_\D(\V_i))&,\text{ if~$h$ is non-zero}\\
                               \prod_{i\in\I}\Aut_\D(\V_i),\text{ if~$h$ is zero}
                              \end{array}
                               \right.
\]
For a Levi subgroup~$\M$ of~$\G$ we write~${\t_{\M}}$ for the restriction of~${\t}$ to $\H^1(\varphi(\b),\Lambda)\cap\M$.
\end{definition}

We also need a finer Levi attached to a parametrization~$((\V,h),\varphi,\Lambda,\b)$. This Levi is related to the cuspidal endo-support of an irreducible representation, which we will define in Section~\ref{secEndoSupport}. 
We define~$\M_c(\varphi(\b),\G)$ to be~$\M(\varphi(\b),\G)$ except for the case where
\begin{itemize}
\item $\G$ is a special orthogonal group, i.e.~$\varepsilon=1,\ \D=\F=\F_\so$ and 
\item there exists an index $i_0\in \I^\sigma $ such that~$\beta_{i_0}=0$ and~$\dim_\F\V_{i_0}=2$ and~$h|_{\V_{i_0}}$ has Witt index~$1$.   
\end{itemize} 
In the latter case we put
\[\M_c(\varphi(\b),\G):=\G\cap(\SAut_\F(\sum_{i\in\I^\sigma,\ i\neq i_0}\V_i)\times(\prod_{i\in \I_+\cup\I_-}\Aut_\F(\V_i))\times\SAut_\F(V_{i_0})),\]
i.e., we remove the~$\V_{i_0}$-part from the classical factor of~$\M(\varphi(\b),\G)$. (Note that when such an index~$i_0$ exists then it is unique.)
{
\begin{remark} 
We will not need this, but in fact~$\M_c(\varphi(\b),\G)$ is the minimal (rational) Levi subgroup containing~$\G_{\varphi(\beta)}$.
\end{remark}
}
Recall also that, if two semisimple characters~${\t},{\t'}\in\Cc_-(h)$ with parametrizations~$((\V,h),\varphi,\Lambda,\b)$ and~$((\V,h),\varphi',\Lambda',\b')$ intertwine in~$\G$ then there is a canonical bijection~$\zeta:\I\rightarrow\I'$, called a \emph{matching} (see~\cite[Theorem 10.1]{SkSt}, \cite[Theorem 8.8]{KSS}). Moreover, from~\cite[Theorem~6.5,~Corollary~6.15]{SkodInnerFormII} we have:

\begin{lemma}\label{lemMphib} 
Let~${\t},{\t'}\in\Cc_-(h)$ be semisimple characters with parametrizations~$((\V,h),\varphi,\Lambda,\b)$ and~$((\V,h),\varphi',\Lambda',\b')$. If~${\t},{\t'}$ intertwine in~$\G$ then~$\M(\varphi(\b),\G)$ is conjugate in~$\G$ to~$\M(\varphi'(\b'),\G)$, and 
~$\M_c(\varphi(\b),\G)$ is conjugate in~$\G$ to~$\M_c(\varphi'(\b'),\G)$.
\end{lemma}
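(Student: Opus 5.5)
The plan is to reduce both claims to the matching~$\zeta:\I\to\I'$ attached to the intertwining pair, together with the numerical invariants it preserves. Suppose~$g\in\G$ intertwines~$\t$ with~$\t'$. The cited results (\cite[Theorem~10.1]{SkSt}, \cite[Theorem~8.8]{KSS}, and for inner forms \cite[Theorem~6.5]{SkodInnerFormII}) give a canonical bijection~$\zeta$. This bijection commutes with the action of~$\sigma$ on the index sets, so it sends~$\I^\sigma$ to~$\I'^\sigma$ and~$\sigma$-orbits of size two to $\sigma$-orbits of size two. Moreover, the simple block restrictions~$\t_i$ and~$\t'_{\zeta(i)}$ are endo-equivalent. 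Because we only work with full characters, their degrees agree and~$\dim_\D\V_i=\dim_\D\V'_{\zeta(i)}$.

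\textbf{Conjugacy of the~$\M$'s.} The key step is the self-dual version of this. For~$i\in\I^\sigma$, the spaces~$(\V_i,h|_{\V_i})$ and~$(\V'_{\zeta(i)},h|_{\V'_{\zeta(i)}})$ are isometric; this is the content of \cite[Corollary~6.15]{SkodInnerFormII}, which is the Witt-type comparison of the hermitian spaces underlying intertwining self-dual characters.

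Granting this, I assemble an element of~$\U(\V,h)$ mapping each~$\V_i$ onto~$\V'_{\zeta(i)}$:
\begin{itemize}
\item on each~$\sigma$-fixed block, I take an isometry~$\V_i\to\V'_{\zeta(i)}$;
\item on a pair~$\V_i\oplus\V_{\sigma(i)}$ with~$i\in\I_+$, I take any~$\D$-linear isomorphism~$\V_i\to\V'_{\zeta(i)}$ and extend it to~$\V_{\sigma(i)}$ by duality, so the result is an isometry of the hyperbolic space.
\end{itemize}
This element conjugates~$\M(\varphi(\b),\G)$ to~$\M(\varphi'(\b'),\G)$ inside~$\U(\V,h)$. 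When~$h=0$ there are no hermitian forms to match and the argument reduces to matching dimensions.

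\textbf{Passing from~$\U(\V,h)$ to~$\G$.} In the special orthogonal case~$\G$ has index two in~$\U(\V,h)$, so I need to correct the conjugating element. If some block~$\V_i$ is nonzero, odd-dimensional or otherwise admits a reflection, I compose with a determinant~$-1$ isometry of that block. Such an isometry lies in the Levi and so normalizes it, which fixes the determinant. If no block admits this, I fall back on the fact that~$\G$-conjugacy and~$\U$-conjugacy of Levi subgroups coincide, because their conjugacy classes are determined by the same data.

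\textbf{The~$\M_c$ statement.} The extra index~$i_0$ is characterised intrinsically: it is the fixed block whose endo-class is that of~$\b_{i_0}=0$, with~$\dim\V_{i_0}=2$ and Witt index one. Since~$\zeta$ preserves endo-classes and the isometry classes of the fixed blocks, the index~$i_0$ exists for~$\b$ if and only if~$\zeta(i_0)$ plays the same role for~$\b'$. The same conjugating element then carries~$\V_{i_0}$ to~$\V'_{\zeta(i_0)}$ and hence~$\M_c$ to~$\M_c$.

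\textbf{Main obstacle.} The step I expect to be hardest is the isometry of the~$\sigma$-fixed blocks. This is where the Witt index or discriminant data have to be extracted from intertwining, and I rely on the cited \cite[Corollary~6.15]{SkodInnerFormII} for it. The determinant correction in the special orthogonal case is the secondary point to check.
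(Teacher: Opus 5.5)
The gap is in your step \emph{Passing from~$\U(\V,h)$ to~$\G$}, in the special orthogonal case with~$\I^\sigma=\emptyset$ (where also~$\M_c=\M$). When~$\I^\sigma\neq\emptyset$ your reflection trick works, and when~$\G=\U(\V,h)$ nothing needs correcting. But your fallback, that~$\G$-conjugacy and~$\U(\V,h)$-conjugacy of Levi subgroups coincide, is false.

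In~$\SO_4(\F)$, take a Witt basis~$e_1,e_2,f_2,f_1$. The Levi subgroups stabilizing~$\langle e_1,e_2\rangle,\langle f_1,f_2\rangle$, respectively~$\langle e_1,f_2\rangle,\langle f_1,e_2\rangle$, are conjugate by the reflection interchanging~$e_2$ and~$f_2$. They are not conjugate by~$\SO_4(\F)$, which preserves each of the two families of maximal isotropic subspaces. More generally, in split~$\SO_{2n}(\F)$ the Levi subgroups~$\prod_j\GL_{n_j}$ with~$\sum_j n_j=n$ and all~$n_j$ even form two~$\G$-conjugacy classes fused by~$\O_{2n}(\F)$. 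Such Levi subgroups arise as~$\M(\varphi(\b),\G)$ whenever~$\I^\sigma=\emptyset$ and all~$\dim_\F\V_i$ are even.

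The underlying problem is what your construction extracts from the intertwining: a~$\sigma$-equivariant bijection preserving dimensions and the isometry classes of the~$\sigma$-fixed blocks. Exactly the same data are available for the pair~$\t,{}^r\t$ with~$r\in\U(\V,h)\setminus\G$. That pair intertwines in~$\U(\V,h)$, yet in the situation just described its Levi subgroups are not~$\G$-conjugate. So no argument using only these data can work.

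Relatedly, for~$\M$ the isometry of the~$\sigma$-fixed parts already follows from a dimension count and Witt cancellation. So the step you flagged as the main obstacle is not where the content of the lemma lies. Also, $\dim_\D\V_i=\dim_\D\V'_{\zeta(i)}$ comes from the matching theorem, not from equality of degrees alone.

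What is missing is to use that the intertwining element lies in~$\G$. The intertwining formula in~$\G$ used in the proof of Proposition~\ref{propEssGconj} (\cite[Theorem~6.12(ii),~(6.14)]{SkodInnerFormII}) writes an intertwining element as a product whose middle factor lies in~$\G\cap x\tG_{\varphi(\b)}$. Here~$x\in\tG$ conjugates~$\varphi(\b)$ to an element supporting~$\t'$. That middle factor is an element~$y\in\G$ carrying each~$\V_i$ onto the matched block~$\V'_{\zeta(i)}$.

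This conjugacy of associated splittings inside~$\G$ is precisely what the paper takes from~\cite[Theorem~6.5, Corollary~6.15]{SkodInnerFormII}, and again from~\cite[Theorem~6.7(i), 6.12(i)]{SkodInnerFormII} in the proof of Theorem~\ref{thmEssentiallyConjugateClasses}. Once you have~$y$, it conjugates~$\M(\varphi(\b),\G)$ to~$\M(\varphi'(\b'),\G)$ with no determinant correction. Your identification of~$\zeta(i_0)$ as the distinguished index for~$\b'$ then shows that~$y$ also conjugates~$\M_c(\varphi(\b),\G)$ to~$\M_c(\varphi'(\b'),\G)$.
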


%%%%%%%%%%%%%%%%%%%%%%%%%%%%%
\subsection{Endo-parameters for~$h$}
We let the group~$\Sigma$ act on~$\cE(\F)$ by the action defined in \cite[Definition 12.13]{KSS}, and we write~$(\cE(\F)/\Sigma)$ for the set of orbits. 

More precisely, we define the~$\Sigma$-action on the endo-class of a ps-character~$\Th$ by using a realization of~$\Th$ in a split general linear group and then applying~\cite[Definition 12.10]{KSS}. An orbit~$t$ in~$(\cE(\F)/\Sigma)$ corresponds to the endo-class of an elementary character~$\theta$. The orbit~$t$ is the set of simple endo-classes of the simple block restrictions of~$\theta$.  We say~$t$ is attached to~$\theta$. 

To define endo-parameters for~$\G$ we need to introduce an extra datum.  For~$t\in(\cE(\F)/\Sigma)$ we associate a set~$\mathcal{W}_{(\overline{\phantom{ll}})_\D,\e}(t)$ of \emph{Witt types} which, if~$t$ is an orbit of size $2$ or if~$\e=0$, identifies with a singleton.  When~$\e\neq 0$ and $t$ is an orbit of size~$1$, say consisting of the endo-class of~$\Theta$, then we can choose~$\Theta$ to be supported  on a full self-dual simple element~$\beta$ and then~$\mathcal{W}_{(\overline{\phantom{ll}})_\D,\e}(t)$ is in canonical bijection to the Witt group~$\W_{\e}((\overline{\phantom{ll}})_{\E}\otimes_\F(\overline{\phantom{ll}})_\D)$, see~\cite[\S7.2, Proposition 7.4, Definition 7.6]{SkodInnerFormII}. This Witt-group is (non-canonically) isomorphic to the Witt group~$\W_\e(\E/\E_\so)$ if both~$\D$ and~$\E$ are different from~$\F$. Otherwise, this Witt group is canonically isomorphic to~$\W_\e(\E/\E_\so)$ (if~$\D=\F$) or to~$\W_{\e}((\overline{\phantom{ll}})_\D)$ (if~$\E=\F$).

This definition was introduced for~$\D=\F$ in~\cite[\S13 after Remark 12.20]{KSS}, see~$\W\T(\mathcal{O})$ there. For~$\D\neq\F$, see~\cite[\S7.4]{SkodInnerFormII} and the definition of~$\mathcal{W}_{c_-}$ there. But there is a subtlety to take into account: In~\cite[\S7.4]{SkodInnerFormII} the third author attaches to a \emph{set of Witt-types} to an endo-class of an \emph{elementary} character. Thus we need to prove that every orbit in~$\cE(\F)/\Sigma$ is attached  to an elementary character for an~$\e$-Hermitian form with respect to the fixed datum~$(\D,(\overline{\phantom{ll}})_\D)$. We give the proof and the precise statement in Appendix~\ref{propAppDRealization}. 

 There is a~\emph{transfer map}~$\lambda_t^*$, from~$\mathcal{W}_{(\overline{\phantom{ll}})_\D,\e}(t)$ to~$\mathcal{W}_{\e}(\F/\F_\so)$ (see~\cite[\S~3.5]{KSS} and~\cite[\S~7.2]{SkodInnerFormII})

\begin{definition}\label{defEndoparG}
\label{def11}  
An \emph{endo-parameter~$\ft$ for~$h$} is a formal sum~$\ft=\sum_{t\in(\cE(\F)/\Sigma)}m_{t}(t,w_t)$, where $m_{t}\in\mathbb{Z}^{\geqslant 0}$ and~$w_t\in \mathcal{W}_{(\overline{\phantom{ll}})_\D,\e}(t)$, satisfying~
\begin{align*}
\label{endo1}
\tag{1}
&  
d \mid m_t\deg(c),\ \text{ for all }c\in t\in (\cE(\F)/\Sigma),\\
\tag{2}\label{endo2}
&\sum_{\substack{c\in t\in (\cE(\F)/\Sigma)}}n_t\deg(c)
=\deg_\F(\A)\ with\ n_t:=\frac{2}{|t|}m_t+\deg_{an}(w_t),\\
&\ \hskip2cm\text{ where }\deg_{an}(w_t)\text{ is the  anisotropic degree of }w_t,\\
\tag{3}
&\sum_{t\in(\cE(\F)/\Sigma)}\lambda_{t}^*(w_t)= [h]\text{ in }\mathcal{W}_{\e}(\F/\F_\so).
\end{align*}
\end{definition}

\begin{remarks}\label{remEndoparG}
\begin{enumerate}
\item\label{remEndoparGi}
We have the obvious forgetful map, from endo-parameters for~$h$ to  endo-parameters for~$\tilde{\G}$, given by forgetting Witt types: 
\[
\sum_{t\in(\cE(\F)/\Sigma)}m_{t}(t,w_t)\mapsto\sum_{c\in t\in(\cE(\F)/\Sigma)}n_{t}c,
\]
with~$n_t$ given in Definition~\ref{defEndoparG}\eqref{endo2}.
Endo-parameters for~$\tilde{\G}$ obtained in this way are called~\emph{unrefined endo-parameters for~$h$.} 

\item Suppose that~$\G$ is not special orthogonal or~$\D\neq\F$. Then an endo-parameter for~$(\G,h)$ is defined to be an endo-parameter for~$h$.  
\item Suppose that~$\G$ is special orthogonal and~$\D=\F$. An endo-parameter for~$(\G,h)$ is a pair, consisting of an endo-parameter defined as in \ref{def11} (an endo-parameter for ~$h$) and possibly an additional sign - see \cite[Definition 12.33]{KSS}.
\item In the case~$\e=0$, an endo-parameter for~$\G$ is the endo-parameter given in Definition~\ref{defEndoparGL}, because the Witt type~$w_t$ is just a dummy variable with no further information. 
\end{enumerate}
\end{remarks}

For~$\e\neq 0$, by \cite[Theorem 12.29 and Corollary 12.34]{KSS} and~\cite[Theorem 7.17]{SkodInnerFormII}, the set of intertwining classes of semisimple characters for~$\G$ is in canonical bijection with the set of endo-parameters for~$(\G,h)$ (see ibid.~ for the description of this map; this bijection depends on the hermitian form~$h$, not only on the group~$\G$). 
On the other hand, when~$\e=0$, by the references after Definition \ref{defEndoparGL}, the set of intertwining classes of semisimple characters for~$\G$ is in canonical bijection with the set of endo-parameters for~$\G$.

It now follows from Lemma~\ref{lemMphib} that an endo-parameter~$\ft$ determines a~$\G$-conjugacy class of Levi subgroups, namely the class containing~$\M(\vphi(\b),\G)$ for any parametrization~$((\V,h),\varphi,\Lambda,\b)$ of any semisimple character~${\t}$ with endo-parameter~$\ft$. We write~$\M(\ft)$ for this conjugacy class. Similarly we denote by~$\M_c(\ft)$ the~$\G$-conjugacy class of~$\M_c(\vphi(\b),\G)$.

%%%%%%%%%%%%%%%%%%%%%%%%
\subsection{Parabolic induction and restriction maps for endo-parameters}\label{SeccuspsuppEP}

Let~$\P$ be a Levi subgroup of~$\G$ and~$\P=\M\ltimes \N$ a Levi decomposition of~$\P$.  We have non-normalized parabolic induction and restriction functors, we denote by~$i_\P^\G:\Rep_{\R}(\M)\rightarrow \Rep_{\R}(\G)$ and~$r^\G_\P:\Rep_\R(\G)\rightarrow \Rep_\R(\M)$ respectively.  Here we define maps of endo-parameters, which we will later show are compatible with parabolic induction and restriction.

We write~$\M=\prod_{j=0}^s \M_j$, with~$\M_0$ a classical group (or trivial), and~$\M_j$ general linear groups. 
\begin{definition}
An \emph{endo-parameter}~$\ft$ for~$\M$ is a tuple~$(\ft_j)_{j=0}^s$ of endo-parameters~$\ft_j$ for~$\M_j$.
\end{definition}

Let~$\ft=(\ft_j)_{j=0}^s$ be an endo-parameter for~$\M$ and~choose realizations~${\theta_{j}}$ of~$\ft_{j}$. Then~${\theta_{\M}}=\bigotimes_{j=1}^s {\theta_{j}}$ is a semisimple character for~$\M$, and by~\cite[Proposition 5.1]{MiSt}, \cite[Theorems 6.6 and 6.10]{SkodInnerFormII}, 
we can choose a semisimple character~${\theta}$ for~$\G$ with~${\theta}\vert_{\M}={\theta_{\M}}$.  We let~$i_{\M}^\G(\ft)$ denote the endo-parameter for~$\G$ with realization~${\theta}$.  This is independent of the choice of~${\theta}$, as any two choices for~${\theta_{\M}}$ intertwine in~$\M$ (by definition) so, since the corresponding semisimple characters~${\theta}$ are decomposed with respect to~$(\M,\P)$ for any parabolic~$\P$ with Levi~$\M$, these realizations~${\theta}$ also intertwine so give the same endo-parameter~$i_{\M}^\G(\ft)$. Moreover, parabolic induction of endo-parameters is clearly transitive.

Conversely, let~$\ft$ now be an endo-parameter for~$\G$. We define~$r_{\M}^{\G}(\ft)$ by
\[
r_{\M}^{\G}(\ft)=\{\text{endo-parameters }\ft_\M\text{ for }\M:\ft=i_{\M}^\G(\ft_\M)\}.
\]
Set~$\W_\M=\N_\G(\M)/\M$. In general,~$r_{\M}^{\G}(\ft)$ will \emph{not} consist of a single~$\W_\M$-conjugacy class of endo-parameters for~$\M$, but is a (possibly empty) finite set of~$\W_\M$-conjugacy classes. 

There is one case of particular interest, namely when~$\M\in \M(\ft)$. If~${\theta}$ is any semisimple character with endo-parameter~$\ft$, and parametrization $((\V,h),\varphi,\Lambda,\b)$ such that~$\M=\M(\varphi(\b),\G)$, then the restriction~${\theta_{\M}}:={\theta}\vert_{\M}$ is a semisimple character for~$\M$, and we write~$\ft_\M$ for its corresponding endo-parameter for~$\M$.  The~$\G$-conjugacy class of this is independent of the choice of~${\theta}$ by~\cite[Theorem~6.5,~Corollary~6.15]{SkodInnerFormII}, and clearly~$\ft_\M\in r_{\M}^{\G}(\ft)$. We call the~$\G$-conjugacy class of the pair~$(\M,\ft_{\M})$ the \emph{support} of~$\ft$. Similarly we construct for~$\M_c:=\M_c(\varphi(\b),\G)$ the endo-parameter~$\ft_{\M_c}$ by restriction and we call the~$\G$-conjugacy class of~$(\M_c,\ft_{\M_c})$ the \emph{cuspidal support} of~$\ft$. 

%%%%%%%%%%%%%%%%%%%%%%%%%%%%%
\section{$m$-realizations of endo-parameters}\label{Section:m-realizations}
%%%%%%%%%%%%%%%%%%%%%%%%%%%%%

We fix~$(\G,h)$ as introduced in~\S\ref{subsecClassicalGroups}, and an endo-parameter $\ft$ for~$(\G,h)$.

\begin{definition}
A \emph{realization} of~$\ft$ for~$(\G,h)$ is a semisimple character for~$\G$ of endo-parameter~$\ft$. 
\end{definition}

Given a parametrization~$((\V,h),\varphi,\Lambda,\b)$ of a semisimple character~${\theta}$ of~$\G$, we write~$\G_{\varphi(\beta)}$ for the centralizer of~$\varphi(\beta)$ in~$\G$, and similarly~$\B_{\varphi(\beta)}$ for the centralizer of~$\varphi(\beta)$ in~$\A$, so that~$\B=\bigoplus_{i\in\I}\B_i$. There is then an embedding of Bruhat--Tits buildings
\begin{equation}\label{eqjb}
 j_{\varphi(\beta)}:\ \mathfrak{B}(\G_{\varphi(\b)})\hookrightarrow\mathfrak{B}(\G)
\end{equation}
which is affine,~$\G_{\varphi(\b)}$-equivariant, and respects the Moy--Prasad filtrations, see~\cite[Theorem 7.2]{SkodFourier} based on~\cite[Theorem II.1.1, Lemma II.3.1]{broussousLemaire:02} and~\cite[Theorem 6.3, \S 9]{broussousStevens:09}. Note that by Bruhat--Tits building we always mean the enlarged building. The image of~$j_{\varphi(\beta)}$ corresponds to the set of~$h$-self-dual~${\o_{\varphi(\E)}}$-$\o_\D$-lattice functions. 

{As in~\S\ref{parahoricssec},} to~$\Lambda$ corresponds a parahoric subgroup of~$\G_{\varphi(\beta)}$ which we denote by~$\P(\Lambda_{\varphi(\b)})^\circ$, with pro-$p$ unipotent radical~$\P_1(\Lambda_{\varphi(\b)})$, and we write~$\P(\Lambda_{\varphi(\b)})$ for the full fixator of the point~$\Lambda$.  

We compose~$j_{\varphi(\b)}$ with the canonical map~$\mathfrak{B}(\G)\rightarrow\mathfrak{B}_{red}(\G)$ (which sends a lattice function to its translation class) to obtain a map
\begin{equation}\label{eqjbbar}
 \bar{j}_{\varphi(\beta)}:\ \mathfrak{B}(\G_{\varphi(\b)})\rightarrow\mathfrak{B}_{red}(\G)
\end{equation}
 which is affine,~$\G_{\varphi(\b)}$-equivariant, and respects Moy--Prasad filtrations.  In the following cases, this map is uniquely determined by these three 
 properties:
 \begin{itemize}
  \item if $\I$ is a singleton and~$\e=0$, by \cite[Theorem II.1.1]{broussousLemaire:02}, and
  \item if~$\I=\I^\sigma$ and~$\e\neq 0$ and the center of~$\G_{\varphi(\b)}$ is compact, by~\cite[Theorem 11.3]{SkodFourier}.
 \end{itemize}

Of most importance are the realizations of~$\ft$ on a maximal parahoric {subgroup} of the centralizer of an embedded full semisimple element. 
We denote by~$\C(\H)$ the centre of a group~$\H$.

\begin{definition}\label{defmRealization}
An \emph{$m$-realization} of~$\ft$ for~$(\G,h)$ is a realization of~$\ft$ which is in a set of semisimple characters~$\Cc_h(\Lambda,\varphi(\beta))$ for~$\G$ such that~$\P(\Lambda_{\varphi(\b)})^\circ$ is a maximal parahoric subgroup of~$\G_{\varphi(\beta)}$.  
If further~$\C(\G_{\varphi(\beta)})/\C(\M(\varphi(\b),\G))$ is compact then the~$m$-realization is called a~\emph{cuspidal~$m$-realization} of~$\ft$
\end{definition}

The property of being {a (cuspidal)}~$m$-realization of~$\ft$ can be verified by any parametrization:
\begin{proposition}[cf.~{\cite[Propositions~11.3]{KSS}}]\label{propmRealization}
 Let~${\theta}$ be an~$m$-realization of~$\ft$ for~$(\G,h)$ and let $((\V,h),\varphi',\Lambda',\b')$ be a parametrization of~${\theta}$.
 Then~$(\G_{\varphi'(\b')})_{\Lambda'}^\circ$  is a maximal parahoric subgroup of~$\G_{\varphi'(\b')}$, and~$\C(\G_{\varphi'(\b')})/\C(\M(\varphi'(\b'),\G))$ is compact if~${\theta}$ is a cuspidal~$m$-realization of~$\ft$. 
\end{proposition}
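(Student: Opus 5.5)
The plan is to compare two parametrizations of the same character ${\theta}$: the given one $((\V,h),\varphi,\Lambda,\b)$, for which $\P(\Lambda_{\varphi(\b)})^\circ$ is maximal parahoric (and, in the cuspidal case, $\C(\G_{\varphi(\b)})/\C(\M(\varphi(\b),\G))$ is compact), and an arbitrary second one $((\V,h),\varphi',\Lambda',\b')$. Since both parametrize the same character, it intertwines with itself trivially. The matching $\zeta:\I\to\I'$ of \cite[Theorem 10.1]{SkSt}, \cite[Theorem 8.8]{KSS} therefore applies, together with the structure results \cite[Theorem~6.5, Corollary~6.15]{SkodInnerFormII} underlying Lemma~\ref{lemMphib}.

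The first step is to reduce to the case of equal associated splittings and equal lattice functions. The group $\H^1(\varphi(\b),\Lambda)=\H^1(\varphi'(\b'),\Lambda')$ and the character ${\theta}$ determine the associated splitting up to conjugation by an element of $\P_1(\Lambda)$; this is the ``intertwining implies conjugacy'' statement for identical characters. Conjugating, we may assume $\V_i=\V'_{\zeta(i)}$ for all $i$ and $\M(\varphi(\b),\G)=\M(\varphi'(\b'),\G)$. The group $\H^1$ also determines the translation class of the lattice function, blockwise on each $\V_i$, with the $\sigma$-fixed blocks handled via self-duality. So, after affine rescaling of each block, which does not change the parahorics, we may assume $\Lambda=\Lambda'$ as lattice functions.

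The second step compares the centralizers. Blockwise, $\varphi(\b_i)$ and $\varphi'(\b'_{\zeta(i)})$ define simple strata $[\Lambda_i,n_i,0,\cdot]$ with the same simple character. By \cite[Proposition~11.3]{KSS} and its inner-form analogue in \cite{SkodInnerFormII}, the parahorics they define agree modulo pro-$p$ radicals: the reductive quotients $\P(\Lambda_{\varphi(\b)})^\circ/\P_1(\Lambda_{\varphi(\b)})$ and $\P(\Lambda'_{\varphi'(\b')})^\circ/\P_1(\Lambda'_{\varphi'(\b')})$ both identify with the same finite reductive group $\J^\circ/\J^1$ attached to ${\theta}$. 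Indeed $\J^1$ and $\J$ depend only on ${\theta}$ (as the normalizer/intertwining data of its Heisenberg representation), and the unramified degree and residue field of $\E_i$ are endo-class invariants.

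Maximality of a parahoric of $\G_{\varphi(\b)}$ can then be read off from this finite reductive quotient. A parahoric is maximal if and only if its reductive quotient has semisimple rank equal to that of the group, equivalently, if and only if no proper parabolic of the quotient comes from a larger parahoric. Equivalently, the point is a vertex of the building of $\G_{\varphi(\b)}$, and by the Moy--Prasad compatibility of $j_{\varphi(\b)}$ this is detected by the image lattice function up to translation. Since the reductive quotients agree and the $\F$-rank of $\G_{\varphi(\b)}$ equals that of $\G_{\varphi'(\b')}$ (both are computed from $\ft$ and the matching: blockwise $\deg_{\E_i}(\B_i)$, the Witt indices, and the residue degrees agree), maximality transfers. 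For the cuspidal statement, the split rank of $\C(\G_{\varphi(\b)})$ is the number of $\sigma$-orbits in $\I_+$ plus the orthogonal-type contributions, and $\C(\M(\varphi(\b),\G))$ has split rank determined by the same combinatorics. Both are preserved under $\zeta$ and under the identification $\M(\varphi(\b),\G)=\M(\varphi'(\b'),\G)$, so compactness of the quotient transfers too.

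I expect the main obstacle to be the identification of the reductive quotients in the non-split and unitary/inner-form setting. There the embedding $\bar{j}$ is only unique under the conditions listed after (\ref{eqjbbar}), so one must argue blockwise: on $\I_+$ blocks via the general-linear uniqueness, and on $\I^\sigma$ blocks via \cite[Theorem 11.3]{SkodFourier}, using that each such block has anisotropic centre modulo $\C(\M)$.
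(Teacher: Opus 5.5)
\textbf{Genuine gap: the reduction to $\Lambda=\Lambda'$.} You obtain $\Lambda=\Lambda'$ by asserting that $\H^1$ and $\theta$ determine the lattice function up to translation, block by block. This is false.

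A simple counterexample is $\b=0$. There $\theta$ is the trivial character of $\H^1=\P_1(\Lambda)$, and $\P_1(\Lambda)$ depends only on the facet containing $\Lambda$. So any two non-homothetic points of one facet carry the same group and the same character. More generally, distinct points in a facet of the centralizer's building can support the same character. This is exactly why the paper has to treat the case $\b=\b'$, $\Lambda\neq\Lambda'$ separately. In addition, on $\sigma$-fixed blocks you cannot translate at all, since self-duality pins down the translation (cf.\ Lemma~\ref{lemTranslatesOfSelfdualLatticeSeq}).

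Without this reduction your second step has no footing. The group $\J(\varphi(\b),\Lambda)=\I_\G(\theta)\cap\P(\Lambda)$ depends on $\Lambda$, not only on $\theta$. There is also no direct way to compare points of the two different buildings $\mathfrak{B}(\G_{\varphi(\b)})$ and $\mathfrak{B}(\G_{\varphi'(\b')})$. Citing \cite[Proposition~11.3]{KSS} ``and its inner-form analogue'' does not fill the gap, because it is circular. The first is the $\D=\F$ case of this very statement, and the second is what is being proved here.

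\textbf{What is missing.} You need a mechanism for passing between two parametrizations that differ in both the element and the lattice function. For $h=0$ and a single block, the paper uses \cite[Proposition~5.42]{SkodInnerFormI}. It gives a field generator $\gamma$ with $\Cc(\Lambda,\gamma)=\Cc(\Lambda,\varphi(\b))$ and $\Cc(\Lambda',{}^g\gamma)=\Cc(\Lambda',\varphi'(\b'))$, with $\theta$ its own transfer. This splits the problem into two cases.
\begin{enumerate}
\item \emph{Equal lattice functions.} Here your comparison of $\J/\J^1$ is correct. It suffices because the quotient is a single finite general linear group and the degrees and inertia degrees agree.
\item \emph{Equal element, different lattice functions.} Here the maximality hypothesis is used essentially. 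The normalizer of $\theta$ lies in the normalizer of $\Lambda$, so $\F[\varphi'(\b)]^\times$ normalizes $\Lambda$. Lemma~\ref{lemTransfer} then removes the change of embedding. Finally, intersecting $\H^1(\varphi(\b),\Lambda)=\H^1(\varphi(\b),\Lambda')$ with the centralizer gives $\P_1(\Lambda_{\varphi(\b)})=\P_1(\Lambda'_{\varphi(\b)})$, hence equal hereditary orders in $\B$.
\end{enumerate}

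For the self-dual blocks the paper does not compare structures at all. It uses that $\theta$ lies in an irreducible cuspidal representation and invokes \cite[\S7]{St08} and \cite[\S10.3]{SkodlerackCuspQuart}, with the proof of \cite[Lemma~11.5(iv)]{KSS} for the non-compact-centre orthogonal case.

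\textbf{Smaller points.} Your transfer of compactness of $\C(\G_{\varphi(\b)})/\C(\M(\varphi(\b),\G))$ via the matching is plausible. However, the maximality statement needs one of the inputs above. Even with $\Lambda=\Lambda'$ in the classical case, $\J/\J^1$ is the quotient of the full stabilizer $\P(\Lambda_{\varphi(\b)})$. Reading off the semisimple rank of the connected parahoric quotient from it therefore requires a further argument.
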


The technical idea for the proposition lies in the following lemma. For~$\theta$ a semisimple character, we write~$\I_\G(\theta)$ for the set of elements of~$\G$ which intertwine~$\theta$ with itself.
\begin{lemma}\label{lemTransfer}
 Suppose~$h=0$ and let~$\theta\in\Cc(\Lambda,\varphi(\beta))$ and~$\theta'\in\Cc(\Lambda,\varphi'(\beta))$ be transfers and suppose that~$\F[\beta]$ is a field. Suppose further that~$1$ intertwines~$\theta$ with~$\theta'$. Then~$\theta$ and~$\theta'$ coincide. 
\end{lemma}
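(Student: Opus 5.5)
The plan is to reduce the statement to two known facts about simple characters on a fixed lattice sequence: \emph{intertwining implies conjugacy}, and the description of the self-intertwining of a simple character. Since~$\F[\beta]$ is a field,~$\theta$ and~$\theta'$ are simple characters for~$\tG=\Aut_\D(\V)$ on the same lattice sequence~$\Lambda$. They are supported on the embedded elements~$\varphi(\beta)$ and~$\varphi'(\beta)$, which have the same degree and the same invariants (ramification index, inertia degree, critical exponent). By hypothesis the identity intertwines~$\theta$ with~$\theta'$, so in particular they intertwine in~$\tG$.

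\emph{Step 1 (conjugacy).} We apply the intertwining-implies-conjugacy theorem for simple characters on a common lattice sequence. This is Bushnell--Kutzko~\cite[3.5.11]{BK93} in the split strict case; for inner forms and general lattice sequences we use S\'echerre and Broussous--S\'echerre--Stevens~\cite{MR2889743}, or~\cite{SkodInnerFormI}. It gives an element~$u\in\P(\Lambda)$ with
\[\widetilde{\H}^1(\varphi'(\beta),\Lambda)=\widetilde{\H}^1(\varphi(\beta),\Lambda)^u \quad\text{and}\quad \theta'=\theta^u.\]
The hypothesis that~$\theta,\theta'$ are transfers with the same~$\beta$ guarantees that the degree and embedding-type hypotheses of that theorem are met. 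If necessary we first reduce to the strict case by the usual direct-sum trick~$\Lambda\mapsto\bigoplus_k\Lambda(k+\cdot)$, which is compatible with transfer.

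\emph{Step 2 (self-intertwining).} Since~$1$ intertwines~$\theta$ with~$\theta'=\theta^u$, the element~$u$ intertwines~$\theta$ with itself. By the intertwining formula for simple characters, and using~$u\in\P(\Lambda)$ together with the Iwasawa-type intersection formula,
\[u\in \I_{\tG}(\theta)\cap\P(\Lambda)=\J^1(\varphi(\beta),\Lambda)\,\B_{\varphi(\beta)}^\times\,\J^1(\varphi(\beta),\Lambda)\cap\P(\Lambda)=\J^1\,\P(\Lambda_{\varphi(\beta)})\,\J^1 .\]
Write~$u=jbj'$ accordingly. The groups~$\J^1$ and~$\P(\Lambda_{\varphi(\beta)})$ normalize~$\H^1$ and fix~$\theta$: the former because~$\theta$ extends to a Heisenberg representation normalized by~$\J^1$, the latter because elements of~$\B^\times\cap\P(\Lambda)$ normalize~$\theta$. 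Hence~$\theta^u=\theta$, so~$\theta'=\theta$. In particular the domains coincide, and so~$\theta$ and~$\theta'$ agree.

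I expect the main obstacle to be Step 1 in the required generality: non-strict lattice sequences over a skew-field~$\D$, with two different embeddings~$\varphi,\varphi'$ of the same field. Here I would invoke~\cite{SkodInnerFormI} (or~\cite{MR2889743}) for inner forms. If a direct reference for general~$\Lambda$ is lacking, I would pass to a strict lattice sequence by the direct-sum construction, and check there that transfers go to transfers and that intertwining by~$1$ is preserved.
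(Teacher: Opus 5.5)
Your Step~1 has a genuine gap when $\D\neq\F$, which is the case this lemma is needed for (it is applied to $\GL_m(\D)$ in Proposition~\ref{propmRealization}). For inner forms, intertwining of two simple characters on the same lattice sequence does \emph{not} imply conjugacy by $\P(\Lambda)$. One also needs the two embeddings to induce the same embedding of $\kk_\E$ into $\mathfrak{a}_0(\Lambda)/\mathfrak{a}_1(\Lambda)$; this is the condition $\zeta_{\theta'}=\zeta_{\theta''}$ under which the paper invokes \cite[Theorem~1.5]{SkodInnerFormI} in the proof of Theorem~\ref{thmEssentiallyConjugateClasses}. You claim this hypothesis holds because $\theta,\theta'$ are transfers with the same $\beta$, and that is false. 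Here is a counterexample:
\begin{itemize}
\item Take $\V=\D$ a quaternion algebra, $\Lambda$ the $\p_\D$-adic chain, and $\E/\F$ unramified quadratic.
\item Let $\iota:\E\hookrightarrow\D$ be such that $\varpi_\D$ normalizes $\iota(\E)$, and put $\varphi=\iota$ and $\varphi'=\iota\circ\sigma={}^{\varpi_\D}\iota$.
\item Let $\beta=\varpi_\F^{-1}\zeta$ with $\zeta$ a root of unity generating $\E$.
\end{itemize}
The transfers $\theta$ and $\theta'={}^{\varpi_\D}\theta$ intertwine. On $1+\p_\F\o_\D$ they are $1+\varpi_\F y\mapsto\psi_\F(\mathrm{trd}(\iota(\zeta)y))$ and $1+\varpi_\F y\mapsto\psi_\F(\mathrm{trd}(\iota(\sigma\zeta)y))$. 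These record the image of $\bar\zeta$ in $\kk_\D$ and its Frobenius conjugate, which are distinct. Since $\o_\D^\times=\P(\Lambda)$ acts trivially on $\kk_\D$, no $u\in\P(\Lambda)$ conjugates $\theta$ to $\theta'$. So the black box cannot be applied on the grounds you give.

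The missing idea is that the hypothesis ``$1$ intertwines $\theta$ with $\theta'$'' must already be used here, not only in Step~2. The paper proceeds as follows:
\begin{itemize}
\item Take a Skolem--Noether element $g$ with ${}^g\varphi=\varphi'$. Both $g$ and $1$ intertwine $\theta$ with $\theta'$.
\item By \cite[Lemma~4.46]{SkodInnerFormI}, $g$ and $1$ induce the same map on residue fields, so $\varphi$ and $\varphi'$ induce the same embedding $\kk_\E\hookrightarrow\mathfrak{a}_0/\mathfrak{a}_1$.
\item By \cite[Proposition~4.39]{SkodInnerFormI}, one may then take $g\in\P(\Lambda)$ (modifying by $\G_{\varphi(\beta)}$). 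For such $g$ the transfer is simply conjugation, $\theta'={}^g\theta$.
\end{itemize}
No intertwining-implies-conjugacy theorem is needed. With $u=g$, your Step~2 then finishes exactly as the paper's computation $\theta={}^{s'gbs}\theta=\theta'$ does. Incidentally, Step~2 needs no Iwasawa-type formula: if $u=jbj'$ with $j,j'\in\J^1$ and $u\in\P(\Lambda)$, then $b\in\P(\Lambda)$ automatically. When $\D=\F$, your argument via \cite{BK93} is fine as it stands.
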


\begin{proof}
 By Skolem--Noether there is an element~$g\in\G=\tG$ such that~${^g\varphi}=\varphi'$. Then~$g$ intertwines~$\theta$ with~$\theta'$, because~$\theta$ and~$\theta'$ are transfers. Thus we have
 \[
 1=s'gbs,\ {\qquad\text{for some }}s'\in\I_\G(\theta')\cap\P_1(\Lambda),\ s\in\I_\G(\theta)\cap\P_1(\Lambda),\ b\in\G_{\varphi(\beta)},
 \]
 by \cite[Theorem 6.7(ii)]{SkodInnerFormII}. 
 On the residue fields, conjugation by the elements~$1$ and~$g$ give the same map
 \[
k_{\F[\varphi(\beta)]}\rightarrow k_{\F[\varphi'(\beta)]},
 \]
{by~\cite[Lemma~4.46]{SkodInnerFormI}.} Thus, by \cite[Proposition~4.39]{SkodInnerFormI}, we can assume without loss of generality that~$g$ is an element of~$\P(\Lambda)$, so that~$b\in\P(\Lambda)\cap\G_{\varphi(\beta)}$. In particular,~$g$ normalizes the lattice sequence~$\Lambda$ and conjugates~$\vphi(\b)$ to~$\vphi'(\b)$, so also conjugates~$\H^1(\vphi(\b),\Lambda)$ to~$\H^1(\vphi'(\b),\Lambda)$; it follows that~$\theta'= {^g\theta}$. Since~$b$ also normalizes~$\theta$, we get~$\theta= {^1\theta}= {^{s'gbs}\theta}= {^{s'}\theta'}=\theta'$.
\end{proof}

\begin{proof}[Proof of Proposition~\ref{propmRealization}]Let~$((\V,h),\varphi,\Lambda,\b)$ be a parametrization for~${\theta}$ given as in Definition~\ref{defmRealization}.
By~\cite[Corollary 5.17]{SkodInnerFormI} together with the proof of~\cite[Proposition 6.9]{SkodInnerFormII} there is an element in the normalizer of~${\theta}$ in~$\G$ which maps the associated splitting of~$\varphi(\beta)$ to the associated splitting of~$\varphi'(\beta')$. We therefore assume without loss of generality that~$\varphi(\beta)$ and~$\varphi'(\beta')$ have the same associated 
splitting and the matching is the identity. Thus we can reduce to the cases ($h=0$ and~$|\I|=1$) and ($h\neq 0$ and~$\I^\sigma=\I$). 

We start with the second case ($h\neq 0$ and~$\I^\sigma=\I$): If we are in the case that~$h$ is orthogonal giving~$\SO(1,1)(\F)$ (which is isomorphic to~$\GL_1(\F)=\F^\times$) then every semisimple character for~$\G$ is an~$m$-realization for its endo-parameter. 
So let us suppose that we are not in this particular case. Then, if the center of~$\G_{\varphi(\b)}$ is compact,  by~\cite[\S7]{St08} and~\cite[\S10.3]{SkodlerackCuspQuart} the center of~$\G_{\varphi'(\beta')}$ is compact and~$\P(\Lambda_{\varphi(\b)})^\circ$ is a maximal parahoric, because~${\theta}$ is contained in a cuspidal irreducible representation of~$\G$. If the center of~$\G_{\varphi(\b)}$ is not compact (that can only happen when~$\D=\F$) then~$\P(\Lambda_{\varphi(\b)})^\circ$ is a maximal parahoric by the proof of~\cite[Lemma 11.5(iv)]{KSS}.

We now prove the first case ($h=0$ and~$|\I|=1$): At first $\C(\G_{\varphi'(\b')})/\C(\M(\varphi'(\b'),\G))$ is compact because~$\F[\beta']$ is a field.
By intertwining, see~\cite[Proposition~5.31]{SkodInnerFormI}, we have that~$\F[\beta]/\F$ and~$\F[\beta']/\F$ share the same degrees and inertia degrees. Thus we only have to show 
\begin{equation}\label{eqJ}
\J(\varphi(\beta),\Lambda)/\J^1(\varphi(\beta),\Lambda)\simeq\J(\varphi(\beta'),\Lambda')/\J^1(\varphi(\beta'),\Lambda')
\end{equation}
as groups, because the first one is isomorphic to a general linear group over a finite field. 

By~\cite[Proposition 5.42]{SkodInnerFormI} there are an element~$\gamma\in\A$ (the Lie algebra of~$\tG=\G$) and an element~$g\in\G$ such that~$\gamma$ generates a field and
\[
\Cc(\Lambda,\gamma)=\Cc(\Lambda,\varphi(\beta)),\ \Cc(\Lambda', ^g\gamma)=\Cc(\Lambda',\varphi'(\beta'))
\]
and~$\theta={{\theta}}$ is its {own} transfer between those sets. 
So we can reduce to the following two cases: $\Lambda=\Lambda'$; and~$\b=\b'$ (with possibly~$\Lambda\neq\Lambda'$).

If~$\Lambda=\Lambda'$ then~\eqref{eqJ} follows, because~$\J(-,\Lambda)=\I_\G(\theta)\cap\P(\Lambda)$
with pro-$p$-radical~$\J^1(-,\Lambda)$.
So suppose~$\b=\b',\ \Lambda\neq\Lambda'$ and~$\theta$ is its {own} transfer from~$\Lambda$ to~$\Lambda'$. 
The normalizer of~$\theta$ is contained in the normalizer of~$\Lambda$ because~$\P(\Lambda_{\varphi(\b)})$ is a maximal compact subgroup of~$\G_{\varphi(\beta)}$. Thus~$\F[\varphi'(\beta)]^\times$ normalizes~$\Lambda$. By Lemma~\ref{lemTransfer} the character~$\theta$ and its transfer to~$\Cc(\Lambda,\varphi'(\beta))$ coincide. Thus by the case of equal lattice sequences above we can 
reduce to the case~$\varphi=\varphi'$, i.e., it is sufficient to prove the result in the case
\[
\theta\in\Cc(\Lambda,\varphi(\beta))\cap\Cc(\Lambda',\varphi(\beta)).
\]
The domain of~$\theta$ is equal to~$\H^1(\varphi(\beta),\Lambda)$ and~$\H^1(\varphi(\beta),\Lambda')$. Intersecting with~$\G_{\varphi(\beta)}$ implies
\[
\P_1(\Lambda_{\varphi(\beta)})=\P_1(\Lambda'_{\varphi(\beta)})
\]
and therefore the corresponding hereditary orders~${\mathfrak{b}_0}(\Lambda)$ and~${\mathfrak{b}_0}(\Lambda')$ in~$\B_{\varphi(\beta)}$ have the same radical and therefore coincide. This finishes the proof.
\end{proof}

We are going to study the~$\G$-conjugacy classes of~$m$-realizations of~$\ft$ for~$(\G,h)$. 
Unfortunately, in the case of non-simple endo-parameters there are infinitely many conjugacy classes. So we consider a coarser partition than~$\G$-conjugacy. We will see that this partition is finite.

\begin{definition}
We call two realizations~${\t},{\t'}\in\Cc_-(h)$ of~$\ft$~\emph{essentially~$\G$-conjugate} (resp.~\emph{cuspidal~$\G$-conjugate}) if they intertwine in~$\G$ and there are parametrizations~$((\V,h),\varphi,\Lambda,\b)$ and~$((\V,h),\varphi',\Lambda',\b')$ for~${\t}$ and~${\t'}$, respectively, and an element~$g\in\G$ such that
 \begin{enumerate}
  \item $g\V_i=\V_{\zeta(i)}$, for all~$i\in\I$, and 
  \item ${}^g(\t_{-,\M(\varphi(\b),\G)})=\t'_{-,\M(\varphi'(\b'),\G)}$, see Definition~\ref{defParametrization}, (resp. ${}^g(\t_{-,\M_c(\varphi(\b),\G)})=\t'_{-,\M_c(\varphi'(\b'),\G)}$)
 \end{enumerate}
where~$\zeta:\I\rightarrow\I'$ is the matching given by the intertwining of~${\t},{\t'}$. 
\end{definition}

\begin{proposition}\label{propEssGconj}
Essential~$\G$-conjugacy and cuspidal~$\G$-conjugacy are equivalence relations on~$\Cc_-(h)$. 
\end{proposition}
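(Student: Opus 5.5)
We verify reflexivity, symmetry and transitivity directly from the definition. One point needs care: the relation quantifies existentially over parametrizations. So we first check that conditions (1) and (2) do not depend on the chosen parametrizations, up to the natural identifications.

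\emph{Reflexivity.} Take the same parametrization for $\t$ and $\t'=\t$, and take $g=1$. The matching of $\t$ with itself is the identity $\zeta=\id_\I$, by the uniqueness of the matching in \cite[Theorem 10.1]{SkSt} and \cite[Theorem 8.8]{KSS}.

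\emph{Symmetry.} Suppose $g$ witnesses that $\t$ is essentially (resp.\ cuspidal) $\G$-conjugate to $\t'$. Then $g^{-1}$ satisfies $g^{-1}\V'_{i'}=\V_{\zeta^{-1}(i')}$ and ${}^{g^{-1}}(\t'_{\M(\varphi'(\b'),\G)})=\t_{\M(\varphi(\b),\G)}$. The matching of $\t'$ with $\t$ is $\zeta^{-1}$, again by the canonicity of matchings. Intertwining is symmetric. The $\M_c$ version is identical.

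\emph{Transitivity.} This is the main step. Let $g$ witness $\t\sim\t'$ using parametrizations $P$ of $\t$ and $P'$ of $\t'$. Let $g'$ witness $\t'\sim\t''$ using parametrizations $P'_1$ of $\t'$ and $P''$ of $\t''$. The difficulty is that $P'$ and $P'_1$ may differ.

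First we compare the data attached to $P'$ and $P'_1$. Both are parametrizations of the same character $\t'$. As in the proof of Proposition~\ref{propmRealization}, by \cite[Corollary 5.17]{SkodInnerFormI} and the proof of \cite[Proposition 6.9]{SkodInnerFormII}, there is an element $n$ of the normalizer of $\t'$ in $\G$ carrying the splitting of $P'$ onto that of $P'_1$ compatibly with the self-matching of $\t'$. Conjugating by $n$ therefore carries $\M(\varphi'(\b'),\G)$ onto the corresponding Levi for $P'_1$, and the same holds for $\M_c$, since $\M_c$ is built from the splitting and the $\V_{i_0}$-part. Because $n$ normalizes $\t'$, it carries the restriction of $\t'$ to the first Levi onto its restriction to the second. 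Then $g'ng$ satisfies condition (2) for $\t$ and $\t''$ with parametrizations $P$ and $P''$, and it maps $\V_i$ onto $\V''_{\zeta'\zeta(i)}$.

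Next we show $\t$ and $\t''$ intertwine, with matching $\zeta'\circ\zeta$. The cleanest route is through the endo-parameter classification. The pairs $(\t,\t')$ and $(\t',\t'')$ each intertwine, so all three characters have the same endo-parameter. By the bijection between intertwining classes and endo-parameters, \cite[Theorem 12.29, Corollary 12.34]{KSS} and \cite[Theorem 7.17]{SkodInnerFormII}, $\t$ and $\t''$ intertwine. The matching is characterized by the endo-classes of the block restrictions together with the Witt-type data, so it is compatible with composition. This gives condition (1) with $\zeta''=\zeta'\circ\zeta$.

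One subtlety remains. In the special orthogonal case, intertwining classes carry an extra sign, so we should check that the same-endo-parameter argument still applies there. This follows from \cite[Corollary 12.34]{KSS}. The argument for cuspidal $\G$-conjugacy is word-for-word the same, with $\M_c$ in place of $\M$.
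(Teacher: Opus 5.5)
Your proof is correct and follows essentially the paper's route. In both, everything reduces to one fact: two parametrizations of the same character $\t'$ are related by an element of $\G$ that normalizes $\t'$ and carries one associated splitting to the other via the self-matching. The only difference is that you take this fact from the references used in the proof of Proposition~\ref{propmRealization}, whereas the paper proves it here from the intertwining formula for $\I_\G(\t)$, writing $1=uxbv$ and taking $g=xb$; the paper also leaves implicit the intertwining and matching bookkeeping that you spell out.
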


We call the equivalence classes of Proposition~\ref{propEssGconj} \emph{essential~$\G$-conjugacy classes} (resp. \emph{cuspidal~$\G$-conjugacy classes}). 

\begin{proof}
 It is enough to prove the following claim: 
 \begin{quote}
 {\it Let~$((\V,h),\varphi,\Lambda,\b)$ and~$((\V,h),\varphi',\Lambda',\b')$ be parametrizations of~${\t}\in\Cc_h(\G)$.
 Then there is~$g\in\G$ such that~$g\V_i=\V_{\zeta(i)}$, for all~$i\in\I$,  
 and~$ ^g(\t_{-,\M(\varphi(\b),\G)})=\t_{-,\M(\varphi'(\b'),\G)}$, where~$\zeta:\I\rightarrow\I'$ is the matching given by the intertwining of~${\t}$ with itself.}
 \end{quote}
 Note that such an element~$g$ conjugates~$\M_c(\varphi(\b),\G)$ to $\M_c(\varphi'(\b'),\G)$, so that we also have the equality $ ^g(\t_{-,\M_c(\varphi(\b),\G)})=\t_{-,\M_c(\varphi'(\b'),\G)}$.
 Without loss of generality we can assume that there is an element~$x\in\tG$ such that 
 $x\varphi(\b)x^{-1}=\varphi'(\b')$ and such that~${\t}$ is the transfer of~${\t}$ from~$\Cc_h(\Lambda,\varphi(\b))$ to~$\Cc_h(\Lambda',\varphi'(\b'))$ -- see \cite[Theorem~6.6,~6.10]{SkodInnerFormII}. 
 By~\cite[Theorem~6.7(ii)~(6.8)]{SkodInnerFormII} when~$\e=0$ and~\cite[Theorem~6.12(ii)~(6.14)]{SkodInnerFormII} when~$\e\ne 0$, we have
 \[
 \I_\G({\t})=( S(\varphi'(\b'),\Lambda')\cap\G)(x\tG_{\varphi(\b)}\cap\G)( S(\varphi(\b),\Lambda)\cap\G),
 \]
where~$S(\varphi(\b),\Lambda)$ is a certain compact open subgroup of~$\G$ whose definition can be found in~\cite[before Proposition~5.15]{SkodInnerFormI}. In particular, since~$1$ intertwines~${\t}$, we can write
 \[
 1=uxbv,\ u\in S(\varphi'(\b'),\Lambda')\cap\G,\ v\in S(\varphi(\b),\Lambda)\cap\G,\ xb\in\G\cap x\tG_{\varphi(\b)}.
 \]
Then~$xb$ normalizes~${\t}$ because~$1,u,v$ do. Therefore~$g=xb$ fulfils the desired property. 
\end{proof}  

\begin{theorem}\label{thmEssentiallyConjugateClasses}
 There are only finitely many essential~$\G$-conjugacy classes of m-reali\-zations of~$\ft$ for~$(\G,h)$.
\end{theorem}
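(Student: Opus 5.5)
We reduce the count of essential $\G$-conjugacy classes to finitely many discrete invariants attached to each block, and we show that these invariants determine the class. Let $\theta$ be an $m$-realization of $\ft$ with parametrization $((\V,h),\varphi,\Lambda,\b)$. By Lemma~\ref{lemMphib}, the Levi $\M(\varphi(\b),\G)$ lies in the fixed conjugacy class $\M(\ft)$. We may conjugate so that $\M:=\M(\varphi(\b),\G)$ is a fixed representative. Its block decomposition $\V=\bigoplus_i\V_i$ is then fixed up to a permutation of blocks with equal endo-class data, and there are only finitely many such permutations.

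The restriction $\theta_\M$ is a tensor product of the block restrictions $\theta_i$ for $i\in\I_+\cup\I^\sigma$. Each of these is a \emph{simple} character (or elementary self-dual) on $\Aut_\D(\V_i)$ or $\Aut_\D(\V_i)\times\Aut_\D(\V_{\sigma(i)})$, or on the classical factor. Moreover, $\Lambda_i$ corresponds to a point of $\mathfrak{B}(\G_{\varphi(\b)})$ whose image in the building of each factor of the centralizer is a vertex, because $\P(\Lambda_{\varphi(\b)})^\circ$ is maximal. Essential conjugacy only demands conjugacy of $\theta_\M$ by an element of $\G$ preserving the blocks, which is exactly conjugacy in $\prod_i$(block groups) inside $\N_\G(\M)$. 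So it suffices to prove, block by block, the following claim: for a fixed simple (resp.\ elementary self-dual) endo-class on a fixed block, there are only finitely many conjugacy classes of realizations $\theta_i\in\Cc(\Lambda_i,\varphi_i(\b_i))$ for which the point of $\Lambda_i$ in $\mathfrak{B}(\G_{\varphi_i(\b_i)})$ is a vertex. For a $\GL$-type block this means a maximal parahoric of $\GL_{\E_i\otimes\D}(\V_i)$; for a self-dual block it means a vertex of the classical centralizer. Here we use that the restriction of an $m$-realization to each block is again of this type, as in the proof of Proposition~\ref{propmRealization}.

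For the block claim we argue as follows. Fix the endo-class and a simple $\beta_i$. By the remark after Definition~\ref{def:endo-e} (equivalence of strata in an endo-class), any realization can be reparametrized using $\beta_i$ itself, up to an embedding $\varphi_i$. Embeddings $\E_i\hookrightarrow\A_i$ form finitely many $\tG$-orbits (Skolem--Noether), and finitely many $\G$-orbits in the self-dual case (classified by the finitely many hermitian structures/Witt data of given dimension over a local field). So we fix $\varphi_i$. Then $\G_{\varphi_i(\b_i)}$ acts on the vertices of its building with finitely many orbits, and the possible lattice sequences $\Lambda_i$ modulo translation and reindexing therefore form finitely many orbits. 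Two realizations on the same $(\Lambda_i,\varphi_i(\b_i))$ and of the same endo-class intertwine, hence are conjugate by an element of the normalizer, using the intertwining formula $\I_\G(\theta)=\P_1\G_{\varphi(\b)}\P_1$ as in the proof of Proposition~\ref{propEssGconj} and Lemma~\ref{lemTransfer}. The subtle point is that the realization depends on the lattice sequence $\Lambda$ rather than only on its point in the building. We would handle this by the numerical rigidity mentioned in the introduction: for the fixed embedding $\varphi_i$, the lattice sequences realizing a given vertex differ by affine reparametrization, and by the Broussous--Grabitz classification of embedding types only finitely many embedding invariants occur for given degree and ramification. The characters then coincide as sets by the $\H^1$ equality argument at the end of Proposition~\ref{propmRealization}.

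The main obstacle is this last step. We must control the $\G$-orbits of pairs (embedding, vertex lattice sequence) uniformly, especially in the classical case, where the self-dual embeddings and the vertices of a unitary or orthogonal centralizer must be matched with Witt types. For that we would invoke the finiteness of Witt-type data and the fact that special and non-special vertex types of a classical group over $\E_{i,\so}$ are finite in number. The remaining steps are bookkeeping with the matching $\zeta$.
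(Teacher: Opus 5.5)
There is a genuine gap, and it sits at the step you yourself call the main obstacle.

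\textbf{The fixed-lattice-sequence step.} You claim that two realizations in $\Cc(\Lambda_i,\varphi_i(\b_i))$ of the same endo-class ``intertwine, hence are conjugate by an element of the normalizer''. You cite Proposition~\ref{propEssGconj} and Lemma~\ref{lemTransfer}, but neither gives this.

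\begin{itemize}
\item Proposition~\ref{propEssGconj} concerns one character with two parametrizations.
\item Lemma~\ref{lemTransfer} is stated only for $h=0$ with $\F[\b]$ a field. It needs the two characters to be transfers of each other \emph{intertwined by~$1$}. That hypothesis makes the residual maps induced by~$1$ and by the Skolem--Noether element agree, which is what lets that element be moved into~$\P(\Lambda)$.
\end{itemize}

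For characters that merely intertwine, even when one is a transfer of the other, the intertwining formula only gives an element $s'xbs$ with $b\in\G_{\varphi(\b)}$ arbitrary. Such an element need not stabilize~$\Lambda$, so you get intertwining, not conjugacy. Turning intertwining into conjugacy on a fixed lattice sequence is exactly what the rigidity theorems do, and they need a condition on residual embeddings (compare Remark~\ref{remNoIntertwiningImpliesEssConjugacy}). Your closing appeal to Broussous--Grabitz embedding types, Witt types and vertex types does not supply this.

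The missing idea is the paper's:
\begin{itemize}
\item First reduce to finitely many $\G$-orbits of points $[\Lambda]$, using the barycentric-coordinate bound from \cite[Lemma~II.3.1]{broussousLemaire:02}, and fix~$\Lambda$.
\item Each $m$-realization $\theta'\in\Cc_h(\Lambda,\varphi'(\b'))$ intertwining with~$\theta$ determines a field embedding $\zeta_{\theta'}\colon k_{\E}\hookrightarrow\mathfrak{a}_0/\mathfrak{a}_1$.
\item By \cite[Theorem~1.5]{SkodInnerFormI}, \cite[Theorem~1.1]{SkodInnerFormII}, \cite[Theorem~10.3]{SkSt} and \cite[Theorem~10.4]{KSS}, equality $\zeta_{\theta'}=\zeta_{\theta''}$ forces $\G$-conjugacy.
\item There are only finitely many such embeddings.
\end{itemize}

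You were also close to another fix. Proposition~\ref{propFixedEmbedding} and its proof show that, up to $\G$-conjugacy, every realization is the \emph{transfer} of one fixed~$\theta$ along one fixed~$\varphi$, and so is determined by its lattice sequence. You reparametrize by~$\b_i$ but never use that the result is a transfer of a fixed character. That is why you end up needing an intertwining-implies-conjugacy statement.

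\textbf{The block-by-block reduction.} This is valid for the general linear factors of $\M(\varphi(\b),\G)$, but not for the self-dual blocks. All the self-dual blocks lie in the single classical factor $\G\cap\Aut_\D(\bigoplus_{i\in\I^\sigma}\V_i)$. The restriction of~$\theta$ to that factor is defined on a group whose off-diagonal pieces, between distinct $\V_i,\V_j$ with $i,j\in\I^\sigma$, depend on the relative position of the~$\Lambda_i$. So conjugacy of each block restriction separately does not give essential conjugacy unless you control these relative positions. This is not ``bookkeeping with the matching~$\zeta$''.

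The paper does not split the self-dual blocks. It reduces to the case $\I=\I^\sigma$ and $\M(\varphi(\b),\G)=\G$. It then uses Lemma~\ref{lemTranslatesOfSelfdualLatticeSeq}: for self-dual lattice sequences of the same period, blockwise translates form a global translate. This carries finiteness of the blockwise points over to finiteness of $\G$-orbits of the whole lattice sequence.
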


As a cuspidal conjugacy class is a union of essential conjugacy classes we obtain the following corollary:

\begin{corollary}\label{corCuspConjugateClasses}
 There are only finitely many cuspidal~$\G$-conjugacy classes of m-reali\-zations of~$\ft$ for~$(\G,h)$.
\end{corollary}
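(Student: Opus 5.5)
The plan is to deduce this directly from Theorem~\ref{thmEssentiallyConjugateClasses}. The only thing to check is the assertion preceding the corollary: every essential~$\G$-conjugacy class of realizations of~$\ft$ is contained in a single cuspidal~$\G$-conjugacy class. Equivalently, essential~$\G$-conjugacy implies cuspidal~$\G$-conjugacy. Once this is known, the set of cuspidal classes of $m$-realizations is a quotient of the set of essential classes of $m$-realizations. The latter is finite by Theorem~\ref{thmEssentiallyConjugateClasses}, so the former is finite too. Both relations are equivalence relations on~$\Cc_-(h)$ by Proposition~\ref{propEssGconj}, so these classes make sense.

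To show essential conjugacy implies cuspidal conjugacy, take~$\t,\t'$ essentially~$\G$-conjugate. Fix parametrizations~$((\V,h),\varphi,\Lambda,\b)$ and~$((\V,h),\varphi',\Lambda',\b')$ and an element~$g\in\G$ with~$g\V_i=\V_{\zeta(i)}$ for all~$i\in\I$ and~${}^g(\t_{-,\M(\varphi(\b),\G)})=\t'_{-,\M(\varphi'(\b'),\G)}$. I will show that~$g$ conjugates~$\M_c(\varphi(\b),\G)$ onto~$\M_c(\varphi'(\b'),\G)$. Since~$\M_c\subseteq\M$, restricting the displayed equality to~$\H^1\cap\M_c$ then gives~${}^g(\t_{-,\M_c(\varphi(\b),\G)})=\t'_{-,\M_c(\varphi'(\b'),\G)}$, which is the defining condition for cuspidal conjugacy with the same~$g$.

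If~$\M_c=\M$ on both sides, the claim about~$g$ is immediate, because~$\M(\varphi(\b),\G)$ is determined by the splitting~$\V=\bigoplus\V_i$ and~$g$ carries the splitting for~$\b$ onto that for~$\b'$. In the exceptional special orthogonal case, one must check that the distinguished index~$i_0$ (with~$\b_{i_0}=0$,~$\dim_\F\V_{i_0}=2$ and~$h|_{\V_{i_0}}$ of Witt index~$1$) satisfies~$\zeta(i_0)=i_0'$.

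I expect this last check to be the only real point. I would argue it as follows.
\begin{itemize}
\item The matching~$\zeta$ respects the endo-classes of the simple block restrictions, so the block with~$\b_{i_0}=0$, which carries the trivial endo-class, is sent to the block with~$\b'_{\zeta(i_0)}=0$.
\item The element~$g\in\G$ is an isometry with~$g\V_{i_0}=\V_{\zeta(i_0)}$, so it preserves both the dimension and the Witt index of~$h$ restricted to this block.
\end{itemize}
Hence~$\zeta(i_0)$ is the distinguished index for~$\b'$, and~$g$ conjugates~$\M_c(\varphi(\b),\G)$ onto~$\M_c(\varphi'(\b'),\G)$. This is exactly the remark already made in the proof of Proposition~\ref{propEssGconj}, applied here with~$\t\neq\t'$.
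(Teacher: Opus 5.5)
Your proposal is correct and is essentially the paper's argument. The paper deduces the corollary in one line from Theorem~\ref{thmEssentiallyConjugateClasses}, noting that each cuspidal $\G$-conjugacy class is a union of essential ones; this rests on the remark in the proof of Proposition~\ref{propEssGconj} that the element $g$ realizing essential conjugacy also carries $\M_c(\varphi(\b),\G)$ onto $\M_c(\varphi'(\b'),\G)$, and your check that the matching sends $i_0$ to $i_0'$ (via endo-class, dimension and Witt index) simply makes that remark explicit.
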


To prove the theorem we need the following lemma:

\begin{lemma}\label{lemTranslatesOfSelfdualLatticeSeq}
Let~$((\V,h),\varphi,\Lambda,\b)$ with non-zero~$h$ be a parametrization of a semisimple character of~$\G$ with~$\I=\I^\sigma$. Let~$\Lambda'$ be a self-dual 
lattice sequence split by the associated splitting of~$\varphi(\b)$ and of the same~$\F$-period as~$\Lambda$.  Suppose that, for all~$i\in\I$, the lattice sequence~$\Lambda'_i$ is a translate of~$\Lambda_i$. 
Then~$\Lambda$ is a translate of~$\Lambda'$. 
\end{lemma}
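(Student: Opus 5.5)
The plan is to exploit self-duality to pin down each translation amount uniquely. The point is that it depends only on the duality constants of~$\Lambda$ and~$\Lambda'$, not on~$i$.

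First I would fix conventions. A self-dual lattice sequence~$\Lambda$ of~$\F$-period~$e$ satisfies~$\Lambda(k)^\#=\Lambda(d-k)$ for all~$k\in\mathbb Z$, for some integer~$d$, where~$\#$ denotes the dual with respect to~$h$. Similarly,~$\Lambda'(k)^\#=\Lambda'(d'-k)$ for some integer~$d'$. Since~$\I=\I^\sigma$, every block~$\V_i$ of the associated splitting is~$\sigma$-stable, so the decomposition~$\V=\bigoplus_{i\in\I}\V_i$ is orthogonal for~$h$ and each~$h_i:=h|_{\V_i}$ is non-degenerate. Because both~$\Lambda$ and~$\Lambda'$ are split by this decomposition, taking duals commutes with it:
\[
\Lambda(k)^\#=\bigoplus_{i\in\I}\Lambda_i(k)^{\#_i},
\]
where~$\#_i$ is the dual with respect to~$h_i$. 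Hence~$\Lambda_i(k)^{\#_i}=\Lambda_i(d-k)$ and~$\Lambda'_i(k)^{\#_i}=\Lambda'_i(d'-k)$ for all~$i$ and~$k$.

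Next, write~$\Lambda'_i(k)=\Lambda_i(k+k_i)$ with~$k_i\in\mathbb Z$, which is possible by hypothesis. The equal~$\F$-periods ensure that translation is by integer shifts in the same indexing on both sides. Dualizing gives
\[
\Lambda'_i(d'-k)=\Lambda'_i(k)^{\#_i}=\Lambda_i(k+k_i)^{\#_i}=\Lambda_i(d-k-k_i)=\Lambda'_i(d-k-2k_i).
\]
Thus~$\Lambda'_i(j)=\Lambda'_i(j+m_i)$ for all~$j$, where~$m_i:=d-d'-2k_i$.

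The key step, and the only real point, is that a lattice sequence on a nonzero space has no nontrivial shift symmetry. Suppose~$\Lambda'_i(j)=\Lambda'_i(j+m)$ for all~$j$ with~$m>0$. Iterating gives~$\Lambda'_i(j)=\Lambda'_i(j+em)$. But~$\Lambda'_i(j+em)=\p_\F^{m}\Lambda'_i(j)$ by periodicity, which is a proper sublattice of~$\Lambda'_i(j)$ when~$\V_i\neq0$, a contradiction; the case~$m<0$ is symmetric. (Zero blocks can be discarded.) Hence~$m_i=0$, that is,~$2k_i=d-d'$ for every~$i$. So all~$k_i$ equal a common integer~$k_0=(d-d')/2$. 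It follows that~$\Lambda'(k)=\bigoplus_i\Lambda_i(k+k_0)=\Lambda(k+k_0)$, so~$\Lambda$ is a translate of~$\Lambda'$.

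The main care needed is in the bookkeeping of conventions: the normalisation of the duality constant~$d$ for lattice sequences, and the identification of the translation parameter as an integer shift. I would state these explicitly at the start, using the same~$\F$-period hypothesis. The rigidity step itself is routine.
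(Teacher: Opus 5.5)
Your proof is correct and follows the same route as the paper: compare the duality constants of $\Lambda$ and $\Lambda'$ block by block, which forces every shift $k_i$ to equal half the difference of those constants. You also spell out two points the paper leaves implicit, namely that the splitting is $h$-orthogonal (so duality commutes with it) and that a lattice sequence on a nonzero space has no nontrivial shift symmetry; the second is what justifies the paper's one-line conclusion $k_i=\frac{k-l}{2}$.
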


\begin{proof}
As~$\Lambda$ and~$\Lambda'$ are self-dual, and by the hypothesis, there exist~$l,k,k_i\in\mathbb{Z},$ for~$i\in\I$, such that 
 \begin{enumerate}
  \item $\Lambda(j)^{\#_h}=\Lambda(-j+l)$,
  \item $\Lambda'(j)^{\#_h}=\Lambda'(-j+k)$,
  \item $\Lambda_i(j)=\Lambda'_i(j+k_i)$,
 \end{enumerate}
 for all integers~$j$, where~${\#_h}$ denotes the duality on lattices induced by~$h$. Therefore~$k_i=\frac{k-l}{2}$ is independent of~$i$. 
\end{proof}

\begin{proof}[Proof of Theorem \ref{thmEssentiallyConjugateClasses}]
Let~${\t}\in\Cc_h(\Lambda,\varphi(\b))$ be an~$m$-realization of~$\ft$ for~$(\G,h)$. All other~$m$-realizations of~$\ft$ for~$(\G,h)$ intertwine with~${\t}$ in~$\G$ and we can therefore conjugate the associated splittings to the splitting of~$\varphi(\b)$ by an element of~$\G$, by~\cite[Theorem~6.7(i) and~6.12(i)]{SkodInnerFormII}, which reduces us to the case where~$\I=\I^\sigma$ and~$\M(\varphi(\b),\G)=\G$. Thus if~$h=0$ then we have~$|\I|=1$ and if~$h\neq 0$  we have~$\I=\I^\sigma$. We have to show that there are finitely many~$\G$-conjugacy classes of~$m$-realizations of~$\ft$ for~$(\G,h)$.  
 Note that there are only finitely many~$\G$-orbits of points~$x'=[\Lambda']$ in the reduced building~$\mathfrak{B}_{red}(\G)$ for which there exists an~$m$-realization~${\t'}\in\Cc_h (\Lambda',\varphi'(\b'))$ of~$\ft$ for~$(\G,h)$ such that, under the embedding~\eqref{eqjbbar}, the point~$x'$ is the image of a point attached to a maximal parahoric subgroup of~$\G_{\varphi'(\b')}$; indeed, such a point~$x'=(x'_i)_{i\in\I},\ x'_i\in\mathfrak{B}_{red}(\Aut_\D(\V_i)),$ fulfils extra conditions on the barycentric coordinates: for example, the barycentric coordinates of~$x'_i$  have to be of the form $\frac{q}{2e(\E_i:\F)}$ with~$0\leq q\leq 2e(\E_i/\F)$, by the formula in~\cite[Lemma II.3.1]{broussousLemaire:02}. Thus we have only a finite number of~$\G\cap\Aut_\D(\V_i)$-orbits for points~$x'_i$. Lemma~\ref{lemTranslatesOfSelfdualLatticeSeq} then provides the finiteness of the number of~$\G$-orbits for points~$x'$. 

We consider the point in~$\mathfrak{B}_{red}(\G)$ corresponding to~$\Lambda$ given by the chosen parametrization of~${\t}$. Every~$m$-realization~${\t'}\in\Cc_h(\Lambda,\varphi'(\b'))$ intertwines with~${\t}$ in~$\G$. So we obtain a map
\[
\zeta_{{\t'}}:k_{\E}\hookrightarrow\mathfrak{a}_0/\mathfrak{a}_1,
\]
by~\cite[Lemma~5.51]{SkodInnerFormI}, where~$\mathfrak{a}_0=\mathfrak{a}_0(\Lambda)$ is the hereditary order corresponding to~$\Lambda$ and~$\mathfrak{a}_1$ its radical. Two~$m$-realizations~${\t'}\in\Cc_h(\Lambda,\varphi'(\b'))$ and~${\t''}\in\Cc_h(\Lambda,\varphi''(\b''))$ of~$\ft$ for~$(\G,h)$ are~$\G$-conjugate if~$\zeta_{{\t'}}=\zeta_{{\t''}}$, by~\cite[Theorem 1.5]{SkodInnerFormI} when~$\e=0$,~and~by~\cite[Theorem 1.1]{SkodInnerFormII} (quaternionic case),~\cite[Theorem 10.3]{SkSt}  (symplectic and unitary case)~and~\cite[Theorem~10.4, see Remark 10.5]{KSS} (special orthogonal case) when~$\e\ne 0$. The number of field embeddings of~{$k_\E$} into~$\mathfrak{a}_0/\mathfrak{a}_1$ is finite, because both sets are finite. This finishes the proof. 
\end{proof}

\begin{lemma}\label{EPsupportmap}
Suppose $\ft$ is an endo-parameter for $(\G,h)$ and put $\M=\M(\ft)$ and~$\M_c=\M_c(\ft)$, the Levi subgroups associated to $\ft$ (well-defined up to $\G$-conjugacy), and set~$\ft_{\M}$ and~$\ft_{\M_c}$ as in \ref{SeccuspsuppEP}.  Then:
\begin{enumerate}
\item The essential $\G$-conjugacy classes of~$m$-realizations of $\ft$ are in bijection with the $\M$-conjugacy classes of~$m$-realizations of $\ft_\M$.
\item The cuspidal $\G$-conjugacy classes of~$m$-realizations of $\ft$ are in bijection with the $\M_c$-conjugacy classes of~$m$-realizations of $\ft_{\M_c}$.
\end{enumerate}
\end{lemma}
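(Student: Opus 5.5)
We define the map in the obvious direction and check that it is well defined, injective and surjective; the two parts are proved in the same way, so we write the argument for~$\M$ and note the changes for~$\M_c$ at the end. Given an~$m$-realization~$\t$ of~$\ft$, choose a parametrization~$((\V,h),\varphi,\Lambda,\b)$. By Lemma~\ref{lemMphib}, after conjugating by an element of~$\G$ we may assume~$\M(\varphi(\b),\G)=\M$ for the fixed representative~$\M$. We then send~$\t$ to the restriction~$\t_{\M}$.

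\textbf{Step 1: $\t_\M$ is an~$m$-realization of~$\ft_\M$.} It is a semisimple character of~$\M$ with endo-parameter~$\ft_\M$ by the definition in~\S\ref{SeccuspsuppEP}. The centralizer~$\M_{\varphi(\b)}$ equals~$\G_{\varphi(\b)}$, since~$\G_{\varphi(\b)}$ preserves the associated splitting and so lies in~$\M$. Under this identification the parahoric~$\P(\Lambda_{\varphi(\b)})^\circ$ is the same group whether we view~$\Lambda$ in~$\G$ or its block decomposition~$(\Lambda_i)$ in~$\M$. Hence maximality is preserved. Proposition~\ref{propmRealization} shows this property does not depend on the chosen parametrization.

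\textbf{Step 2: the map is well defined on classes.} Suppose~$\t$ and~$\t'$ are essentially~$\G$-conjugate. The element~$g$ in the definition respects the matching of the splittings, so it normalizes the Levi~$\M$ once both are normalized to~$\M$; thus~$g\in \N_\G(\M)$. Different parametrizations of the same~$\t$ are handled by the claim in the proof of Proposition~\ref{propEssGconj}.

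Here lies the main subtlety: an element of~$\N_\G(\M)$ can permute blocks, so we only get~$\N_\G(\M)$-conjugacy of~$\t_\M$ and~$\t'_\M$, not~$\M$-conjugacy. To fix this, we fix once and for all an identification of the index set~$\I$ with the blocks of~$\M$, compatible with the simple endo-classes (or~$\Sigma$-orbits) occurring in~$\ft$. Within an essential class, the requirement~$g\V_i=\V_{\zeta(i)}$, with~$\zeta$ the canonical matching, forces~$g$ to preserve each block once both characters are normalized in this way. Distinct blocks carry distinct endo-classes, because the minimal polynomials of the~$\b_i$ differ and full semisimple characters separate them. Hence~$g$ acts trivially on the set of blocks and lies in~$\M$, up to the components of~$\M$ itself. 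So the map lands in~$\M$-conjugacy classes of~$m$-realizations of~$\ft_\M$.

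\textbf{Step 3: injectivity and surjectivity.} Injectivity is immediate: if~$m\in\M$ conjugates~$\t_\M$ to~$\t'_\M$, then~$m$ preserves each~$\V_i$. The two characters intertwine because their restrictions to~$\M$ are conjugate and they are decomposed with respect to~$(\M,\P)$. The matching is the identity, so~$\t$ and~$\t'$ are essentially~$\G$-conjugate.

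For surjectivity, take an~$m$-realization~$\t_\M$ of~$\ft_\M$, with parametrization given by blockwise data~$(\Lambda_i,\varphi_i,\b_i)$. We need a self-dual lattice sequence~$\Lambda=\bigoplus\Lambda_i$ in~$\V$. In the hermitian case the blocks~$i\in\I_\pm$ are paired by duality, and for~$i\in\I^\sigma$ the sequences are self-dual. We shift the~$\Lambda_i$ to a common period and a common duality centre, as in Lemma~\ref{lemTranslatesOfSelfdualLatticeSeq}; translations do not change the character. By~\cite[Theorems~6.6,~6.10]{SkodInnerFormII}, as in~\S\ref{SeccuspsuppEP}, there is then~$\t\in\Cc_h(\Lambda,\varphi(\b))$ with~$\t|_\M=\t_\M$. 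Its endo-parameter is~$i_\M^\G(\ft_\M)=\ft$, and by Step~1 it is an~$m$-realization.

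\textbf{The case of~$\M_c$.} This differs only in the special orthogonal case with an index~$i_0$. There~$\G_{\varphi(\b)}$ contains the split torus~$\SO(1,1)$ on~$\V_{i_0}$, which is then a separate factor of~$\M_c$. Every parahoric of that factor is maximal, as observed in the proof of Proposition~\ref{propmRealization}, so the same arguments go through.

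We expect the main obstacle to be Step~2, namely showing that essential~$\G$-conjugacy reduces exactly to~$\M$-conjugacy rather than~$\N_\G(\M)$-conjugacy. This relies on the canonical matching being compatible with a fixed labelling of the blocks.
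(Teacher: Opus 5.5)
Your treatment of part (i) follows the paper. You normalize so that $\M(\varphi(\beta),\G)=\M$ and the matching is the identity, and observe that an element of $\N_\G(\M)$ respecting the matching preserves every $\V_i$ and so lies in $\M$. For special orthogonal $\G$ this uses that the determinant on each $\V_i\oplus\V_{\sigma(i)}$, $i\in\I_+$, is $1$. Your surjectivity step is something the paper leaves implicit.

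One small correction in your injectivity step: when $h\neq 0$, an element $m\in\M$ need not preserve the individual $\V_i$ with $i\in\I^\sigma$, because the classical factor of $\M$ is the isometry group of their sum. You need the claim in the proof of Proposition~\ref{propEssGconj} to modify $m$ so that it carries the splitting of $\theta$ to that of $\theta'$.

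The genuine gap is in part (ii). Saying ``the same arguments go through'' only checks the $m$-realization property. The key step of your Step~2 fails for $\M_c$, because labelling blocks compatibly with the simple endo-classes no longer forces the conjugating element into the Levi subgroup. Here is a concrete obstruction:
\begin{enumerate}
\item In the special orthogonal case with index $i_0$, put $\V'=\bigoplus_{i\in\I^\sigma,\,i\neq i_0}\V_i$ and suppose $\V'\neq 0$.
\item Let $g\in\G$ be the identity on $\V_i$ for $i\in\I_+\cup\I_-$, the reflection of $\V_{i_0}$ swapping its two isotropic lines, and some $w\in\mathrm{O}(\V')$ with $\det w=-1$ on $\V'$.
\item Then $g$ preserves every $\V_i$, so it respects the identity matching. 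It also normalizes $\M_c$, but $g\notin\M_c$.
\item Every $\beta_i$ occurring in $\V'$ is non-zero, so the $\mathrm{O}(\V')$-intertwining $\J^1\mathrm{O}(\V')_{\varphi(\beta)}\J^1$ of the $\V'$-component of $\theta$ lies in $\SO(\V')$. Hence its $w$-conjugate does not even intertwine with it in $\SO(\V')$.
\item Thus $\theta$ and ${}^g\theta$ are both in your normalized position and are cuspidally $\G$-conjugate (indeed $\G$-conjugate). But their restrictions to $\M_c$ are not $\M_c$-conjugate.
\item In fact ${}^g\theta|_{\M_c}$ realizes ${}^g\ft_{\M_c}$, which differs from $\ft_{\M_c}$ in the sign attached to the special orthogonal factor (Remark~\ref{remEndoparG}).
\end{enumerate}
Your remark that distinct blocks carry distinct endo-classes cannot detect this. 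The element $g$ acts on the $\GL_1$-factor by inversion, which fixes the trivial character of $1+\mathfrak{p}_\F$. So your map is not well defined on cuspidal classes and need not land in $m$-realizations of $\ft_{\M_c}$.

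The repair is what the paper does: conjugate within $\N_\G(\M_c)$ so that the full endo-parameters agree, $\ft_{\M_c}=\ft'_{\M_c}$, sign included. Then any element of $\N_\G(\M_c)$ that respects the matching and fixes $\ft_{\M_c}$ has determinant $1$ on $\V'$, hence on $\V_{i_0}$. It therefore preserves the isotropic lines of $\V_{i_0}$ and lies in $\M_c$.
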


\begin{proof}
Let ${\theta}$ be an~$m$-realization of $\ft$ from $((\V,h),\varphi,\Lambda,\beta)$, so that (by conjugating in $\G$) we can assume $\M(\varphi(\beta),\G)=\M$ and $\M_c(\varphi(\beta),\G)=\M_c$. Then ${\theta_{\M}}={\theta}\mid_{\M}$ is a semisimple character for $\M$ and determines an endo-parameter $\ft_\M$, and ${\theta_{\M}}$ is an~$m$-realization of $\ft_\M$.  Similar we obtain~$\ft_{\M_c}$.
 
Now suppose ${\theta'}$ is another~$m$-realization of $\ft$, from $((\V,h),\varphi',\Lambda',\beta')$; again, by conjugating in $\G$, we can assume $\M(\varphi'(\beta'),\G)=\M$ and $\M_c(\varphi'(\beta'),\G)=\M_c$. Then we similarly get an endo-parameter $\ft'_\M$ for $\M$, which is $\N_\G(\M)$-conjugate to $\ft_\M$. In fact, by changing our conjugation in $\G$, we can assume $\ft_\M=\ft'_{\M}$ and~$\ft_{\M_c}=\ft'_{\M_c}$ -- this is the same as conjugating so that the matching between ${\theta}$ and ${\theta'}$ is the identity map.
 
By definition, ${\theta'}$ is essentially $\G$-conjugate to ${\theta}$ if and only if there is $g\in\G$ such that ${}^g\M=\M$ and ${}^g{\theta_{\M}}={\theta'_{\M}}$, that is, if and only if there is $g\in \N_\G(\M)$ such that ${}^g{\theta_{\M}}={\theta'_{\M}}$. But any such $g$ must come from the matching between ${\theta},{\theta'}$ so in fact is in $\M$. Thus~${\theta'}$ is essentially $\G$-conjugate to ${\theta}$ if and only if ${\theta_{\M}}$ is $\M$-conjugate to ${\theta'_{\M}}$.

In the case~$\M\neq\M_c$ the second assertion has the identical proof, noting the fact that the restriction of~${\theta}$ and~${\theta'}$ onto the factor where~$\beta$ and~$\beta'$ are zero are just trivial characters of~$1+\mathfrak{p}_\F$.
\end{proof}

For further applications it is important to be able to parametrize all~$m$-realizations of an endo-parameter~$\ft$ with a fixed embedding~$\varphi$. This is already a property for realizations of~$\ft$. 
\begin{proposition}\label{propFixedEmbedding}
Let~$\ft$ be an endo-parameter of~$(\G,h)$ and~$\theta\in\Cc_h(\Lambda,\varphi(\b))$ a realization of~$\ft$. 
\begin{enumerate}
\item Let~$\theta'\in\Cc_h(\Lambda',\varphi'(\b'))$ be a second realization of~$\ft$. 
Then there is an~$h$-self-dual embedding~$\psi:\ \F[\beta]\rightarrow \A$ such that~$\theta'\in\Cc_h(\Lambda',\psi(\b))$.
\item Let~$C$ be the closure of a chamber of the building of~$\G_{\varphi(\b)}$ (strong simplicial structure). Then for every realization~$\theta'$ of~$\ft$ there {are} an~${\o_{\varphi(\E)}}$-$\o_\D$-lattice sequence~$\Lambda''$ in~$j_{\varphi(\b)}(C)$
and a semisimple character~$\theta''\in\Cc_h(\Lambda'',\varphi(\b))$ which is~$\G$-conjugate to~$\theta'$. If~$\theta'$ is an~$m$-realization of~$\ft$ then~$\theta''$ is an~$m$-realization of~$\ft$. 
\end{enumerate}
\end{proposition}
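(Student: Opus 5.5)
For part (1), I would reduce to a single embedding by moving $\theta'$ through an intertwining element. Since $\theta$ and $\theta'$ are realizations of the same $\ft$, they intertwine in~$\G$. By \cite[Theorem~6.7(i) and~6.12(i)]{SkodInnerFormII}, after conjugating by an element of~$\G$ I may assume the associated splittings of $\varphi(\b)$ and $\varphi'(\b')$ agree and the matching $\zeta$ is the identity.

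Then I would work block by block. For each $i$, the simple block restrictions $\theta_i$ and $\theta'_i$ have the same endo-class. By the Remark after the definition of $\Cc(\beta)$ (based on \cite[Proposition 4.30]{SkodInnerFormI} and \cite[Theorem 6.6]{SkodInnerFormII}), the stratum $[\Lambda'_i,n,0,\varphi'(\b'_i)]$ is equivalent to one of the form $[\Lambda'_i,n,0,\psi_i(\b_i)]$ for some embedding $\psi_i$ of $\E_i$. Hence $\theta'_i\in\Cc(\Lambda'_i,\psi_i(\b_i))$.

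In the self-dual case I need to choose the $\psi_i$ compatibly. For $i\in\I^\sigma$, I would take $\psi_i$ to be $h$-self-dual, using the self-dual version of this replacement in \cite[Theorems 6.6, 6.10]{SkodInnerFormII}. For $i\in\I_+$, I choose $\psi_i$ freely and define $\psi_{\sigma(i)}$ as its dual. I would then set $\psi=\bigoplus_i\psi_i$. Because semisimple characters are built blockwise from their block restrictions (their defining groups and conditions are compatible with the splitting), this gives $\theta'\in\Cc_h(\Lambda',\psi(\b))$. This is essentially the argument already sketched in the proof of Proposition~\ref{propEssGconj}, where the passage from $\varphi(\b)$ to $\varphi'(\b')$ is also realized via \cite[Theorem~6.6,~6.10]{SkodInnerFormII}.

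For part (2), I start from the $\psi$ produced by (1). I then need $g\in\G$ with ${}^g\psi=\varphi$, i.e.\ $\psi(\b)$ and $\varphi(\b)$ must be $\G$-conjugate. Both define the same matching and the same Witt types, because the endo-parameter records the Witt type of each self-dual block. The hermitian spaces $(\V_i,h|_{\V_i})$ are therefore isometric as $\E_i$-hermitian spaces, via the transfer $\lambda^*$ and the classification of self-dual embeddings in \cite[\S7]{SkodInnerFormII} and \cite{KSS}. This lets me produce $g\in\U(\V,h)$ with ${}^g\psi=\varphi$.

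In the special orthogonal case $g$ might have determinant $-1$. I would correct this by multiplying by an element of $\U(\V,h)$ centralizing $\varphi(\b)$ with determinant $-1$ (for instance a reflection in a $\varphi(\b)$-stable block). If no such element exists, then the sign in the endo-parameter for $(\G,h)$, \cite[Definition 12.33]{KSS}, is exactly what guarantees that $\det g=1$ can be achieved.

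After conjugating, $\theta'\in\Cc_h({}^g\Lambda',\varphi(\b))$ and ${}^g\Lambda'$ lies in $j_{\varphi(\b)}(\mathfrak{B}(\G_{\varphi(\b)}))$. Since $\G_{\varphi(\b)}$ acts transitively on chambers of its building, I can move the point by some $b\in\G_{\varphi(\b)}$ into $C$. Conjugation by $b$ preserves $\Cc_h(-,\varphi(\b))$, which gives $\Lambda''$ and $\theta''$. The final assertion about $m$-realizations follows from Proposition~\ref{propmRealization}, since being an $m$-realization is independent of the chosen parametrization.

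I expect the main obstacle to be the conjugacy step for the self-dual blocks in (2). This requires matching the Witt type data of $\psi$ and $\varphi$, and handling the special orthogonal determinant issue.
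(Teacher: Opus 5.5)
Your part (2) has the same structure as the paper's: conjugate $\psi$ back to $\varphi$, move the point into $C$ using $\G_{\varphi(\beta)}$, and invoke Proposition~\ref{propmRealization}. Part (1), however, has a genuine gap.

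The Remark you invoke is about endo-equivalence classes of \emph{strata} at level~$0$. All you know is that the \emph{characters} $\theta_i$ and $\theta'_i$ are endo-equivalent, and a semisimple character does not determine its parametrizing stratum up to equivalence. Concretely, Remark~\ref{remNoIntertwiningImpliesEssConjugacy} gives $\Cc_h(\Lambda_0,\beta_0)=\Cc_h(\Lambda_0,\beta_0+\lambda)$, so $\theta_0$ has both parametrizations. Yet $[\Lambda_0,-,0,\beta_0+\lambda]$ is equivalent to no stratum $[\Lambda_0,-,0,\psi(\beta_0)]$. Indeed, for every $a\in\mathfrak{a}_0(\Lambda_0)$ one has $(\beta_0+\lambda+a)^2-\varpi_\F^{-4}\alpha\equiv\varpi_\F^{-3}(2+\varpi_\F)\alpha$ modulo $\varpi_\F^{-2}\mathfrak{a}_0(\Lambda_0)$. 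This scalar does not lie in $\varpi_\F^{-2}\mathfrak{a}_0(\Lambda_0)$, so $\beta_0+\lambda+a$ is never of the form $\psi(\beta_0)$. Hence your claimed equivalence $[\Lambda'_i,n,0,\varphi'(\beta'_i)]\sim[\Lambda'_i,n,0,\psi_i(\beta_i)]$ can fail.

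Your gluing step is also unjustified. The group $\H^1(\psi(\beta),\Lambda')$ and the set $\Cc(\Lambda',\psi(\beta))$ are not determined by the blocks separately, since their off-diagonal parts depend on how the blocks interact. So knowing $\theta'_i\in\Cc(\Lambda'_i,\psi_i(\beta_i))$ for each~$i$ does not give $\theta'\in\Cc_h(\Lambda',\psi(\beta))$. Only an equivalence of the full strata would, and that is precisely what you do not have.

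The missing idea is to apply \cite[Theorems~6.6,~6.10]{SkodInnerFormII} to the pair $(\theta,\theta')$ as a whole rather than blockwise. This produces an $h$-self-dual semisimple $\gamma$ and $\tilde\theta\in\Cc_{h\oplus h}(\Lambda\oplus\Lambda',\varphi(\beta)\oplus\gamma)$ restricting to $\theta$ and $\theta'$, so $\theta'\in\Cc_h(\Lambda',\gamma)$. Distinct block restrictions of $\tilde\theta$ are not endo-equivalent. So each block of $\gamma$ falls into the same block of $\varphi(\beta)\oplus\gamma$ as its endo-equivalent partner in $\varphi(\beta)$, and $\gamma$ and $\varphi(\beta)$ have the same minimal polynomial. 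The matching theorem gives equal reduced characteristic polynomials. The intertwining formula \cite[(6.14)]{SkodInnerFormII}, together with the fact that $\theta$ and $\theta'$ intertwine in~$\G$, then yields $g\in\G$ with $g\varphi(\beta)g^{-1}=\gamma$; put $\psi=\mathrm{Ad}(g)\circ\varphi$.

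This $\psi$ is $\G$-conjugate to $\varphi$ by construction, which is what (2) actually needs. Your Witt-type argument therefore becomes unnecessary. In particular it bypasses the special orthogonal step, where you only assert that the sign ``is exactly what guarantees'' $\det g=1$: the conjugating element comes from $\G$-intertwining and so already lies in $\G$.
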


\begin{proof}
The second assertion follows from the first one, the transitivity of the~$\G_{\varphi(\b)}$-action on the set of chambers of~$\mathfrak{B}(\G_{\varphi(\b)})$ and Proposition~\ref{propmRealization}. 
As~$\theta$ and~$\theta'$ are realizations of the same endo-parameter, they intertwine over~$\G$ and in particular are endo-equivalent. Thus by~\cite[Theorem 6.6, Theorem 6.10]{SkodInnerFormII}  there exist an~$h$-self-dual semisimple element~$\gamma$ and 
a semisimple character~$\tilde{\theta}\in\Cc_{h\oplus h}(\Lambda\oplus\Lambda',\varphi(\b)\oplus\gamma)$ such that~$\tilde{\theta}|_{\H^1(\varphi(\b),\Lambda)}=\theta$ and~$\tilde{\theta}|_{\H^1(\gamma,\Lambda')}=\theta'$. As~$\theta$ and~$\theta'$ are endo-equivalent
~$\varphi(\b)$ and~$\gamma$ must have the same minimal polynomial over~$\F$; here we use that~$[\Lambda\oplus\Lambda',-,0,\varphi(\b)\oplus\gamma]$ is a semisimple stratum and that different block-restrictions of~$\tilde{\theta}$ are not endo-equivalent. Further, as~$\theta,\theta'$ intertwine they have the same reduced characteristic polynomial by the matching theorem~\cite[Theorem 6.5]{SkodInnerFormII}. Thus, by~\cite[(6.14)]{SkodInnerFormII} there is an element 
of~$\G$ which conjugates~$\varphi(\b)$ to~$\gamma$. We take~$\psi$ to be~$\varphi$ followed by this conjugation. 
\end{proof}

One could suspect that two~$m$-realizations of~$\ft$ parametrized by the same pair~$(\Lambda,\varphi(\b))$ are essentially~$\G$-conjugate, but the following remark provides a counter example. 
The example is related to~\cite[Remark 8.12]{KSS}.

\begin{remark}\label{remNoIntertwiningImpliesEssConjugacy}
Suppose~$\F$ has positive odd characteristic~$p$, $\G=\Sp_{8p}(\F)=\U(h)$ {and fix a uniformizer~$\varpi_\F$}. We write~$h$ as an orthogonal direct sum of Hermitian forms~$h_0$, such that~$\U(h_0)=\Sp_8(\F)$. 
Let~$\alpha\in o_\F^\times$ be a non-square in~$\F$ and~$\b_0$ be a~$\sigma_{h_0}$ skew-element of~$\M_8(\F)$ whose square is~$\varpi_\F^{-4}\alpha$ and generates a quadratic unramified field extension of~$\F$. 
We put~$\lambda:=\varpi_\F\b_0$. Then for a lattice sequence~$\Lambda_0$ of~$\F^8$ which under~$j_{\b_0}$ is the image of a special vertex of~$\mathfrak{B}(\G_{\b_0})$ we have 
\[{\Cc_h}(\Lambda_0,\b_0)={\Cc_h}(\Lambda_0,\b_0+\lambda).\]
(Note that~$[\Lambda_0,-,0,\b_0]$ and~$[\Lambda_0,-,0,\b_0+\lambda]$ are simple strata as the elements are minimal over~$\F$.)
We put~$\Lambda=\Lambda_0\oplus\ldots\oplus\Lambda_0$ ($p$-times) and
\[\b=\b_0\oplus (\b_0+\lambda)  \oplus (\b_0+2\lambda)\oplus \ldots\oplus (\b_0+(p-1)\lambda) \]
We choose a simple character~$\theta_0\in{\Cc_h}(\Lambda_0,\b_0)$ and its transfer~$\theta\in{\Cc_h}(\Lambda,\b_0\oplus\ldots\oplus\b_0)$ and we put
$\theta^{(1)}:=\theta\psi_{\oplus_{i=0}^{p-1}i\lambda}$ and~$\theta^{(2)}:=\theta\psi_{\oplus_{i=1}^{p}i\lambda}$, both are elements of~${\Cc_h}(\Lambda,\b)$. 
(Note that~$[\Lambda,-,0,\b]$ is a semisimple stratum by~\cite[Theorem 6.15]{SkSt}). We now choose a non-special vertex~$\Lambda'_{0,\b_0}$ in~$\mathfrak{B}(\G_{\b_0})$ and we put~$\Upsilon:=\Lambda'_0\oplus\Lambda_0\oplus\ldots\oplus\Lambda_0$
and we obtain the transfers~$\zeta^{(1)},\zeta^{(2)}\in{\Cc_h}(\Upsilon,\b)$ of~$\theta^{(1)},\theta^{(2)}$. Then~$\zeta^{(1)}$ and~$\zeta^{(2)}$ intertwine in~$\G$ such that the matching is a cyclic permutation.  Now if~$\zeta^{(1)}$ and~$\zeta^{(2)}$ are~$\G$-conjugate then 
by~\cite[Theorem 6.12]{SkodInnerFormII} the second block restriction~$\zeta^{(1)}_2$ is~$\G$-conjugate to the first block-restriction~$\zeta^{(2)}_1$. Then the finite quotients~$\P(\Lambda_{0,\b_0})/\P_1(\Lambda_{0,\b_0})$ and~$\P(\Lambda'_{0,\b_0})/\P_1(\Lambda'_{0,\b_0})$
are isomorphic. A contradiction as the first one is isomorphic to the unitary group~$\U(2,2)(k_\E|k_\F)$ and the second one to~$\U(1,1)(k_\E|k_\F)\times \U(1,1)(k_\E|k_\F)$ where~$\E:=\F[\b_0]$.
\end{remark}

{Nevertheless,} for cuspidal conjugacy classes we can still find a nice set of representatives. 
\begin{proposition}\label{propNiceSystemOfRepresentatives}
Let~$\ft$ be an endo-parameter of~$(\G,h)$ and~$\theta\in\Cc_h(\Lambda,\varphi(\b))$ {be} a realization of~$\ft$. Suppose that~$\Lambda_1,\ldots,\Lambda_k$ are lattice sequences such that~$\Lambda_{1,\varphi(\b)},\ldots,\Lambda_{k,\varphi(\b)}$ form a system of representatives of 
$\G_{\varphi(\b)}$-conjugacy classes of vertices of a chamber of~$\mathfrak{B}(\G_{\varphi(\b)})$ in the strong simplicial structure. 
Let~$\theta_i$ be the transfer of~$\theta$ to~$\Lambda_i$ for~$i=1,\ldots,k$. 
Then~$\theta_1,\ldots,\theta_k$ form a system of representatives of cuspidal $\G$-conjugacy classes of~$m$-realizations of~$\ft$. 
\end{proposition}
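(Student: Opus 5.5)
We must prove three things: each $\theta_i$ is an $m$-realization of $\ft$; every $m$-realization of $\ft$ is cuspidal $\G$-conjugate to some $\theta_i$; and the $\theta_i$ lie in pairwise distinct cuspidal classes. By Lemma~\ref{EPsupportmap}(2) this is the same as a statement about $\M_c$-conjugacy of $m$-realizations of $\ft_{\M_c}$. We therefore first reduce, as in the proof of Theorem~\ref{thmEssentiallyConjugateClasses}, to a single block of the Levi $\M_c(\varphi(\b),\G)$, working factor by factor: the general linear blocks with $\F[\b_i]$ a field, and the classical block with $\I=\I^\sigma$ (minus the exceptional $\SO(1,1)$ summand with $\b_{i_0}=0$).

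\emph{Realizations and surjectivity.} Each $\theta_i$ is a transfer of $\theta$, so it has endo-parameter $\ft$. Since $\Lambda_{i,\varphi(\b)}$ is a vertex in the strong structure, $\P(\Lambda_{i,\varphi(\b)})^\circ$ is a maximal parahoric of $\G_{\varphi(\b)}$, so $\theta_i$ is an $m$-realization. Now let $\theta'$ be any $m$-realization. By Proposition~\ref{propFixedEmbedding}(2), applied to the chamber $C$, $\theta'$ is $\G$-conjugate to an $m$-realization $\theta''\in\Cc_h(\Lambda'',\varphi(\b))$ with $\Lambda''_{\varphi(\b)}\in C$. By Proposition~\ref{propmRealization}, $\Lambda''_{\varphi(\b)}$ must be a vertex of $C$. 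It is therefore $\G_{\varphi(\b)}$-conjugate to some $\Lambda_{i,\varphi(\b)}$, and after conjugating we may assume $\Lambda''_{\varphi(\b)}=\Lambda_{i,\varphi(\b)}$. In the reduced building of each block, $\Lambda''$ and $\Lambda_i$ then agree up to translation, and Lemma~\ref{lemTranslatesOfSelfdualLatticeSeq} makes them equal up to translation (after normalising periods). Both $\theta''$ and $\theta_i$ then lie in $\Cc_h(\Lambda_i,\varphi(\b))$.

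\emph{The main obstacle.} It remains to show $\theta''$ and $\theta_i$ are cuspidal $\G$-conjugate, even though they need not be transfers of each other: they differ by a character of the form $\psi_c$, as in Remark~\ref{remNoIntertwiningImpliesEssConjugacy}. My plan is to use that on each $\M_c$-block the two characters intertwine with matching the identity, since the blocks are now simple or elementary. I would then transfer both characters to a common \emph{special} vertex, or to $\Lambda$, where intertwining implies conjugacy by the intertwining-implies-conjugacy theorems cited in the proof of Theorem~\ref{thmEssentiallyConjugateClasses}, and where the embeddings $\zeta$ of $k_\E$ agree. Finally I would transport the conjugating element back along transfer, using that transfer commutes with conjugation by $\G_{\varphi(\b)}$. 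I expect this step to be the hardest. In particular, the cyclic-matching phenomenon of the Remark must be excluded: this is exactly why we restrict to $\M_c$, where each block carries a single endo-class.

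\emph{Distinctness.} Suppose $\theta_i$ and $\theta_j$ are cuspidal $\G$-conjugate. On each block we obtain a $\G$-conjugacy between them respecting the splitting. Comparing the quotients $\J/\J^1\simeq\P(\Lambda_{i,\varphi(\b)})/\P_1(\Lambda_{i,\varphi(\b)})$, as in the proof of Proposition~\ref{propmRealization} and the Remark, shows that the vertices have the same type. Hence the conjugating element can be taken to lie in $\G_{\varphi(\b)}$ (via the $k_\E$-embedding argument and Lemma~\ref{lemTransfer}), so $\Lambda_{i,\varphi(\b)}$ and $\Lambda_{j,\varphi(\b)}$ are $\G_{\varphi(\b)}$-conjugate, forcing $i=j$.
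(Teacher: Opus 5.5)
There is a genuine gap in your surjectivity step, and the claim you try to prove there is in fact false. You use only the \emph{statement} of Proposition~\ref{propFixedEmbedding}(2). That gives you some $\theta''\in\Cc_h(\Lambda_i,\varphi(\b))$ of endo-parameter $\ft$ which, as you say yourself, need not be the transfer of $\theta$. You then try to show it is cuspidal $\G$-conjugate to $\theta_i$ anyway.

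Remark~\ref{remNoIntertwiningImpliesEssConjugacy} is a counterexample. There $\G=\Sp_{8p}(\F)$ and every block of $\b$ is $\sigma$-fixed, so $\M_c(\b,\G)=\M(\b,\G)=\G$. Yet $\zeta^{(1)},\zeta^{(2)}\in\Cc_h(\Upsilon,\b)$ are $m$-realizations of the same endo-parameter at the same vertex which are not $\G$-conjugate, hence not cuspidal $\G$-conjugate; take the realization in the Proposition to be $\theta^{(1)}$ and $\Lambda_i=\Upsilon$.

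So it is not true that passing to $\M_c$ leaves a single endo-class per block. All $\sigma$-fixed indices sit together in the one classical factor of $\M_c$, which is exactly where the cyclic matching of the Remark lives. Your proposed repair also fails on this example. Transferring to the special point $\Lambda_0\oplus\cdots\oplus\Lambda_0$ produces $\theta^{(1)},\theta^{(2)}$, which are conjugate by a cyclic permutation $w$ of the blocks. But $w\notin\G_{\b}$ and $w\Upsilon\neq\Upsilon$, so this conjugation cannot be transported back along transfer.

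The missing idea is to keep the transfer relation, which the \emph{proof} of Proposition~\ref{propFixedEmbedding}(1) supplies. There, $\theta'$ is shown to lie in $\Cc_h(\Lambda',\psi(\b))$ and to be the transfer of $\theta$, for a self-dual embedding $\psi$ that encodes the matching. Choose $g\in\G$ with $g\varphi(\b)g^{-1}=\psi(\b)$ (Skolem--Noether). Then ${}^{g^{-1}}\theta'$ is the transfer of $\theta$ to $(g^{-1}\Lambda',\varphi(\b))$, by naturality of transfer under conjugation.

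The index $i$ must then be read off from the vertex $g^{-1}\Lambda'_{\varphi(\b)}$. This depends on the matching, and is what separates $\zeta^{(2)}$ from $\zeta^{(1)}$. After moving by $\G_{\varphi(\b)}$, the characters ${}^{g^{-1}}\theta'$ and $\theta_i$ are transfers of each other for the same $\varphi(\b)$. Their lattice sequences are translates on each factor of $\M_c(\varphi(\b),\G)$, so the characters coincide there. No special vertex and no intertwining-implies-conjugacy theorem is needed.

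The same transfer relation is what drives distinctness. A cuspidal conjugation between $\theta_i$ and $\theta_j$ can be realized in $\G_{\varphi(\b)}$ because they are transfers of each other. Isomorphism of the reductive quotients $\J/\J^1$ alone would not suffice; consider the two $\Sp_{2n}(\F)$-classes of hyperspecial vertices.
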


\begin{proof}
At first we observe that different~$\theta_i$ and~$\theta_j$ cannot be cuspidal~$\G$-conjugate, because, as they are transfers this cuspidal conjugation can be obtained by an element of the centralizer~$\G_{\varphi(\b)}$ which conjugates the maximal parahoric~$\P(\Lambda_{i,\varphi(\b)})$
to~$\P(\Lambda_{j,\varphi(\b)})$, so that the corresponding vertices are~$\G_{\varphi(\b)}$-conjugate, a contradiction. 

Let~$\theta'$ be an~$m$-realization of~$\ft$. By Proposition~\ref{propFixedEmbedding} and its proof we can parametrize~$\theta'$ by a pair~$(\Lambda',\psi(\b))$ such that~$\theta'$ is a transfer of~$\theta$. As $\theta'$ intertwines with~{$\theta$} in~$\G$ we can find an element
$g\in\G$ such that~$g\varphi(\b) g^{-1}=\psi(\b)$, see Skolem-Noether in \cite[Corollary 4.14]{SkodInnerFormII}.  Therefore we can without loss of generality assume that~$\varphi=\psi$. The vertex associated to~$\Lambda'_{\varphi(\b)}$ is~$\G_{\varphi(\b)}$-conjugate to the vertex associated
 to some~$\Lambda_{i,\varphi(\b)}$.
So without loss of generality we can assume that~$\Lambda'_{\varphi(\b)}$ and~$\Lambda_{i,\varphi(\b)}$ represent the same vertex. As~$\theta'$ and~$\theta_i$ are transfers they have to intertwine by~$1$. On the factors of~$\M_c(\varphi(\b),\G)$ the restriction of~$\Lambda'$ 
is a translate of the restriction of~$\Lambda_{i}$, because those restrictions represent the same vertex in the corresponding factor of the building of the centralizer~$\G_{\varphi(\b)}$. 
Thus~$\theta'$ and~$\theta_i$ coincide on~$\M_c(\varphi(\b),\G)$. 
\end{proof}

\begin{remark}
The above proposition would be wrong if we {had} chosen essential conjugacy instead of cuspidal conjugacy, as if two~$m$-realizations~$\theta\in{\Cc_h}(\Lambda,\varphi(\b))$ and~$\theta'\in{\Cc_h}(\Lambda',\varphi(\b))$ intertwine by $1$ and if~$\Lambda_{\varphi(\b)},\Lambda'_{\varphi(\b)}$ represent the same vertex in the reduced building of~$\G_{\varphi(\b)}$ then it may happen that~$\H^1(\varphi(\b),\Lambda)\cap\M(\varphi(\b),\G)$ and~$\H^1(\varphi(\b),\Lambda)\cap\M(\varphi(\b),\G)$ are different and even not~$\G_{\varphi(\b)}$-conjugate. But we always have that their intersections with~$\M_c(\varphi(\b),\G)$ coincide.  Here we provide an example. 

Let~$\D=\F$ and~$h$ be an orthogonal bilinear form on a~$6$-dimensional~$\F$-vector space~$\V$ with Witt index~$3$, i.e., we consider a basis of~$\V$ such that the Gram matrix of~$h$ is anti-diagonal with all entries being~$1$ on the anti-diagonal (a Witt basis).  
We consider the {strict self-dual lattice sequences~$\Lambda,\Lambda'$ with the following associated} hereditary orders, and element~$\b$, with respect to the fixed Witt basis: 
\[{\mathfrak{a}_0(\Lambda)=\left(\begin{array}{lll} \o_\F & \p_\F & \p_\F \\ \o_\F & \o_\F & \p_\F  \\ \p_\F^{-1} & \o_\F & \o_\F \end{array}\right)^{(2,2,2)}\!\!\!\!,\ \mathfrak{a}_0(\Lambda')=\left(\begin{array}{ll} \o_\F & \p_\F \\ \p_\F^{-1} & \o_\F \end{array}\right)^{(3,3)}}\!\!\!, \ 
\b=\varpi^{-1}\left(\begin{array}{lllll}  & \zeta & & & \\ 1 & & & & \\ & & 0 & &  \\ & & & & -\zeta \\ & & & 1 & \end{array}\right)^{(1,1,2,1,1)} \]
with a non-square~$\zeta\in\o^\times$, {where the superscripts indicate the block sizes}. 
Then~$[\Lambda,2,0,\b]$, $[\Lambda',1,0,\b]$ are semisimple strata and~$\Lambda_\beta, \Lambda'_\beta$ represent vertexes in the reduced building of~$\G_\b$. We note that the critical exponents are~$-2$ and $-1$ respectively. 
So we obtain the groups~$\H^1(\b,\Lambda)=\P_1(\Lambda_\b)\P_2(\Lambda)$, {where~$\P_2(\Lambda)=(1+\p_\F\mathfrak{a}_0(\Lambda))\cap\G$,} and~$\H^1(\b,\Lambda')=\P_1(\Lambda')$. Note that~$\G_\b$ (and therefore $\P_1(\Lambda_\b)$) is contained in
\[\M_c=\left(\begin{array}{lll}* & & * \\ & * & \\ * & & * \end{array}\right)^{(2,2,2)}.\]
So we find an element in~$\H^1(\b,\Lambda')$ which is not in~$\H^1(\b,\Lambda)$. 
Similar we see that any~$\G_\b$-conjugate of~$\H^1(\b,\Lambda)$ is different {from}~$\H^1(\b,\Lambda')$. 
\end{remark}

%%%%%%%%%%%%%%%%%%%%%%%%%%%%%
%\section{Heisenberg representations}\label{Heisenberg}
%%%%%%%%%%%%%%%%%%%%%%%%%%%%%

%%%%%%%%%%%%%%%%%%%%%%%%%%%%%
\section{Exhaustion of~$m$-realizations in an Endo-factor}\label{secESing}
%%%%%%%%%%%%%%%%%%%%%%%%%%%%%

%%%%%%%%%%%%%%%%%%%%%%%%%%%%%
%\subsection{Heisenberg representations and semisimple characters over~$\mathbb{Z}[1/p,\mu_{p^{\infty}}]$}\label{HeisenbergZ}
Let~$\beta$ be a self-dual full semisimple element, and~$((\V,h),\varphi,\Lambda)\in\mathscr{Q}_{\F/\F_\so,\varepsilon}(\beta)$.  Then also attached to this datum, see \cite[3.2]{St08}, \cite[Definition~5.4]{SkodInnerFormI}, and \cite[\S4]{SkodlerackCuspQuart}, are~$\Sigma$-stable compact open subgroups~$\widetilde{\J}^1(\varphi(\beta),\Lambda)\leqslant \widetilde{\J}(\varphi(\beta),\Lambda)$ of~$\tG$ containing~$\H^1(\varphi(\beta),\Lambda)$, and we write
\[\J^1(\varphi(\beta),\Lambda)=\widetilde{\J}^1(\varphi(\beta),\Lambda)\cap\G,\quad \J(\varphi(\beta),\Lambda)=\widetilde{\J}(\varphi(\beta),\Lambda)\cap\G,\]
for the associated compact open subgroups of~$\G$.  The group~$\J^1(\varphi(\beta),\Lambda)$ is pro-$p$, and normal in~$\J(\varphi(\beta),\Lambda)$ with
\[\J(\varphi(\beta),\Lambda)/\J^1(\varphi(\beta),\Lambda)\simeq \P(\Lambda_{\varphi(\beta)}) /\P_1(\Lambda_{\varphi(\b)}),\]
a finite reductive group.  

Let~$\t\in\Cc_h(\Lambda,\varphi(\b))$ be a self-dual semisimple character.  Then there exists a unique irreducible $\mathbb{C}$-representation~$\eta$ of~$\J^1(\varphi(\beta),\Lambda)$ which contains~$\t$, by \cite[Proposition 3.5]{St08}, Lemma~\ref{lemkthetanondeg} and \cite[Proposition 4.3]{SkodlerackCuspQuart}.  These representations are called \emph{Heisenberg~$\mathbb{C}$-representations}, and this definition is extended to algebraically closed fields in \cite{RKSS}.

Let~$\Rep_\R(\G)$ denote the abelian category of smooth $\R$-representations of~$\G$.  
%We fix our base ring~$\R_0=\mathbb{Z}[1/p,\mu_{p^{\infty}}]$, which contains all values of all semisimple characters over~$\mathbb{C}$, and base rings~$\R_{0,r}=\mathbb{Z}[1/p,\mu_{p^{r}}]$ which contain all values of all semisimple characters over~$\mathbb{C}$ of depth $\leq d(r)$, so~$\R_0=\bigcup \R_{0,r}$.

\begin{definition}
Let~$\Sigma$ be a collection of finitely generated projective smooth~$\R$-representations of compact open subgroups of~$\G$, and~$\mathcal{H}$ a full abelian subcategory of~$\Rep_{\R}(\G)$.  We say that~$\Sigma$ \emph{exhausts}~$\mathcal{H}$ if, for any smooth~$\R$-representation~$\pi$ of~$\G$ contained in~$\mathcal{H}$, there exists~$(\K,\rho)\in\Sigma$ such that~$\Hom_{\R[\K]}(\rho,\pi)\neq 0$.
\end{definition}

By a result of Dat, {the} semisimple characters for~$\G$ exhaust the category of smooth representations:

\begin{proposition}[{\cite[Propositions 7.5 \& 8.5]{Dat09}, Theorem~\ref{thmSemiCharInRep}, \cite[Theorem~3.1]{SkodlerackCuspQuart}}]\label{DatProp}~
 The collection of semisimple~$\R$-characters for~$\G$ exhausts~$\Rep_{\R}(\G)$. 
\end{proposition}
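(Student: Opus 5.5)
We reduce to the case of an irreducible (or at least finitely generated) representation and then apply Dat's theorem together with the explicit structure of semisimple characters. Let~$\pi$ be a nonzero smooth~$\R$-representation of~$\G$. Since~$\pi$ is smooth, it contains a nonzero vector~$v$ fixed by some compact open pro-$p$ subgroup; replacing~$\pi$ by the subrepresentation generated by~$v$, we may assume~$\pi$ is finitely generated. Exhaustion only requires a nonzero~$\Hom_{\R[\K]}(\rho,\pi)$, and a nonzero hom into a subrepresentation gives one into~$\pi$, so we may further pass to an irreducible quotient~$\pi'$ (\S\ref{abstractsmoothreps}). We then need to pull the result back from~$\pi'$ to~$\pi$. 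For this we use that each semisimple character~$\t$ is a character of a pro-$p$ group~$\H^1$ and~$\ell\neq p$. Hence~$\t$ is a projective~$\R[\H^1]$-module, taking~$\t$-isotypic components is exact, and a nonzero~$\t$-isotypic part of~$\pi'$ lifts to a nonzero~$\t$-isotypic part of~$\pi$. So it suffices to treat irreducible~$\pi$.

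For irreducible~$\pi$ we proceed as in the cited references, by induction on depth. In the complex or~$\ell$-adic setting with~$\G$ classical, Dat's argument~\cite[Prop.~7.5, 8.5]{Dat09} builds, by a descending induction along Moy--Prasad filtrations, a semisimple stratum~$[\Lambda,n,r,\b]$ whose character is contained in~$\pi$. The filtration steps at which the stratum is not yet "fundamental" are refined by the Bushnell--Kutzko/Stevens method: a non-semisimple stratum is replaced by one of lower depth, or by a semisimple stratum, through the standard approximation of~\cite{St05}. For inner forms we would invoke~\cite[Theorem~3.1]{SkodlerackCuspQuart}. In the general linear case,~\cite{SecherreStevensIV,SkodInnerFormI} supply the analogous results. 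The steps, in order, are as follows. First, every~$\pi$ contains a fundamental stratum or has depth zero, via Moy--Prasad theory, and depth zero gives the trivial (semisimple, with~$\b=0$) character of~$\P_1(\Lambda)$. Second, if~$\pi$ contains the character~$\psi_\b$ of a semisimple stratum~$[\Lambda,n,n-1,\b]$, we inductively produce a semisimple stratum~$[\Lambda,n,r,\b_r]$ and a semisimple character of~$\H^{r+1}$ contained in~$\pi$, extended by the relevant~$\psi_c$. Third, when a derived stratum fails to be semisimple, we change the lattice sequence within the building of the centralizer~$\G_{\b}$, using~$j_\b$ from~\eqref{eqjb}, to decrease depth. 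Each step uses only the fact that the groups involved are pro-$p$, so it works verbatim over~$\R$ with~$\ell\neq p$. This is precisely the content of Theorem~\ref{thmSemiCharInRep}.

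The main obstacle is the refinement step in the presence of the hermitian form: one must produce a self-dual semisimple stratum and ensure that the~$\Sigma$-fixed characters restricted to~$\G$ appear, not merely those for~$\tG$. Here we would use the Glauberman correspondence. Since~$p$ is odd and~$\Sigma$ has order~$2$, restriction~$\Cc(\Lambda,\b)^\Sigma\to\Cc_h(\Lambda,\b)$ is a bijection, and occurrences of characters on~$\H^1\cap\G$ can be averaged to self-dual ones. Finally, we must check that the characters produced are full semisimple. Nonfull pieces, where some~$\b_i$ is zero on a block, are handled by absorbing them into the depth-zero part, giving full semisimple characters with~$\b_i=0$ blocks.
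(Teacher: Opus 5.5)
Your proposal follows the paper's argument. You reduce to irreducible $\pi$ using that semisimple characters are characters of pro-$p$ groups, hence projective with exact isotypic functors since $\ell\neq p$. You then appeal case by case to \cite{Dat09}, \cite[Theorem~3.1]{SkodlerackCuspQuart} and Theorem~\ref{thmSemiCharInRep}.

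Two parts of your commentary need correcting:
\begin{itemize}
\item The inner-form-of-$\GL$ case is not supplied by \cite{SecherreStevensIV,SkodInnerFormI}. The paper says this result is missing from the literature, and Theorem~\ref{thmSemiCharInRep} proves it by adapting Skodlerack's descent via Proposition~\ref{propCentralizerMove}, with termination coming from the discreteness of the normalized levels $s/e(\Lambda/\F)$.
\item ``Full'' means the descent reaches level $s=0$; it does not mean that blocks with $\beta_i=0$ are excluded. Such blocks are allowed, so your final paragraph addresses a non-issue.
\end{itemize}
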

%For classical groups with~$\R=\R_0$ this is proved by Dat in ibid., in the other references this is proved for smooth~$\mathbb{C}$-representations and for smooth~$\Fl$-representations of~$\G$ for primes~$\ell\neq p$.  

\begin{corollary}\label{Corollarythetaisotypic}
Let $(\pi,\cV)$ be a smooth $\R$-representation of $\G$.  Then $\pi$ is generated by the sum of all its $\theta$-isotypic components, where the sum is over all semisimple~$\R$-characters $\theta$ for $\G$. 
\end{corollary}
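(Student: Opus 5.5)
The plan is to deduce the corollary formally from Proposition~\ref{DatProp}. Put $\cV'\subseteq\cV$ for the $\G$-subrepresentation generated by $\sum_\theta \cV^\theta$. Here $\theta$ runs over all semisimple $\R$-characters for $\G$, and $\cV^\theta$ is the $\theta$-isotypic component of $\cV$ under the compact open pro-$p$ group $\H^1(\varphi(\b),\Lambda)$ on which $\theta$ is defined. We want $\cV'=\cV$. Suppose it is not, and consider the quotient $\cV''=\cV/\cV'$. This is a nonzero smooth $\R$-representation of $\G$. Proposition~\ref{DatProp} then gives a semisimple character $\theta$, defined on $\H=\H^1(\varphi(\b),\Lambda)$, with $\Hom_{\R[\H]}(\theta,\cV'')\neq0$. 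Hence $(\cV'')^\theta\neq 0$.

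Next we lift this back to $\cV$. The group $\H$ is pro-$p$ and $\ell\neq p$, so $\R[\H]$-representations that are smooth are semisimple. The functor taking $\theta$-isotypic parts is exact: concretely, it is the image of the idempotent $e_\theta=\int_\H \theta(h)^{-1}h\,dh$, where the Haar measure is normalized so that $\H$ has volume $1$, which is possible because the pro-order of $\H$ is invertible in $\R$. Applying $e_\theta$ to the surjection $\cV\to\cV''$ therefore gives a surjection $\cV^\theta\to(\cV'')^\theta$.

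This produces a contradiction. Since $\cV^\theta\subseteq\cV'$, the image of $\cV^\theta$ in $\cV''$ is zero. But that image is all of $(\cV'')^\theta$, which we showed is nonzero. Hence $\cV'=\cV$.

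The only point that needs any care is the exactness of taking $\theta$-isotypic components. This rests on the pro-$p$ property of $\H^1$ and on $\ell\neq p$. We also use the identification of $\Hom_{\R[\H]}(\theta,\cV'')\neq 0$ with $(\cV'')^\theta\neq0$, which is immediate because $\theta$ is a character. If $\cV=0$ the statement is trivial.
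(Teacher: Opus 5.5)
Your proof is correct and follows the paper's argument. You form the subrepresentation generated by all $\theta$-isotypic components, use exactness of the $\theta$-isotypic functor (the groups $\H^1$ are pro-$p$ and $\ell\neq p$) to see that the quotient has no $\theta$-isotypic part, and conclude from Proposition~\ref{DatProp} that the quotient is zero; your explicit idempotent $e_\theta$ just makes concrete the exactness the paper cites.
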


\begin{proof}
Let $\cW$ be the $\R$-subrepresentation of $\cV$ generated by the sum of the $\theta$-isotypic components of $\cV$ over all semisimple~$\R$-characters~$\theta$. For each self-dual semisimple character~$\theta$, since the~$\theta$-isotypic functor is exact,~$\cV^\theta=\cW^\theta$ and~$(\cV/\cW)^\theta=0$.  Thus $\cV/\cW$ contains no self-dual semisimple characters, and by Proposition \ref{DatProp} it is zero.
\end{proof}

\begin{definition}
Let~$\ft$ be an endo-parameter for~$\G$.   A smooth $\R$-representation $\pi$ of $\G$ is of \emph{class} $\ft$ if every semisimple character for~$\G$ contained in $\pi$ has endo-parameter $\ft$.
\end{definition}

Note that a representation of~$\pi$ is of class~$\ft$ if it is generated by semisimple characters of class~$\ft$, as a semisimple character~$\theta$ in~$\pi$ in that case is contained, {by projectivity}, in a direct sum of compactly induced representations of semisimple characters of class~$\ft$, which implies by Mackey that~$\theta$ is of class~$\ft$. We immediately get the following.

\begin{corollary}\label{corollaryendoclassifandonlyifgeneratedbyisotypic}
Let $\pi$ be a smooth $R$-representation of $\G$.  Then $\pi$ is of class $\ft$ if and only if it is generated by the sum of its $\theta$-isotypic components, where the sum is over all semisimple characters $\theta$ for~$\G$ of endo-parameter $\ft$.
\end{corollary}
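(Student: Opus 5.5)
We prove the two implications separately, using Corollary~\ref{Corollarythetaisotypic} for one direction and the remark preceding the statement for the other.

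\emph{If $\pi$ is generated by its isotypic components of endo-parameter~$\ft$, then $\pi$ is of class~$\ft$.} Let $\cW$ be the sum of the $\theta$-isotypic components of~$\pi$, taken over all semisimple characters $\theta$ of endo-parameter~$\ft$. Each such component is a quotient of a direct sum of copies of $\ind_{\H^1}^{\G}\theta$. This uses that $\theta$ is a character of a pro-$p$ group and $\ell\neq p$, so $\theta$ is projective. Hence $\pi$ is a quotient of a direct sum $\Pi$ of such compactly induced representations. Now let $\theta'$ be any semisimple character contained in~$\pi$. By projectivity of $\theta'$, the character $\theta'$ also occurs in~$\Pi$, hence in some summand $\ind_{\H^1}^{\G}\theta$. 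By Mackey's formula, $\theta'$ then intertwines with~$\theta$ in~$\G$. Since endo-parameters classify intertwining classes of semisimple characters, $\theta'$ has endo-parameter~$\ft$.

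\emph{If $\pi$ is of class~$\ft$, then it is generated by these isotypic components.} By Corollary~\ref{Corollarythetaisotypic}, $\pi$ is generated by the sum of its $\theta$-isotypic components over all semisimple characters~$\theta$. Every $\theta$ with nonzero isotypic component is contained in~$\pi$, so it has endo-parameter~$\ft$ by hypothesis. The isotypic components for characters of other endo-parameters are zero, so dropping them does not change the sum. Therefore $\pi$ is generated by the $\theta$-isotypic components with $\theta$ of endo-parameter~$\ft$.

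The only substantive step is the projectivity-plus-Mackey argument in the first direction. It relies on the fact that intertwining in~$\G$ is exactly equality of endo-parameters, which was recalled in Section~\ref{secEPs}.
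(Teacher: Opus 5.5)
Your proof is correct and essentially the paper's own: the remark just before the corollary gives exactly your projectivity-plus-Mackey argument for the implication ``generated by $\theta$-isotypic components of endo-parameter $\ft$ implies class $\ft$'', and the converse is the immediate consequence of Corollary~\ref{Corollarythetaisotypic} that you describe. One point of wording: it is the $\G$-subrepresentation generated by each isotypic component (via Frobenius reciprocity), not the component itself, that is a quotient of a direct sum of copies of $\ind_{\H^1}^{\G}\theta$.
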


Note that, a smooth $\R$-representation~$\pi$ of $\G$ is of class $\ft$ if and only if every irreducible subquotient of $\pi$ is of class $\ft$, since semisimple characters are projective.  Hence we make the following definition:

\begin{definition}
We let~$\Rep_\R(\ft)$ denote the full abelian subcategory of $\Rep_\R(\G)$ consisting of all representations of class~$\ft$.  We call~$\Rep_\R(\ft)$ an \emph{endo-factor} of~$\Rep_\R(\G)$.
\end{definition}

In fact, we only need to look at~$m$-realizations in order to generate a representation of class~$\ft$.

\begin{theorem}\label{thm43}
Let~$\ft$ be an endo-parameter for~$\G$. The collection of all~$m$-realizations of~$\ft$ exhausts~$\Rep_\R(\ft)$. 
\end{theorem}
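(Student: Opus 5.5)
Let~$\pi$ be a nonzero smooth~$\R$-representation of class~$\ft$. By Corollary~\ref{corollaryendoclassifandonlyifgeneratedbyisotypic} it contains a semisimple character~$\theta'$ of endo-parameter~$\ft$. Proposition~\ref{propFixedEmbedding}(2) lets us replace~$\theta'$ by a~$\G$-conjugate, so we may assume~$\theta'\in\Cc_h(\Lambda',\varphi(\b))$, with~$\Lambda'_{\varphi(\b)}$ in the closure~$C$ of a fixed chamber of~$\mathfrak{B}(\G_{\varphi(\b)})$ (strong simplicial structure). The goal is to move~$\Lambda'$ inside~$j_{\varphi(\b)}(C)$ to a lattice sequence~$\Lambda$ with~$\Lambda_{\varphi(\b)}$ a vertex of~$C$, while keeping some transfer of~$\theta'$ inside~$\pi$. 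The transfer~$\theta\in\Cc_h(\Lambda,\varphi(\b))$ is then an~$m$-realization of~$\ft$ by Definition~\ref{defmRealization}. If needed, we first pass to an irreducible subquotient to simplify the bookkeeping, since~$\Hom$ from the projective characters is exact.

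The basic step concerns two points~$x,y$ of~$C$ with~$x$ in the closure of the facet of~$y$, corresponding to~$\Lambda_x,\Lambda_y$. Let~$\theta_x,\theta_y$ be the transfers of~$\theta'$. Suppose~$\pi^{\theta_y}\neq0$. We want to show~$\pi^{\theta_x}\neq0$, where~$x$ is a vertex of the facet of~$y$. For this we use the standard compatibility of transfer with the groups~$\H^1,\J^1$ when~$\mathfrak{b}_0(\Lambda_y)\subseteq\mathfrak{b}_0(\Lambda_x)$, as in~\cite{St08} and~\cite{SkodInnerFormII}. These give
\[\H^1(\varphi(\b),\Lambda_y)\subseteq \P_1(\Lambda_{y,\varphi(\b)})\H^1(\varphi(\b),\Lambda_x)\quad\text{and}\quad \P_1(\Lambda_{x,\varphi(\b)})\subseteq\P_1(\Lambda_{y,\varphi(\b)}).\]
They also show that~$\theta_x$ and~$\theta_y$ agree on~$\H^1_x\cap\H^1_y$. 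We then consider the group~$\H^1_x\P_1(\Lambda_{y,\varphi(\b)})\cap\J^1_y$. By the Heisenberg uniqueness of~$\eta_y$ and the analogous statement for~$\eta_x$, any vector of~$\pi$ on which~$\H^1_y$ acts by~$\theta_y$ generates under~$\J^1_y$ a copy of~$\eta_y$. Restricted to the pro-$p$ group~$\J^1_x\cap\J^1_y$, this copy contains the character~$\theta_x$. Here we use that~$\R$-representations of pro-$p$ groups are semisimple, since~$\ell\neq p$. This is where~$\pi^{\theta_x}\neq0$ comes from.

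Iterating from~$y=\Lambda'_{\varphi(\b)}$ to any vertex of its minimal facet gives the result. The main obstacle is the intertwining and restriction computation in the basic step. Concretely, we must show that~$\eta_y|_{\J^1_x\cap\J^1_y}$ contains~$\theta_x$ on~$\H^1_x\cap\J^1_y$, and that~$\H^1_x\subseteq\J^1_y$ up to the part of~$\P_1(\Lambda_{x,\varphi(\b)})$ that is controlled. For this we would use the Iwahori decomposition of~$\J^1_y,\H^1_x$ relative to the parabolic of~$\G_{\varphi(\b)}$ determined by~$x,y$, following the compatibility arguments in~\cite[\S5]{St08}. Alternatively, we argue via intertwining: both~$\theta_x$ and~$\theta_y$ extend to a common character of~$\H^1_x\cap\H^1_y\cdot(\J^1_x\cap\J^1_y)$, so they are intertwined by~$1$. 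Once this containment is in hand, the rest is formal.
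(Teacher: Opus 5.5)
Your outline matches the paper's: start from a transfer contained in $\pi$, then move inside the building of $\G_{\varphi(\beta)}$ to a vertex of its strong facet while keeping a transfer in $\pi$. But the basic step carries all the content, and it is not established; the route you sketch to it fails at two points.

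First, $\mathfrak{b}_0(\Lambda_y)\subseteq\mathfrak{b}_0(\Lambda_x)$ tells you nothing about the hereditary orders $\mathfrak{a}_0(\Lambda_y)$ and $\mathfrak{a}_0(\Lambda_x)$ in $\A$. For semisimple $\beta$ the embedding $j_{\varphi(\beta)}$ does not send facets into facets. For example, if $\G_{\varphi(\beta)}$ is a split maximal torus of $\GL_2(\F)$, its building is a single facet, and it maps onto a whole apartment of $\mathfrak{B}(\G)$. So these orders need not be comparable. Without comparability, neither your group inclusions nor the Heisenberg compatibility results (\cite[Proposition~3.7]{St08}, Proposition~\ref{propHeisenbergPairs}) are available. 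The paper handles this in two steps. It uses \cite[Lemma~2.8]{St08} to replace $\Lambda$ by some $\Lambda'$ with $\P(\Lambda'_{\varphi(\beta)})^\circ=\P(\Lambda_{\varphi(\beta)})^\circ$ and $\P(\Lambda')^\circ\subseteq\P(\Lambda_M)^\circ$. It then uses \cite[Lemma~2.10]{St08} to join $\Lambda$ to $\Lambda'$ by a chain along which the centralizer parahoric is constant and consecutive hereditary orders in $\A$ are comparable.

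Second, even when the orders are comparable, your restriction argument does not deliver what is needed. Knowing that $\eta_y$ contains $\theta_x$ on $\H^1_x\cap\J^1_y$ does not produce a vector on which all of $\H^1_x$ acts by $\theta_x$. The inclusion of $\H^1_x$ in $\J^1_y$ ``up to a controlled part'' is not justified. And the fact that $\theta_x$ and $\theta_y$ are intertwined by $1$ says nothing about which of them occurs in $\pi$.

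The missing idea is to work in the pro-$p$ group $\P_1$ of the finer lattice sequence, which contains both $\J^1$-groups, and to use that the relevant induced representations there are isomorphic and irreducible. Along the chain, $\P_1(\Lambda_{i,\varphi(\beta)})$ is constant, so $\eta_{i-1}$ and $\eta_i$ induce to isomorphic irreducible representations of the $\P_1$-group of the finer of $\Lambda_{i-1},\Lambda_i$. Frobenius reciprocity then shows that $\pi$ contains $\eta_i$ if and only if it contains $\eta_{i-1}$. At the final step, the extension $\hat\eta_M$ of $\eta_M$ to $\P_1(\Lambda'_{\varphi(\beta)})\J^1_M$ induces to the same irreducible representation of $\P_1(\Lambda')$ as $\eta'$. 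Hence $\pi$ contains $\hat\eta_M$, hence $\eta_M$, hence $\theta_M$.

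With these two ingredients your outline becomes the paper's proof; without them the argument is incomplete.
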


\begin{proof}
\
Suppose~$\pi$ is of class~$\ft$.   By Corollary~\ref{corollaryendoclassifandonlyifgeneratedbyisotypic}, it is generated by the sum of its~$\theta$-isotypic components, for~$\theta$ of endo-parameter~$\ft$.  Writing~$\cV_{\text{m}}$ for the subspace generated by the sum of its~$\theta$-isotypic components where~$\theta$ passes over the set of all~$m$-realizations of~$\ft$, we see that~$\cV/\cV_{\text{m}}$ is of class~$\ft$ but contains no~$m$-realization of~$\ft$. If~$\cV/\cV_{\text{m}}$ is non-zero then it has an irreducible subquotient with the same property. Thus it suffices to show that if~$\pi$ is irreducible of class~$\ft$ then it contains an~$m$-realization of~$\ft$.

Thus let~$\pi$ be irreducible of class~$\ft$ and let~$\theta\in\Cc_h(\Lambda,\varphi(\beta))$ be any semisimple character of endo-parameter~$\ft$ such that~$\cV^\theta\neq 0$.  Then~$\cV^\eta=\cV^\theta \neq 0$, where~$\eta$ is the unique Heisenberg representation of~$\J^1(\varphi(\beta),\Lambda)$ containing~$\theta$.

Choose~$\Lambda_M\in\mathcal{B}(\G_{\varphi(\beta)})$ a vertex of the facet containing~$\Lambda$ (strong simplicial structure) such that $\P(\Lambda_{M,\varphi(\b)})^{\circ}$ is a maximal parahoric subgroup in~$\G_{\varphi(\beta)}$.  We will show that~$\pi$ contains the transfer~$\theta_M$ of~$\theta$ to~$\La_M$.
By~\cite[Lemma 2.8]{St08} (see also the paragraph following Lemma 3.11 in~\cite{SkodlerackCuspQuart}), 
there exists~$\Lambda'\in\mathcal{B}(\G_{\varphi(\beta)})$ such that~$\P(\Lambda'_{\varphi(\b)})^{\circ}$ and~$\P(\Lambda_{\varphi(\b)})^{\circ}$ coincide and~$\P(\Lambda_M)^\circ\supseteq \P(\Lambda')^\circ$.  Moreover, by~\cite[Lemma 2.10]{St08} there exists a sequence in~$\mathcal{B}(\G_{\varphi(\beta)})$
\[
\Lambda=\Lambda_0,\ldots,\Lambda_t=\Lambda'
\]
such that~$\P(\Lambda_{i,\varphi(\beta)})^\circ=\P(\Lambda_{\varphi(\b)})^{\circ}=\P(\Lambda'_{\varphi(\b)})^{\circ}$ and either~$\P(\Lambda_i)^\circ\subseteq \P(\Lambda_{i-1})^\circ$ or~$\P(\Lambda_i)^\circ\supseteq \P(\Lambda_{i-1})^\circ$. In fact we choose~$\Lambda_i$ close enough to~$\Lambda_{i-1}$ such that the above inclusions are satisfied for the corresponding hereditary orders.  Let~$\theta',\theta_i$ be the transfer of~$\theta$ to~$\La'$ and~$\La_i$, respectively, for~$0\leqslant i \leqslant t$.  Similarly, we write~$\eta_M,\eta',\eta_i$ for the Heisenberg representations associated to~$\theta_M,\theta',\theta_i$ respectively, and abbreviate~$\J^1_M=\J^1(\varphi(\beta),\Lambda_M)$ and similarly~$\J'^1,\J^1_i$.

For~$0< i\leqslant t$, suppose first that~$\P(\Lambda_i)^\circ\subseteq \P(\Lambda_{i-1})^\circ$ so that~$\P_1(\Lambda_i)\supseteq \P_1(\Lambda_{i-1})$. By~\cite[Proposition 3.7]{St08},~\cite[Proposition~4.5]{SkodlerackCuspQuart} and Proposition~\ref{propHeisenbergPairs}, we find that~$\ind_{\J^1_{i-1}}^{\P_1(\Lambda_i)}(\eta_{i-1})\simeq \ind_{ \J^1_i}^{\P_1(\Lambda_i)}(\eta_i)$ is irreducible, hence
\[
{\cV^{\eta_i}}=\cV^{ \ind_{ \J^1_i}^{\P_1(\Lambda_i)}(\eta_i)}=\cV^{\ind_{\J^1_{i-1}}^{\P_1(\Lambda_i)}(\eta_{i-1})}={\cV^\eta_{i-1}}.
\]
Therefore,~$\cV^{\eta_i}$ is non-zero if and only if~$\cV^{\eta_{i-1}}$ is non-zero.
The same result holds when~$\P(\Lambda_i)^\circ\supseteq \P(\Lambda_{i-1})^\circ$, by inducing instead to~$\P_1(\Lambda_{i-1})$. 
Thus, iterating this procedure along the path~$\Lambda=\Lambda_0,\ldots,\Lambda_t=\Lambda'$, we find that~$\cV^{\eta'}$ is non-zero.

By~\cite[Proposition 3.7]{St08} etc.~again, there is a unique irreducible representation~$\hat\eta_M$ of~$\hat\J^1_M=\P_1(\Lambda'_{\varphi(\b)})\J^1_M$ which extends~$\eta_M$ and such that~$\ind_{\J'^1}^{\P_1(\Lambda')}(\eta')\simeq \ind_{ \hat\J^1_M}^{\P_1(\Lambda')}(\hat\eta_M)$ is irreducible. Hence, we repeat the above argument to get that~$\cV^{\hat\eta_M} $ is non-zero. 
%\[
%\cV^{\eta'} \subseteq 
%\cV^{ \ind_{\J'^1}^{\P_1(\Lambda')}(\eta')}=\cV^{\ind_{ \hat\J^1_M}^{\P_1(\Lambda')}(\hat\eta_M)} \text{ and } 
%\cV^{\hat\eta_M} \subseteq \cV^{\eta_M}.
%\]
Thus~$\cV^{\eta_M}=\cV^{\theta_M}$ is non-zero, as required.
\end{proof}

%%%%%%%%%%%%%%%%%%%%%%%%%%%%%
\section{Beta extensions and types for Bernstein blocks}\label{secbetaextsandtypes}
%%%%%%%%%%%%%%%%%%%%%%%%%%%%%

Let~$\mathfrak{t}$ be an endo-parameter for~$(\G,h)$ and~$(0,\beta)$ a full semisimple pair for~$\mathfrak{t}$ and set~$\E=\F[\beta]$.  
In this section, we collect results which in a future work will allow us to decompose~$\Rep_\R(\ft)$.  It follows from a simple cohomology calculation (cf.~\cite[5.2.4]{BK93}, \cite[Theorem 4.1]{St08}) in characteristic zero, and either by an analogous argument or a simple reduction modulo~$\ell$ argument for positive characteristic, that the Heisenberg representation~{$\eta$ of~$\J^1=\J^1(\varphi(\beta),\Lambda)$} extends to an irreducible~$\R$-representation of~$\J=\J(\varphi(\beta),\Lambda)$.  Now, 
 \begin{enumerate}
\item for the construction of types for Bernstein blocks it is useful to choose extensions with strong intertwining properties, this leads to the notion of ``beta extensions'';
\item for questions related to {comparing} types for different blocks, we need to choose beta extensions ``compatibly''. 
\end{enumerate}

In the literature in the case of depth zero for the construction of types the strong simplicial structure of the Bruhat-Tits building of~$\G$ has been used. 
So the first main purpose is to introduce beta extensions where we use the strong simplicial structure of the building of the centralizer of~$\beta$. 
The second is to recall the constructions of types for Bernstein blocks, {and show that the collection of beta extensions using the strong simplicial structure is sufficient for the construction of types}.  {As well as making it more resemble the depth zero situation, this very mildly reduces a choice made in the construction of types (from choosing a weak vertex to choosing a strong vertex).}

%%%%%%%%%%%%%%%%%%%%%%%%%%%%%
\subsection{Beta extensions}\label{subsecBeta}
Let~$\theta$ be an $m$-realization of~$\mathfrak{t}$ for~$(\G,h)$ and choose for~$\theta$ a para\-metri\-za\-tion~$((\V,h),\varphi,\Lambda,\beta)$ so that~$\theta\in\mathcal{C}_h(\Lambda,\varphi(\beta))$,
 in particular~$\P(\Lambda_{{\varphi(\beta)}})^\circ$ is a maximal parahoric subgroup of~$\G_{{\varphi(\beta)}}$. For any self-dual~${\o_{\varphi(\E)}}$-lattice sequence~$\Upsilon$, via the transfer map we have a semisimple character $\theta_{\Upsilon}=\tau_{\Lambda,\Upsilon,\varphi(\beta)}(\theta)$ and a Heisenberg extension~$\eta_{\Upsilon}$ of~$\J^1_\Upsilon=\J^1(\varphi(\beta),\Upsilon)$ the pro-$p$-radical of~$\J_\Upsilon=\J(\varphi(\beta),\Upsilon)$. We  denote by~$\P^{\st}(\Upsilon_{{\varphi(\beta)}})$ the pointwise fixator of the facet~$\overline{\Upsilon_{{\varphi(\beta)}}}$ (under the strong simplicial structure, as in \ref{parahoricssec}) containing~$\Upsilon_{{\varphi(\beta)}}$ in the building of~$\G_\beta$. 
 
%%%%%%%%%%%%%%%%%%%%%%%%%%%%%
\subsubsection{Extensions of Heisenberg extensions}
 Suppose~$\Upsilon$ and~$\Upsilon'$ are~${\o_{\varphi(\E)}}$-lattice sequences satisfying \[\P(\Upsilon_{{\varphi(\beta)}})^\circ \subseteq\P(\Upsilon'_{{\varphi(\beta)}})^\circ.\] 
 As~$\P_1(\Upsilon_{{\varphi(\beta)}})\subseteq\P^{\st}(\Upsilon_{{\varphi(\beta)}})\subseteq\P(\Upsilon'_{{\varphi(\beta)}})$, they both normalize~$\J^1(\varphi(\beta),\Upsilon')$ and we can form the subgroups 
 \[\J^{\st}_{\Upsilon,\Upsilon'}:=\P^{\st}(\Upsilon_{{\varphi(\beta)}})\J^1(\varphi(\beta),\Upsilon'),\text{ and }\J^{1}_{\Upsilon,\Upsilon'}:=\P_1(\Upsilon_{{\varphi(\beta)}})\J^1(\varphi(\beta),\Upsilon'),\]
 of~$\J(\varphi(\beta),\Upsilon)$.  We let~$\J^{\st}_{\Upsilon}=\J^{\st}_{\Upsilon,\Upsilon}$ and also use the notation~$\J^{\st}(\varphi(\beta),\Upsilon)$ where we want to emphasize the dependence on~$\varphi(\beta)$.
 Thus we have the following chain of subgroups of~$\J_{\Upsilon'}$:
 \[\J^1_{\Upsilon'}\subseteq\J^1_{\Upsilon,\Upsilon'}\subseteq\J^{\st}_{\Upsilon,\Upsilon'}\subseteq\J^{\st}_{\Upsilon'}\subseteq\J_{\Upsilon'}.\]
 
Suppose~$\P(\Upsilon)\subseteq \P(\Upsilon')$, then by \cite[Proposition 3.7]{St08}, Proposition~\ref{propHeisenbergPairs},~\cite[Proposition 4.5]{SkodlerackCuspQuart}, there exists a unique irreducible representation~$\eta_{\Upsilon,\Upsilon'}$ extending~$\eta_{\Upsilon}$ to~$\J^1_{\Upsilon,\Upsilon'}$ such that~$\eta_{\Upsilon,\Upsilon'}$ and~$\eta_{\Upsilon}$ induce equivalent irreducible representations on~$\P_1(\Upsilon)$. 
We denote 
\[\ext(\Upsilon,\Upsilon'):=\{\text{extensions of~$\eta_{\Upsilon,\Upsilon'}$ to~$\J^{\st}_{\Upsilon,\Upsilon'}$}\},\]
and~$\ext(\Upsilon)=\{\text{extensions of~$\eta_{\Upsilon}$ to~$\J^{\st}_{\Upsilon}$}\}=\ext(\Upsilon,\Upsilon)$.

%%%%%%%%%%%%%%%%%%%%%%%%%%%%%
\subsubsection{The maximal case}
 Let~$\Gamma$ be an~${\o_{\varphi(\E)}}$-lattice sequence corresponding to a minimal parahoric subgroup~$\P(\Gamma_{{\varphi(\beta)}})^\circ$ of~$\G_{{\varphi(\beta)}}$ contained in the maximal parahoric subgroup~$\P(\Lambda_{{\varphi(\beta)}})^\circ$.  
 Then~$\J^1_{\Gamma,\Lambda}$ is a pro-$p$ Sylow subgroup of~$\J_{\Lambda}$. 
By \cite[Theorem 4.1]{St08} (the same proof works in the other cases, cf.~\cite[Proposition 6.1(i)]{SkodlerackCuspQuart}),~$\eta_{\Gamma,\Lambda}$ extends to~$\J_{\Lambda}$, and we call any extension of~$\eta_{\Gamma,\Lambda}$ to~$\J_{\Lambda}$ a \emph{beta extension} of~$\eta$ (this definition is independent of the choice of~$\Gamma$). We write
\[\bext(\Lambda):=\{\text{beta extensions of~$\eta$ to~$\J_{\Lambda}$}\}.\]
Any two beta extensions of~$\eta$ differ by a character of~$\P(\Lambda_{{\varphi(\beta)}})/\P_1(\Lambda_{{\varphi(\beta)}})$ which is trivial on the subgroup generated by all its unipotent subgroups. 
Note that~$\bext(\Lambda)$ is the set of those representations of~$\J^{\st}_\Lambda$ with restriction to~$\J^{\st}_{\Gamma,\Lambda}$ contained in~$\ext(\Gamma,\Lambda)$.
 
\begin{remark}
Our definition of the ``maximal case'' for beta extensions is more restrictive than that of \cite{St08}, \cite{SkodlerackCuspQuart} who allow non-maximal parahoric subgroups for~$\P^{\circ}(\Lambda_{{\varphi(\beta)}})$ (and non-$m$-semisimple realizations), but where the full stabilizer is a maximal compact in their definition of the ``maximal case''.  We restrict to just maximal parahoric subgroups as we wish to have stronger compatibility properties, prefer less choice between the extensions we eventually define, and for our application there is no need to extend as far as in \cite{St08},  \cite{SkodlerackCuspQuart}.
\end{remark}
 
%%%%%%%%%%%%%%%%%%%%%%%%%%%%%
\subsubsection{The non-maximal case:~compatibility} 
For~$\Lambda$ an~${\o_{\varphi(\E)}}$-lattice sequence in~$\V$, we write~${\mathfrak{a}}_0(\Lambda)$, and~${\mathfrak{b}}_0(\Lambda)$ for the associated hereditary~{$\o_\F$ and~$\o_{\varphi(\E)}$}-orders in~{$\Aut_{\D}(\V)$ and~$\Aut_{\varphi(\E)\otimes \D}(\V)$} respectively.

Suppose now that~$\Upsilon$ is a self-dual~${\o_{\varphi(\E)}}$-lattice sequence with associated parahoric subgroup~$\P^{\circ}(\Upsilon_{{\varphi(\beta)}})$ of~$\G$ contained in the parahoric~$\P^{\circ}(\Lambda_{{\varphi(\beta)}})$. 

\begin{lemma}\label{betaextsbij}
There exists a natural bijection
\[\mathbf{b}_{\Upsilon,\Lambda}:\ \ext(\Upsilon)\rightarrow\ext(\Upsilon,\Lambda)\]
which can be characterized as follows:  there exists a sequence of (self-dual)~${\o_{\varphi(\E)}}$-lattice sequences
\begin{equation}
\label{eqUpsSeq}\Upsilon=\Upsilon_0,\Upsilon_1,\ldots,\Upsilon_l=\Lambda
\end{equation}
satisfying
\begin{enumerate}
\item for all~$0<i<l$,~$\P(\Upsilon_{{\varphi(\beta)}})^\circ=\P(\Upsilon_{i,{\varphi(\beta)}})^\circ$ (which implies~$\P^{\st}(\Upsilon_{i,{\varphi(\beta)}})=\P^{\st}(\Upsilon_{{\varphi(\beta)}})$), and
\item for all~$0\leqslant i<l$,~${\mathfrak{a}}_0(\Upsilon_i)\subseteq {\mathfrak{a}}_0(\Lambda_{i+1})\text{ or }{\mathfrak{a}}_0(\Upsilon_i)\supseteq {\mathfrak{a}}_0(\Lambda_{i+1})$ (which implies~$\P_1(\Upsilon_i)\geqslant \P_1(\Upsilon_{i+1})$ or~$\P_1(\Upsilon_i)\leqslant \P_1(\Upsilon_{i+1})$ respectively),
\end{enumerate} 
and, choosing such a sequence, {for any~$\kappa_0\in\ext(\Upsilon)$} there exist unique representations~$\kappa_i\in\ext(\Upsilon_i)$, for~$i<l$, and a unique representation~$\kappa_l\in\ext(\Upsilon,\Upsilon_l)$, such that:
\begin{enumerate}
\item for~$i<l-1$,  if~${\mathfrak{a}_0}(\Upsilon_i)\subseteq{\mathfrak{a}_0}(\Upsilon_{i+1})$, then 
\[\ind_{\J^{\st}_{\Upsilon_i}}^{\P^{\st}(\Upsilon_{{\varphi(\beta)}})\P_1(\Upsilon_{i})}(\kappa_i)\simeq \ind_{\J^{\st}_{\Upsilon_{i+1}}}^{\P^{\st}(\Upsilon_{{\varphi(\beta)}})\P_1(\Upsilon_{i})}(\kappa_{i+1})\]
else if~${\mathfrak{a}_0}(\Upsilon_{i+1})\subseteq{\mathfrak{a}_0}(\Upsilon_i)$, then 
\[\ind_{\J^{\st}_{\Upsilon_i}}^{\P^{\st}(\Upsilon_{{\varphi(\beta)}})\P_1(\Upsilon_{i+1})}(\kappa_i)\simeq \ind_{\J^{\st}_{\Upsilon_{i+1}}}^{\P^{\st}(\Upsilon_{{\varphi(\beta)}})\P_1(\Upsilon_{i+1})}(\kappa_{i+1}).\]
\item the representation~$\kappa_l$ satisfies
\[\ind_{\J^{\st}_{\Upsilon_{l-1}}}^{\P^{\st}(\Upsilon_{{\varphi(\beta)}})\P_1(\Upsilon_{l-1})}(\kappa_{l-1})\simeq \ind_{\J^{\st}_{\Upsilon,\Upsilon_{l}}}^{\P^{\st}(\Upsilon_{{\varphi(\beta)}})\P_1(\Upsilon_{l-1})}(\kappa_{l})\]
\end{enumerate}
and we set~$\mathbf{b}_{\Upsilon,\Lambda}(\kappa_0)=\kappa_l$.
\end{lemma}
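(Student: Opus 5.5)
We build the bijection one step at a time along a path as in~\eqref{eqUpsSeq}, and then show that the composite does not depend on the path. \emph{Existence of a path.} As in the proof of Theorem~\ref{thm43}, we use~\cite[Lemma 2.8, Lemma 2.10]{St08} and the paragraph after Lemma 3.11 of~\cite{SkodlerackCuspQuart}, working in the building of~$\G_{\varphi(\beta)}$. We move~$\Upsilon_{\varphi(\beta)}$ inside its own facet until we reach a point~$\Upsilon_{l-1}$ whose hereditary order is comparable with~$\mathfrak{a}_0(\Lambda)$. Each consecutive pair of points is taken so close that their~$\o_\F$-hereditary orders are nested. This gives conditions (1) and (2). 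Every~$\Upsilon_i$ with~$i<l$ lies in the same facet as~$\Upsilon$, so~$\P^{\st}(\Upsilon_{i,\varphi(\beta)})=\P^{\st}(\Upsilon_{\varphi(\beta)})$.

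\emph{One step.} Fix~$i<l-1$ with, say,~$\mathfrak{a}_0(\Upsilon_i)\subseteq\mathfrak{a}_0(\Upsilon_{i+1})$. Put~$K=\P^{\st}(\Upsilon_{\varphi(\beta)})\P_1(\Upsilon_i)$. By~\cite[Proposition 3.7]{St08}, Proposition~\ref{propHeisenbergPairs} and~\cite[Proposition~4.5]{SkodlerackCuspQuart}, the representations~$\ind_{\J^1_{\Upsilon_i}}^{\P_1(\Upsilon_i)}\eta_{\Upsilon_i}$ and~$\ind_{\J^1_{\Upsilon_{i+1}}}^{\P_1(\Upsilon_i)}\eta_{\Upsilon_{i+1}}$ are irreducible and isomorphic. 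Call this common representation~$\mu$. Its intertwining in~$K$ is all of~$K$. For~$\kappa_i\in\ext(\Upsilon_i)$, the induction~$\ind_{\J^{\st}_{\Upsilon_i}}^{K}\kappa_i$ restricts to~$\mu$ on~$\P_1(\Upsilon_i)$. This uses the Mackey formula together with the equalities~$K=\J^{\st}_{\Upsilon_i}\P_1(\Upsilon_i)$ and~$\J^{\st}_{\Upsilon_i}\cap\P_1(\Upsilon_i)=\J^1_{\Upsilon_i}$; both follow from the Iwahori-type decompositions of the~$\J$-groups, and the same holds for~$\Upsilon_{i+1}$. So the maps~$\kappa\mapsto\ind\kappa$ are bijections from~$\ext(\Upsilon_i)$ and from~$\ext(\Upsilon_{i+1})$ onto the set of extensions of~$\mu$ to~$K$. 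The inverse map is restriction to~$\J^{\st}$ followed by taking the~$\eta$-isotypic part, which by Frobenius reciprocity and irreducibility is a single copy. Composing one bijection with the inverse of the other gives the unique~$\kappa_{i+1}$. The last step, to~$\ext(\Upsilon,\Upsilon_l)=\ext(\Upsilon,\Lambda)$, works the same way. There we use~$\eta_{\Upsilon,\Lambda}$, which is characterized by inducing to~$\P_1(\Upsilon)$ exactly as~$\eta_\Upsilon$ does. The reverse inclusion case is handled symmetrically, inducing to~$\P^{\st}\P_1(\Upsilon_{i+1})$. This settles uniqueness and bijectivity for a fixed path.

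\emph{Independence of the path.} This is the main obstacle. A cleaner characterization should avoid paths altogether: the claim is that~$\mathbf{b}_{\Upsilon,\Lambda}(\kappa_0)$ is the unique~$\kappa\in\ext(\Upsilon,\Lambda)$ with
\[\ind_{\J^{\st}_\Upsilon}^{\P^{\st}(\Upsilon_{\varphi(\beta)})\P_1(\Upsilon)}\kappa_0\simeq\ind_{\J^{\st}_{\Upsilon,\Lambda}}^{\P^{\st}(\Upsilon_{\varphi(\beta)})\P_1(\Upsilon)}\kappa.\]
To prove this, we check the relation is transitive along the path. For each~$i$, induce both sides of the step isomorphism further up to~$\P^{\st}(\Upsilon_{\varphi(\beta)})\P_1(\Upsilon)$, or compare after restricting to a common subgroup. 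The key tool is that~$\P_1(\Upsilon)\supseteq\P_1(\Upsilon_i)\cap\P_1(\Upsilon)$ and that~$\J^1$-parts agree on the~$\G_{\varphi(\beta)}$-component. If direct transitivity fails because the groups~$\P_1(\Upsilon_i)$ are incomparable, we fall back on a simply-connectedness argument: any two admissible paths inside the facet of~$\Upsilon_{\varphi(\beta)}$ are joined by elementary moves within a small neighbourhood, where all hereditary orders are mutually comparable. For such local moves, uniqueness of the step maps shows that the composites agree.
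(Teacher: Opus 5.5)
The paper's proof of Lemma~\ref{betaextsbij} only points to \cite[Lemma~4.3]{St08} and \cite[\S6]{SkodlerackCuspQuart}, with $\J^{\st}$-groups in place of $\J$-groups. Your construction for a \emph{fixed} sequence is that argument, and it is correct. Lemmas~2.8 and~2.10 of \cite{St08} give the path. At each step, irreducibility of $\ind_{\J^1_{\Upsilon_i}}^{\P_1(\Upsilon_i)}\eta_{\Upsilon_i}\simeq\ind_{\J^1_{\Upsilon_{i+1}}}^{\P_1(\Upsilon_i)}\eta_{\Upsilon_{i+1}}$, together with Frobenius reciprocity, makes $\kappa\mapsto\ind\kappa$ a bijection onto the extensions of this representation to $\P^{\st}(\Upsilon_{\varphi(\beta)})\P_1(\Upsilon_i)$.

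The gap is in the step you yourself single out: independence of the chosen sequence, which is needed for $\mathbf{b}_{\Upsilon,\Lambda}$ to be well defined. Neither of your arguments for it works.
\begin{itemize}
\item Your path-free characterization only makes sense when $\J^{\st}_{\Upsilon,\Lambda}=\P^{\st}(\Upsilon_{\varphi(\beta)})\J^1_\Lambda$ lies in $\P^{\st}(\Upsilon_{\varphi(\beta)})\P_1(\Upsilon)$. That is essentially the case $\mathfrak{a}_0(\Upsilon)\subseteq\mathfrak{a}_0(\Lambda)$, where no path is needed at all.
\item Along a general path $\P_1(\Upsilon_i)\not\subseteq\P_1(\Upsilon)$, so the step isomorphisms cannot be induced up to $\P^{\st}(\Upsilon_{\varphi(\beta)})\P_1(\Upsilon)$. ``Restricting to a common subgroup'' does not transport an isomorphism of induced representations.
\item Your fallback rests on a false claim. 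Near a point $z$ of $j_{\varphi(\beta)}(F)$, with $F$ the facet of $\Upsilon_{\varphi(\beta)}$, the hereditary orders are in general \emph{not} mutually comparable. Points on opposite sides of a wall of $\mathfrak{B}(\G)$ through $z$ have incomparable orders; what is true is only that all of them are contained in $\mathfrak{a}_0(z)$.
\item Similarly, the last step must be taken in $\P^{\st}(\Upsilon_{\varphi(\beta)})\P_1(\Upsilon_{l-1})$, not over $\P_1(\Upsilon)$. You must also check that $\eta_{\Upsilon_{l-1},\Lambda}$ does not depend on the choice of $\Upsilon_{l-1}$.
\end{itemize}

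A workable route is as follows.
\begin{enumerate}
\item Prove transitivity for nested triples. If $\mathfrak{a}_0(x)\subseteq\mathfrak{a}_0(y)\subseteq\mathfrak{a}_0(z)$, induce the $y\to z$ isomorphism from $\P^{\st}(\Upsilon_{\varphi(\beta)})\P_1(y)$ up to $\P^{\st}(\Upsilon_{\varphi(\beta)})\P_1(x)$. Uniqueness of the $x\to z$ step then gives that $x\to z$ equals the composite $x\to y\to z$. The same argument with $z=\Lambda$ also shows $\eta_{x,\Lambda}=\eta_{y,\Lambda}$.
\item Observe that every step map is equivariant for twisting by characters of $\P^{\st}(\Upsilon_{\varphi(\beta)})/\P_1(\Upsilon_{\varphi(\beta)})$. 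Hence two admissible sequences give composites differing by a single such character, and by (1) this character is trivial around every nested triangle.
\item Supply a genuine topological argument that the loop formed by two sequences is a product of nested triangles. For instance: $j_{\varphi(\beta)}(F)$ is convex and cut by the walls of $\mathfrak{B}(\G)$ into relatively open cells, whose complex of chains is contractible. The points whose order lies in $\mathfrak{a}_0(\Lambda)$ form a set that is star-shaped towards $\Lambda$, hence connected.
\end{enumerate}
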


\begin{proof}
The proof follows the arguments given in~\cite[\S6]{SkodlerackCuspQuart} and~\cite[Lemma 4.3]{St08}, replacing their maximal condition with ours and their use of~$\J$-groups with the~$\J^{\st}$-groups we introduce in this paper.
\end{proof}

\begin{definition}
Suppose~$\P(\Lambda_{{\varphi(\beta)}})^\circ$ is a maximal parahoric subgroup of~$\G_{\varphi(\b)}$. An extension~$\kappa_{\Upsilon}\in\ext(\Upsilon)$ is called a \emph{beta extension relative to~$\Lambda$} if there exists a (maximal) beta extension~$\kappa\in\bext(\Lambda)$ such that~\[\mathbf{b}_{\Upsilon,\Lambda}(\kappa_{\Upsilon})=\kappa\mid_{\J^{\st}_{\Upsilon,\Lambda}};\]
and if this is the case we say that~$\kappa_{\Upsilon}$ and~$\kappa$ are \emph{compatible}.\end{definition}

\begin{lemma}\label{positivedepthparaindinfinitegroup}
Let~$\theta$ be an~$m$-realization of~$\mathfrak{t}$ for~$(\G,h)$ with parameterization $((\V,h),\varphi,\Lambda,\beta)$, and let~$\kappa\in\bext(\Lambda)$.  Let~$\overline{\P}$ be {the subgroup} of~$\overline{\G}=\J_{\Lambda}^{\st}/\J^1_{\Lambda}$ corresponding to {the subgroup}~$\P^{\st}(\Upsilon_{{\varphi(\beta)}})$ of~$\P^{\st}(\Lambda_{{\varphi(\beta)}})$ ({i.e.,~$\overline{\P}$ is the image of~$\J^{\st}_{\Upsilon,\Lambda}$ under the reduction map~$\J_{\Lambda}^{\st}\rightarrow \overline{\G}$}), and self-dual~${\o_{\varphi(\E)}}$-lattice sequence~$\Upsilon_{{\varphi(\beta)}}$, and set~$\overline{\M}=\J_{\Upsilon}^{\st}/\J^1_{\Upsilon}\simeq \P^{\st}(\Upsilon_{{\varphi(\beta)}})/\P_1(\Upsilon_{{\varphi(\beta)}})$, {considered as a quotient of~$\overline{\P}$}.  {Let~$\kappa_{\Upsilon}\in\ext(\Upsilon)$ be compatible with~$\kappa$.} Then, for any~$\R$-representation~$\rho$ of~$\overline{\M}$,
\[\ind_{\J^{\st}_{\Upsilon}}^{\G}(\kappa_{\Upsilon}\otimes \rho)\simeq \ind_{\J^{\st}_{\Lambda}}^{\G}(\kappa\otimes \ind_{\overline{\P}}^{\overline{\G}}(\rho)). \]
\end{lemma}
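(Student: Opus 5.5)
By transitivity of induction, both sides are obtained by inducing to $\G$ from a common intermediate group. The plan is to show that the two representations already agree on the intermediate group
\[
\J^{\st}_{\Upsilon,\Lambda}\J^1_\Lambda=\J^{\st}_{\Upsilon,\Lambda}=\P^{\st}(\Upsilon_{\varphi(\beta)})\J^1_\Lambda,
\]
or more conveniently on $\P^{\st}(\Upsilon_{\varphi(\beta)})\P_1(\Upsilon)$, and then induce up to $\G$.

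\textbf{Right-hand side.} Since $\kappa$ is a representation of $\J^{\st}_\Lambda$, we have
\[
\kappa\otimes\ind_{\overline{\P}}^{\overline{\G}}(\rho)\simeq \ind_{\J^{\st}_{\Upsilon,\Lambda}}^{\J^{\st}_\Lambda}\big(\kappa|_{\J^{\st}_{\Upsilon,\Lambda}}\otimes\rho\big),
\]
where $\rho$ is inflated to $\overline{\P}$ via $\overline{\P}\to\overline{\M}$. The isomorphism comes from the projection formula $\ind(\mathrm{res}\,\kappa\otimes\rho)\simeq\kappa\otimes\ind\rho$. It holds for arbitrary $\R$, since $\J^{\st}_{\Upsilon,\Lambda}$ has finite index in $\J^{\st}_\Lambda$ and $\rho$ factors through $\overline{\P}$. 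Hence the right-hand side of the lemma equals
\[
\ind_{\J^{\st}_{\Upsilon,\Lambda}}^{\G}\big(\kappa|_{\J^{\st}_{\Upsilon,\Lambda}}\otimes\rho\big).
\]
By compatibility, $\kappa|_{\J^{\st}_{\Upsilon,\Lambda}}=\mathbf{b}_{\Upsilon,\Lambda}(\kappa_\Upsilon)$. Here $\rho$ is viewed through $\J^{\st}_{\Upsilon,\Lambda}\to\P^{\st}(\Upsilon_{\varphi(\beta)})/\P_1(\Upsilon_{\varphi(\beta)})$.

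\textbf{Left-hand side: the chain.} It therefore suffices to prove that
\[
\ind_{\J^{\st}_\Upsilon}^{\G}(\kappa_\Upsilon\otimes\rho)\simeq\ind_{\J^{\st}_{\Upsilon,\Lambda}}^{\G}\big(\mathbf{b}_{\Upsilon,\Lambda}(\kappa_\Upsilon)\otimes\rho\big).
\]
I will use the chain $\Upsilon=\Upsilon_0,\dots,\Upsilon_l=\Lambda$ of Lemma~\ref{betaextsbij}. Along it the groups $\P^{\st}(\Upsilon_{i,\varphi(\beta)})=\P^{\st}(\Upsilon_{\varphi(\beta)})$ and $\P_1(\Upsilon_{i,\varphi(\beta)})$ do not change. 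Consequently each $\J^{\st}_{\Upsilon_i}/\J^1_{\Upsilon_i}$ is canonically $\overline{\M}$, and $\rho$ inflates compatibly to every $\J^{\st}_{\Upsilon_i}$ and to $\J^{\st}_{\Upsilon,\Lambda}$.

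At each step, say with $\mathfrak a_0(\Upsilon_i)\subseteq\mathfrak a_0(\Upsilon_{i+1})$, let $H=\P^{\st}(\Upsilon_{\varphi(\beta)})\P_1(\Upsilon_i)$. Lemma~\ref{betaextsbij} gives $\ind_{\J^{\st}_{\Upsilon_i}}^{H}\kappa_i\simeq\ind_{\J^{\st}_{\Upsilon_{i+1}}}^{H}\kappa_{i+1}$. I then twist by $\rho$ inflated to $H$. This requires that $\rho$ extends to $H$ through a quotient $H\to\overline{\M}$, namely the quotient by $\P_1(\Upsilon_i)$ times the pro-$p$ part, compatibly with both inflations. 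I will check this by noting that $H=\P^{\st}(\Upsilon_{\varphi(\beta)})\cdot N$, where
\[
N=\P_1(\Upsilon_i)\J^1_{\Upsilon_{i+1}}
\]
is a normal pro-$p$ subgroup with $N\cap\P^{\st}(\Upsilon_{\varphi(\beta)})=\P_1(\Upsilon_{\varphi(\beta)})$. This intersection should follow from the Iwahori-type decompositions of $\J$ and $\P_1$ relative to $\G_{\varphi(\beta)}$, as in \cite{St08}.

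The projection formula then gives
\[
\ind_{\J^{\st}_{\Upsilon_i}}^{H}(\kappa_i\otimes\rho)\simeq\ind_{\J^{\st}_{\Upsilon_{i+1}}}^{H}(\kappa_{i+1}\otimes\rho).
\]
Inducing to $\G$ and concatenating the steps, with the final step landing on $\J^{\st}_{\Upsilon,\Lambda}$ and $\kappa_l=\mathbf{b}_{\Upsilon,\Lambda}(\kappa_\Upsilon)$, yields the claim.

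\textbf{Main obstacle.} The delicate point is the compatibility of the various inflations of $\rho$. Concretely, the subgroup $N$ must be normalized by $\P^{\st}(\Upsilon_{\varphi(\beta)})$, and its intersection with that group must be exactly $\P_1(\Upsilon_{\varphi(\beta)})$. Only then is $\rho$ well defined on $H$ and restricts correctly to both $\J^{\st}_{\Upsilon_i}$ and $\J^{\st}_{\Upsilon_{i+1}}$. I would first try to derive this from $\J^1_{\Upsilon_i}\cap\G_{\varphi(\beta)}=\P_1(\Upsilon_{i,\varphi(\beta)})$ together with $\P_1(\Upsilon_i)\cap\G_{\varphi(\beta)}=\P_1(\Upsilon_{i,\varphi(\beta)})$, the latter coming from the compatibility of $j_{\varphi(\beta)}$ with Moy--Prasad filtrations.
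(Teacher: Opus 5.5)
Your proposal is correct and follows essentially the same route as the paper: induce in stages through $\P^{\st}(\Upsilon_{\varphi(\beta)})\P_1(\Upsilon_i)$ along the chain of Lemma~\ref{betaextsbij}, and twist each isomorphism of induced representations by the inflation $\widetilde{\rho}$ of $\rho$ using the projection formula. The argument then finishes with $\ind_{\J^{\st}_{\Upsilon,\Lambda}}^{\J^{\st}_\Lambda}(\kappa\otimes\rho)\simeq\kappa\otimes\ind_{\overline{\P}}^{\overline{\G}}(\rho)$. Your extra check that $\rho$ inflates consistently is something the paper takes for granted, and it is easier than you fear: in the step you describe, $N=\P_1(\Upsilon_i)\J^1_{\Upsilon_{i+1}}$ is just $\P_1(\Upsilon_i)$, so everything reduces to $\P_1(\Upsilon_i)\cap\G_{\varphi(\beta)}=\P_1(\Upsilon_{i,\varphi(\beta)})=\P_1(\Upsilon_{\varphi(\beta)})$.
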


\begin{proof}
We choose a sequence of self-dual~${\o_{\varphi(\E)}}$-lattices as in \eqref{eqUpsSeq}, and~$\kappa_i$ as in Lemma \ref{betaextsbij} so that~$\kappa_l=\kappa\mid_{\J^{\st}_{\Upsilon,\Lambda}}$.  If~$i<l-1$, then as~$\kappa_{i}$ and~$\kappa_{i+1}$ induce equivalent representations of a subgroup of~$\G$, transitivity of induction along the path allows one to deduce an isomorphism between~$\ind_{\J^{\st}_{\Upsilon}}^{\G}(\kappa_{\Upsilon}\otimes \rho)$ and~$\ind_{\J^{\st}_{\Upsilon_{l-1}}}^{\G}(\kappa_{l-1}\otimes \rho)$.  Indeed, if~${\mathfrak{a}}_0(\Upsilon_i)\subseteq {\mathfrak{a}}_0(\Upsilon_{i+1})$ then we have
\begin{align*}
\ind_{\J^{\st}_{\Upsilon_i}}^{\G}(\kappa_{i}\otimes \rho)&\simeq \ind_{\P^{\st}(\Upsilon_{{\varphi(\beta)}})\P_1(\Upsilon_{i})}^{\G}(\ind_{\J^{\st}_{\Upsilon_i}}^{\P^{\st}(\Upsilon_{{\varphi(\beta)}})\P_1(\Upsilon_{i})}(\kappa_i\otimes \rho))\\
&\simeq \ind_{\P^{\st}(\Upsilon_{{\varphi(\beta)}})\P_1(\Upsilon_{i})}^{\G}(\ind_{\J^{\st}_{\Upsilon_i}}^{\P^{\st}(\Upsilon_{{\varphi(\beta)}})\P_1(\Upsilon_{i})}(\kappa_i)\otimes \widetilde{\rho})\\
&\simeq\ind_{\P^{\st}(\Upsilon_{{\varphi(\beta)}})\P_1(\Upsilon_{i})}^{\G}(\ind_{\J^{\st}_{\Upsilon_{i+1}}}^{\P^{\st}(\Upsilon_{{\varphi(\beta)}})\P_1(\Upsilon_{i})}(\kappa_{i+1})\otimes \widetilde{\rho})\\&\simeq \ind_{\J^{\st}_{\Upsilon_{i+1}}}^{\G}(\kappa_{i+1}\otimes \rho),
\end{align*}
where~$\widetilde{\rho}$ extends~$\rho$ to~$\P^{\st}(\Upsilon_{{\varphi(\beta)}})\P_1(\Upsilon_{i})$ by trivial extension to~$\P_1(\Upsilon_{i})$.  The analogous argument reversing the roles of~$i$ and~$i+1$ gives the other required isomorphism when~${\mathfrak{a}}_0(\Upsilon_i)\supseteq {\mathfrak{a}}_0({\Upsilon}_{i+1})$.

At the final step in the path, similarly (noting $\Upsilon_l=\Lambda$) we have
\begin{align*}
\ind_{\J^{\st}_{\Upsilon_{l-1}}}^{\G}(\kappa_{l-1}\otimes \rho)
&\simeq \ind_{\P^{\st}(\Upsilon_{{\varphi(\beta)}})\P_1(\Upsilon_{l-1})}^{\G}(\ind_{\J^{\st}_{\Upsilon_{l-1}}}^{\P^{\st}(\Upsilon_{{\varphi(\beta)}})\P_1(\Upsilon_{l-1})}(\kappa_{l-1})\otimes \widetilde{\rho})\\
&\simeq \ind_{\P^{\st}(\Upsilon_{{\varphi(\beta)}})\P_1(\Upsilon_{l-1})}^{\G}( \ind_{\J^{\st}_{\Upsilon,\Upsilon_l}}^{\P^{\st}(\Upsilon_{{\varphi(\beta)}})\P_1(\Upsilon_{l-1})}(\kappa)\otimes\widetilde{\rho}) \\
&\simeq \ind_{\P^{\st}(\Upsilon_{{\varphi(\beta)}})\P_1(\Upsilon_{l-1})}^{\G}( \ind_{\J^{\st}_{\Upsilon,\Upsilon_l}}^{\P^{\st}(\Upsilon_{{\varphi(\beta)}})\P_1(\Upsilon_{l-1})}(\kappa\otimes\rho)) \\
&\simeq \ind_{\J^{\st}_{\Lambda}}^{\G}( \ind_{\J^{\st}_{\Upsilon,\Upsilon_l}}^{\J^{\st}_{\Lambda}}(\kappa\otimes\rho))\simeq  \ind_{\J^{\st}_{\Lambda}}^{\G}(\kappa\otimes \ind_{\overline{\P}}^{\overline{\G}}(\rho)),
\end{align*}
where~$\widetilde{\rho}$ extends~$\rho$ to~$\P^{\st}(\Upsilon_{{\varphi(\beta)}})\P_1(\Upsilon_{l-1})$.  The composition of all these isomorphisms gives the required statement.
\end{proof}

%%%%%%%%%%%%%%%%%%%%%%%%%%%%%
\subsubsection{Compatible families of beta extensions}
In order to make reductions to the depth zero situation, we will need compatibility relations between {a whole family of} beta extensions.

\begin{definition}\label{def:compatfamilyofbetas}
Let~$\theta$ be an~$m$-realization of~$\mathfrak{t}$ for~$(\G,h)$ with parametrization $((\V,h),\varphi,\Lambda,\beta)$.  Let~$\mathcal{C}$ be a chamber in the building of~$\G_{\varphi(\beta)}$ and let~$\{\Lambda_j\}$ denote a set of~${\o_{\varphi(\E)}}$-lattice sequences which form a complete set of representatives for the~$\G_{\varphi(\beta)}$-conjugacy classes of the vertices in the closure of~$\mathcal{C}$. % (without repetition). 
Let~$\{\kappa_j\}$ be a collection of beta extensions of~$\{\theta_j=\tau_{\Lambda,\Lambda_j,\beta}(\theta)\}$.
\begin{enumerate}
\item\label{def:compatfamilyofbetas.i} 
We call~$\{\kappa_j\}$ a \emph{compatible family} of beta extensions if, whenever $\Upsilon,\Upsilon'$ are an~${\o_{\varphi(\E)}}$-lattice sequences in~$\mathcal{C}$ for which there is~$g\in\G_{\varphi(\beta)}$ with~$g\cdot \Upsilon=\Upsilon'$, then for any pair~$\Lambda_i,\Lambda_j$ in our chosen set of representatives such that~$\P({\Upsilon_{{\varphi(\beta)}}})^{\circ}\subseteq \P({\Lambda_{i,{\varphi(\beta)}}})^{\circ}$ and~$\P({\Upsilon'_{{\varphi(\beta)}}})^{\circ}\subseteq \P({\Lambda_{j,{\varphi(\beta)}}})^{\circ}$, we have
\[
\mathbf{b}_{\Upsilon',\Lambda_j}^{-1}(\kappa_j\mid_{\J^{\st}_{\Upsilon',\Lambda_j}})
\simeq
\left(\mathbf{b}_{\Upsilon,\Lambda_i}^{-1}(\kappa_i\mid_{\J^{\st}_{\Upsilon,\Lambda_i}})\right)^g.
\]
Then, by conjugation, a compatible family of beta extensions defines a beta extension at every point in the building of~$\G_{\varphi(\beta)}$.
\item We say that~$\{\kappa_j\}$ has \emph{full intertwining} if $\I_{\G}(\kappa_i,\kappa_j)\supseteq\G_{\varphi(\beta)}$, for all~$i,j$.
\end{enumerate}
\end{definition}

The notions of compatible family and full intertwining are in fact related.

\begin{proposition}\label{Fullimpcomp}
If~$\{\kappa_j\}$  has full intertwining then it is a compatible family.
\end{proposition}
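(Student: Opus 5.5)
The plan is to reduce the compatibility condition to an intertwining statement at the level of the groups~$\J^{\st}_{\Upsilon}$, and then use that the Heisenberg part has a one-dimensional intertwining space.

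Fix~$\Upsilon,\Upsilon'$ in~$\mathcal{C}$ with~$g\cdot\Upsilon=\Upsilon'$, $g\in\G_{\varphi(\beta)}$, and~$\Lambda_i,\Lambda_j$ as in Definition~\ref{def:compatfamilyofbetas}\eqref{def:compatfamilyofbetas.i}. Put~$\kappa_\Upsilon:=\mathbf{b}^{-1}_{\Upsilon,\Lambda_i}(\kappa_i|_{\J^{\st}_{\Upsilon,\Lambda_i}})$ and~$\kappa_{\Upsilon'}:=\mathbf{b}^{-1}_{\Upsilon',\Lambda_j}(\kappa_j|_{\J^{\st}_{\Upsilon',\Lambda_j}})$. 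Since~$g$ centralizes~$\varphi(\beta)$ and transfer is~$\G_{\varphi(\beta)}$-equivariant, $\kappa_\Upsilon^g$ and~$\kappa_{\Upsilon'}$ both lie in~$\ext(\Upsilon')$, i.e.\ both extend~$\eta_{\Upsilon'}$ to~$\J^{\st}_{\Upsilon'}$. Two such extensions differ by a character~$\chi$ of~$\J^{\st}_{\Upsilon'}/\J^1_{\Upsilon'}\simeq\P^{\st}(\Upsilon'_{\varphi(\beta)})/\P_1(\Upsilon'_{\varphi(\beta)})$. So the whole task is to show~$\chi=1$, and for this it suffices to show that~$g$ intertwines~$\kappa_\Upsilon$ with~$\kappa_{\Upsilon'}$. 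Indeed, the~$g$-intertwining space of~$\eta_{\Upsilon}$ with~$\eta_{\Upsilon'}$ is one-dimensional, as in~\cite[Proposition 3.7]{St08} and~\cite[Proposition 4.5]{SkodlerackCuspQuart}. The element~$g$ then acts on this space by a scalar, and intertwining of the extensions forces~$\chi$ to be trivial.

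To get this intertwining, I would transport the hypothesis~$g\in\I_\G(\kappa_i,\kappa_j)$ down along the paths of Lemma~\ref{betaextsbij}. First, restricting the intertwining of~$\kappa_i$ and~$\kappa_j$ by~$g$ gives a nonzero~$g$-intertwining between~$\kappa_i|_{\J^{\st}_{\Upsilon,\Lambda_i}}$ and~$\kappa_j|_{\J^{\st}_{\Upsilon',\Lambda_j}}$. Here I use that~${}^g\P^{\st}(\Upsilon_{\varphi(\beta)})=\P^{\st}(\Upsilon'_{\varphi(\beta)})$. Next, the characterizing isomorphisms of Lemma~\ref{betaextsbij} are isomorphisms of induced representations. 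By Mackey theory, intertwining by an element~$g\in\G_{\varphi(\beta)}$ is preserved when passing from~$\kappa_{l}$ to~$\kappa_{l-1}$, and so on down to~$\kappa_0$. The reason is that the intertwining of each~$\eta_{\Upsilon_k}$ is~$\J^1\G_{\varphi(\beta)}\J^1$, and the double-coset contribution at~$g$ is isolated because~$g$ normalizes the relevant~$\P^{\st}$-groups. The same argument applies to the path from~$\Upsilon'$ to~$\Lambda_j$. Alternatively, I could argue with the~$g$-conjugate path: the path for~$\Upsilon$ is conjugated by~$g$ to a path starting at~$\Upsilon'$. By uniqueness in Lemma~\ref{betaextsbij}, $\kappa_\Upsilon^g$ is then the pullback of~$(\kappa_i|)^g$ along this conjugate path. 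This reduces the problem to comparing~$(\kappa_i)^g$ and~$\kappa_j$ restricted to~$\J^{\st}_{\Upsilon',\Lambda_j}\cap{}^g\J^{\st}_{\Upsilon,\Lambda_i}$, where the intertwining hypothesis applies.

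The main obstacle I expect is the Mackey step: showing that the~$g$-intertwining of the large-group restrictions yields intertwining of the small-group extensions, without interference from other double cosets. The argument I would try first is to compare traces, or intertwining dimensions, of induced representations on~$\P^{\st}(\Upsilon_{\varphi(\beta)})\P_1(\Upsilon_k)$, using that~$\P_1(\Upsilon_k)\cap\G_{\varphi(\beta)}$ is absorbed into~$\P^{\st}$. In doing so I would track that the intertwining of~$\eta$ restricted to~$\G_{\varphi(\beta)}$-double cosets is at most one-dimensional. In positive characteristic~$\ell$ I would argue with pro-$p$ invariants, which remain exact functors.
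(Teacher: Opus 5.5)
\textbf{Verdict: the reduction is right, but the key step is missing.}

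Your reduction is the same as the paper's. Everything comes down to showing that $g$ intertwines $\kappa_\Upsilon$ with $\kappa_{\Upsilon'}$. Once that is known you are done, because $g$ conjugates $\J^{\st}_\Upsilon$ onto $\J^{\st}_{\Upsilon'}$. A nonzero $g$-intertwining operator is then a nonzero $\J^{\st}_{\Upsilon'}$-homomorphism between irreducible representations, hence an isomorphism. Your detour through $\chi$ and the one-dimensionality of $\Hom_{\J^1_{\Upsilon'}}(\eta_{\Upsilon'},\eta_{\Upsilon'})$ is correct but not needed.

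\textbf{Where the gap is.} The intertwining statement carries all the content; the paper proves it separately as Lemma~\ref{LemCompatibleintertwining}. You do not establish it, and the justification you sketch does not work. The element $g$ does not normalize the relevant $\P^{\st}$-groups: it carries $\P^{\st}(\Upsilon_{\varphi(\beta)})$ to $\P^{\st}(\Upsilon'_{\varphi(\beta)})$. Nor are the other double cosets isolated away. From $g$-intertwining of the induced representations, Mackey theory (the support statement of \cite[Lemma~2.2]{RKSS}, using irreducibility of the induced representations) only tells you that \emph{some} element $kugu'k'$ intertwines $\kappa_\Upsilon$ with $\kappa_{\Upsilon'}$. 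Here $u,u'$ lie in the $\P^{\st}$-groups of $\Upsilon,\Upsilon'$ and $k,k'$ in pro-$p$ radicals. Comparing traces or intertwining dimensions does not get around this, and is awkward when $\ell>0$.

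\textbf{The missing idea.} You need a way to pass from such an element back to $g$; the paper's argument runs as follows.
\begin{itemize}
\item One has $\I_\G(\kappa_\Upsilon,\kappa_{\Upsilon'})\subseteq\I_\G(\theta_\Upsilon,\theta_{\Upsilon'})=\J^1_\Upsilon\G_{\varphi(\beta)}\J^1_{\Upsilon'}$.
\item So you may multiply by elements of $\J^1_\Upsilon$ and $\J^1_{\Upsilon'}$ to obtain an intertwiner in $\G_{\varphi(\beta)}\cap\P_1(\Upsilon)ugu'\P_1(\Upsilon')$.
\item By \cite[Lemma~4.6, Corollary~4.7]{RKSS} this set equals $\P_1(\Upsilon_{\varphi(\beta)})ugu'\P_1(\Upsilon'_{\varphi(\beta)})$, so the intertwiner has the form $sugu's'$.
\item The outer factors $su\in\J^{\st}_\Upsilon$ and $u's'\in\J^{\st}_{\Upsilon'}$ normalize the two representations, so $g$ itself intertwines.
\end{itemize}
This handles only a single step of the form $\P(\Upsilon)\subseteq\P(\Lambda)$, and only in one direction. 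The paths of Lemma~\ref{betaextsbij} contain steps of both kinds. The paper therefore also uses that the argument reverses when $\P(\Upsilon_{\varphi(\beta)})^\circ=\P(\Lambda_{\varphi(\beta)})^\circ$, since then $\J^{\st}_{\Upsilon,\Lambda}=\J^{\st}_\Lambda$. It then combines the two cases through an auxiliary point of $\mathcal{C}$.

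\textbf{Your alternative route.} Conjugating the path by $g$ is a legitimate simplification. It reduces the problem to comparing, at the single point $\Upsilon'$, the pullbacks from the two vertices $g\Lambda_i$ and $\Lambda_j$. But these vertices need not coincide, and the hypothesis only says that $1$ intertwines $\kappa_i^g$ with $\kappa_j$. So you still need the same transport lemma, now with $g=1$.
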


We will prove this below. We also make the following

\begin{conjecture}\label{Conjcompatfam}
There exists a family~$\{\kappa_j\}$ of beta extensions with full intertwining.
\end{conjecture}

\begin{remark}
\begin{enumerate}
\item For inner forms of general linear groups the existence of a compatible family of beta extensions is straightforward as there is a unique class of maximal parahoric subgroup, and full intertwining can be shown in this case following the original method of \cite{BK93} for beta extensions of simple characters of~$\GL_n(\F)$.  The details will appear in the forthcoming work of \cite{SkodlerackYe}.  It is currently not known if compatible families exist for inner forms of classical groups with~$p\neq 2$ or if there are always beta extensions with full intertwining (though we expect many cases will be covered in forthcoming work of \cite{SkodlerackYe}).
  \item As further evidence towards Conjecture \ref{Conjcompatfam}, in the tame setting, rephrasing into the Bushnell--Kutzko language of this paper, Fintzen--Kaletha--Spice in \cite{FinKalSpi} construct canonical (maximal) beta extensions of the Heisenberg representations they consider and prove that they have full intertwining.
  \end{enumerate}
\end{remark}

It remains to prove Proposition~\ref{Fullimpcomp}. The main idea is to see that intertwining by elements of~$\G_\beta$ is preserved under compatibility of beta extensions, which is an asymmetric version of~\cite[Lemma~4.3]{St08}. We thus begin with the following lemma.

\begin{lemma}\label{LemCompatibleintertwining} Suppose~$\theta$ is a realization of~$\mathfrak{t}$ for~$(\G,h)$ with parametrization $((\V,h),\varphi,\Lambda,\beta)$. Let~$\Upsilon,\Lambda',\Upsilon'$ be~${\o_{\varphi(\E)}}$-lattice sequences with~$\P({\Upsilon_{{\varphi(\beta)}}})^{\circ}\subseteq \P({\Lambda_{{\varphi(\beta)}}})^{\circ}$ and~$\P({\Upsilon'_{{\varphi(\beta)}}})^{\circ}\subseteq \P({\Lambda'_{{\varphi(\beta)}}})^{\circ}$, and let~$\theta'=\tau_{\Lambda,\Lambda',\beta}(\theta)$. Let~$\kappa,\kappa'$ be beta extensions of~$\theta,\theta'$ respectively, {and} set~$\kappa_\Upsilon=\mathbf{b}_{\Upsilon,\Lambda}^{-1}(\kappa\mid_{\J^{\st}_{\Upsilon,\Lambda}})$ and~$\kappa'_{\Upsilon'}=\mathbf{b}_{\Upsilon',\Lambda'}^{-1}(\kappa'\mid_{\J^{\st}_{\Upsilon',\Lambda'}})$. Let~$g\in\G_{\varphi(\beta)}$.
\begin{enumerate}
\item\label{LemCompatibleintertwining.i} If~$\P(\Upsilon)\subseteq \P(\Lambda),\ \P(\Upsilon')\subseteq \P(\Lambda')$  and~$g\in\I_{\G}(\kappa,\kappa')$ then also~$g\in\I_{\G}(\kappa_\Upsilon,\kappa'_{\Upsilon'})$.
\item If~$\P({\Upsilon_{{\varphi(\beta)}}})^{\circ}= \P({\Lambda_{{\varphi(\beta)}}})^{\circ}$ and~$\P({\Upsilon'_{{\varphi(\beta)}}})^{\circ}= \P({\Lambda'_{{\varphi(\beta)}}})^{\circ}$ then~$g\in\I_{\G}(\kappa,\kappa')$ if and only if~$g\in\I_{\G}(\kappa_\Upsilon,\kappa'_{\Upsilon'})$.
\item\label{LemCompatibleintertwining.iii} If~$g\in\I_{\G}(\kappa,\kappa')$ then also~$g\in\I_{\G}(\kappa_\Upsilon,\kappa'_{\Upsilon'})$.
\end{enumerate}
\end{lemma}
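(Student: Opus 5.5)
Throughout I write $\beta$ for $\varphi(\beta)$ and $\J^{\st}_{\Upsilon,\Lambda}$ etc.\ as in the text. The plan is to follow the asymmetric analogue of \cite[Lemma~4.3]{St08}. I would prove the three parts one at a time along the chains of lattice sequences in Lemma~\ref{betaextsbij}. Each comparison step will be a Mackey-type argument for induced representations. I record two facts that will be used repeatedly:
\[
\I_\G\bigl(\ind_{H}^{K}\rho,\ind_{H'}^{K'}\rho'\bigr)=K'\,\I_\G(\rho,\rho')\,K,
\]
and every $g\in\G_{\beta}$ intertwines the Heisenberg representations $\eta_\Upsilon$ and $\eta'_{\Upsilon'}$ with a one-dimensional intertwining space. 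The latter comes from the Heisenberg pair theory, \cite[Proposition 3.7]{St08}, Proposition~\ref{propHeisenbergPairs} and \cite[Proposition 4.5]{SkodlerackCuspQuart}.

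\textbf{Part (i).} Assume $\P(\Upsilon)\subseteq\P(\Lambda)$, so $\P_1(\Lambda)\subseteq\P_1(\Upsilon)$, and take the one-step chain $\Upsilon_0=\Upsilon$, $\Upsilon_1=\Lambda$ in Lemma~\ref{betaextsbij}. Set $K=\P^{\st}(\Upsilon_\beta)\P_1(\Upsilon)$. Then
\[
\ind_{\J^{\st}_\Upsilon}^{K}\kappa_\Upsilon\simeq\ind_{\J^{\st}_{\Upsilon,\Lambda}}^{K}\kappa,
\]
where $\kappa$ denotes its restriction to $\J^{\st}_{\Upsilon,\Lambda}$, and likewise on the primed side with $K'=\P^{\st}(\Upsilon'_\beta)\P_1(\Upsilon')$. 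Since $g$ intertwines $\kappa$ with $\kappa'$, it intertwines their restrictions to $\J^{\st}_{\Upsilon,\Lambda}$ and $\J^{\st}_{\Upsilon',\Lambda'}$. Hence $g$ intertwines the two induced representations to $K$ and $K'$.

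I then need to descend from the induced representations to $\kappa_\Upsilon,\kappa'_{\Upsilon'}$. The formula above gives $g\in K'\,\I_\G(\kappa_\Upsilon,\kappa'_{\Upsilon'})\,K$, so there are $k\in K$, $k'\in K'$ and an intertwiner $x$ with $g=k'xk$. Using the Iwahori-type decomposition $K=\P_1(\Upsilon)\J^{\st}_\Upsilon$, I move $k$ and $k'$ into the $\J^{\st}$-groups. This reduces $x$ to the form $u'gu$ with $u\in\P_1(\Upsilon)$ and $u'\in\P_1(\Upsilon')$. Finally, the intertwining of $\eta_\Upsilon$ and $\eta'_{\Upsilon'}$ is $\J^1\G_\beta\J^1$ with one-dimensional spaces, by \cite[Theorem 6.7]{SkodInnerFormII}. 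This should force $u'gu\in\J'^1g\J^1$, and therefore $g$ itself intertwines.

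\textbf{Part (ii).} Assume $\P(\Upsilon_\beta)^\circ=\P(\Lambda_\beta)^\circ$. Then the $\J^{\st}$-groups along the chain all share the same $\P^{\st}(\Upsilon_\beta)$ part. Each step $\Upsilon_i\to\Upsilon_{i+1}$ is an inclusion of hereditary orders in one direction or the other. I would show that each step preserves intertwining by $g\in\G_\beta$ in both directions, using the argument of (i) for ``$\subseteq$'' and its mirror image for ``$\supseteq$''. The key point is that the induced representation to $\P^{\st}\P_1$ is irreducible and determines both ends of the step.

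\textbf{Part (iii).} Choose a chain as in Lemma~\ref{betaextsbij} from $\Upsilon$ to $\Lambda$, and one from $\Upsilon'$ to $\Lambda'$. Along these chains I combine the ``equal parahoric'' steps from (ii) with one ``shrinking'' step of type (i). For the shrinking step I first replace $\Lambda$ by some $\Lambda''$ with $\P(\Lambda''_\beta)^\circ=\P(\Lambda_\beta)^\circ$ and $\P(\Upsilon)\subseteq\P(\Lambda'')$, using \cite[Lemma 2.8]{St08}. I would then check that $\mathbf b$ is transitive, i.e.\ $\mathbf b_{\Upsilon,\Lambda}$ factors through $\mathbf b_{\Upsilon,\Lambda''}$.

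\textbf{Main obstacle.} The hardest step is the descent in (i): showing that intertwining of the induced representations by $g\in\G_\beta$ gives intertwining of $\kappa_\Upsilon$ and $\kappa'_{\Upsilon'}$ by $g$ itself, not merely by some element of the double coset. I would try first to adapt the St08 argument, which uses that $\P_1(\Upsilon)\cap\G_\beta$ is already contained in $\J^{\st}_\Upsilon$.
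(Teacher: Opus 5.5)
You are following the paper's route: Mackey for the inductions to $\P^{\st}(\Upsilon_\beta)\P_1(\Upsilon)$, cutting the segments into small pieces for (ii), and an auxiliary point for (iii). However, the step that carries all the content of (i) is not proved. After Mackey you have $x=u'gu$, with $u\in\P_1(\Upsilon)$ and $u'\in\P_1(\Upsilon')$, intertwining $\kappa_\Upsilon$ with $\kappa'_{\Upsilon'}$, and you must get back from $x$ to $g$.

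You rightly observe that $x$ intertwines $\theta_\Upsilon$ with $\theta'_{\Upsilon'}$, so $x\in\J^1_{\Upsilon'}\G_\beta\J^1_{\Upsilon}$. Multiplying by elements of the $\J^1$-groups then gives some $y\in\G_\beta$, lying in $\P_1(\Upsilon')\,g\,\P_1(\Upsilon)$, that still intertwines the $\kappa$'s. But the tools you invoke for the rest do not finish the job:
\begin{enumerate}
\item One-dimensionality of the Heisenberg intertwining spaces says nothing about which $(\J^{\st}_{\Upsilon'},\J^{\st}_\Upsilon)$-double coset $y$ lies in.
\item The inclusion $\P_1(\Upsilon)\cap\G_\beta\subseteq\J^{\st}_\Upsilon$ is only the trivial-coset case.
\end{enumerate}
The missing input is the double coset intersection property
\[
\P_1(\Upsilon')\,g\,\P_1(\Upsilon)\cap\G_\beta=\P_1(\Upsilon'_\beta)\,g\,\P_1(\Upsilon_\beta),\qquad g\in\G_\beta,
\]
which the paper takes from \cite[Lemma~4.6, Corollary~4.7]{RKSS} and \cite[Proposition~4.8]{SkodInnerFormII} (compare \eqref{eq5_1_19} for a single lattice sequence). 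It gives $y=s'gs$ with $s\in\P_1(\Upsilon_\beta)\subseteq\J^{\st}_\Upsilon$ and $s'\in\P_1(\Upsilon'_\beta)\subseteq\J^{\st}_{\Upsilon'}$, so $g$ itself intertwines. Without this lemma your assertion $u'gu\in\J^1_{\Upsilon'}g\J^1_\Upsilon$ is unsupported. The reverse implication you need in (ii) rests on the same step.

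A smaller problem is in (iii). You propose replacing $\Lambda$ by some $\Lambda''$ with $\P(\Lambda''_\beta)^\circ=\P(\Lambda_\beta)^\circ$ and $\P(\Upsilon)\subseteq\P(\Lambda'')$, but this is not possible in general. Since $\Lambda_\beta$ is a vertex, its facet in the enlarged building of $\G_\beta$ is that vertex times the central directions, so $\Lambda''$ can differ from $\Lambda$ only by translating the blocks $\Lambda_i$. Such translations need not give the inclusion: take two blocks of different ramification index, with $\Upsilon_\beta$ in an adjacent chamber but near the opposite vertex in each factor. The auxiliary point must be chosen on the small side, as in the proof of Theorem~\ref{thm43} and in the paper. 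Take $\Upsilon_1$ in the facet of $\Upsilon_\beta$ but close to $\Lambda_\beta$, so that $\P(\Upsilon_{1,\beta})^\circ=\P(\Upsilon_\beta)^\circ$ and $\P(\Upsilon_1)\subseteq\P(\Lambda)$. Then (i) handles the pair $(\Upsilon_1,\Lambda)$, and the step-wise form of (ii) handles $\Upsilon_1\to\Upsilon$. The chain of Lemma~\ref{betaextsbij} already has exactly this shape, so no separate transitivity check for $\mathbf{b}$ is needed.
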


Of course, case~\ref{LemCompatibleintertwining.iii} supsersedes case~\ref{LemCompatibleintertwining.i} but the proof is achieved in this order.

\begin{proof}
(i) Suppose~$g\in\I_{\G}(\kappa,\kappa')$ so certainly~$g$ intertwines their restrictions and, by~\cite[Lemma~2.2]{RKSS}, we have that
\[
\text{$g$ intertwines~$\Ind_{\J^{\st}_{\Upsilon,\Lambda}}^{\P^{\st}(\Upsilon_{{\varphi(\beta)}})\P_1(\Lambda)} (\kappa\mid_{\J^{\st}_{\Upsilon,\Lambda}})$ with~$\Ind_{\J^{\st}_{\Upsilon',\Lambda'}}^{\P^{\st}(\Upsilon'_{{\varphi(\beta)}})\P_1(\Lambda')} (\kappa\mid_{\J^{\st}_{\Upsilon',\Lambda'}})$.}
\]
By definition of compatibility of beta extensions, these representations are precisely~$\Ind_{\J^{\st}_\Upsilon}^{\P^{\st}(\Upsilon_{{\varphi(\beta)}})\P_1(\Lambda)}\kappa_\Upsilon$ and~$\Ind_{\J^{\st}_{\Upsilon'}}^{\P^{\st}(\Upsilon'_{{\varphi(\beta)}})\P_1(\Lambda')}\kappa'_{\Upsilon'}$, and are irreducible. Thus, by the support condition in \emph{loc.\ cit.}, there are then~$u\in\P^{\st}(\Upsilon_{{\varphi(\beta)}})$, $k\in\P_1(\Lambda)$ and~$u'\in\P^{\st}(\Upsilon'_{{\varphi(\beta)}})$, $k'\in\P_1(\Lambda')$ such that
\[
\text{$kugu'k'$ intertwines~$\kappa_\Upsilon$ with~$\kappa'_{\Upsilon'}$.}
\]
Now~$\I_{\G}(\kappa_\Upsilon,\kappa'_{\Upsilon'})\subseteq\I_{\G}(\theta_\Upsilon,\theta_{\Upsilon'}) = \J^1_{\Upsilon}\G_{\varphi(\beta)}\J^1_{\Upsilon'}$ so there exist~$h\in\J^1_{\Upsilon}$ and~$h'\in\J^1_{\Upsilon'}$ such that
\[
\text{$hkugu'k'h'\in\G_{\varphi(\beta)}$ intertwines~$\kappa_\Upsilon$ with~$\kappa'_{\Upsilon'}$.}
\]
But then~$hkugu'k'h'\in\P_1(\Upsilon)ugu'\P_1(\Upsilon')\cap\G_{\varphi(\beta)}$, which is~$\P_1(\Upsilon_{{\varphi(\beta)}})ugu'\P_1(\Upsilon'_{{\varphi(\beta)}})$, by~\cite[Lemma~4.6, Corollary~4.7]{RKSS} (see also~\cite[Proposition~4.8]{SkodInnerFormII}). Thus there are~$s\in\P_1(\Upsilon_{{\varphi(\beta)}})$ and~$s'\in\P_1(\Upsilon'_{{\varphi(\beta)}})$ such that~$sugu's'$ intertwines~$\kappa_\Upsilon$ with~$\kappa'_{\Upsilon'}$. Thus~$g\in\I_{\G}(\kappa_\Upsilon,\kappa'_{\Upsilon'})$, as required.

For~(ii), we note first that the argument in~(i) is reversible when~$\P({\Upsilon_{{\varphi(\beta)}}})^{\circ}= \P({\Lambda_{{\varphi(\beta)}}})^{\circ}$ and~$\P({\Upsilon'_{{\varphi(\beta)}}})^{\circ}= \P({\Lambda'_{{\varphi(\beta)}}})^{\circ}$, since then~$\J^{\st}_{\Upsilon,\Lambda}=\J^{\st}_{\Lambda}$ and~$\J^{\st}_{\Upsilon',\Lambda'}=\J^{\st}_{\Lambda'}$. Now, as in the proof of {Theorem~\ref{thm43}}, we cut up the line segments in the building from~$\Upsilon$ to~$\Lambda$ and from~$\Upsilon'$ to~$\Lambda'$ into sufficiently small pieces so that the hypotheses in~(i) are satisfied, and the result follows from~(i).

Finally,~(iii) follows from the previous parts by choosing~$\Upsilon_1,\Upsilon_1'$ auxiliary points in the chamber~$\mathcal{C}$ such that the hypotheses of~(i) are satisfied when~$\Upsilon,\Upsilon'$ are replaced by~$\Upsilon_1,\Upsilon_1'$, while the hypothesis of~(ii) are satisfied when~$\Lambda,\Lambda'$ are replaced by~$\Upsilon_1,\Upsilon_1'$.
\end{proof}

\begin{proof}[Proof of Proposition~\ref{Fullimpcomp}]
Suppose we are in the situation of Definition~\ref{def:compatfamilyofbetas}\ref{def:compatfamilyofbetas.i}, with~$g\in\G_{\varphi(\beta)}$ such that~$g\cdot\Upsilon=\Upsilon'$. Since the family~$\{\kappa_j\}$ has full intertwining, we certainly have~$g\in\I_{\G}(\kappa_i,\kappa_j)$ for each~$i,j$. But then Lemma~\ref{LemCompatibleintertwining}\ref{LemCompatibleintertwining.iii} implies that~$g$ also intertwines~$\mathbf{b}_{\Upsilon,\Lambda_i}^{-1}(\kappa_i\mid_{\J^{\st}_{\Upsilon,\Lambda_i}})$ with~$\mathbf{b}_{\Upsilon',\Lambda_j}^{-1}(\kappa_j\mid_{\J^{\st}_{\Upsilon',\Lambda_j}})$, and hence~$1$ intertwines the latter with~$\left(\mathbf{b}_{\Upsilon,\Lambda_i}^{-1}(\kappa_i\mid_{\J^{\st}_{\Upsilon,\Lambda_i}})\right)^g$. But these are representations of the same group, so being intertwined by~$1$ is the same as being isomorphic.
\end{proof}

%%%%%%%%%%%%%%%%%%%%%%%%%%%%%
\subsection{Types for Bernstein blocks}\label{typesoverC}
We write~$\mathfrak{B}_{\R}(\G)$ for the set of inertial classes of (super)cuspidal supports for~$\G$, if~$\R$ has characteristic zero.    
\begin{definition}
Suppose~$\R$ is an algebraically closed field of characteristic zero.   Let~$\mathfrak{s}\in\mathfrak{B}_{\R}(\G)$.  A pair~$(\U,\Sigma)$, with $\U$ a compact open subgroup of~$\G$, and~$\Sigma$ an irreducible representation of~$\U$, is called an~\emph{$\mathfrak{s}$-type} if~$\ind_{\U}^{\G}(\Sigma)$ is a (finitely generated projective) generator of~$\Rep_{\R}(\mathfrak{s})$.
\end{definition}

For classical~$p$-adic groups,~$\GL_m(\D)$, and quarternionic forms of classical groups, we have a construction of types for Bernstein blocks of Miyauchi--Stevens~\cite{MiSt}, S\'echerre--Stevens~\cite{SecherreStevensVI}, Skodlerack--Ye~\cite{SkodlerackYe}.%, and in depth zero for an arbitrary connected reductive group, Morris in~\cite{Morris} has constructed types for Bernstein blocks.  

Let~$\mathfrak{s}\in\mathfrak{B}_{\R}(\G)$ with representative~$(\M,\rho)$.  This determines a supercuspidal inertial class~$\mathfrak{s}_{\M}\in \mathfrak{B}_{\R}(\M)$ with representative~$(\M,\rho)$. 

A Levi subgroup~$\M$ in~$\G$ decomposes as a product~$\M=\prod \M_i$ of (inner forms) of general linear groups, or of (inner forms) of general linear groups and (an inner form) of a classical group.  We define an (m-semisimple) semisimple %stratum in
{character for~$\M$, to be %a direct sum 
the tensor product of (m-semisimple) semisimple %strata 
characters in the corresponding~$\M_i$, and define the associated groups, %characters, 
Heisenberg representations, and beta extensions %,  associated to a stratum 
in~$\M$ by taking the appropriate product or tensor product over~$i$.}

 The construction of cuspidal representations has been extended to all algebraically closed fields of characteristic~$\ell\neq p$ and we have:

\begin{theorem}[{%Depth zero \cite{Morris, Vigbarcelona}, 
Classical groups \cite{St08, RKSS}, $\GL_m(\D)$ \cite{SecherreStevensIV,MinSec}, inner forms of classical groups~\cite{SkodlerackCuspQuart}}]\label{thm:covers}
Let~$\rho$ be an irreducible cuspidal~$\R$-representation of~$\M$.  There exist
\begin{enumerate}
\item an $m$-semisimple %stratum~$[\Lambda,n,0,\varphi(\beta)]$ 
{character~$\theta_\M\in\Cc(\Lambda,\varphi(\beta))$} for~$\M$, and a beta extension~{$\kappa_\M$} to~$\J_\M(\varphi(\beta),\Lambda)$; % of a semisimple character for~$[\Lambda,n,0,\varphi(\beta)]$; 
%\item in depth zero we set~$\E=\F$, and let~$\kappa$ be the trivial character of a maximal parahoric subgroup~$\M_{\Lambda_{{\varphi(\beta)}}}$ which we denote by~$\J_\M(0,\Lambda)$ with pro-$p$ unipotent radical~$\J_\M^1(0,\Lambda)$ for uniformity; 
\item an irreducible representation~${\tau}_{\M}$ of~$\P(\Lambda_{{\varphi(\beta)}})/\P_1(\Lambda_{{\varphi(\beta)}})$ with cuspidal restriction {to}~$\P(\Lambda_{{\varphi(\beta)}})^\circ/\P_1(\Lambda_{{\varphi(\beta)}})$; and
\item an extension~$\widetilde{\Sigma}_\M$ of~$\Sigma_{\M}=\kappa_{\M}\otimes{\tau}_{\M}$ to~$\N_{\M}(\Sigma_{\M})$, the normalizer of $\Sigma_{\M}$ in~$\M$, 
\end{enumerate}
such that, $\rho \simeq \ind_{\N_{\M}(\Sigma_{\M})}^{\M}(\widetilde{\Sigma}_\M)$.  Moreover, if~$\ell=0$, then~$(\J_\M(\varphi(\beta),\Lambda),\Sigma_{\M})$ is an~$\mathfrak{s}_{\M}$-type for~$\mathfrak{s}_{\M}=[\M,\rho]_{\M}$.
\end{theorem}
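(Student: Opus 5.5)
Since $\rho=\bigotimes_i\rho_i$ and all the objects in the statement (semisimple characters, $\J$-groups, Heisenberg representations, beta extensions) are formed factor by factor for $\M=\prod_i\M_i$, I would first reduce to $\M=\G$. So $\rho$ is an irreducible cuspidal $\R$-representation of a single inner form of a general linear group, or of a classical group. Let $\ft$ be the endo-parameter of the semisimple characters contained in $\rho$; it is well defined because $\rho$ is irreducible and hence lies in a single $\Rep_\R(\ft)$.

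\textbf{Step 1: an $m$-realization inside $\rho$.} By Theorem~\ref{thm43}, $\rho$ contains an $m$-realization $\theta\in\Cc_h(\Lambda,\varphi(\beta))$ of $\ft$, so $\P(\Lambda_{\varphi(\beta)})^\circ$ is a maximal parahoric subgroup of $\G_{\varphi(\beta)}$. By the theory of covers, cuspidality forces $\C(\G_{\varphi(\beta)})/\C(\G)$ to be compact. Then Proposition~\ref{propmRealization} shows that this holds for every parametrization, so $\theta$ is a cuspidal $m$-realization.

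\textbf{Step 2: producing $\kappa_\M$ and $\tau_\M$.} Fix any $\kappa\in\bext(\Lambda)$. Because $\rho|_{\J^1}$ is $\eta$-isotypic on the $\eta$-part, the space $\Hom_{\J^1}(\kappa,\rho)$ is a nonzero representation of $\J/\J^1\simeq\P(\Lambda_{\varphi(\beta)})/\P_1(\Lambda_{\varphi(\beta)})$. I would choose an irreducible subquotient $\tau$ of it, so that $\kappa\otimes\tau\hookrightarrow\rho$ after replacing $\tau$ by a suitable constituent.

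Next I would show that $\tau|_{\P(\Lambda_{\varphi(\beta)})^\circ/\P_1}$ is cuspidal. Suppose it is not. Then $\tau$ has nonzero invariants under the unipotent radical of some proper parabolic $\overline{\P}$, which corresponds to a non-maximal point $\Upsilon$ in the chamber. Lemma~\ref{positivedepthparaindinfinitegroup} then gives a nonzero map $\ind_{\J^{\st}_\Upsilon}^\G(\kappa_\Upsilon\otimes\rho')\to\rho$. Together with the Iwahori decomposition of $(\J^{\st}_\Upsilon,\kappa_\Upsilon)$ with respect to a proper Levi (the usual cover argument, as in \cite{St08,SkodlerackCuspQuart}), this shows that $\rho$ has a nonzero Jacquet module, which contradicts cuspidality.

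\textbf{Step 3: $\rho$ as an induced representation.} Put $\Sigma=\kappa\otimes\tau$. The intertwining of $\Sigma$ is contained in $\J\,\I_{\G_{\varphi(\beta)}}(\tau)\,\J$. Since $\tau$ is cuspidal on a maximal parahoric and $\C(\G_{\varphi(\beta)})/\C(\G)$ is compact, the depth-zero argument for the finite group $\P(\Lambda_{\varphi(\beta)})/\P_1$ shows that $\I_\G(\Sigma)=\N_\G(\Sigma)$, and this group is compact modulo centre. Hence for any irreducible extension $\widetilde\Sigma\subseteq\rho|_{\N_\G(\Sigma)}$, the representation $\ind_{\N_\G(\Sigma)}^\G\widetilde\Sigma$ is irreducible. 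Frobenius reciprocity then gives $\rho\simeq\ind_{\N_\G(\Sigma)}^\G\widetilde\Sigma$. When $\ell>0$, irreducibility needs the stronger intertwining criterion, and I would invoke \cite{RKSS,MinSec}.

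\textbf{Step 4: the type property when $\ell=0$.} When $\ell=0$, $(\J,\Sigma)$ is an $\mathfrak{s}_\M$-type by the standard Bushnell--Kutzko argument. An irreducible representation containing $\Sigma$ contains $\theta$, $\kappa$ and the cuspidal $\tau$, so by Step 3 it is an unramified twist of $\rho$.

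\textbf{Main obstacle.} The hardest part is reconciling the strong-structure beta extensions with the literature. The references \cite{St08,SkodlerackCuspQuart} define $\kappa$ on $\J$ using a possibly non-maximal parahoric with maximal compact stabilizer, and via weak simplicial structure compatibility. I would check that their beta extensions restrict on $\J^1_{\Gamma,\Lambda}$ to extensions of $\eta_{\Gamma,\Lambda}$, so they lie in $\bext(\Lambda)$ in the present sense. Conversely, two elements of $\bext(\Lambda)$ differ by a character of $\P(\Lambda_{\varphi(\beta)})/\P_1$ trivial on the unipotent subgroups, and this twist can be absorbed into $\tau$ without destroying cuspidality. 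Given that, the cited constructions transfer verbatim.
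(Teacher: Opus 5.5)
The paper gives no proof of this statement. It is quoted from \cite{St08,RKSS,SecherreStevensIV,MinSec,SkodlerackCuspQuart}; the only point specific to this paper is that the beta extensions used there are beta extensions in the present strong-structure sense. Your proposal is a sound reconstruction, in outline, of the cited argument, but it is organized differently.

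You enter through Theorem~\ref{thm43}, which is proved independently of this statement, so there is no circularity. This gives you a maximal parahoric first, and you then use Lemma~\ref{positivedepthparaindinfinitegroup} plus a cover argument to force $\tau$ to be cuspidal. The cited works essentially go in the opposite order: they first produce $\kappa\otimes\tau\subseteq\rho$ with $\tau$ cuspidal on some parahoric by a minimality argument, and then use covers to exclude non-maximal parahorics and a non-compact centre.

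Both routes rest on the same external inputs, which you still quote rather than prove:
\begin{enumerate}
\item cuspidality forcing $\C(\G_{\varphi(\beta)})/\C(\G)$ to be compact;
\item the Jacquet-module argument via covers, with invertible Hecke elements in the $\ell$-modular case;
\item the computation of $\I_\G(\kappa\otimes\tau)$;
\item the $\ell$-modular irreducibility criterion;
\item the Bushnell--Kutzko type criterion.
\end{enumerate}

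What your route buys is a presentation internal to the paper's machinery, with the reconciliation of beta extensions isolated as a single compatibility check. That check is right. When $\P(\Lambda_{\varphi(\beta)})^\circ$ is a maximal parahoric, $\P(\Lambda_{\varphi(\beta)})=\P^{\st}(\Lambda_{\varphi(\beta)})$ is maximal compact in $\G_{\varphi(\beta)}$, and the two definitions agree (extensions of $\eta_{\Gamma,\Lambda}$ to $\J_\Lambda$). Changing the beta extension only twists $\tau$ by a character trivial on unipotent elements, which preserves cuspidality. What citing buys is brevity, together with the stronger form (decomposition properly subordinate) that the paper uses immediately afterwards for covers.

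Some details to tighten:
\begin{enumerate}
\item In Step~2, take $\tau$ to be an irreducible \emph{subrepresentation} of the finite-dimensional space $\Hom_{\J^1}(\kappa,\rho)$. A subquotient does not yield $\kappa\otimes\tau\hookrightarrow\rho$ when $\ell$ divides $|\J/\J^1|$.
\item Step~2 does not actually need Step~1. A proper parabolic of the reductive quotient already gives a proper Levi subgroup of $\G_{\varphi(\beta)}$, hence of $\G$.
\item The appeal to Proposition~\ref{propmRealization} in Step~1 is superfluous. That proposition measures compactness relative to $\C(\M(\varphi(\beta),\G))$, whereas the covers statement already applies to every semisimple character in $\rho$.
\item In Step~3, the containment $\I_\G(\Sigma)\subseteq\J\,\I_{\G_{\varphi(\beta)}}(\tau)\,\J$ is not a formal consequence of $\I_\G(\eta)=\J^1\G_{\varphi(\beta)}\J^1$. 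It uses the defining pro-$p$ Sylow property of beta extensions. Without full intertwining of $\kappa$ (cf.\ Conjecture~\ref{Conjcompatfam}), it only controls $\tau$ up to a twisting character on the normalizer of $\P(\Lambda_{\varphi(\beta)})$ in $\G_{\varphi(\beta)}$. This is harmless here: on classical factors with compact centre that normalizer is $\P(\Lambda_{\varphi(\beta)})$ itself, and on general linear factors (where the character is simple) beta extensions are intertwined by all of $\G_{\varphi(\beta)}$.
\item The existence of $\widetilde\Sigma$ deserves a line. On classical factors $\N_\G(\Sigma)=\J$, and on general linear factors $\N_\G(\Sigma)/\J\simeq\mathbb{Z}$, so $\Sigma$ extends.
\end{enumerate}
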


The construction of covers allows one to construct an $\mathfrak{s}$-type, as we now explain again working in the broader setting of algebraically closed fields of characteristic~$\ell\neq p$:

Let~$\P=\M\N$ be a parabolic subgroup of~$\G$.  Let~$\rho$ be an irreducible cuspidal~$\R$-representation of~$\M$.  Then we can choose a semisimple %stratum~$\Delta=[\Lambda,n,0,\varphi(\beta)]$ for~$\G$ and a semisimple 
character~$\theta\in\Cc({\varphi(\beta)},\Lambda)$ such that the decomposition of~$\V$ associated to~$\M$ is {\emph{properly subordinate to the underlying stratum} (see~\cite[\S8]{SkodlerackCuspQuart} for the definitions)} %~$\Delta$, 
and~$\theta\mid_{\H^1(\varphi(\beta),\Lambda)\cap \M}$ is an $m$-semisimple character contained in~$\rho$.  {(Note that this is a stronger requirement than the statement of Theorem~\ref{thm:covers} but is in fact what is proved in the references.)} We set
\[\J_\P=\J_{\P}(\varphi(\beta),\Lambda)=\H^1(\varphi(\beta),\Lambda)(\J^{\st}(\varphi(\beta),\Lambda)\cap \P).\]
From Appendix \ref{AppendixC} Propositions \ref{propJPsteqJP} and \ref{propJPst} (see also Remark~\ref{remJPst}), or \cite{Morris} in the depth zero case,
\[\J_\P=\H^1(\varphi(\beta),\Lambda)(\J(\varphi(\beta),\Lambda)\cap \P),\]
and hence agrees with the~$\J_{\P}$ group considered in \cite{MiSt}, \cite{SkodlerackYe}. 

Let~$\kappa$ be a beta extension to~$ \J^{\st}(\varphi(\beta),\Lambda)$.  We form the natural representation~$\kappa_{\P}$ of~$\J_{\P}$ on the space of~$(\J^{\st}(\varphi(\beta),\Lambda)\cap \U)$-fixed vectors in~$\kappa$.  Then~$\kappa_{\P}$ extends~$\eta_\P$ and~$\ind_{\J_{\P}}^{\J^{\st}(\varphi(\beta),\Lambda)}(\kappa_{\P})\simeq \kappa$.

\begin{theorem}[{%Depth zero \cite{Morris}, 
Classical groups \cite{MiSt}, $\GL_m(\D)$ \cite{SecherreStevensVI}, inner forms of classical groups~\cite{SkodlerackYe}}]\label{Gcoverstheorem}
Under the above notation, writing~$\Sigma_{\P}=\kappa_{\P}\otimes{\tau}_{\P}$, we have 
\begin{enumerate}
\item $(\J_{\P},\Sigma_{\P})$ is a~$\G$-cover of~$(\J_{\M},\Sigma_{\M})$ relative to~$\P$.
\item $(\J_{\P},\Sigma_{\P})$ is a~$\mathfrak{s}$-type, if~$l=0$.
\end{enumerate}\end{theorem}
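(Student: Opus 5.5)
The plan is to reduce the statement to the cited constructions of covers, \cite{MiSt}, \cite{SecherreStevensVI} and \cite{SkodlerackYe}, and to isolate what is genuinely new here. The new point is that~$\kappa$ is now a beta extension to~$\J^{\st}(\varphi(\beta),\Lambda)$, defined using the strong simplicial structure and compatibility with a maximal~$\Lambda_{\max}$. In the references it is a beta extension to~$\J(\varphi(\beta),\Lambda)$ in the weak sense.

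First I would check the group-theoretic input. By Propositions~\ref{propJPsteqJP} and~\ref{propJPst} of Appendix~\ref{AppendixC}, we have~$\J_\P=\H^1(\varphi(\beta),\Lambda)(\J(\varphi(\beta),\Lambda)\cap\P)$. Hence~$\J_\P$ is the group of the references and has an Iwahori decomposition with respect to~$(\M,\P)$. Moreover~$\J_\P\cap\M=\J_\M$, because the decomposition is properly subordinate and~$\theta|_\M$ is~$m$-semisimple. Next, the restriction of~$\kappa_\P$ to~$\J_\P\cap\M$ should be a beta extension~$\kappa_\M$ of~$\theta_\M$, and~$\kappa_\P$ should be trivial on~$\J_\P\cap\U$ and on~$\J_\P\cap\U^-$. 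I would prove this with the path argument of Lemma~\ref{betaextsbij}: choose~$\Upsilon$ with~$\P(\Upsilon_{\varphi(\beta)})^\circ$ minimal, and compare~$\kappa$ with the maximal beta extension on~$\Lambda_{\max}$ through~$\mathbf{b}_{\Upsilon,\Lambda_{\max}}$. Intersecting every step with~$\M$ turns the chain of induction isomorphisms into the analogous chain for~$\M$, and at~$\Lambda_{\max}$ the restriction property is the one already established in \cite{MiSt}. Then~$\tau_\P$ is the inflation of~$\tau_\M$ through~$\J_\P/\J^1_\P\simeq\J_\M/\J^1_\M$, and~$\Sigma_\P|_{\J_\M}=\Sigma_\M$.

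For the cover property~(i), Bushnell--Kutzko's criterion (in Vign\'eras' form for positive characteristic, as used in \cite{RKSS}) reduces the claim to exhibiting a strongly~$(\P,\J_\P)$-positive element~$z$ in the centre of~$\M$ such that~$\mathcal{H}(\G,\Sigma_\P)$ contains an invertible element supported on~$\J_\P z\J_\P$. For this I would transfer the corresponding statement of \cite{MiSt}/\cite{SkodlerackYe}, which holds for a weak-structure beta extension~$\kappa'$ on the same~$\Lambda$ (or on a weak vertex above it). The key comparison is that~$\kappa$ and~$\kappa'|_{\J^{\st}}$ differ by a character~$\chi$ of~$\P^{\st}(\Lambda_{\varphi(\beta)})/\P_1(\Lambda_{\varphi(\beta)})$ that is trivial on unipotent elements; one can then absorb~$\chi$ into~$\tau$. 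To see this, apply Lemma~\ref{LemCompatibleintertwining} to the elements~$z\in\G_{\varphi(\beta)}$ (which centralize~$\beta$) to show that~$z$ intertwines~$\kappa_\P$ exactly when it intertwines~$\kappa'_\P$, and the support computations of the references then carry over. This comparison is the main obstacle: one has to check that the strong-structure compatibility, which is defined through~$\P^{\st}$ rather than the full stabilizer, still produces a beta extension in the sense of the references, up to such a twist. I would first test this in the case where~$\Lambda_{\max}$ is itself a weak vertex, and then reduce to that case using Lemma~\ref{LemCompatibleintertwining}(iii).

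Finally, (ii) follows formally. When~$\ell=0$, Theorem~\ref{thm:covers} says that~$(\J_\M,\Sigma_\M)$ is an~$\mathfrak{s}_\M$-type. A~$\G$-cover of an~$\mathfrak{s}_\M$-type is an~$\mathfrak{s}$-type by Bushnell--Kutzko's theorem on covers, so~(ii) follows from~(i).
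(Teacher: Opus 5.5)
Your reduction is the one the paper makes. The paper's only argument is the identification $\J_\P=\H^1(\varphi(\beta),\Lambda)(\J(\varphi(\beta),\Lambda)\cap\P)$ from Propositions~\ref{propJPsteqJP} and~\ref{propJPst}, after which it quotes \cite{MiSt}, \cite{SecherreStevensVI}, \cite{SkodlerackYe}; you should add that their strong-point hypothesis holds by Remark~\ref{remJPst}, since $\theta|_\M$ comes from a cuspidal type of~$\M$. The gap is in the extra step you isolate, comparing~$\kappa$ with a beta extension~$\kappa'$ of the references. You leave it open, and the tools you propose do not settle it:
\begin{itemize}
\item Lemma~\ref{LemCompatibleintertwining} only transports intertwining between beta extensions \emph{in this paper's sense} from maximal points to smaller ones. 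It says nothing about~$\kappa'$, which is compatible with a weak maximal-compact point. Knowing that~$z$ intertwines would not produce an invertible Hecke element anyway.
\item Your path argument for~$\kappa_\P|_{\J_\M}$ fails as written. At the vertex~$\Lambda_{\max}$ the decomposition defining~$\M$ is in general not properly subordinate, so there is no~$\kappa_{\max,\P}$ and no restriction property in~\cite{MiSt} to invoke there. Also, ``intersecting an induction isomorphism with~$\M$'' is not a defined operation.
\end{itemize}

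The missing step is short.
\begin{itemize}
\item Since~$\eta$ is irreducible, Schur's lemma gives~$\kappa\simeq\chi\otimes\kappa'|_{\J^{\st}}$ for a character~$\chi$ of~$\J^{\st}/\J^1\simeq\P^{\st}(\Lambda_{\varphi(\beta)})/\P_1(\Lambda_{\varphi(\beta)})$.
\item Both~$\kappa$ and~$\kappa'$ restrict to the canonical~$\eta_{\Gamma,\Lambda}$ on the pro-$p$ Sylow subgroup~$\J^1_{\Gamma,\Lambda}$. To check this, restrict the induction isomorphisms along the path of Lemma~\ref{betaextsbij} (resp.\ of~\cite[Lemma~4.3]{St08}) to the pro-$p$ groups~$\P_1(\Gamma_{\varphi(\beta)})\P_1(\Upsilon_i)$, and use the uniqueness in~\cite[Proposition~3.7]{St08} and Proposition~\ref{propHeisenbergPairs}. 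Hence~$\chi$ is trivial on all unipotent elements.
\item That is exactly the property you need, because twisting by a character non-trivial on unipotents can destroy cuspidality. For example, twisting a non-trivial (cuspidal) linear character of~$\SL_2(\mathbb{F}_3)$ by its inverse gives the trivial character.
\item The Iwahori decomposition of~$\P^{\st}(\Lambda_{\varphi(\beta)})$ gives~$\J^{\st}=\J_\P\J^1$ and~$\J^{\st}/\J^1\simeq\J_\M/\J^1_\M$. So~$\kappa_\P=\chi\kappa'_\P$ and~$\Sigma_\P=\kappa'_\P\otimes(\chi\tau)_\P$, with~$\chi\tau_\M$ still cuspidal.
\item Now~(i) is literally the theorem of the references applied to~$(\kappa',\chi\tau_\M)$. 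Moreover~$\kappa_\P|_{\J_\M}=\chi\kappa'_\M$ is a beta extension for~$\M$, which replaces your path argument.
\end{itemize}
Triviality of~$\kappa_\P$ on~$\J_\P\cap\N$ and on~$\J_\P\cap\bar{\N}=\H^1\cap\bar{\N}$ is immediate from the definitions. Your deduction of~(ii) from~(i) is fine.
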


\section{Cuspidal endo-support of an irreducible representation} \label{secEndoSupport}
{In~\S\ref{SeccuspsuppEP} we introduced the notion of \emph{cuspidal support} for endo-parameters of~$\G$. In this section, we explain how the cuspidal support of the endo-parameter of an irreducible representation~$\pi$ of~$\G$ relates to the endo-parameter of the (inertial) cuspidal support of~$\pi$. To do so, we first introduce the notion of a \emph{sub-endo-parameter for~$\G$}.}
%In this section we introduce the analogue of cuspidal support for the arithmetic core of an irreducible representation, the cuspidal endo-support. For that we need to modify the support of an endo-parameter~$(\G,\ft)$ in the special orthogonal case, because in the special orthogonal case we need to make sure that the split rank of the center of~$\M_{\varphi(\beta)}$ is not greater than the split rank of the center of~$\M$. This modified support is the cuspidal support of~$(\G,\ft)$, see \S\ref{SeccuspsuppEP}. Test~$\cSE,\cE$.

\begin{definition}\label{defSubEndopar}
{
Let~$\M$ be a Levi subgroup of~$\G$. A \emph{sub-endo-parameter for~$\M$} is the~$\M$-conjugacy class~$(\M',\fs')_\M$ of a pair consisting of a Levi subgroup~$\M'$ of~$\M$ and an endo-parameter~$\fs'$ for~$\M'$. We write~$\cSE(\M)$ for the set of all sub-endo-parameters for~$\M$.
}

{
If~$(\M,\fs)_\G$ and~$(\M',\fs')_\G$ are two sub-endo-parameters for~$\G$, we say that~$(\M',\fs')_\G$ is a \emph{sub-endo-parameter of~$(\M,\fs)_\G$} if there exists~$g\in\G$ such that~$\M$ contains~${}^g\M'$ and~$\fs=\ind_{\M'}^\M{}^g\fs'$ (which is clearly independent of the choice of representatives for the classes). This defines a partial order on~$\cSE(\G)$ and we write~$(\M',\fs')_\G\leq (\M,\fs)_\G$. 
}

{
For~$\ft$ an endo-parameter for~$\G$, we write~$\cSE(\G,\ft)$ for the set of sub-endo-parameters~$(\M,\fs)_\G$ such that~$(\M,\fs)_\G\leq (\G,\ft)_\G$. 
}
\end{definition}

{Recall that, for~$\ft$ an endo-parameter for~$\G$, we defined the \emph{cuspidal support}~$\cusp(\ft):=(\M_c,\ft_{\M_c})_\G$ of~$\ft$ in~\S\ref{SeccuspsuppEP}; thus~$\cusp(\ft)$ is a sub-endo-parameter for~$\G$ in~$\cSE(\G,\ft)$. If~$\pi$ is an irreducible~$\R$-representation of~$\G$ with endo-parameter~$\endo(\pi):=\ft$ then we also call~$\cusp(\ft)$ the~\emph{cuspidal endo-support} of~$\pi$.}

%\begin{definition}\label{defEndoSupport}
%Let~$\pi$ be an irreducible~$\R$-representation of~$\G$ with endo-parameter~$(\G,\ft)$ and we fix Levi subgroups~$\M\in \M(\ft)$ and~$\M_c\in \M_c(\ft)$. The~$\G$-conjugacy class of~$(\M,\ft_\M)$ is called the~\emph{endo-support} of~$\pi$,  and the~$\G$-conjugacy class of~$(\M_c,\ft_{\M_c})$ is called the~\emph{cuspidal endo-support} of~$\pi$.  
%\end{definition}

%We will observe in the remaining part that cuspidal support of an irreducible representation can be used to obtain its cuspidal endo-support.   
%Except of Proposition~\ref{propBelowConstrofEndoSuppInCCcase} there is no restriction on the characteristic of~$\R$. 

\begin{definition}\label{defCuspEndopar}
{Let~$\M$ be a Levi subgroup of~$\G$ and let~$\fs$ be an endo-parameter for~$\M$. We say that~$\fs$ is \emph{cuspidal} if there is an irreducible cuspidal complex representation of~$\M$ with endo-parameter~$\fs$. We also then say that the sub-endo-parameter~$(\M,\fs)_\G$ is cuspidal.}

{We write~$\cSE^\circ(\G)$ for the set of cuspidal sub-endo-parameters for~$\G$, and similarly~$\cSE^\circ(\G,\ft)$ for the cuspidal sub-endo-parameters of~$(\G,\ft)_\G$.}
%$(\M,\fs)$ be a Levi subgroup together with an endo-parameter on~$\M$. The endo-parameter~$\fs$ is called \emph{cuspidal} if there is an irreducible cuspidal complex representation of~$\M$ of type~$\fs$. 
\end{definition}

\begin{remark}\label{remCuspEndopar}
{If~$\ft$ is an endo-parameter for~$\G$, then the cuspidal support~$(\M_c,\ft_{\M_c})_\G$ of~$\ft$ is a cuspidal sub-endo-parameter for~$\G$. Indeed, if~$\theta_{\M_c}\in\Cc_h(\Lambda,\varphi(\beta))$ is an~$m$-semisimple character for~$\M_c$ with endo-parameter~$\ft_{\M_c}$ and~$\kappa_{\M_c}$ is a beta extension to~$\J_{\M_c}:=\J_{\M_c}(\varphi(\beta),\Lambda)$ then, for~$\tau_{\M_c}$ any irreducible representation of~$\P(\Lambda_{\varphi(\beta)})/\P_1(\Lambda_{\varphi(\beta)})$ with cuspidal restriction to~$\P(\Lambda_{\varphi(\beta)})^\circ/\P_1(\Lambda_{\varphi(\beta)})$, any irreducible subquotient of~$\ind_{\J_{\M_c}}^{\M_c}(\kappa_{\M_c}\otimes\tau_{\M_c})$ is cuspidal with endo-parameter~$\ft_{\M_c}$.}
\end{remark}

{In fact the cuspidal support of an endo-parameter~$\ft$ is not merely any cuspidal sub-endo-parameter, but is maximal with respect to the partial order~$\leq$:}

\begin{proposition}\label{propEndosuppbyMaxcuspEP}
{Let~$\ft$ be an endo-parameter for~$\G$. Then~$\cusp(\ft)$ is maximal in~$\cSE^\circ(\G)$ and is the unique maximal element of~$\cSE^\circ(\G,\ft)$.}
%Let~$\pi$ be an irreducible~$\R$-representation of~$\G$ with endo-parameter~$(\G,\ft)$ and~$(\M,\fs)$ be a~$\leq$-maximal cuspidal sub-endo-parameter of~$(\G,\ft)$, i.e., maximal among all cuspidal sub-endo-parameters of~$(\G,\ft)$. 
%Then~$(\M,\fs)$ lies in the cuspidal endo-support of~$\pi$.  
\end{proposition}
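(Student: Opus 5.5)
The proof splits into three steps: show that every cuspidal sub-endo-parameter below $(\G,\ft)_\G$ lies below $\cusp(\ft)$; deduce from this that $\cusp(\ft)$ is the unique maximal element of $\cSE^\circ(\G,\ft)$; and then deduce maximality in all of $\cSE^\circ(\G)$.

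\emph{Key claim.} If $(\M,\fs)_\G\in\cSE^\circ(\G,\ft)$, then $(\M,\fs)_\G\le\cusp(\ft)$. I would prove this as follows.

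1. Since $\fs$ is cuspidal, pick an irreducible cuspidal complex representation of $\M$ with endo-parameter $\fs$. By the theory of cuspidal types (Theorem~\ref{thm:covers}), it contains an $m$-semisimple character $\theta_\M\in\Cc(\Lambda_\M,\varphi_\M(\beta_\M))$ for $\M$. For this character, $\C(\M_{\varphi_\M(\beta_\M)})/\C(\M)$ is compact.

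2. I would next show that $\M_c(\varphi_\M(\beta_\M),\M)=\M$. In each general linear factor of $\M$, compactness of that quotient forces $\F[\beta_\M]$ to be a field. In the classical factor, it forces every index to be $\sigma$-fixed, and forbids a zero-block of the $\SO(1,1)$ shape, which is exactly the case excluded in the definition of $\M_c$.

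3. Extend $\theta_\M$ to a semisimple character $\theta$ for $\G$ with $\theta|_\M=\theta_\M$, using \cite[Proposition 5.1]{MiSt} and \cite[Theorems 6.6, 6.10]{SkodInnerFormII} as in \S\ref{SeccuspsuppEP}. Then $\theta$ has endo-parameter $i_\M^\G(\fs)=\ft$, and its parametrization can be taken with $\beta$ equal to $\beta_\M$ embedded block-diagonally. The idempotent decomposition of $\beta$ refines that of $\beta_\M$, because each block of $\beta_\M$ is a single block of $\beta$. The main subtlety is to merge blocks of $\beta_\M$ whose simple endo-classes coincide and which are therefore conjugate by the matching theorem (Lemma~\ref{lemMphib}); after this merging, the associated splitting is coarser than the one defining $\M$. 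Hence $\M_c(\varphi(\beta),\G)\supseteq\M$ up to conjugacy.

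4. Restricting $\theta$ to $\M_c(\varphi(\beta),\G)$ and using transitivity of $i$ gives $\ft_{\M_c}=i_\M^{\M_c}(\fs)$. This is exactly the relation $(\M,\fs)_\G\le\cusp(\ft)$, which proves the key claim.

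\emph{Uniqueness in $\cSE^\circ(\G,\ft)$.} Remark~\ref{remCuspEndopar} shows that $\cusp(\ft)\in\cSE^\circ(\G,\ft)$. By the key claim, every element of $\cSE^\circ(\G,\ft)$ lies below $\cusp(\ft)$, so $\cusp(\ft)$ is the unique maximal element there.

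\emph{Maximality in $\cSE^\circ(\G)$.} Suppose $\cusp(\ft)\le(\M,\fs)_\G$ with $(\M,\fs)_\G$ cuspidal. Transitivity of parabolic induction of endo-parameters gives $i_\M^\G(\fs)=i_{\M_c}^\G(\ft_{\M_c})=\ft$, so $(\M,\fs)_\G\in\cSE^\circ(\G,\ft)$. The key claim then gives $(\M,\fs)_\G\le\cusp(\ft)$. The two inequalities force $\M$ to be conjugate to $\M_c$ by comparing ranks, and then $\fs=\ft_{\M_c}$. So $\leq$ is antisymmetric here and $\cusp(\ft)$ is maximal in $\cSE^\circ(\G)$.

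\emph{Main obstacle.} The hardest step is step 3: showing that the Levi $\M_c$ of the extended character contains a conjugate of $\M$. This requires care with the classical factor. Self-dual blocks of $\beta_\M$ in the GL-factors of $\M$ pair as $\sigma$-orbits in $\G$. Zero-blocks merge with the classical part, and merging two $\SO(1,1)$-type pieces produces a larger anisotropic-or-not Witt type. I would handle this by a case analysis on the orbit type of each index, comparing the Witt-type data of Definition~\ref{defEndoparG}.
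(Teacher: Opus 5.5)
Your skeleton matches the paper's, and your maximality-in-$\cSE^\circ(\G)$ argument (transitivity plus antisymmetry) is fine. The skeleton is: use cuspidality of $\fs$ to constrain the blocks on $\M$, extend to $\theta$ on $\G$ with $\varphi(\beta)$ block-diagonal for $\M$, show $\M\subseteq\M_c(\varphi(\beta),\G)$, and read off $\ft_{\M_c}=i_\M^{\M_c}(\fs)$. But the step that carries all the content is not proved.

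In step~3 you infer $\M\subseteq\M_c(\varphi(\beta),\G)$ from the claim that ``the associated splitting is coarser than the one defining $\M$''. Taken literally, that claim is false: the classical factor of $\M$ can contain several self-dual blocks of $\beta$, e.g.\ $\M=\G$ with $\ft$ cuspidal. In its correct form it only yields $\M\subseteq\M(\varphi(\beta),\G)$, whose classical factor is all of $\sum_{i\in\I^\sigma}\V_i$.

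The whole difference between $\M(\varphi(\beta),\G)$ and $\M_c$ is the zero block $\V_{i_0}$ of $\SO(1,1)$ type, and it typically arises through your own merging. Take $\M\simeq\GL_1(\F)\times\SO(\V_0)$, with a depth-zero character on $\GL_1(\F)$ and no zero block on $\V_0$. The two lines of the $\GL_1$-factor then merge into such a $\V_{i_0}$, and $\M(\varphi(\beta),\G)=\G$. You flag this as the main obstacle but only promise a case analysis of Witt types. Witt types carry no information about how $\M$ sits relative to $\V_{i_0}$.

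The missing argument is short and uses your step~2.
\begin{itemize}
\item Since $\varphi(\beta)$ is block-diagonal for $\M$, the idempotents $e_j$ defining $\M$ commute with $\varphi(\beta)$, hence with $1_{i_0}$. So $\V_{i_0}=\bigoplus_j e_j\V_{i_0}$.
\item For a general linear block $\V_j$ of $\M$, the restriction $\varphi(\beta)|_{\V_j}$ is simple, so $e_j1_{i_0}\in\{0,e_j\}$. These blocks come in $\sigma_h$-swapped pairs, so they contribute even dimension to $\V_{i_0}$.
\item $e_0\V_{i_0}$ is the kernel of $\varphi(\beta)$ on the classical factor. It cannot be all of $\V_{i_0}$: that would be either an $\SO(1,1)$-type zero block there, excluded by your step~2, or a classical factor $\SO(1,1)(\F)$, excluded by admissibility (Lemma~\ref{lemAdmissPartforM}).
\item Since $\dim_\F\V_{i_0}=2$, this forces $\V_{i_0}=\V_j\oplus\V_{-j}$ with $\dim\V_{\pm j}=1$. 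These lines are isotropic, so $\{e_j,e_{-j}\}=\{1_+,1_-\}$, and $\M\subseteq\M_c$.
\end{itemize}
This is the paper's argument, which it runs with the simple central idempotent $e$ of the Lie algebra of $\tM$ satisfying $e1_{i_0}\neq 0$. The case $\sigma_h(e)=e$ is ruled out by cuspidality of $\fs$, since an $\SO(1,1)$ component of $\M_{\varphi(\beta)}$ would make $\C(\M_{\varphi(\beta)})/\C(\M)$ non-compact. That leaves $1_{i_0}=e+\sigma_h(e)$.

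A smaller point: Lemma~\ref{lemMphib} does not merge blocks. The merging, and the fact that $\varphi(\beta)$ is block-diagonal for $\M$, come from the construction in \cite[Proposition~5.1]{MiSt} and \cite[Theorems~6.6, 6.10]{SkodInnerFormII}.
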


\begin{proof}
{
Let~$(\M,\fs)_\G\in\cSE^\circ(\G,\ft)$ so we need to show that~$(\M,\fs)_\G \le (\M_c,\ft_{\M_c})_\G$. Let~$\t\in\Cc_h(\varphi(\b),\La)$ be a semisimple character such that~$\t|_{\M}$ is a realization of~$\fs$; by the construction of such a character (see~\cite[Proposition 5.1]{MiSt}, \cite[Theorems 6.6 and 6.10]{SkodInnerFormII}) and the fact that~$\fs$ is cuspidal, we may assume that~$\M\subseteq\M(\varphi(\b),\G)$. Since the Levi subgroups~$\M(\varphi(\b),\G)$ and~$\M_c$ differ at most in the classical factor of~$\M(\varphi(\b),\G)$, we immediately reduce to the case that~$\G_{\varphi(\b)}$ is classical, i.e.~$\I=\I_0$, which implies~$\M(\varphi(\b),\G)$ is equal to~$\G$.
}

{
If the quotient of centres~$\C(\G_{\varphi(\b)})/\C(\G)$ is compact, then~$\M(\varphi(\b),\G)=\M_c=\G$ so~$\ft$ is itself cuspidal and we get~$\fs=\ft$ by maximality. So suppose~$\C(\G_{\varphi(\b)})/\C(\G)$ is not compact, in which case~$\b$ is non-zero and there is a unique index~$i_0\in\I$ such that~$\b_{i_0}=0$ and the form $h_{i_0}=h|_{\V_{i_0}}$ is two-dimensional orthogonal with Witt index~$1$. Moreover, writing~$1_{i_0}$ for the idempotent in~$\F[\varphi(\beta)]$ projecting onto~$\V_{i_0}$, there is a unique (up to labelling) decomposition~$1_{i_0}=1_++1_-$ into idempotents which are swapped by the involution~$\s_h$. Then~$\M_c$ is the subgroup of~$\G$ stabilizing the decomposition~$1=1_++(1-1_{i_0})+1_-$.
}

{
Now let~$\tM$ be the minimal Levi subgroup of~$\tG$ containing~$\M$ and let~$e$ be the unique simple central idempotent in the Lie algebra of~$\tM$ such that~$e1_{i_0}\ne 0$. Note that~$e$ commutes with~$\varphi(\b)$ so it also commutes with~$1_{i_0}$. Suppose first that~$e1_{i_0}= 1_{i_0}$. Then also~$\sigma_h(e)1_{i_0}=1_{i_0}$ so~$e,\sigma_h(e)$ are not orthogonal and we must have~$\sigma_h(e)=e$. If~$e\ne 1_{i_0}$ then the centre of the classical factor of~$\M$ is compact, while the classical factor of~$\M_{\varphi(\beta)}$ has~$\SO(1,1)(\F)$ as a component, so~$\C(\M_{\varphi(\b)})/\C(\M)$ is non-compact, which contradicts the cuspidality of~$\fs$. Thus~$e=1_{i_0}$, but then~$\M$ has classical factor~$\SO(1,1)(\F)$ and~$e$ is not simple, a contradiction.
}

{
Thus we have~$e1_{i_0}\ne 1_{i_0}$ and there is a unique further simple central idempotent~$f$ such that~$1_{i_0}=e1_{i_0}+f1_{i_0}$. But then we also have~$1_{i_0}=\sigma_h(1_{i_0})=\sigma_h(e)1_{i_0}+\sigma_h(f)1_{i_0}$ so by uniqueness, we have~$\{e,f\}=\{\sigma_h(e),\sigma_h(f)\}$. Since there is at most one simple central idempotent fixed by~$\sigma_h$ (the one corresponding to the classical factor of~$\M$) we deduce that~$\sigma_h(e)=f\ne e$. In particular,~$e$ corresponds to a general linear block of the Levi subgroup~$\M$ so that~$e\varphi(\beta)$ is a \emph{simple} element and there is a unique~$i\in\I$ such that~$e1_i=e$. Since~$e1_{i_0}\ne 0$, we deduce that~$e=e1_{i_0}$, and similarly~$\sigma_h(e)=\sigma_h(e)1_{i_0}$. Thus we have the decomposition~$1_{i_0}=e+\sigma_h(e)$ as a sum of idempotents switched by~$\sigma_h$ so, by uniqueness, they are~$1_+,1_-$. Thus~$\M$ is indeed contained in~$\M_c$, as required.
}
\end{proof}

{We can now generalize Remark~\ref{remCuspEndopar} to deduce that every maximal cuspidal sub-endo-parameter of~$\G$ arises as the cuspidal endo-support of some irreducible representation.}

\begin{corollary}\label{corAllMaxcuspSEPareendosupp}
{Suppose~$(\M,\fs)_\G\in\cSE^\circ(\G)$ is maximal. Then there is an irreducible~$\R$-representation~$\pi$ of~$\G$ such that~$\cusp(\endo(\pi))=(\M,\fs)_\G$.} 
%Let~$(\M,\fs)$ be a maximal cuspidal sub-endo-parameter for~$\G$. Then there is an irreducible~$\R$-representation~$\pi$ of~$\G$ such that~$(\M,\fs)$ lies in the cuspidal endo-support of~$\pi$. 
\end{corollary}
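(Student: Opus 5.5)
Given a maximal $(\M,\fs)_\G\in\cSE^\circ(\G)$, I would set $\ft:=i_\M^\G(\fs)$. Then $(\M,\fs)_\G\le(\G,\ft)_\G$, so $(\M,\fs)_\G\in\cSE^\circ(\G,\ft)$.

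By Proposition~\ref{propEndosuppbyMaxcuspEP}, $\cusp(\ft)$ is the unique maximal element of $\cSE^\circ(\G,\ft)$, so $(\M,\fs)_\G\le\cusp(\ft)$. Since $\cusp(\ft)$ is itself a cuspidal sub-endo-parameter of $\G$ (Remark~\ref{remCuspEndopar}) and $(\M,\fs)_\G$ is maximal in $\cSE^\circ(\G)$, this forces $(\M,\fs)_\G=\cusp(\ft)=(\M_c,\ft_{\M_c})_\G$. The proof is therefore reduced to finding an irreducible $\R$-representation $\pi$ of $\G$ with $\endo(\pi)=\ft$.

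To produce such a $\pi$, I would take any realization $\theta$ of $\ft$, for instance an $m$-realization $\theta\in\Cc_h(\Lambda,\varphi(\beta))$. The representation $\ind_{\H^1(\varphi(\beta),\Lambda)}^{\G}\theta$ is generated by its $\theta$-isotypic vector. By the remark before Corollary~\ref{corollaryendoclassifandonlyifgeneratedbyisotypic} (projectivity of $\theta$ together with Mackey), it is of class $\ft$. It is nonzero and finitely generated, so it has an irreducible quotient $\pi$ (\S\ref{abstractsmoothreps}). This $\pi$ is again of class $\ft$, since being of class $\ft$ passes to subquotients (the remark before the definition of $\Rep_\R(\ft)$). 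Moreover $\pi$ contains some semisimple character, by Proposition~\ref{DatProp}, so $\endo(\pi)=\ft$ is well defined. Then $\cusp(\endo(\pi))=\cusp(\ft)=(\M,\fs)_\G$.

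I expect the main subtle point to be the well-definedness of $\endo(\pi)$, namely that an irreducible representation has a single endo-parameter. That is exactly what the class-$\ft$ argument above supplies, with nonvanishing guaranteed by Dat's exhaustion. A second point to check is that the conjugacy class $\cusp(\ft)$ does not depend on the realization chosen, which follows from Lemma~\ref{lemMphib} and the discussion in \S\ref{SeccuspsuppEP}. A more explicit alternative would be to take an irreducible subquotient of the parabolic induction $i_\P^\G$ of a cuspidal representation of $\M$ with endo-parameter $\fs$. However, that route would require compatibility of endo-parameters with parabolic induction, which has not yet been proved at this point, so I would avoid it.
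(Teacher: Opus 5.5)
Your proof is correct and matches the paper's: set $\ft=i_\M^\G(\fs)$, use Proposition~\ref{propEndosuppbyMaxcuspEP} together with maximality of $(\M,\fs)_\G$ to get $(\M,\fs)_\G=\cusp(\ft)$, then take an irreducible quotient $\pi$ of the representation compactly induced from a realization $\theta$ of $\ft$. The paper just notes that $\pi$ contains $\theta$ (Frobenius reciprocity), so the appeal to Dat's exhaustion is unnecessary, though your class-$\ft$ argument is a valid way to see that $\endo(\pi)$ is well defined.
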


\begin{proof}
{Let~$\ft=\ind_\M^\G\fs$ so that~$(\M,\fs)_\G=\cusp(\ft)$ and is the unique maximal element in~$\cSE^\circ(\G,\ft)$, by Proposition~\ref{propEndosuppbyMaxcuspEP}. Now let~$\theta\in\Cc_h(\varphi(\beta),\Lambda)$ be a realization of~$\ft$ and take any irreducible quotient~$\pi$ of the representation of~$\G$ compactly induced from~$\theta$. Then~$\pi$ contains~$\theta$ so that~$\endo(\pi)=\ft$ and~$\cusp(\endo(\pi))=(\M,\fs)_\G$.}
%Let~$(\G,\ft )$ be the endo-parameter of~$\G$ induced from~$(\M,\fs )$ and let~${\theta}\in\Cc_-(\varphi(\beta),\Lambda)$ be a realization of~$\ft$.  We can take an irreducible quotient~$\pi$ of the representation of~$\G$ compactly induced from~${\theta}$. Then~$\pi$ contains~${\theta}$. { Now apply Proposition~\ref{propEndosuppbyMaxcuspEP} to obtain the assertion.}
\end{proof}

{Finally, we arrive at the main result of this section. For~$\pi$ an irreducible~$\R$-representation of~$\G$, we write~$\cusp(\pi)$ for its inertial cuspidal support. Note that the endo-parameter of the inertial cuspidal support is a well-defined sub-endo-parameter of~$\G$.} 

\begin{theorem}\label{thmBelowConstrofEndoSuppInCCcase}
{Let~$\pi$ be an irreducible $\R$-representation of~$\G$.
\begin{enumerate}
\item The endo-parameter of the inertial cuspidal support of~$\G$ is a sub-endo-parameter of the cuspidal endo-support of~$\pi$:
\[
\endo(\cusp(\pi)) \le \cusp(\endo(\pi)).
\]
\item Moreover,~$\cusp(\endo(\pi))$ is the unique maximal element of~$\cSE^\circ(\G)$ which is greater than or equal to~$\endo(\cusp(\pi))$.
\end{enumerate}
}
%Let~$\pi$ be an irreducible $\R$-representation of~$\G$ with cuspidal support given by the~$\G$-conjugacy class of~$(\M,\rho)$. Let~$(\M,\fs)$ be the endo-parameter of~$(\M,\rho)$ and~$(\M',\fs')$ be in the cuspidal endo-support of~$\pi$. {Suppose that $\ft=\ind_\M^G\fs$ is the endo-parameter for~$\pi$.} Then~$(\M,\fs)$ is a sub-endo-parameter of a~$\G$-conjugate of~$(\M',\fs')$. 
\end{theorem}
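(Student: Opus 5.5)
Write~$\ft=\endo(\pi)$ and let~$(\M,\rho)$ represent the cuspidal support of~$\pi$, so that~$\pi$ is a subquotient of~$i_\P^\G(\rho)$ for a parabolic~$\P=\M\N$. Let~$\fs$ be the endo-parameter of~$\rho$; then~$(\M,\fs)_\G$ is a cuspidal sub-endo-parameter, since its realizations come from an irreducible cuspidal representation (in positive characteristic one lifts~$\rho$, or uses Theorem~\ref{thm:covers} to see that~$\fs$ is realized by the~$m$-semisimple character of a cuspidal type, so that Remark~\ref{remCuspEndopar} gives a complex cuspidal representation with endo-parameter~$\fs$). The strategy is to reduce part~(1) to showing~$\ind_\M^\G\fs=\ft$, i.e.~$(\M,\fs)_\G\in\cSE^\circ(\G,\ft)$. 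Once this holds, Proposition~\ref{propEndosuppbyMaxcuspEP} says that~$\cusp(\ft)$ is the unique maximal element of~$\cSE^\circ(\G,\ft)$, which gives~$(\M,\fs)_\G\le\cusp(\ft)=\cusp(\endo(\pi))$.

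The key step is therefore the compatibility of parabolic induction with endo-parameters. By Theorem~\ref{thm:covers}, choose an~$m$-semisimple character~$\theta_\M$ of~$\M$ contained in~$\rho$. Following the paragraph before Theorem~\ref{Gcoverstheorem}, extend it to a semisimple character~$\theta\in\Cc_h(\Lambda,\varphi(\b))$ of~$\G$ such that the decomposition attached to~$\M$ is properly subordinate, and~$\theta|_{\H^1\cap\M}=\theta_\M$. By definition~$\theta$ realizes~$i_\M^\G(\fs)$. Since~$\H^1(\varphi(\b),\Lambda)$ has an Iwahori decomposition with respect to~$(\M,\P)$ and~$\theta$ is trivial on its intersections with~$\N$ and the opposite unipotent radical, a Frobenius reciprocity and Jacquet module argument gives
\[
\Hom_{\H^1}(\theta,i_\P^\G(\rho))\supseteq\Hom_{\H^1\cap\M}(\theta_\M,\rho)\ne0 .
\]
Alternatively, one can appeal to the cover property in Theorem~\ref{Gcoverstheorem}. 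The main obstacle is that this only shows the full induced representation contains~$\theta$, not the subquotient~$\pi$. I would address this by showing that every irreducible subquotient of~$i_\P^\G(\rho)$ has the same endo-parameter. Take the finitely many semisimple characters~$\theta'$ occurring in~$i_\P^\G(\rho)$. Using Mackey theory and the Iwahori decomposition, each such~$\theta'$ restricts on~$\M$ to a character intertwining with~$\theta_\M$ up to~$\W_\M$. More robustly, I would use the decomposition~$\Rep_\R(\G)=\prod$ over the classes in Corollary~\ref{corollaryendoclassifandonlyifgeneratedbyisotypic}: a representation of class~$\ft'$ contains only characters of class~$\ft'$. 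Since~$i_\P^\G(\rho)$ is generated by the image of the~$\kappa_\P\otimes\tau_\P$-isotypic part, coming from the cover, it is generated by~$\theta$-isotypic vectors. Hence it is of class~$\ft_\theta$, and so is its subquotient~$\pi$. This gives~$\ft=\ft_\theta=i_\M^\G(\fs)$, as required; in characteristic~$\ell$ the cover is used only for the generation statement, which follows from~$\ind_{\J_\P}^\G$ surjecting onto~$i_\P^\G$ of~$\ind_{\J_\M}^\M$.

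For part~(2), let~$(\M',\fs')_\G$ be maximal in~$\cSE^\circ(\G)$ with~$(\M,\fs)_\G\le(\M',\fs')_\G$. By transitivity of induction,~$i_{\M'}^\G(\fs')=i_\M^\G(\fs)=\ft$, so~$(\M',\fs')_\G\in\cSE^\circ(\G,\ft)$. Proposition~\ref{propEndosuppbyMaxcuspEP} then forces~$(\M',\fs')_\G=\cusp(\ft)$. Together with part~(1), this gives both existence and uniqueness.
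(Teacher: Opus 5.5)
Your proof is correct and has the same skeleton as the paper's. You reduce (1) to $\endo(\pi)=\ind_\M^\G\fs$, apply Proposition~\ref{propEndosuppbyMaxcuspEP}, and get (2) from transitivity of induction of endo-parameters.

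The genuine difference is in the key step. The paper simply quotes the compatibility of endo-parameters with parabolic induction from \cite[Theorem~5.9(ii)]{HKSSBlocks}. You instead prove the case you need (cuspidal~$\rho$) via the cover of Theorem~\ref{Gcoverstheorem}:
\begin{itemize}
\item $i_\P^\G(\rho)$ is a quotient of $\ind_{\J_\P}^\G\Sigma_\P$;
\item this is generated by $\theta$-isotypic vectors, because $\Sigma_\P|_{\H^1}$ is a multiple of~$\theta$;
\item so it is of class $\ft_\theta=i_\M^\G(\fs)$, and this passes to the subquotient~$\pi$ by exactness of isotypic functors.
\end{itemize}
This keeps the argument inside type theory. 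The price is an input the paper does not supply in characteristic~$\ell$: the surjection (indeed isomorphism) from $\ind_{\J_\P}^\G\Sigma_\P$ onto the parabolic induction of $\ind_{\J_\M}^\M\Sigma_\M$ for a cover. This is Blondel's theorem over~$\mathbb{C}$, and you must quote an $\ell$-modular version (e.g.\ from Dat's work). It is stated for one specific parabolic among $\P$ and its opposite, so choose the ordering that defines the cover to match.

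When writing it up, make four corrections:
\begin{itemize}
\item Drop the appeal to a product decomposition $\Rep_\R(\G)=\prod_{\ft}\Rep_\R(\ft)$. The paper explicitly does not establish it, and you only need the remark before Corollary~\ref{corollaryendoclassifandonlyifgeneratedbyisotypic}: a representation generated by semisimple characters of class~$\ft$ is of class~$\ft$.
\item Drop the ``finitely many~$\theta'$'' remark; there are infinitely many semisimple characters in $i_\P^\G(\rho)$.
\item Drop the displayed inclusion of Hom-spaces, which compares unrelated spaces.
\item Justify cuspidality of~$\fs$ through the type rather than by lifting~$\rho$, since cuspidal $\ell$-modular representations need not lift. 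The paper itself takes this cuspidality for granted.
\end{itemize}
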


\begin{proof}
{Write~$(\M,\fs)_\G=\endo(\cusp(\pi))$ and~$\ft=\ind_\M^\G\fs$. Then~$\endo(\pi)=\ft$ by the compatibility of endo-parameter with parabolic induction (see~\cite[Theorem~5.9(ii)]{HKSSBlocks}). Then~$(\M,\fs)_\G\in\cSE^\circ(\G,\ft)$, while~$\cusp(\endo(\ft))$ is its maximal element, by Proposition~\ref{propEndosuppbyMaxcuspEP}.
}
%{Take a maximal cuspidal sub-endo-parameter~$(\M'',\fs'')$ of~$(\G,\ft)$, such that~$(\M,\fs)\leq (\M'',\fs'')$. Then, by Proposition~\ref{propEndosuppbyMaxcuspEP}, ~$(\M'',\fs'')$ is~$\G$-conjugate to~$(\M',\fs')$.}
%Refer to \cite{HKSSBlocks}
\end{proof}

\begin{remark}
{In fact, given~$\ft$ an endo-parameter for~$\G$, the set~$\cSE^\circ(\G,\ft)$ also has a unique minimal element~$\min(\ft)$ so that we have
\[
\min(\endo(\pi)) \le \endo(\cusp(\pi)) \le \cusp(\endo(\pi)),
\]
for any irreducible~$\R$-representation~$\pi$ of~$\G$.
This minimal element~$\min(\ft)$ can be described as follows. Write~$\ft=\sum_{t\in(\cE(\F)/\Sigma)}m_{t}(t,w_t)$, where $m_{t}\in\mathbb{Z}^{\geqslant 0}$ and~$w_t\in \mathcal{W}_{(\overline{\phantom{ll}})_\D,\e}(t)$, as in Definition~\ref{defEndoparG}. For each~$t\in(\cE(\F)/\Sigma)$, set
\[
d_t:= \frac{\deg(t)}{\gcd(d,\deg(t))}\qquad\text{and}\qquad r_t:= \frac{m_t}{d\,/\gcd(d,\deg(t))},
\]
so that~$r_td_td=m_t\deg(t)$. Then there is (up to conjugacy) a unique Levi subgroup~$\M_p$ of~$\G$ such that
\[
\M_p \simeq \prod_{t\in(\cE(\F)/\Sigma)} \GL_{d_t}(\D)^{r_t} \times \G_0,
\]
for some classical group~$\G_0$. In fact,~$\G_0$ is the isometry group of the unique form~$h_0$ (up to isometry) on a space of dimension~$\dim_\D\V-2\sum_{t\in(\cE(\F)/\Sigma)}d_tr_t$ such that~$\sum_{t\in(\cE(\F)/\Sigma)}\lambda_{t}^*(w_t)= [h_0]$. (The Levi subgroup~$\M_p$ is in fact the minimal Levi subgroup of~$\G$ which contains the centre of~$\G_{\varphi(\b)}$, but we will not use this.)
}

{Now~$\ft$ determines a unique sub-endo-parameter~$\min(\ft):=(\M_p,\ft_{\M_p})_\G$ as follows. Let~$\t$ be a realization of~$\ft$ with parametrization~$((\V,h),\varphi,\Lambda,\b)$ such that~$\Lambda$ is in the interior of a chamber of the building of~$\G_\varphi(\beta)$; that is,~$\P(\Lambda_{\varphi(\b)})^\circ$ is an Iwahori subgroup of~$\G_\varphi(\beta)$. By conjugating, we may assume that~$\P(\Lambda_{\varphi(\b)})^\circ$ has an Iwahori decomposition with respect to~$(\M_p,\P_p)$, for any parabolic subgroup~$\P_p$ with Levi component~$\M_p$. Then~$\t|_{\M_p}$ is a semisimple character for~$\M_p$ and we let~$\ft_{\M_p}$ be its endo-parameter.
}
%{
%In Proposition~\ref{propBelowConstrofEndoSuppInCCcase} the condition that~$\ind_\M^G\fs$ is the endo-parameter of~$\pi$ is superfluous as will be seen in the following up work on block decompositions. 
%}
\end{remark}

%%%%%%%%%%%%%%%%%%%%%%%%%%%%%
\appendix
%%%%%%%%%%%%%%%%%%%%%%%%%%%%%

%%%%%%%%%%%%%%%%%%%%%%%%%%%%%
\section{Elementary characters}
%%%%%%%%%%%%%%%%%%%%%%%%%%%%%

In this appendix, we prove a technical result which we use to unify the definitions of endo-parameters given in the quaternionic case \cite{SkodInnerFormII} and in the non--quaternionic case \cite{KSS}.   

We fix~$(\F/\F_\so,\varepsilon)$ as in \S\ref{secPreliminaries}.   We need to prove that to every orbit of~$\Ee(\F)/\Sigma$ there is attached an elementary character for a given pair~$(\D,(\overline{\phantom{ll}})_\D)\in\Div(\F/\F_\so,\varepsilon)$ (Note that~$\Ee(\F)/\Sigma$ does not depend on~$(\D,(\overline{\phantom{ll}})_\D)$.)

At first we need the following notation: Let~$\theta\in\Cc(\Lambda,\varphi(\beta))$ be a semisimple character, with block restrictions~$\theta_i,\ i\in\I$. 
We write~$\mathcal{F}(\theta)$ for the set
\[\{[\Theta_i] \mid\ i\in\I\}\]
where~$[\Theta_i]$,~$i\in\I$, is the endo-class of~$\theta_i$. 

\begin{proposition}\label{propAppDRealization}
 Given a pair~$(\D,(\overline{\phantom{ll}})_\D)\in\Div(\F/\F_\so,\varepsilon)$ and an orbit~$\mathcal{O}\in\Ee(\F)/\Sigma$ then there {are} a Hermitian space~$(\V,h)\in\Herm(\F/\F_\so,\varepsilon)$ for~$(\D,(\overline{\phantom{ll}})_\D)$ and~$\theta\in\Cc(h)$ such that~$\mathcal{F}(\theta)=\mathcal{O}$. 
\end{proposition}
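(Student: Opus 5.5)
We construct the elementary character directly: first a self-dual simple (or swapped-pair) character on a split space, then push it into a $\D$-hermitian space. Let $\mathcal{O}$ be an orbit, and choose a simple endo-class $c\in\mathcal{O}$ supported on a full simple element~$\beta_0$. There are two cases, $|\mathcal{O}|=2$ and $|\mathcal{O}|=1$.

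\textbf{Case $|\mathcal{O}|=2$.} Write $\mathcal{O}=\{c,\sigma(c)\}$. The action of~$\Sigma$ on $\Ee(\F)$ in \cite[Definition 12.10]{KSS} is defined on realizations by $\theta\mapsto\theta^\sigma=\overline{\theta}^{-1}$, suitably transported. We take a $\D$-vector space $\V_+$ of $\D$-dimension $\deg(c)$, so that $\F[\beta_0]$ embeds in $\End_\D(\V_+)$ (since $\dim_\F\V_+=d\deg(c)$). We choose a realization $\theta_+$ of $c$ on $\V_+$. We then set $\V=\V_+\oplus\V_+^*$, with $\V_+^*$ the $(\overline{\phantom{ll}})_\D$-twisted dual, and let $h$ be the hyperbolic $\e$-hermitian form for which $\V_\pm$ are totally isotropic. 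Put $\beta=\beta_0\oplus(-1)^{\e}\overline{\beta_0}^{\,*}$ and $\Lambda=\Lambda_+\oplus\Lambda_+^{\#}$. Then $\sigma_h$ swaps the two blocks, and $\theta=\theta_+\otimes\theta_+^{\sigma}$ is $\Sigma$-fixed. Its block restrictions have endo-classes $c$ and $\sigma(c)$, and these are distinct because $|\mathcal{O}|=2$. Hence $[\Lambda,-,0,\beta]$ is a semisimple stratum, giving $\I=\I_+\cup\I_-$ and $\mathcal{F}(\theta)=\mathcal{O}$.

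\textbf{Case $|\mathcal{O}|=1$, i.e.\ $\sigma(c)=c$.} By \cite[\S12]{KSS} (the $\D=\F$ theory) we may choose $\beta_0$ to be $\e$-self-dual, so that $\E=\F[\beta_0]$ carries an involution extending $\overline{\phantom{ll}}$ with $\overline{\beta_0}=(-1)^\e\beta_0$. We need an $\E\otimes_\F\D$-module $\V$ with a $\D$-valued $\e$-hermitian form $h$ for which $\beta_0$ is skew. The standard trick is to take $\V=\E\otimes_\F\D$ (or a suitable simple module $S$ of $\E\otimes_\F\D\simeq\M_r(\D')$). The involution $(\overline{\phantom{ll}})_\E\otimes(\overline{\phantom{ll}})_\D$ is an anti-involution of $\E\otimes_\F\D$. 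We set
\[
h(x,y)=\mathrm{tr}\bigl(\lambda\,\overline{x}\,y\bigr),
\]
where $\mathrm{tr}$ is an $\F$-linear (reduced-trace type) $\D$-bimodule map $\E\otimes\D\to\D$ commuting with the involutions, and $\lambda\in\E^\times$ is chosen with $\overline{\lambda}=\pm\lambda$ so that $h$ is $\e$-hermitian. This is the transfer construction $\lambda^*$ of \cite[\S3.5]{KSS} and \cite[\S7.2]{SkodInnerFormII}. Left multiplication by $\beta_0$ is then $h$-skew, which gives an $h$-self-dual embedding $\varphi$. An $h$-self-dual $\o_{\varphi(\E)}$-$\o_\D$-lattice sequence exists: take $\o_\E\otimes\o_\D$-lattices and their duals, or invoke the existence of self-dual points in $\mathfrak{B}(\G_{\varphi(\beta_0)})$ together with the embedding \eqref{eqjb}.

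Transferring a self-dual realization of $c$ to $(\V,\varphi,\Lambda)$ then yields $\theta\in\Cc(\Lambda,\varphi(\beta_0))^\Sigma$. Here we use that transfer commutes with the $\Sigma$-action, and that the set of self-dual characters of a $\sigma$-fixed endo-class is nonempty, by Glauberman correspondence / \cite[Lemma 9.3]{KSS}. The result is simple with $\I=\I^\sigma$, so $\mathcal{F}(\theta)=\{c\}=\mathcal{O}$.

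\textbf{Main obstacle.} The hard step is the second case when $\D\ne\F$. There we must show that a nondegenerate $\e$-hermitian form exists on an $\E\otimes\D$-module for the fixed pair $(\D,(\overline{\phantom{ll}})_\D)$ and the given sign $\e$. This needs the existence of a suitable $\lambda$ (possibly of trace zero, or with prescribed symmetry) making the composite involution have the right type on $\E\otimes\D$. I would first try the explicit formula above with $\lambda=1$. If the sign comes out wrong, I would instead use $\lambda$ a $(-1)$-eigenvector of the involution on $\E$ (or on $\D$, via a pure quaternion). Should such a $\lambda$ fail to exist, because $\E=\E_\so$ is fixed and $\D=\F$, then that case is already covered by $\D=\F$ in \cite{KSS}, where $\e$-hermitian forms of any sign exist on even-dimensional spaces by doubling $\V$.
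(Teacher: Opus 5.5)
\emph{Case $|\mathcal{O}|=1$ matches the paper; case $|\mathcal{O}|=2$ has a genuine gap.}

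Your treatment of $|\mathcal{O}|=1$ is essentially the paper's case (ii). You compose an $\E\otimes_\F\D$-valued form with a $\sigma$-equivariant $\F$-linear map $\E\to\F$ tensored with $\id_\D$; the paper uses $\lambda_\beta$ where you use a trace. You then take a self-dual lattice sequence from the building of the centralizer and evaluate the self-dual ps-character from \cite[Theorem~12.16]{KSS}. Your ``main obstacle'' disappears if you take $h'$ to be the hyperbolic $\e$-hermitian form on $(\E\otimes_\F\D)^2$, which exists for either sign.

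The gap is at ``these are distinct because $|\mathcal{O}|=2$. Hence $[\Lambda,-,0,\beta]$ is a semisimple stratum.'' Whether $\beta=\beta_0\oplus(-\sigma_h(\beta_0))$ is full semisimple is a property of $\beta$ alone. The group $\H^1(\beta,\Lambda)$ on which $\theta_+\otimes\theta_+^\sigma$ is supposed to live depends on it. So it cannot follow from the endo-classes of the characters, and for an arbitrary $\beta_0$ supporting $c$ it is false. Here is a counterexample.
\begin{itemize}
\item Setup: take $\F=\F_\so$, $\D=\F$, $\E=\F[\sqrt{\alpha}]$ unramified quadratic, $\beta_0=\varpi^{-3}\sqrt{\alpha}$, and $\Lambda_+$ the $\o_\E$-lattice chain in $\V_+=\E$. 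Then $\H^1(\beta_0,\Lambda_+)=\J^1(\beta_0,\Lambda_+)=(1+\p_\E)(1+\mathfrak{a}_2(\Lambda_+))$. The set $\Cc(\Lambda_+,\beta_0)$ consists of characters equal to $\psi_{\beta_0}$ on $1+\mathfrak{a}_2(\Lambda_+)$ and otherwise free on $1+\p_\E$.
\item $c\ne\sigma(c)$: compute $\sigma(c)$ using the symplectic form $\mathrm{Tr}_{\E/\F}(\sqrt{\alpha}\,\overline{x}y)$ on $\V_+$, for which $\beta_0$ is skew. You get $\theta_+^\sigma\in\Cc(\Lambda_+,\beta_0)$ with $\theta_+^\sigma(x)=\theta_+(\overline{x})^{-1}$ on $1+\p_\E$. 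So $\theta_+^\sigma\ne\theta_+$ as soon as $\theta_+$ is non-trivial on $1+\p_\F$. All intertwiners lie in $(1+\mathfrak{a}_2(\Lambda_+))\E^\times(1+\mathfrak{a}_2(\Lambda_+))$, which normalizes both characters. Hence they do not intertwine, and $c\ne\sigma(c)$.
\item The construction fails: $-\sigma_h(\beta_0)$ has the same minimal polynomial $X^2-\varpi^{-6}\alpha$ as $\beta_0$. So your $\beta$ generates a field and $[\Lambda,3,0,\beta]$ is simple. Every character in $\Cc(\Lambda,\beta)$ restricts to the Levi subgroup as a product of two endo-equivalent simple characters. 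Therefore $\theta_+\otimes\theta_+^\sigma$ is not a semisimple character for this stratum.
\end{itemize}

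To repair this you must choose the support $\beta_0$ of $c$ so that $\beta_0\oplus(-\sigma_h(\beta_0))$ is full semisimple. In the example, $\beta_0+\varpi^{-1}a$ with $a\in\o_\E$ of unit trace still supports $\theta_+$ and works. In general this is exactly what \cite[Theorem~12.16]{KSS} provides for two-element orbits. Alternatively, extend $\theta_+$ from the Levi subgroup $\GL_\D(\V_+)$ to a self-dual semisimple character via \cite[Proposition~5.1]{MiSt}, \cite[Theorems~6.6 and~6.10]{SkodInnerFormII}.

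The paper avoids the issue altogether. It takes the elementary character $\theta'\in\Cc(\Lambda',\varphi'(\beta))^{\Sigma'}$ on an $\F$-hermitian space $(\V',h')$ given by \cite[Theorem~12.16]{KSS}. Its $\beta$ is already a self-dual full semisimple element with two swapped blocks. The paper transfers $\theta'$ to $\V=\V'\otimes_\F\D$ with $h(v'\otimes x,w'\otimes y)=\overline{x}h'(v',w')y$. Full semisimplicity is intrinsic to $\beta$, so it survives the new embedding.
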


\begin{proof}
 We have to consider several cases:
 \begin{enumerate}
  \item $\varepsilon\neq 0$ and $\F=\D$.
  \item $\varepsilon\neq 0$ and $\F\neq\D$ and the orbit~$\mathcal{F}$ has cardinality one. 
  \item $\varepsilon\neq 0$ and $\F\neq\D$ and the orbit~$\mathcal{F}$ has cardinality two. 
  \item $\varepsilon=0$. 
 \end{enumerate}
Case (i) is done in~\cite[Theorem 12.16]{KSS}. Case (iv) follows from transfer using the fact that if~$\E/\F$ is a finite field extension then~$\E\otimes_\F\D$
is a finite~$\D$-vector space with a bi-$\E$-$\D$-module structure. 
We prove the case (ii) first, i.e., we have~$\mathcal{O}=\{[\Theta]\}$ for a ps-character~$\Theta$. By~\cite[Theorem~12.16]{KSS} the ps-character~$\Theta$ can be chosen to be supported on a full self-dual simple element~$\beta$. Consider any~$\varepsilon$-Hermitian form 
\[h':\ \V\times\V\rightarrow (\E\otimes_\F\D, (\overline{\phantom{ll}})_\E\otimes_\F(\overline{\phantom{ll}})_\D).\]
For constructing the desired hermitian form~$h$ we use the~$\F$-linear map~$\lambda_\beta:\E=\F[\beta]\rightarrow\F$ given by
\[\lambda_\beta(\beta^k)=\left\{\begin{array}{ll}
0 & k=1,\ldots,[\E:\F]-1\\
1 & k=0.
\end{array}\right.
\]
 We define~$h$ via~$h:=(\lambda_\beta\otimes\id_\D)\circ h'$. The embedding~$\varphi:\E\rightarrow\End_\D(\V)$ is self-dual with respect to the anti-involution~$\sigma_h$ of~$h$. We write~$\G$ for~$\U(\V,h)$ and~$\G_{\varphi(\beta)}$ for {the} centralizer {of~$\varphi(\beta)$ in~$\G$}. The image of the embedding of buildings
 \[j_{{\varphi(\beta)}}:\mathfrak{B}(\G_{\varphi(\beta)})\hookrightarrow\mathfrak{B}(\G)\]
 consists of the set of self-dual~${\o_{\varphi(\E)}}$-$\o_\D$-lattice functions, see~\cite[Theorem 7.2]{SkodFourier}. Take a rational point~$x$ of~$\mathfrak{B}(\G_{\varphi(\beta)})$, i.e.~a point with rational barycentric coordinates with respect to the vertexes of a chamber. Then~$j_{{\varphi(\beta)}}(x)$ corresponds to a self-dual~${\o_{\varphi(\E)}}$-$\o_\D$-lattice sequence~$\Lambda$. Then {the simple character~$\theta %\in\Cc(\Lambda,\varphi(\beta))^\Sigma\cap\im(\Theta)$ 
 =\Theta((\V,h),\varphi,\Lambda)$} satisfies the assertion. 
 
 Case (iii) has a slightly different proof. By~\cite[Theorem~12.16]{KSS} there exist~$(\V',h')\in\Herm(\F/\F_\so,\varepsilon)$ for~$(\F,\id_\F)$ and an elementary character~$\theta'\in\Cc(h')$, say in~$\Cc(\Lambda',\varphi'(\beta))^{\Sigma'}$, such that~$\mathcal{F}(\theta')=\mathcal{O}$.  
We define on~$\V=\V'\otimes_\F\D$ an~$\varepsilon$-Hermitian form:
\[h:\V\times\V\rightarrow\D,\ h(v'\otimes x,w'\otimes y):=\overline{x}h(v',w')y.\]
We have 
\[\End_\D\V=\End_\D(\V'\otimes_\F\D)=(\End_\F\V')\otimes_\F\D\]
and an embedding
\[\varphi:\E\hookrightarrow\End_D\V\]
by composing~$\varphi'$ with the inclusion. We set~$\G=\U(\V,h)$ and choose a~${\o_{\varphi(\E)}}$-$\o_\D$-lattice sequence~{$\Lambda$} in~$\V$ corresponding to some point in the image of the embedding of~$\mathfrak{B}(\G_{\varphi(\beta)})$ into~$\mathfrak{B}(\G)$ (\cite[Theorem 7.2]{SkodFourier}).
The transfer~$\theta\in\Cc(\Lambda,\varphi(\beta))$ of~$\theta'$ satisfies~$\mathcal{F}(\theta)=\mathcal{O}$.
\end{proof}

%%%%%%%%%%%%%%%%%%%%%%%%%%%%%
\section{Semisimple characters for inner forms of general linear groups}
%%%%%%%%%%%%%%%%%%%%%%%%%%%%%

For inner forms of general linear groups, for~$\R$ an algebraically closed field of characteristic $\ell\neq p$, every cuspidal $\R$-representation contains a simple~$\R$-character.  Thus in some previous works, the authors consider only simple characters and use a combination of simple character theory and parabolic induction to approach~$\Rep_{\R}(\G)$.   In this article, we have preferred a uniform approach utilizing semisimple characters for (inner forms of) classical groups and general linear groups.  The point of this appendix is to prove some results on semisimple characters of inner forms of general linear groups, currently missing from the literature.

%%%%%%%%%%%%%%%%%%%%%%%%%%%%%
\subsection{Semisimple characters in irreducible representations}
In this section, we suppose that~$h=0$, i.e., that~$\G=\GL_m(\D)$.  Note that in this case the sets~$\Cc(\V)$  and~$\Cc_-(h)$ coincide, see~\S\ref{semisimplechartG}and~\S\ref{semisimplecharG}.

\begin{theorem}\label{thmSemiCharInRep}
 Let~~$\R$ be an algebraically closed field of characteristic~$\ell\neq p$, and~$\pi$ be a smooth~$R$-representation of~$\G$. Then there exists a semisimple 
 character in~$\Cc(\V)$ contained in~$\pi$. 
\end{theorem}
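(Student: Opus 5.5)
We have to show that every nonzero smooth $\R$-representation $\pi$ of $\G=\GL_m(\D)$ contains some semisimple character in $\Cc(\V)$. Since $\pi$ is smooth and nonzero, we may pick a nonzero vector and replace $\pi$ by a finitely generated subrepresentation, then by an irreducible quotient. This reduction is valid because semisimple characters are characters of pro-$p$ groups and $\ell\neq p$, so their isotypic functors are exact. It therefore suffices to treat irreducible $\pi$.

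For irreducible $\pi$, take its (super)cuspidal support, or more simply a cuspidal support $(\M,\rho)$ with $\M=\prod_j\GL_{m_j}(\D)$ and $\pi\hookrightarrow i_\P^\G(\rho)$ (up to passing to a subquotient in Frobenius reciprocity). For cuspidal $\R$-representations of $\GL_{m_j}(\D)$, the theory of S\'echerre--Stevens and M\'inguez--S\'echerre \cite{SecherreStevensIV,MinSec} gives that each factor $\rho_j$ contains a simple character $\theta_j\in\Cc(\Lambda_j,\beta_j)$, and we may take it to be $m$-simple. Then $\theta_\M=\bigotimes\theta_j$ is a semisimple character of $\M$, though not yet full if two $\beta_j$ are endo-equivalent.

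The next step is to build a semisimple character $\theta$ of $\G$ decomposed with respect to $(\M,\P)$, with $\theta|_\M$ realizing the same endo-parameter as $\theta_\M$. To do this, group the factors with endo-equivalent simple characters. Within each group, use transfer, following \cite[Proposition 5.1]{MiSt} and \cite[Theorems 6.6, 6.10]{SkodInnerFormII}, to replace the $\beta_j$ by a common simple element and the $\theta_j$ by transfers, so that we obtain a full semisimple stratum $[\Lambda,n,0,\beta]$ with $\M$ properly subordinate to it, in the sense of \cite[\S8]{SkodlerackCuspQuart}. This requires choosing $\Lambda$ as a suitable direct sum of lattice sequences, rescaled so that the blocks have a common period; the changes inside each group must be by elements of $\M$, which preserve $\rho$.

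Finally, we use $\H^1=(\H^1\cap\N^-)(\H^1\cap\M)(\H^1\cap\N)$ and the fact that $\theta$ is trivial on the unipotent parts. Then Jacquet-module arguments in the style of \cite[Proposition 7.5]{Dat09} and \cite{SecherreStevensVI} give
\[
\Hom_{\H^1}(\theta,\pi)\neq0 \iff \Hom_{\H^1\cap\M}(\theta_\M,r_\P^\G\pi)\neq0.
\]
The right-hand side is nonzero because $r_\P^\G\pi$ surjects onto $\rho$ (up to an $\M$-conjugate), and $\rho$ contains $\theta_\M$.

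The main obstacle is this last comparison. The transfer of characters along $(\M,\P)$ requires the Iwahori decomposition of $\H^1(\beta,\Lambda)$ relative to $\P$, and an argument that the $(\H^1\cap\N)$-coinvariants of $\pi^{\H^1\cap\N^-}$ detect $\theta_\M$. Here I would first try Dat's approach of using the second adjointness and a Hecke element strictly positive for $\P$, as in the classical-group proof, to contract $\H^1\cap\N^-$. If that fails, the fallback is to argue inductively on $m$ via simple characters and the covers of \cite{SecherreStevensVI}, noting that $\theta$ is the character underlying the cover of the type of $\rho$.
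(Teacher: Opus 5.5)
Your proof is correct in outline and takes a genuinely different route from the paper's, but two of your claims need correcting. Your first reduction, to irreducible $\pi$ via exactness of isotypic functors for pro-$p$ groups, is the same as the paper's.

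\textbf{How the paper argues.} It never passes through cuspidal support. It runs the descent argument of \cite[Theorem~3.1]{SkodlerackCuspQuart} directly on $\GL_m(\D)$:
\begin{itemize}
\item it considers all pairs consisting of a semisimple stratum $[\Lambda,n,s,\beta]$ with $s\geq 0$ and a character in $\Cc(\Lambda,s,\beta)$ contained in $\pi$;
\item it minimizes the normalized level $s/e(\Lambda/\F)$, which lies in a discrete set;
\item it shows, using the centralizer-move Proposition~\ref{propCentralizerMove}, that a positive minimum could be lowered.
\end{itemize}
The new ingredients are Lemmas~\ref{lemkthetanondeg}, \ref{lemNondegkxi} and~\ref{lemxidecisionforSUbrep}, obtained by Galois descent from $\End_\L(\V)$ with $\L$ a maximal unramified subfield of $\D$. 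That argument is self-contained, uniform with the classical-group case, and independent of the classification of cuspidal representations.

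\textbf{What your route buys and costs.} Your argument is shorter given the literature. It also gives extra information: the character you produce is decomposed with respect to a cuspidal support of $\pi$, with endo-parameter induced from that of $\rho$. It rests on two outside inputs. The first is the existence of simple characters in cuspidal $\R$-representations \cite{SecherreStevensIV,MinSec}, whose proofs run a version of the same descent; the paper's Proposition~\ref{propCentralizerMove} is adapted from \cite[Proposition~3.15]{SecherreStevensIV}. The second is the gluing of $\theta_\M$ into a semisimple character with full semisimple $\beta$, decomposed over $(\M,\P)$. The paper itself uses this gluing in \S\ref{SeccuspsuppEP}, via \cite[Proposition~5.1]{MiSt} and \cite[Theorems~6.6, 6.10]{SkodInnerFormII}.

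\textbf{First correction.} Asking that $\theta|_\M$ realize the same endo-parameter as $\theta_\M$ is too weak. A cuspidal $\rho$ does not contain every realization of its endo-parameter; for example, it does not contain the transfers of the $\theta_j$ to non-maximal points of the building of the centralizer. You need $\theta|_{\H^1\cap\M}$ to be an $\M$-conjugate of $\theta_\M$ itself. Your construction (re-parametrizing the $\beta_j$, re-indexing and shifting the $\Lambda_j$, then gluing) does give this, and it is what your last step actually uses.

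\textbf{Second correction.} The displayed equivalence is false in general. A depth-zero cuspidal representation of $\GL_2(\F)$ contains the trivial character of $1+\varpi_\F\mathrm{M}_2(\o_\F)$, which is decomposed relative to the diagonal Levi subgroup, yet its Jacquet module is zero. However, you only need the implication from right to left. That direction is an elementary Jacquet-lemma argument and needs neither second adjointness nor covers:
\begin{itemize}
\item Take $\bar u\neq 0$ in the $\theta_\M$-isotypic part of $r_\P^\G\pi$. Lift it to $u$ and average over the pro-$p$ group $\H^1\cap\M$, so that $u$ is a $\theta_\M$-eigenvector.
\item Choose $z$ central in $\M$ such that $z^{-m}(\H^1\cap\bar{\N})z^{m}$ shrinks to $1$. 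Then $w=\pi(z^m)u$ is fixed by $\H^1\cap\bar{\N}$ for $m$ large, and it is still a $\theta_\M$-eigenvector.
\item Since $\H^1=(\H^1\cap\N)(\H^1\cap\M)(\H^1\cap\bar{\N})$ and $\theta$ is trivial on the unipotent factors, the $\theta$-projection of $w$ equals its $(\H^1\cap\N)$-average.
\item The image of that average in $r_\P^\G\pi$ is $z^m\cdot\bar u\neq 0$. Hence $\pi$ contains $\theta$.
\end{itemize}
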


As semisimple characters are projective and every smooth~$\R$-representation contains an irreducible subquotient, we reduce to proving the theorem for irreducible~$\R$-representations. For this section, from now on we {fix}~$\pi$ an irreducible~$\R$-representation of~$\G$. 

We consider semisimple characters for strata~$\Delta=[\Lambda,n,s,\beta]$ allowing~$s$ to be zero or positive, see~\cite[Definition 5.4]{SkodInnerFormI}. We define the following sets:
\begin{align*}
\mathfrak{M}_\pi&:=\{(\Delta,\theta)\mid \Delta=[\Lambda,n,s,\beta]\ \text{ a semisimple stratum},\ \theta\in\Cc(\Delta),\ \theta\subseteq\pi\},\\% \text{ and}\\
\mathfrak{N}_\pi&:=\left\{q\in\mathbb{Q}\mid \exists\ ([\Lambda,n,s,\beta],\theta)\in\mathfrak{M}_\pi:\ q=\tfrac{s}{e(\Lambda/\F)}\right\},
\end{align*}
{where~$e(\Lambda/\F)$ denotes the~$\F$-period of~$\Lambda$.}
Then Theorem~\ref{thmSemiCharInRep} states that the set~$\mathfrak{N}_\pi$ has a minimum  equal to zero.  For the proof we show the following assertions. 

\begin{lemma}\label{lemStratumContainedInRep}
 The set~$\mathfrak{M}_\pi$ is not empty. 
\end{lemma}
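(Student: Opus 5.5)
We need to show that $\mathfrak{M}_\pi$ is non-empty, i.e.\ that $\pi$ contains some semisimple character attached to a semisimple stratum $[\Lambda,n,s,\beta]$, where $s$ is allowed to be positive. The plan is to use the most degenerate strata, the null strata with $\beta=0$.

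For a null stratum $\Delta=[\Lambda,n,n-1,0]$, or more generally $[\Lambda,s,s,0]$ with $s\geq 1$, the group $\widetilde{\H}^{s+1}(0,\Lambda)$ is $\P_{s+1}(\Lambda)=1+\mathfrak{a}_{s+1}(\Lambda)$. By the definitions in \cite[Definition 5.4]{SkodInnerFormI}, the set $\Cc(\Delta)$ then consists of the trivial character of this group. So it suffices to find one $\o_\F$-$\o_\D$-lattice sequence $\Lambda$ and one $s\geq 1$ such that $\pi^{\P_{s+1}(\Lambda)}\neq 0$.

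That is immediate from smoothness. Since $\pi$ is irreducible, hence non-zero, choose any $v\neq 0$. Its stabilizer is open, so it contains $1+\mathfrak{a}_{k}(\Lambda)$ for $k$ large, for any fixed lattice sequence $\Lambda$, because these groups form a neighbourhood basis of the identity. Taking $s=k-1\geq 1$ gives $(\Delta,\mathbf{1})\in\mathfrak{M}_\pi$ with $\Delta=[\Lambda,s,s,0]$.

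The only point to verify is conventional: that the paper's notion of semisimple stratum with $s>0$ admits the null stratum, with the trivial character as its unique semisimple character. If the conventions of \cite{SkodInnerFormI} require $n>s$, I would instead take $[\Lambda,s+1,s,0]$, which is still a null stratum with trivial character on $\P_{s+1}(\Lambda)$, so the argument is unchanged. I expect no real obstacle here. The substantive work, pushing $\min\mathfrak{N}_\pi$ down to $0$, happens in the subsequent lemmas.
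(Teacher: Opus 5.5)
Your proposal is correct and is essentially the paper's own argument. The paper's proof is the one-line observation that, by smoothness, $\pi$ contains the (trivial character of the) null stratum $[\Lambda,s,s,0]$ for $s$ large enough, and you have simply written out the details of that observation.
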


\begin{proof}
 The representation~$\pi$ being smooth contains a null-stratum~$[\Lambda,s,s,0]$ for~$s$ large enough. 
\end{proof}

\begin{proposition}[{cf.~\cite[Lemma~5.4]{St05},~\cite[Proposition~3.15]{SecherreStevensIV}}]\label{propCentralizerMove}
 Suppose~$(\Delta=[\Lambda,n,s,\beta],\theta)$ is an element of~$\mathfrak{M}_\pi$ with~$s$ positive and~$\tilde{\theta}\in\Cc(\Lambda,s-1,\beta)$
 is an extension of~$\theta\in\Cc(\Lambda,s,\beta)$ and~$c\in\mathfrak{a}_{-s}$. 
 
 Suppose further that~$[\Lambda',n',s',\beta]$ is a semisimple stratum,~$\theta'\in\Cc(\Lambda',s'-1,\beta)$ and~$b'\in\mathfrak{b}_{-s'}\cap\mathfrak{b}_{-s}$ such that 
 \begin{itemize}
  \item $\frac{s'}{e(\Lambda'/\F)}\leq\frac{s}{e(\Lambda/\F)} $
  \item there is a tame corestriction~$s_\beta$ with respect to~$\beta$ (see~\cite[Definition 4.13]{SkodInnerFormI}) such that 
  \[s_\beta(c)+\mathfrak{b}_{1-s}\subseteq b'+\mathfrak{b}_{1-s'}.\]
 \end{itemize}
Then there exist~$c'\in\mathfrak{a}'_{-s'}$ and~$\theta'\in\Cc(\Lambda',s'-1,\beta)$ such that~$s_\beta(c')=b'$ and
$\theta'\psi_{c'}$ is contained in~$\pi$. The element~$c'$ can be chosen to be zero if~$b'=0$.
\end{proposition}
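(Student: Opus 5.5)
The strategy is the "moving along the centralizer" argument of \cite[Lemma~5.4]{St05} and \cite[Proposition~3.15]{SecherreStevensIV}. We transport the character $\tilde\theta\psi_c$, which lives on the level-$(s-1)$ group for $\Lambda$, to a character on the level-$(s'-1)$ group for $\Lambda'$. The transport goes through a common pro-$p$ group where both are related by induction or restriction, and the tame corestriction controls the "$\beta$-centralizer part" of the twist.

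\textbf{Step 1: identify what $\pi$ contains.} First I would record what $\pi$ actually contains. Since $\theta\subseteq\pi$ and $\H^{s}(\beta,\Lambda)$ is normal in $\H^{s-1}(\beta,\Lambda)$ with abelian $p$-group quotient (on the relevant piece), the restriction of $\pi$ to $\H^{s-1}$ contains some extension of $\theta$. All extensions are of the form $\tilde\theta\psi_c$ with $c\in\mathfrak{a}_{-s}$, so I read the hypothesis as "$\tilde\theta\psi_c\subseteq\pi$." Here $\psi_c$ depends only on $c$ modulo the appropriate dual lattice.

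\textbf{Step 2: reduce $c$ to its corestriction.} Using the exact sequence
\[
0\to\mathfrak{b}_{-s}/\mathfrak{b}_{1-s}\to \mathfrak{a}_{-s}/(\mathfrak{a}_{1-s}+\ldots)\xrightarrow{a_\beta}\cdots
\]
given by $s_\beta$ and $a_\beta=\mathrm{ad}(\beta)$ (\cite[\S4]{SkodInnerFormI}), I would conjugate by an element of $\P_{s'}(\Lambda)$-type groups, or rather of $\J^{s}(\beta,\Lambda)$. The aim is to change $\tilde\theta$ within $\Cc(\Lambda,s-1,\beta)$ and to replace $c$ by an element $b\in\mathfrak{b}_{-s}$ with $s_\beta(c)\equiv b \pmod{\mathfrak{b}_{1-s}}$. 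This is the standard "$(\J^s,\H^{s-1})$ intertwining" step: the elements $1+x$ with $x$ in the relevant lattice act on extensions of $\theta$ by twisting with $\psi_{a_\beta(x)}$, which kills the part of $c$ outside the image of the corestriction kernel. After this step, $\pi$ contains $\tilde\theta\psi_b$ with $b+\mathfrak{b}_{1-s}\subseteq b'+\mathfrak{b}_{1-s'}$ by hypothesis.

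\textbf{Step 3: pass from $\Lambda$ to $\Lambda'$, the main obstacle.} Consider the group
\[
\H=\H^{s'}(\beta,\Lambda')\,\bigl(1+\mathfrak{b}_{-s}\cap \ldots\bigr)\H^{s}(\beta,\Lambda)
\]
generated by the two character domains. Along the centralizer $\B_\beta$, the character $\psi_b$ restricted to $(1+\mathfrak{b}_{s'})$ agrees with $\psi_{b'}$; this uses the assumption $s'/e(\Lambda')\le s/e(\Lambda)$ and the inclusion $b+\mathfrak{b}_{1-s}\subseteq b'+\mathfrak{b}_{1-s'}$. Off the centralizer, both $\tilde\theta$ and any $\theta'$ are transfers of each other on the intersection of their domains, by the intersection properties of semisimple characters (\cite[Proposition~5.15]{SkodInnerFormI}). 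I would then argue as in the proof of Theorem~\ref{thm43}. Cut the segment from $\Lambda$ to $\Lambda'$ in the building of $\G_\beta$ into small pieces with nested hereditary orders. At each step, use that the $\theta$-isotypic part equals the isotypic part of the Heisenberg-type induced representation to $\P_{1}$ of the smaller order. This shows that some $\theta'\psi_{c'}$ occurs, with $c'$ any element of $\mathfrak{a}'_{-s'}$ satisfying $s_\beta(c')=b'$. Such an element exists by surjectivity of the tame corestriction onto $\mathfrak{b}'_{-s'}$.

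\textbf{Step 4: the case $b'=0$.} If $b'=0$, take $c'=0$. The required character is then simply an element of $\Cc(\Lambda',s'-1,\beta)$, since the twisting by $\psi_{c'}$ disappears in the argument of Step 3.

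The delicate point is the bookkeeping of Step 3: one must check that on the overlapping groups the characters $\tilde\theta\psi_b$ and $\theta'\psi_{c'}$ really coincide, and that the relevant induced representations are irreducible when $\ell\neq p$. For the irreducibility I would invoke Proposition~\ref{propHeisenbergPairs}. For the rest, where only the intersections with $1+\mathfrak{b}$ need care, I would use the exact-sequence description of $s_\beta$.
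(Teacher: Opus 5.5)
There is a genuine gap. It sits in your Step~3, which carries the entire content of the proposition, and the mechanism you propose there does not apply.

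The path argument in the proof of Theorem~\ref{thm43} and Proposition~\ref{propHeisenbergPairs} concern a full character in $\Cc(\Lambda,0,\beta)$, its transfers, and Heisenberg representations of $\J^1$. They move only in two ways: along lattice sequences with a fixed centralizer parahoric, or in the direction where containment at a smaller centralizer parahoric forces containment at a larger one. Here you must pass from $\tilde\theta\psi_c$ on $\H^s(\beta,\Lambda)$ to $\theta'\psi_{c'}$ on $\H^{s'}(\beta,\Lambda')$. These are positive-level characters twisted by $\psi_c$, which in general is not part of any semisimple character for $\beta$, so there is nothing to transfer. The point of the statement is a move \emph{inside} the centralizer, from the derived stratum $[\Lambda_\beta,s,s-1,s_\beta(c)]$ to $[\Lambda'_\beta,s',s'-1,b']$.

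Your remark that $\psi_{s_\beta(c)}$ and $\psi_{b'}$ agree on $1+\mathfrak{b}'_{s'}\subseteq 1+\mathfrak{b}_s$ is correct, but it only controls the centralizer. In general $\H^{s'}(\beta,\Lambda')$ is not contained in $\H^s(\beta,\Lambda)$: their parts off the centralizer are filtration subgroups at different points of the building of $\G$. So you have no group on which both characters are visible. A chain of Heisenberg inductions transports only $\theta$ and its transfers, never the twist. Converting $\psi_c$ into $\psi_{c'}$ across this move is exactly what has to be proved, and you defer it to ``bookkeeping''.

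Several related points also fail:
\begin{itemize}
\item The claim that \emph{any} $c'$ with $s_\beta(c')=b'$ works is unjustified, since off the centralizer $\psi_{c'}$ depends on $c'$ itself and not only on $s_\beta(c')$.
\item The set $\H$ in Step~3 is not a group.
\item Proposition~\ref{propHeisenbergPairs} gives irreducibility of the wrong induced representation.
\item Step~2 does nothing as stated. $\J^s(\beta,\Lambda)\subseteq\J^1$ normalizes $\tilde\theta$, and for $y\in\mathfrak{a}_s$ one has $[y,c]\in\mathfrak{a}_0$. Hence conjugation by $\J^s$ fixes every $\tilde\theta\psi_c$.
\end{itemize}

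What is missing is a Heisenberg-type statement at level $s$ for the \emph{twisted} character. This is the Bushnell--Kutzko mechanism of \cite[Proposition~(8.1.7)]{BK93}, which the paper sets up just before the proof. Put $\xi=\theta\psi_c$ on $\H^s_1(\beta,\Lambda)$. The argument then runs as follows.
\begin{itemize}
\item The form $\k_\xi$ on $\J^s_1/\H^s_1$ is non-degenerate (Lemmas~\ref{lemkthetanondeg} and~\ref{lemNondegkxi}). This is obtained by extending $\theta$ to $\theta_\L$ over a maximal unramified $\L\subseteq\D$ and descending.
\item The resulting unique representation $\mu$ of $\J^s_1$ induces irreducibly to $\K^s_2(\Lambda)$, by bounding its intertwining there.
\item Hence, by Lemma~\ref{lemxidecisionforSUbrep}, every irreducible representation of a subgroup $\U\subseteq\K^s_2(\Lambda)$ whose restriction to $\U\cap\H^s_1$ contains $\xi$ already occurs in $\pi$.
\item The proof then follows Steps~3--4 of \cite[Proposition~3.11]{SkodlerackCuspQuart}. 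One produces $c'$ with $s_\beta(c')=b'$, together with an auxiliary element $x$, from the simple case in \cite[Lemma~3.17]{SecherreStevensIV} and the discussion preceding it.
\item One conjugates by $1+x$ (instead of a Cayley transform) and compares the result with $\xi$ via Lemma~\ref{lemxidecisionforSUbrep}.
\end{itemize}
Some device of this kind, which pins down the character on a group where both sides are visible and which involves a specific choice of $c'$, is indispensable. The level-zero Heisenberg path argument cannot replace it.
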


\begin{proof}[Proof of Theorem~\ref{thmSemiCharInRep}]
The proof is mutatis mutandis {that} of~\cite[Theorem 3.1]{SkodlerackCuspQuart} using Proposition~\ref{propCentralizerMove} instead of~\cite[Proposition 3.11]{SkodlerackCuspQuart} and Lemma~\ref{lemStratumContainedInRep} instead of~\cite[Proposition 3.6]{SkodlerackCuspQuart}.
Precisely the proof consists of three parts 

Part 1: $\mathfrak{M}_\pi\neq\emptyset$ (See Lemma~\ref{lemStratumContainedInRep}, cf.~\cite[Proposition 3.6]{SkodlerackCuspQuart}.)

Part 2: Let~$z$ {be} the smallest element of~$\frac{1}{((4N)!)^2}\mathbb{Z}$ such that there is an element~$q$ of~$\mathfrak{N}_\pi$ such that~$q\leq z$. Then~$z$ is the infimum of~$\mathfrak{N}_\pi$.  (See the first paragraph of the proof of~\cite[Theorem 3.1]{SkodlerackCuspQuart}.)

Part 3: The minimum of~$\mathfrak{N}_\pi$ is $0$. (See the part after the first paragraph of the proof of~\cite[Theorem 3.1]{SkodlerackCuspQuart}.)
\end{proof} 

We are left with the proof of Proposition~\ref{propCentralizerMove}. We need to adapt the proofs of~\cite[Lemma 3.14/3.15]{SkodlerackCuspQuart} to the case of~$\GL_m(\D)$.
We recall the groups~$\K^t_1(\La),\ \K^r_2(\La),\ t\in\mathbb{N},$ for a stratum~$[\La,n,s,\beta]$ from~\cite[\S3]{SkodlerackCuspQuart} and~\cite[\S5.2]{St05}: Let~$t$ be a positive integer, and set
\begin{align*}
\K^t_1(\La):=1+\mathfrak{a}_{\lfloor \frac{t}{2}\rfloor+1}\cap \left((\prod_{i\neq j}\A^{i,j})\oplus (\prod_i (\mathfrak{a}_t\cap\A^{i,i}))\right)\\
\K^t_2(\La):=1+\mathfrak{a}_{\lfloor \frac{t+1}{2}\rfloor}\cap \left((\prod_{i\neq j}\A^{i,j})\oplus (\prod_i (\mathfrak{a}_t\cap\A^{i,i}))\right).
\end{align*}
We  denote the intersection of~$\H(\beta,\Lambda)$ and~$\J(\beta,\Lambda)$ with~$\K^t_i(\La)$ by~$\H^t_i(\beta,\La)$ and~$\J^t_i(\beta,\La)$ ($i\in\{1,2\}$). 

\begin{lemma}[{cf.~\cite[Theorem~(3.4.1)]{BK93},~\cite[Proposition~3.24]{St05}}]\label{lemkthetanondeg}
 Given a semisimple character~$\theta\in\Cc(\Lambda,s,\beta)$, the bilinear form
 \[\k_\theta: (\J^{s+1}(\beta,\Lambda)/\H^{s+1}(\beta,\Lambda))\times(\J^{s+1}(\beta,\Lambda)/\H^{s+1}(\beta,\Lambda))\rightarrow \R^\times \]
 defined via~$\k_\theta([g_1]_{\H^{s+1}},[g_2]_{\H^{s+1}})=\theta([g_1,g_2])$
 is non-degenerate, {where~$[g_1,g_2]=g_1g_2g_1^{-1}g_2^{-1}$ denotes the commutator.} % of~$g_1$ with~$g_2$,~$[g_1,g_2]=g_1g_2g_1^{-1}g_2^{-1}$. 
\end{lemma}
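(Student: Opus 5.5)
We prove non-degeneracy of $\k_\theta$ by reducing it to the corresponding statement for a stratum with a simple element (a field $\F[\beta]$), where the Bushnell--Kutzko argument of \cite[Theorem~(3.4.1)]{BK93} has already been adapted to $\GL_m(\D)$ in \cite{SkodInnerFormI}. The reduction uses the block decomposition $\V=\bigoplus_{i\in\I}\V_i$ attached to $\beta$.

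\emph{Well-definedness.} First we check that $\k_\theta$ is a well-defined alternating bimultiplicative form on the abelian group $\J^{s+1}/\H^{s+1}$. For this we use that $[\J^{s+1},\J^{s+1}]\subseteq\H^{s+1}$, and that $\J^{s+1}$ normalizes $\theta$ (indeed $\J^{s+1}\subseteq\I_\G(\theta)$ by the intertwining formula). Both facts follow from the standard filtration properties in \cite[\S5]{SkodInnerFormI}.

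\emph{Reduction to the radical.} For non-degeneracy we follow \cite[Proposition~3.24]{St05}. Suppose $g\in\J^{s+1}$ lies in the radical, i.e.\ $\theta([g,x])=1$ for all $x\in\J^{s+1}$. Then $\theta$ extends to a character of $\langle g\rangle\H^{s+1}$, so $g$ intertwines $\theta$. Hence
\[
g\in \I_\G(\theta)\cap\J^{s+1}=\J^{s+1}\cap\bigl(\J^{s+1}\G_\beta\J^{s+1}\bigr).
\]
Multiplying $g$ by elements of $\H^{s+1}$, we want to push it into $\H^{s+1}$.

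\emph{Splitting the group.} We use the Iwahori-type decomposition of $\J^{s+1}$ and $\H^{s+1}$ relative to the splitting. Each is a product of its block-diagonal part $\prod_i\J^{s+1}(\beta_i,\Lambda_i)$ with off-diagonal parts in $\mathfrak{a}\cap\A^{i,j}$. The off-diagonal part of $\H^{s+1}$ is $\mathfrak{a}_{\lfloor s/2\rfloor+1}$, and that of $\J^{s+1}$ is $\mathfrak{a}_{\lfloor (s+1)/2\rfloor}$, matching the groups $\K^t_1,\K^t_2$ introduced above.

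\emph{Off-diagonal part.} Here we exploit that $\beta_i$ and $\beta_j$ have distinct endo-classes, i.e.\ the stratum $[\Lambda_i\oplus\Lambda_j,-,-,\beta_i+\beta_j]$ is semisimple and not simple. The map $a\mapsto \beta a-a\beta$ restricted to $\A^{i,j}$ is then bijective with a precise valuation shift: $\operatorname{ad}(\beta)$ induces $\mathfrak{a}_k\cap\A^{i,j}\cong\mathfrak{a}_{k-n}\cap\A^{i,j}$ \cite[Prop.~4.30]{SkodInnerFormI}. Since $\theta$ restricted to the relevant commutators is $\psi_\beta$, the pairing
\[
(1+x,\,1+y)\mapsto \psi_\F\bigl(\mathrm{tr}(\beta[x,y])\bigr)
\]
between the $(i,j)$ and $(j,i)$ off-diagonal quotients becomes the trace pairing twisted by $\operatorname{ad}\beta$. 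This pairing is perfect by duality of the lattices $\mathfrak{a}_k^*=\mathfrak{a}_{1-k}$. The numerology is exactly why $\lfloor s/2\rfloor+1$ and $\lfloor (s+1)/2\rfloor$ appear.

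\emph{Diagonal part.} Here $\k_\theta$ restricts to the product of the forms $\k_{\theta_i}$, which are non-degenerate by the simple case. This requires showing that diagonal and off-diagonal pieces are $\k_\theta$-orthogonal modulo $\H^{s+1}$, which follows from the conductor estimates on commutators.

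\emph{Main obstacle.} The hardest step is the off-diagonal computation when $s$ is small relative to the critical exponents of the individual blocks. There one must verify that the cross terms between diagonal and off-diagonal parts really lie in $\ker\theta$. I would first try to adapt Stevens' proof verbatim, replacing traces by reduced traces over $\D$ and using the tame corestriction $s_\beta$ of \cite[Def.~4.13]{SkodInnerFormI}. If a direct estimate fails, I would instead induct along the defining sequence of $\beta$, as in \cite[(3.4.1)]{BK93}.
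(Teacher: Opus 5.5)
Your block-by-block strategy has a genuine gap at the off-diagonal step, because the structural facts it uses are false.

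\textbf{The off-diagonal lattices.} The off-diagonal parts of $\H^{s+1}(\beta,\Lambda)$ and $\J^{s+1}(\beta,\Lambda)$ are not $\mathfrak{a}_{\lfloor s/2\rfloor+1}\cap\A^{ij}$ and $\mathfrak{a}_{\lfloor (s+1)/2\rfloor}\cap\A^{ij}$. Both groups lie in $1+\mathfrak{a}_{s+1}$, so this cannot hold. You have imported the numerology of $\K^s_1(\Lambda),\K^s_2(\Lambda)$, which belong to the twisted character $\xi=\theta\psi_c$ of the next lemma. The true off-diagonal lattices come from the inductive definition of $\mathcal{H},\mathcal{J}$ through approximations. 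If $[\Lambda,n,r,\beta]$, with $r=-k_0(\beta,\Lambda)$, is equivalent to $[\Lambda,n,r,\gamma]$ where $\gamma$ merges blocks $i$ and $j$ into one simple block, then $(\mathcal{H}^{s+1})^{ij}$ and $(\mathcal{J}^{s+1})^{ij}$ contain pieces such as $\mathfrak{b}_k(\gamma)\cap\A^{ij}$.

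\textbf{The valuation shift.} Likewise, $a_\beta$ does not shift valuations by $n$ on $\A^{ij}$. Take the semisimple stratum with scalar blocks $\beta_1=\varpi_\F^{-n}+\varpi_\F^{-m}$ and $\beta_2=\varpi_\F^{-n}$, where $0<m<n$. There $a_\beta$ is multiplication by $\varpi_\F^{-m}$ on $\A^{12}$, so the shift is governed by $m$, not $n$. When $\gamma$ is not scalar it is worse: on $\mathfrak{b}(\gamma)\cap\A^{ij}$ (where $a_\gamma$ vanishes) the shift is governed by $\beta-\gamma\in\mathfrak{a}_{-r}$, while on a complement it is governed by $a_\gamma$. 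So there is no uniform shift and no clean trace pairing.

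\textbf{What is actually needed.} Off the diagonal you must show that $x\in(\mathcal{J}^{s+1})^{ij}$ with $a_\beta(x)\in\bigl((\mathcal{J}^{s+1})^{ji}\bigr)^*$ forces $x\in(\mathcal{H}^{s+1})^{ij}$. That is the full semisimple duality behind Stevens' proof: the exact sequence relating $\mathfrak{b}$, $\mathcal{J}$ and $\mathcal{H}^*$ via $a_\beta$, proved by induction along approximations. It is essentially the whole content of the lemma, and your proposal defers it to a vague fallback. Your ``reduction to the radical'' via intertwining is also empty, since every element of $\J^{s+1}$ already normalizes $\theta$.

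\textbf{The missing idea.} Base change to a maximal unramified subfield $\L$ of $\D$ removes all block analysis, because $\End_\L(\V)$ is split and Stevens' theorem applies there.
\begin{enumerate}
\item Extend $\theta$ to $\theta_\L\in\Cc(\Lambda_\L,s,\beta)$.
\item By the commutator formula of \cite[Lemma~3.23]{St05}, $\theta([1+x,1+y])=\theta_\L([1+x,1+y])=\psi_{(1+x)^{-1}\beta(1+x)-\beta}(1+y)$. So $x\in\mathcal{J}^{s+1}(\beta,\Lambda)$ in the radical of $\k_\theta$ gives $a_\beta(x)\in(\mathcal{J}^{s+1}(\beta,\Lambda))^*$.
\item Since $\mathcal{J}^{s+1}(\beta,\Lambda_\L)=\mathcal{J}^{s+1}(\beta,\Lambda)\otimes_{\o_\F}\o_\L$, this dual lies in $(\mathcal{J}^{s+1}(\beta,\Lambda_\L))^*$. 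Hence $x$ is in the radical of $\k_{\theta_\L}$.
\item Then \cite[Proposition~3.24]{St05} gives $x\in\mathcal{H}^{s+1}(\beta,\Lambda_\L)\cap\End_\D(\V)=\mathcal{H}^{s+1}(\beta,\Lambda)$.
\end{enumerate}
This is the paper's argument. It also removes your reliance on a simple-case result for $\GL_m(\D)$ at arbitrary level $s$, which the paper does not take as available in the literature.
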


Note first that the groups~$\J^{s+1}(\beta,\Lambda)$ and~$\H^{s+1}(\beta,\Lambda)$ are of the form
\[\J^{s+1}(\beta,\Lambda)=1+\mathcal{J}^{s+1}(\beta,\Lambda),\text{ and }\H^{s+1}(\beta,\Lambda)=1+\mathcal{H}^{s+1}(\beta,\Lambda)\]
for~$\o_\F$-orders~$\mathcal{H}^{s+1}(\beta,\ \Lambda),$~$\mathcal{J}^{s+1}(\beta,\Lambda)$.
Let~$\L/\F$ be a maximal unramified field extension in~$\D$. Recall the map \[a_\beta:\ \A\rightarrow\A,\ a_\beta(x):=\beta x-x\beta.\] 

\begin{proof}
 Suppose~$x\in\mathcal{J}^{s+1}(\beta,\Lambda)$ satisfies~$\theta([1+x,1+y])=1$ for all~$y\in\mathcal{J}^{s+1}(\beta,\Lambda)$.
 We claim that~$x$ is an element of~$\mathcal{H}^{s+1}(\beta,\ \Lambda)$. 
 Let~$\theta_\L\in\Cc(\Lambda_\L,s,\beta)$ be an extension of~$\theta$, where~$\Lambda_\L$ is~$\Lambda$ seen as an~$\o_{\L[\beta]}$-lattice sequence in~$\V$ and~$\theta_\L$ is a semisimple character for~$\A_\L:=\Aut_\L(\V)$. 
 From~\cite[Lemma~3.23]{St05} we obtain
 \[1=\theta([1+x,1+y])=\theta_\L([1,+x,1+y])=\psi_{(1+x)^{-1}\beta (1+x)-\beta}(1+y).\]
 Thus~$a_\beta(x)$ is an element of~$(\mathcal{J}^{s+1}(\beta,\Lambda))^*$ (see~\cite[before Lemma~3.4]{SkodlerackCuspQuart} for the~$*$-operation). 
 The set~$(\mathcal{J}^{s+1}(\beta,\Lambda))^*$ is a subset of~$(\mathcal{J}^{s+1}(\beta,\Lambda_\L))^*$ because
 \[\mathcal{J}^{s+1}(\beta,\Lambda_\L)=\mathcal{J}^{s+1}(\beta,\Lambda)\otimes_{o_\F}o_\L.\]
 Thus~$a_\beta(x)$ is an element of~$(\mathcal{J}^{s+1}(\beta,\Lambda_\L))^*$ and therefore
 \[x\in\mathcal{H}^{s+1}(\beta,\La_\L)\cap\End_\D(\V)=\mathcal{H}^{s+1}(\beta,\La)\]
 by the non-degeneracy of~$\k_{\theta_\L}$, see~\cite[Proposition~3.24]{St05}. This finishes the proof. 
\end{proof}

Let~$\Delta=[\La,n,s,\beta],\theta$ and~$c$ be given as in Proposition~\ref{propCentralizerMove}. We {choose a character in~$\Cc(\Lambda,\lfloor\frac{s}{2}\rfloor,\beta)$ which extends~$\theta$, which we shall also call~$\theta$, and} % to~$\H^{\lfloor\frac{s}{2}\rfloor+1}(\beta,\Lambda)$ to an element of~$\Cc(\Lambda,\lfloor\frac{s}{2}\rfloor,\beta)$ and still call this extension~$\theta$ and we 
 consider the character~$(\xi,\H^{s}_1(\beta,\Lambda))$ given by~$\xi:=\theta\psi_c$. 

\begin{lemma}\label{lemNondegkxi}
%Let~$s$ be a positive integer.  T
{With notation as above,} the form
 \begin{align*}
 \k_\xi: \J^s_1(\beta,\La)/\H^s_1(\beta,\La)\times\J^s_1(\beta,\La)/\H^s_1(\beta,\La)&\rightarrow \R^\times \\
([x]_{\H^s_1},[y]_{\H^s_1})&\mapsto \xi([x,y])\end{align*}
 is non-degenerate, and there is unique representation~{$\mu$ of~$\J^s_1(\beta,\La)$ containing~$\xi|_{\H^s_1(\beta,\Lambda)}$.}
\end{lemma}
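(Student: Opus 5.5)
We prove non-degeneracy by splitting $\k_\xi$ into its block-diagonal and off-diagonal parts and treating each separately; uniqueness of $\mu$ then follows from standard Heisenberg theory. Write $\A=\bigoplus_{i,j}\A^{i,j}$ for the block decomposition given by the associated splitting of $\beta$. Since $\J^s_1$ and $\H^s_1$ are built from $\K^s_1(\La)$, the quotient $\J^s_1(\beta,\La)/\H^s_1(\beta,\La)$ decomposes as a direct sum of two pieces. The first is the diagonal piece $\prod_i \J^{s+1}(\beta_i,\La_i)/\H^{s+1}(\beta_i,\La_i)$, since on the diagonal blocks the lattice is $\mathfrak{a}_s$. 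The second is the off-diagonal piece, coming from $\mathfrak{a}_{\lfloor s/2\rfloor+1}\cap\A^{i,j}$, $i\neq j$, modulo its intersection with $\H$.

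The first step is to check that commutators between the diagonal and off-diagonal parts lie in the kernel of $\xi$ modulo $\H^s_1$. The second step is to check that $\psi_c$ contributes nothing to the commutator pairing. Both follow from filtration levels: $c\in\mathfrak{a}_{-s}$, and a commutator $[1+x,1+y]$ with $x,y$ in the relevant lattices lies in $1+xy-yx+\dots$. The diagonal-diagonal terms land in $\mathfrak{a}_{2s}$, where $\psi_c$ is trivial. For the off-diagonal contributions we use that $\psi_c$ is a character on $\K^s_1$, so it is trivial on its commutator subgroup. Consequently $\k_\xi$ agrees with $\k_\theta$ on the relevant quotients, and it is block-diagonal with respect to the decomposition of the first paragraph.

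On the diagonal piece, non-degeneracy is Lemma~\ref{lemkthetanondeg}, applied to the simple restrictions $\theta_i\in\Cc(\La_i,s,\beta_i)$.

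The off-diagonal piece is the main obstacle. Here I would follow the proof of Lemma~\ref{lemkthetanondeg} by passing to the unramified splitting field $\L$. Extend $\theta$ to $\theta_\L$ and use \cite[Lemma~3.23]{St05} to write $\theta([1+x,1+y])=\psi_{a_\beta(x)}(1+y)$ up to terms killed by the level. Then reduce to the corresponding statement for $\Aut_\L(\V)$, namely the split case \cite[Lemma 5.4--5.6]{St05} and its use in \cite[Lemma~3.14]{SkodlerackCuspQuart}. That argument says that $a_\beta$ induces an isomorphism between the off-diagonal quotients and their duals, because $\beta_i$ and $\beta_j$ have different minimal polynomials; this is where the semisimplicity of the stratum enters. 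Intersecting with $\End_\D(\V)$ recovers the $\D$-case, using $\mathcal{J}(\La_\L)=\mathcal{J}(\La)\otimes\o_\L$ as in the proof of Lemma~\ref{lemkthetanondeg}.

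Finally, $\J^s_1/\ker(\xi|_{\H^s_1})$ is a finite $p$-group whose centre modulo the kernel is $\H^s_1/\ker\xi$ by non-degeneracy. The standard Heisenberg argument \cite[Proposition~8.3.3]{BK93}, which is valid over any $\R$ with $\ell\neq p$, then gives a unique irreducible $\mu$ containing $\xi$. This $\mu$ has dimension $[\J^s_1:\H^s_1]^{1/2}$ and restricts to $\H^s_1$ as a multiple of $\xi$.
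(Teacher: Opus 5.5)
Your overall plan is sound: split $\J^s_1(\beta,\La)/\H^s_1(\beta,\La)$ into a block-diagonal part and an off-diagonal part, show the two are $\k_\xi$-orthogonal, and prove non-degeneracy on each. However, the orthogonality step is not justified by what you say, and the diagonal piece is mis-indexed.

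\textbf{Orthogonality.} Filtration levels only show that $\psi_c$ is trivial on commutators in $\J^s_1$, that is, $\k_\xi=\k_\theta$. They do not give $\theta([x,y])=1$ for $x\in\M\cap\J^s_1$ and $y\in\U_\pm\cap\J^s_1$. Such a commutator is off-diagonal and lies in $\H^{s+1}\cap\U_\pm$, but only at level $s+\lfloor s/2\rfloor+1$, which can be well below $n$; at those levels $\theta$ is in general non-trivial, so level considerations alone force nothing. The missing input is the structural fact that a semisimple character is trivial on $\H^{s+1}\cap\U_\pm$ for unipotent radicals adapted to the splitting of $\beta$. Alternatively, use the commutator formula over $\L$ from the proof of Lemma~\ref{lemkthetanondeg}: $(1+x)^{-1}a_\beta(x)$ is block-diagonal when $x$ is, so it has zero trace against an off-diagonal $y$. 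The paper uses exactly this triviality to kill the factors $\theta([j_\pm,j'])$.

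\textbf{Indexing.} The diagonal part of $\K^s_1(\La)$ is $\mathfrak{a}_s$, so your diagonal piece is $\prod_i\J^{s}(\beta_i,\La_i)/\H^{s}(\beta_i,\La_i)$, not $\prod_i\J^{s+1}(\beta_i,\La_i)/\H^{s+1}(\beta_i,\La_i)$. Accordingly, Lemma~\ref{lemkthetanondeg} must be applied to the restriction of $\theta_i$ to $\H^s$, which lies in $\Cc(\La_i,s-1,\beta_i)$. This is available because $\theta$ was extended to level $\lfloor s/2\rfloor$; applying the lemma to $\theta_i\in\Cc(\La_i,s,\beta_i)$ gives the wrong quotient.

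\textbf{Comparison with the paper.} With these repairs your argument works. The remaining difference is the off-diagonal step, where you propose to redo the descent from $\L$ using split-case results; this is unnecessary. $\K^s_1(\La)$ and $1+\mathfrak{a}_{\lfloor s/2\rfloor+1}$ have the same off-diagonal parts, so your off-diagonal piece is also the off-diagonal piece of $\J^{\lfloor s/2\rfloor+1}/\H^{\lfloor s/2\rfloor+1}$. Its non-degeneracy then follows formally from Lemma~\ref{lemkthetanondeg} for the extension of $\theta$ in $\Cc(\La,\lfloor s/2\rfloor,\beta)$, together with the same orthogonality.

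The paper packages this without splitting the quotient. For $j=j_-j_\M j_+$ in the radical, it gets $j_\M\in\H^s\cap\M$ from the Levi non-degeneracy. It then deduces $\theta([j,j'])=1$ for all $j'\in\J^{\lfloor s/2\rfloor+1}$ via the Iwahori decomposition, since the $\U_\pm$ directions already lie in $\J^s_1$. It concludes with Lemma~\ref{lemkthetanondeg} at level $\lfloor s/2\rfloor$. Your version is more symmetric but reproves a special case of a lemma already in hand. Your Heisenberg argument for the uniqueness of $\mu$ is fine.
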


\begin{proof}
The idea of the proof is given in~\cite[Lemma~5.9]{St05}, but we want to give more details. We consider a linear order on the index set~$\I$ of~$\Delta$ which gives an Iwahori decomposition~$(\U_-\cap\J^s_1)(\M\cap\J^s_1)(\U_+\cap\J^s_1)$ of~$\J_1^s$, see~\cite[Lemma~8.5]{SkodlerackCuspQuart}. 
Let~$j$ be an element of~$\J^s_1$ such that~$\xi([j,j'])=1$,
for all~$j'\in\J^s_1(\beta,\Lambda)$. It is enough to show that~$j$ is an element of~$\H^{1}(\beta,\Lambda)$.
The commutator subgroup of~$\J^{\lfloor\frac{s}{2}\rfloor+1}(\beta,\Lambda)$ is contained in~$\H^{s+1}(\beta,\Lambda)$, {by~\cite[Lemma~3.23]{St05} for~$\J^{\lfloor\frac{s}{2}\rfloor+1}(\beta,\Lambda_\L)$ and restriction to~$\A$. Moreover,} the restriction of~$\xi$ to~$\H^{s+1}(\beta,\Lambda)$ coincides with~$\theta$. 
By Lemma~\ref{lemkthetanondeg} it is enough to prove
\begin{equation}\label{eqthetajjp}\theta([j,j'])=1\end{equation}
for all~$j'\in\J^{\lfloor\frac{s}{2}\rfloor+1}(\beta,\Lambda)$.  
Assertion~\eqref{eqthetajjp} is true for~$j'\in (\J^{\lfloor\frac{s}{2}\rfloor+1}\cap\U_{\pm})$ by assumption, because those intersections are subsets of~$\J^s_1(\beta,\Lambda)$. By the Iwahori decomposition for~$\J^{\lfloor\frac{s}{2}\rfloor+1}(\beta,\Lambda)$ we are left to prove~\eqref{eqthetajjp} for the case~$j'\in\J^{\lfloor\frac{s}{2}\rfloor+1}\cap\M$. We use the factorization
\[j=j_-j_\M j_+,\ j_-\in(\U_-\cap\J^s_1),\ j_\M\in(\M\cap{\J^s}),\ j_+\in(\U_+\cap\J^s_1).\]
{Since~$j_\M$ and~$j_+$ normalize~$\theta$,} we have
\[\theta([j,j'])=\theta([j_-,j'])\theta([j_\M,j'])\theta([j_+,j']),\ j'\in\J^{\lfloor\frac{s}{2}\rfloor+1}\cap\M,\]
and the first and the last factor are~$1$, because~$\theta$ is trivial on~$\H^{s+1}\cap\U_{\pm}$. Further~$j_\M$ is an element of~$\H^s\cap\M$ by Lemma~\ref{lemkthetanondeg}, because~\eqref{eqthetajjp} holds for all~$j'\in\J^s\cap\M$. Therefore~$j_\M$ is an element of~$\H^{\lfloor\frac{s}{2}\rfloor+1}\cap\M$ and thus 
$\theta([j_\M,j'])=1$ for all~$j'\in\J^{\lfloor\frac{s}{2}\rfloor+1}\cap\M$. 
This finishes the proof. 
\end{proof}

\begin{lemma}[{cf.~\cite[Proposition~(8.1.7)]{BK93},~\cite[Lemma~5.8]{St05}}]\label{lemxidecisionforSUbrep}
{Let~$\U$ be a subgroup of~$\K_2^s(\La)$ and let~$\rho$ be an irreducible representation and~$\U$.} 
%Granted that~$s$ is a positive integer. Let~$(\rho,\U)$ be an irreducible representation and~$\U$ be a subgroup of~$\K_2^s(\La)$. 
 Suppose that~$\rho|_{\U\cap\H_1^s(\beta,\La)}$ contains~$\xi|_{\U\cap\H_1^s(\beta,\La)}$. Then~$\rho$ is a subrepresentation of~$\pi|_{\U}$.
\end{lemma}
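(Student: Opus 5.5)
We follow the pattern of \cite[Proposition~(8.1.7)]{BK93} and \cite[Lemma~5.8]{St05}. Throughout we use the standing hypothesis of Proposition~\ref{propCentralizerMove} that~$\xi=\theta\psi_c$ occurs in~$\pi$. Write~$\H=\H^s_1(\beta,\La)$,~$\J=\J^s_1(\beta,\La)$ and~$\K=\K^s_2(\La)$. These are pro-$p$ groups with~$\H\trianglelefteq\J\subseteq\K$, and~$\K$ normalizes~$\H$. Since~$\ell\neq p$, all representations of these groups are semisimple.

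\textbf{Step 1.} We describe the~$\J$-representations generated by~$\xi$. Let~$\cV^{\xi}$ be the~$\xi$-isotypic part of~$\pi$ for~$\H$; it is non-zero by hypothesis and stable under~$\J$. By Lemma~\ref{lemNondegkxi}, the form~$\k_\xi$ is non-degenerate and~$\mu$ is the unique irreducible representation of~$\J$ containing~$\xi$. Hence~$\mu|_\H$ is a multiple of~$\xi$, the induced representation~$\ind_\H^\J\xi$ is a multiple of~$\mu$, and~$\cV^\xi$ is~$\mu$-isotypic.

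\textbf{Step 2.} We enlarge from~$\J$ to~$\K$. Conjugating by~$k\in\K$ gives~${}^k\xi=\xi\psi_{a_k}$ for a suitable element~$a_k$; this is the standard commutator computation, using~$\theta_\L$ and~\cite[Lemma~3.23]{St05} as in the proof of Lemma~\ref{lemkthetanondeg}. The target is the following statement: every character~$\chi$ of~$\H$ that agrees with~$\xi$ on~$\U\cap\H$ and occurs in~$\ind_{\U\cap\H}^{\U}\xi$ is~$\K$-conjugate to~$\xi$. We then transport~$\cV^\xi$ by~$\K$, which shows that~$\pi$ contains~$\chi$ for every such~$\chi$, with multiplicities governed by~$\mu$. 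The plan is to obtain this statement by comparing~$\xi$ and~$\chi$ on~$\H$ through their restrictions to~$\U\cap\H$, together with the fact that the pairing~$\K\times\J\to\R^\times$,~$(k,j)\mapsto\xi([k,j])$, identifies~$\K/\J$ with a dual of~$\H/(\text{kernel})$. This is the analogue of~\cite[(8.1.7)]{BK93}.

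\textbf{Step 3.} We conclude for~$\U$. Because~$\rho|_{\U\cap\H}$ contains~$\xi|_{\U\cap\H}$, Frobenius reciprocity gives~$\rho\hookrightarrow\ind_{\U\cap\H}^{\U}\xi$. The aim is to prove that~$\ind_{\U\cap\H}^{\U}\xi$ embeds in~$\pi|_\U$. To do this, we decompose it by Mackey theory along~$\U\H$ as the restriction to~$\U$ of~$\ind_{\H}^{\U\H}$ of the characters of~$\H$ extending~$\xi|_{\U\cap\H}$. The irreducible constituents are then controlled by Steps 1 and 2, each appearing in~$\pi$. Since everything is semisimple, it suffices to check irreducible constituents, so~$\rho$ is a subrepresentation of~$\pi|_\U$.

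The main obstacle will be Step~2. For~$\GL_m(\D)$, non-degeneracy of the commutator pairing on~$\K$ modulo~$\J$ is not available directly. The first approach we would try is to pass to~$\A_\L=\End_\L(\V)$, as in the proofs of Lemmas~\ref{lemkthetanondeg} and~\ref{lemNondegkxi}: there we would invoke~\cite[Lemma~5.8]{St05} for~$\theta_\L\psi_c$, and then intersect back with~$\A$ using~$\mathcal{J}(\beta,\La_\L)=\mathcal{J}(\beta,\La)\otimes_{\o_\F}\o_\L$ and the Iwahori decomposition of~\cite[Lemma~8.5]{SkodlerackCuspQuart}.
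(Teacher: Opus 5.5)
\textbf{There is a genuine gap in passing from Step~2 to Step~3.} Write $\H=\H^s_1(\beta,\La)$ and $\K=\K^s_2(\La)$. Knowing which characters of $\H$ occur in $\pi$, or that they are all $\K$-conjugate to $\xi$, cannot decide which irreducible representations of $\U$ occur in $\pi|_{\U}$.

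Take $\U=\K$. Then your Step~2 target only says that $\xi$ is $\K$-conjugate to itself. The lemma, however, asserts that \emph{every} irreducible constituent of $\ind_{\H}^{\K}\xi$ lies in $\pi|_{\K}$. Suppose $\ind_{\H}^{\K}\xi$ had two inequivalent irreducible constituents. Both would contain $\xi$, and with $\K$ normalizing $\H$ as you assume, both would contain the same $\K$-orbit of characters of $\H$. Nothing in Steps~1--2 would show that both occur in $\pi$. So the sentence ``the irreducible constituents are then controlled by Steps 1 and 2, each appearing in $\pi$'' is unsupported.

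Step~2 is also not well posed as written. $\chi$ is a character of $\H$, so it cannot ``occur in'' the $\U$-representation $\ind_{\U\cap\H}^{\U}\xi$. For $\U=\{1\}$ the condition ``agrees with $\xi$ on $\U\cap\H$'' is empty, so the claim would say every character of $\H$ is $\K$-conjugate to $\xi$, which is false. Similarly, the Mackey step involves only $\xi$ itself: $\ind_{\U\cap\H}^{\U}(\xi|_{\U\cap\H})$ is simply the trivial-double-coset summand of $(\ind_{\H}^{\K}\xi)|_{\U}$.

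\textbf{The missing idea}, which is the whole content of the paper's proof, is that $\ind_{\H}^{\K}\xi$ is a multiple of a single irreducible representation $\tilde\mu$. Granting this, the second half of the proof of \cite[Proposition~(8.1.7)]{BK93} finishes the argument. We have $\rho\hookrightarrow\ind_{\U\cap\H}^{\U}\xi\hookrightarrow(\ind_{\H}^{\K}\xi)|_{\U}$, which is a multiple of $\tilde\mu|_{\U}$. Also $\tilde\mu\subseteq\pi|_{\K}$ because $\pi$ contains $\xi$.

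By Lemma~\ref{lemNondegkxi}, this uniqueness reduces to irreducibility of $\ind_{\J^s_1(\beta,\La)}^{\K}\mu$. That is, one must show that any $k\in\K$ intertwining $\mu$ lies in $\J^1(\beta,\La)$; here $\J^s_1=\J^s_2$ because $s<-k_0(\beta,\La)$. The paper argues as follows:
\begin{enumerate}
\item Such a $k$ intertwines $\theta|_{\H^{s+1}(\beta,\La)}$.
\item Hence, by the intertwining formula \cite[Proposition~5.15]{SkodInnerFormI}, $k$ intertwines $\theta_\L|_{\H^{s+1}(\beta,\La_\L)}$ for an extension $\theta_\L$, and $k$ lies in $\K^s_2(\La_\L)$.
\item The split-case argument of \cite{St05} then places $k$ in $\J^1(\beta,\La_\L)$, hence in $\J^1(\beta,\La)$.
\end{enumerate}

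Your instinct to pass to $\A_\L$ is right. But you cannot literally invoke \cite[Lemma~5.8]{St05} for $\theta_\L\psi_c$, since there is no representation of $\Aut_\L(\V)$ containing it. Only the intertwining computation inside that proof transfers. It should be aimed at the $\K$-intertwining of $\mu$, not at a pairing on $\K\times\J$.
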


\begin{proof}
 By the second part of the proof of~\cite[Proposition (8.1.7)]{BK93} we only need to prove that~$\ind_{\H^s_1}^{\K^s_2}\xi$ is a multiple of a unique irreducible representation. By Lemma~\ref{lemNondegkxi} the representation {of}~$\J^s_1(\beta,\Lambda)$ induced {from~$\xi$} is a multiple of a unique irreducible representation~$\mu$. %,\J^s_1(\beta,\Lambda))$. 
 We only need to show that~$\mu$ induces irreducibly to~$\K^s_2(\beta,\Lambda)$, i.e., that the intertwining of~$\mu$ in~$\K^s_2(\beta,\Lambda)$ is contained in~$\J^1(\beta,\Lambda)$, noting that~$\J^s_1(\beta,\Lambda)$ and~$\J^s_2(\beta,\Lambda)$ coincide because~$s$ is smaller than~$-k_0(\beta,\Lambda)$, {where}~$k_0(\beta,\Lambda)$ {is} the critical exponent of~$\beta$ with respect to~$\Lambda$ {see~\cite[\S4.1]{SkodInnerFormI}}. 
  An element~$k$ of~$\K^s_2(\beta,\Lambda)$ which intertwines~$\mu$ also intertwines~$\theta|_{\H^{s+1}(\beta,\Lambda)}$ and therefore~$\theta_\L|_{\H^{s+1}(\beta,\Lambda_\L)}$ for an extension~$\theta_\L\in\Cc(\La_\L,s,\beta)$, by the intertwining formulas in~\cite[Proposition~5.15]{SkodInnerFormI}, and it is an element of~$\K^s_2(\beta,\La_\L)$.
 Thus by the proof of~\cite[Proposition~5.8]{St05} the element~$k$ lies in~$\J^1(\beta,\La_\L)$ and therefore in~$\J^1(\beta,\La)$. This finishes the proof.
\end{proof}

\begin{proof}[Proof of Proposition~\ref{propCentralizerMove}] 
 The proof is similar to the proof of~\cite[Proposition~3.11]{SkodlerackCuspQuart} with the following modifications (see also: ~\cite[Proposition~3.15]{SecherreStevensIV}):
\begin{itemize}
 \item The character~$\theta_\L$ is just an extension of~$\theta$. (Note that we have no Glauberman correspondence anymore.) 
 \item We don't work in~$\A_\F=\End_\F(\V)$. We work directly in~$\A=\End_\D(\V)$, see~\cite{SecherreStevensIV} after Lemma~3.16. Note that there is no averaging function as in~\cite[\S 2.8]{SkodlerackCuspQuart} and we use the simple case~\cite[Paragraph after proof of Lemma~3.16]{SecherreStevensIV} and~\cite[Lemma~3.17]{SecherreStevensIV} to obtain the elements~$c'$ and~$x$ of Step 3 and 4 in~\cite[Proposition~3.11]{SkodlerackCuspQuart}.  
 \item There is no Cayley map. We conjugate in Step 4 with~$1+x$ instead.
\end{itemize}
\end{proof}

%%%%%%%%%%%%%%%%%%%%%%%%%%%%%
\subsection{Heisenberg representations for semisimple characters for general linear groups}
We continue in the special case~$h=0$, i.e.~$\G=\GL_\D(\V)$ and~$\R$ an algebraically closed field of characteristic~$\ell\neq p$.  We consider a semisimple character~$\theta\in\Cc(\beta,\Lambda)$ and a parabolic subgroup~$\P=\M\N$ which is adapted to a decomposition of~$\V$ properly subordinate to the stratum~$[\Lambda,-,0,\b]$, see~\cite[Definition 8.1 and before Lemma 8.4]{SkodlerackCuspQuart}.  By Lemma~\ref{lemkthetanondeg} there exists a unique (up to equivalence) irreducible~$\R$-representation~$\eta$ of~$\J^1$ containing~$\theta$, which we call the~\emph{Heisenberg representation} of~$\theta$.   The first statement is about the intertwining of~$\eta$.
\begin{proposition}\label{propetaIntertwiningtG}
For~$g\in \G$, the intertwining of~$\eta$ is given by
\begin{equation*}
\Hom_{\R[\J^1\cap (\J^1)^g]}(\eta,\eta^g)\simeq \begin{cases}
\R&\text{if }g\in \J^1\G_{{\beta}}\J^1;\\
0&\text{otherwise.}\end{cases}\end{equation*}
\end{proposition}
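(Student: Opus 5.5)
We follow the standard strategy of Bushnell--Kutzko \cite[Proposition~5.1.8]{BK93} and its semisimple analogue \cite[Proposition~3.31]{St05}. The computation has two halves, and we treat them in turn: first show that the intertwining of $\eta$ is contained in the intertwining of $\theta$, then compute the dimension of the intertwining space on that set.

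\emph{Support of the intertwining.} Since $\eta$ is the unique irreducible representation of $\J^1$ containing $\theta$, the restriction $\eta|_{\H^1}$ is a multiple of $\theta$; this follows from Lemma~\ref{lemkthetanondeg}, using the standard Heisenberg argument valid over any algebraically closed $\R$ of characteristic $\ell\neq p$. Hence any $g$ intertwining $\eta$ intertwines $\theta$. By the intertwining formula for semisimple characters of $\GL_m(\D)$ (\cite[Theorem~6.7(ii)]{SkodInnerFormII}, or \cite[Proposition~5.15]{SkodInnerFormI}), this gives $g\in \J^1\G_\beta\J^1$. Here one uses $S(\beta,\Lambda)\subseteq\J^1$, or rewrites the formula as $\J^1\G_\beta\J^1$ as in the full case. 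This settles the ``$0$ otherwise'' case.

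\emph{Dimension one on $\J^1\G_\beta\J^1$.} Since $\J^1$ normalizes $\eta$, it suffices to take $g\in\G_\beta$. We would then reduce to $\L$-linear groups, with $\L/\F$ maximal unramified in $\D$, exactly as in the proof of Lemma~\ref{lemkthetanondeg}: extend $\theta$ to $\theta_\L\in\Cc(\Lambda_\L,\beta)$ and use $\mathcal{J}^1(\beta,\Lambda_\L)=\mathcal{J}^1(\beta,\Lambda)\otimes\o_\L$. Directly, however, the argument is the classical one:
\begin{itemize}
\item[(a)] $\dim\Hom_{\J^1\cap{}^g\J^1}(\eta,{}^g\eta)$ is computed by comparing $\ind_{\H^1}^{\J^1}\theta\cong(\dim\eta)\,\eta$ with its $g$-conjugate via Mackey, which gives the bound $\dim \le 1$. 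The input is the identity
\[(\J^1\cap{}^g\J^1)\H^1\cap\dots\]
namely the group-theoretic equality $[\J^1\cap{}^g\H^1:\H^1\cap{}^g\H^1]=[\H^1\cap{}^g\J^1:\H^1\cap{}^g\H^1]$ together with $|\J^1:\H^1|=(\dim\eta)^2$.
\item[(b)] Nonvanishing follows because $g\in\G_\beta$ intertwines $\theta$ and the standard construction produces a nonzero intertwiner, as in \cite[5.1.8]{BK93}.
\end{itemize}
For (a), the index identity is proved in \cite[Lemma~3.29--3.30]{St05} for split $\GL$ through the adjoint map $a_\beta$ and exact sequences of lattices with duality. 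For $\D\neq\F$ we would obtain it by Galois descent from $\Lambda_\L$, taking $\Gal(\L/\F)$-fixed points in those exact sequences. This is legitimate since $p\nmid[\L:\F]$ is not needed: the lattices are $\o_\L$-free, and the fixed points of $x\otimes 1$ are exact by Hilbert 90 for additive groups.

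\emph{Main obstacle.} The main obstacle is this descent of the index identity, together with the characteristic-$\ell$ issue. For the latter, the Heisenberg dimension count works verbatim since $|\J^1|$ is a power of $p$, invertible in $\R$, so the representation theory of the finite $p$-group quotient $\J^1/\ker\theta$ is semisimple and behaves as over $\mathbb{C}$. Alternatively, one could deduce the statement from the complex case by reduction modulo $\ell$ of the $\overline{\mathbb{Z}}_\ell$-lattice, since $\eta$ is a $p$-group representation and $\Hom$ dimensions are preserved.
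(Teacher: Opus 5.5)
You handle the support statement (via $\eta|_{\H^1}$ being $\theta$-isotypic and the intertwining formula for $\theta$), the reduction to $g\in\G_\beta$, and the characteristic-$\ell$ issues correctly. The gap is in step (a), which is the whole content of the proposition. Carry out the Mackey computation you describe: restrict $\ind_{\H^1}^{\J^1}\theta\cong(\dim\eta)\,\eta$ and its $g$-conjugate to $\J^1\cap{}^g\J^1$, and note that every double coset contributes $\dim\Hom_{\H^1\cap{}^g\H^1}(\theta,{}^g\theta)=1$. This gives an exact formula, not just a bound:
\[
\dim_\R\Hom_{\J^1\cap{}^g\J^1}(\eta,{}^g\eta)=\frac{(\J^1:\J^1\cap{}^g\J^1)}{(\H^1:\H^1\cap{}^g\H^1)},\qquad g\in\G_\beta .
\]
So the identity you need is $(\J^1:\J^1\cap{}^g\J^1)=(\H^1:\H^1\cap{}^g\H^1)$ for $g\in\G_\beta$, which is the identity underlying the Bushnell--Kutzko argument \cite{BK93}. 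The equality you state, $(\J^1\cap{}^g\H^1:\H^1\cap{}^g\H^1)=(\H^1\cap{}^g\J^1:\H^1\cap{}^g\H^1)$, is a different statement (it compares $g$ with $g^{-1}$) and does not imply it. Combine it with the elementary fact that $(\J^1\cap{}^g\H^1)\H^1/\H^1$ and $(\J^1\cap{}^g\J^1)\H^1/\H^1$ are $\k_\theta$-orthogonal. The result is only $(\J^1\cap{}^g\J^1:\H^1\cap{}^g\H^1)\le(\J^1:\H^1)$, i.e.\ $\dim\ge 1$, which is the opposite of the bound you attribute to it. As written, the upper bound is not proved, and the nonvanishing in (b) rests on an unspecified ``standard construction''.

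With the correct identity the count gives exactly $1$, so (b) becomes unnecessary. The passage from the split case is also simpler than your Hilbert~90 descent. The lattices $\mathcal{J}^1$ and $\mathcal{H}^1$ for $\Lambda_\L$ are obtained from those for $\Lambda$ by $\otimes_{\o_\F}\o_\L$ (cf.\ the proof of Lemma~\ref{lemkthetanondeg}). Intersections commute with this flat base change, and $g\in\G_\beta$ centralizes $\beta$ in $\Aut_\L(\V)$. Hence each index over $\L$ is the $[\L:\F]$-th power of the corresponding index over $\F$, and the identity follows from the one for $\Aut_\L(\V)$. The paper instead works at the level of representations. It extends $\theta$ to $\theta_\L$, uses that the Heisenberg representation $\eta_\L$ restricts to $\J^1$ as a multiple of $\eta$, and then argues as in \cite[Proposition~4.3]{SkodlerackCuspQuart}. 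In particular, restricting a nonzero intertwiner of $\eta_\L$ to $\J^1\cap{}^g\J^1$ gives the nonvanishing immediately, which is exactly what your step (b) left open.
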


\begin{proof}
Let~$\theta_\L\in\Cc(\Lambda,\beta\otimes_\F 1)$ be an extension of~$\theta$, a semisimple character for~$\G\otimes\L$, and~$\eta_\L$ its Heisenberg~$\R$-representation.  The restriction of~$\eta_\L$ to~$\J^1$ is a direct sum of copies of~$\eta$.  Now the proof is similar to~\cite[Proposition~4.3]{SkodlerackCuspQuart}. 
\end{proof}

We also need Heisenberg representations for pairs of lattice sequences. Note that we have fixed a semisimple stratum~$\Delta=[\Lambda,n,0,\beta]$ with~$\E=\F[\beta]$ a product of fields in~$\End_\D(\V)$ and~$\Lambda$ an~$\o_{\E}$-$\o_\D$-lattice sequence in~$\V$, with hereditary order~$\mathfrak{a}_0=\mathfrak{a}_0(\Lambda)$.

\begin{proposition}[{cf.~\cite[Proposition~(5.1.14)]{BK93}}]\label{propHeisenbergPairs}
Let~$\Lambda'$ be an~$\o_{\E}$-$\o_\D$-lattice sequence in~$\V$ such that its hereditary order~$\mathfrak{a}_0(\Lambda')$ is contained in~$\mathfrak{a}_0$. Let~$\theta'$ be the transfer of~$\theta$ to~$\Lambda'$ with Heisenberg~$\R$-representation~{$\eta'$ of~$\J^1_{\Lambda'}$}. We denote by~$\J^1_{\Lambda',\Lambda}$ the group~$\P_1(\Lambda'_\beta)\J^1_{\Lambda}$. %(Note:~$\P_1(\Lambda'_\beta)$ is the intersection of~$\P_1(\Lambda')$ with~$\G_{{\varphi(\beta)}}$.) 
Then:
\begin{enumerate}
\item There is a unique (up to equivalence) extension~$(\eta_{\Lambda',\Lambda},\J^1_{\Lambda',\Lambda})$ of~$\eta$, such that~$\eta_{\Lambda',\Lambda}$ and~$\eta'$ induce isomorphic representations on~$\P_1(\Lambda')$. \label{propHeisenbergPairs-i}
\item The induced representation~$\ind_{\J^1_{\Lambda',\Lambda}}^{\P_1(\Lambda')}(\eta_{\Lambda',\Lambda})$ is irreducible.
\label{propHeisenbergPairs-ii}
\item The~$\G$-intertwining set of~$\eta_{\Lambda',\Lambda}$ is given by the set~$\J^1_{\Lambda',\Lambda}\G_\beta\J^1_{\Lambda',\Lambda}$.\label{propHeisenbergPairs-iii}
\end{enumerate}
\end{proposition}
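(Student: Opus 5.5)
We follow the proof of \cite[Proposition~(5.1.14)]{BK93}, adapted to $\GL_\D(\V)$ via the passage to the maximal unramified extension $\L/\F$ in $\D$ used for Lemma~\ref{lemkthetanondeg} and Proposition~\ref{propetaIntertwiningtG}.

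\emph{Group-theoretic preliminaries.} Since $\mathfrak{a}_0(\Lambda')\subseteq\mathfrak{a}_0$, we have $\P_1(\Lambda)\subseteq\P_1(\Lambda')$. The standard intersection properties of the $\H^1,\J^1$ groups for pairs of lattice sequences \cite[\S5]{SkodInnerFormI} give that $\P_1(\Lambda'_\beta)$ normalizes $\J^1_\Lambda$, $\H^1_\Lambda$ and $\theta$ (it intertwines $\theta$ and lies in $\P(\Lambda_\beta)$). They also give the two identities
\[
\J^1_{\Lambda',\Lambda}\cap\J^1_{\Lambda'}=\P_1(\Lambda'_\beta)\H^1_{\Lambda},\qquad \P_1(\Lambda')\cap\J^1_\Lambda\G_\beta\J^1_\Lambda=\J^1_{\Lambda'}\P_1(\Lambda'_\beta)\J^1_{\Lambda},
\]
in analogy with \cite[(5.1.13)]{BK93}. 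The second identity follows from the intersection result \cite[Proposition~4.8]{SkodInnerFormII}, as already used in the proof of Lemma~\ref{LemCompatibleintertwining}.

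\emph{Proof of (ii) and existence in (i).} Set $\tilde\theta=\theta\cdot\theta'$ on $\H^1_{\Lambda',\Lambda}:=\P_1(\Lambda'_\beta)\H^1_\Lambda$. This is well defined because $\theta$ and $\theta'$ agree on the overlap, being transfers with $1$ intertwining them. Then $\eta_{\Lambda',\Lambda}$ is the unique irreducible representation of $\J^1_{\Lambda',\Lambda}$ containing $\tilde\theta$. Uniqueness and the dimension count $\dim\eta_{\Lambda',\Lambda}=\dim\eta$ come from the nondegeneracy of $\kappa_\theta$ (Lemma~\ref{lemkthetanondeg}) on $\J^1_{\Lambda',\Lambda}/\H^1_{\Lambda',\Lambda}\cong \J^1_\Lambda/\H^1_\Lambda$. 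Hence $\eta_{\Lambda',\Lambda}$ restricts to $\eta$.

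For irreducibility of the induction to $\P_1(\Lambda')$, take $g\in\P_1(\Lambda')$ intertwining $\eta_{\Lambda',\Lambda}$. Then $g$ intertwines $\eta$, so by Proposition~\ref{propetaIntertwiningtG} and the second identity above, $g\in\J^1_{\Lambda'}\P_1(\Lambda'_\beta)\J^1_\Lambda$. We then reduce, exactly as in \cite[(5.1.14)]{BK93}, to showing that elements of $\J^1_{\Lambda'}$ intertwining $\tilde\theta$ lie in $\J^1_{\Lambda',\Lambda}$. This uses the nondegeneracy of $\kappa_{\theta'}$. Since $\R$ has characteristic $\ell\neq p$ and all groups are pro-$p$, Mackey theory and Frobenius reciprocity work as over $\mathbb{C}$. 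The same computation applied to $\eta'$, with the roles reversed, shows that $\ind_{\J^1_{\Lambda'}}^{\P_1(\Lambda')}\eta'$ and $\ind_{\J^1_{\Lambda',\Lambda}}^{\P_1(\Lambda')}\eta_{\Lambda',\Lambda}$ both contain $\tilde\theta$ on $\H^1_{\Lambda',\Lambda}$. Both are irreducible, of equal dimension by the index computations, and so they are isomorphic. Uniqueness in (i) follows because any extension inducing to this representation must contain $\tilde\theta$, again by Mackey.

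\emph{Proof of (iii).} The containment $\supseteq$ holds because $\G_\beta$ intertwines $\tilde\theta$: it intertwines both $\theta$ and $\theta'$, and the dimension-one Hom spaces glue. For $\subseteq$, an intertwiner of $\eta_{\Lambda',\Lambda}$ intertwines $\eta$, so it lies in $\J^1_\Lambda\G_\beta\J^1_\Lambda\subseteq\J^1_{\Lambda',\Lambda}\G_\beta\J^1_{\Lambda',\Lambda}$ by Proposition~\ref{propetaIntertwiningtG}.

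I expect the main obstacle to be the intersection identities over $\D$. The cleanest route is to base change to $\L$, where they are the known statements for $\GL$ over a field, and then intersect with $\End_\D(\V)$ as in the proof of Lemma~\ref{lemkthetanondeg}. This intersection step is plausible but needs checking carefully.
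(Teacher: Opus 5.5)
Your proposal has a genuine gap at its central step, the construction of $\eta_{\Lambda',\Lambda}$.

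\textbf{The construction via $\tilde\theta$ fails.} The group $\H':=\P_1(\Lambda'_\beta)\H^1_\Lambda$ is in general not normal in $\J^1_{\Lambda',\Lambda}$. The image of $\P_1(\Lambda'_\beta)$ in $\P(\Lambda_\beta)/\P_1(\Lambda_\beta)$ is the unipotent radical of a parabolic subgroup, and it acts nontrivially on $\J^1_\Lambda/\H^1_\Lambda$. This already happens for $\G=\GL_4(\F)$, $\beta=\varpi_\F^{-2}\alpha$ with $\F[\alpha]/\F$ unramified quadratic, $\Lambda$ a vertex and $\Lambda'$ an Iwahori point of $\mathfrak{B}(\GL_2(\F[\alpha]))$. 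So $\kappa_\theta$ tells you nothing about $\J^1_{\Lambda',\Lambda}/\H'$.

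Your conclusion is in fact false. The representation $\ind_{\H'}^{\J^1_{\Lambda',\Lambda}}\tilde\theta$ has dimension $[\J^1_\Lambda:\H^1_\Lambda]=(\dim\eta)^2$. If $\tilde\theta$ lay in a unique irreducible representation of dimension $\dim\eta$, this induced representation would be $\dim\eta$ copies of it. Its endomorphism algebra would then have dimension $[\J^1_{\Lambda',\Lambda}:\H']$. But Mackey bounds that dimension by the number of $(\H',\H')$-double cosets, which is strictly smaller when $\H'$ is not normal.

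Consequences:
\begin{itemize}
\item Neither the uniqueness in (i) nor $\dim\eta_{\Lambda',\Lambda}=\dim\eta$ is established.
\item Your proof of $\supseteq$ in (iii) collapses. Since $\eta_{\Lambda',\Lambda}$ is not determined by $\tilde\theta$, the fact that $\G_\beta$ intertwines $\tilde\theta$ says nothing about intertwining $\eta_{\Lambda',\Lambda}$.
\item Only the inclusion $\subseteq$ in (iii) is fine.
\end{itemize}

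\textbf{What is missing for (i) and (ii).} The first non-formal input the paper isolates is the index identity $(\P_1(\Lambda'):\J^1_{\Lambda'})\dim\eta'=(\P_1(\Lambda'):\J^1_{\Lambda',\Lambda})\dim\eta$ of \cite[(5.1.17)]{BK93}. Your phrase ``of equal dimension by the index computations'' silently presupposes it. With it, the BK argument runs as follows.
\begin{itemize}
\item $\Xi=\ind_{\J^1_{\Lambda'}}^{\P_1(\Lambda')}\eta'$ is irreducible and contains $\theta$.
\item An irreducible constituent $\tilde\eta$ of $\Xi|_{\J^1_{\Lambda',\Lambda}}$ containing $\theta$ restricts to $\J^1_\Lambda$ as $m\eta$ for some $m$.
\item Its intertwining in $\P_1(\Lambda')$ lies in $\P_1(\Lambda')\cap\J^1_\Lambda\G_\beta\J^1_\Lambda=\J^1_{\Lambda',\Lambda}$. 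So $\ind\tilde\eta$ is irreducible, hence isomorphic to $\Xi$.
\item Comparing dimensions, the index identity forces $m=1$.
\end{itemize}
The paper proves the identity from the exactness of $0\to\mathfrak b_1(\Gamma)\to\mathcal J^1(\beta,\Gamma)\to\mathcal H^1(\beta,\Gamma)^*\to\mathfrak b_0(\Gamma)\to 0$ for $\Gamma=\Lambda,\Lambda'$. It obtains this exactness by taking $\Gal(\L/\F)$-fixed points in the split case \cite[Lemma~3.17]{St05}.

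\textbf{What is missing for (iii).} The second input is the double coset identity $(\P_1(\Lambda')x\P_1(\Lambda'))\cap\G_\beta=\P_1(\Lambda'_\beta)x\P_1(\Lambda'_\beta)$ for $x\in\G_\beta$. Again this comes by Galois descent from \cite[Lemma~4.6]{RKSS}, using \cite[Lemma~2.1]{StevensDouble}. The argument is:
\begin{itemize}
\item $x$ intertwines $\eta'$, hence intertwines $\Xi\simeq\ind\eta_{\Lambda',\Lambda}$.
\item So some element of $\P_1(\Lambda')x\P_1(\Lambda')$ intertwines $\eta_{\Lambda',\Lambda}$.
\item The intertwining of $\eta$ lets you move this element into $\G_\beta$.
\item The double coset identity then moves it to $x$ modulo $\P_1(\Lambda'_\beta)\subseteq\J^1_{\Lambda',\Lambda}$.
\end{itemize}

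\textbf{The identities you do state are not these, and both are wrong as written.}
\begin{itemize}
\item The first fails for $\Lambda'=\Lambda$: it would give $\J^1_\Lambda=\H^1_\Lambda$.
\item In the second, the left side is just $\P_1(\Lambda'_\beta)\J^1_\Lambda=\J^1_{\Lambda',\Lambda}$, which need not contain $\J^1_{\Lambda'}$. That elementary form is exactly what gives (ii) directly once an extension of $\eta$ is in hand.
\end{itemize}
Your instinct that Galois descent from $\L$ is the right tool is correct. It has to be applied to the exact sequence and to the double coset identity above.
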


\begin{proof}
To prove assertions \ref{propHeisenbergPairs-i} and~\ref{propHeisenbergPairs-ii} one can proceed in a similar fashion to the proof of~\cite[Proposition (5.1.14)]{BK93} if we can establish 
\begin{equation}\label{eq5_1_17}
(\P_1(\Lambda'):\J^1_{\Lambda'})\dim(\eta')=(\P_1(\Lambda'):\J^1_{\Lambda',\Lambda})\dim(\eta),
\end{equation}
see~\cite[(5.1.17)]{BK93}.   Assertion~\ref{propHeisenbergPairs-iii} follows as in~\cite[Proposition (5.1.19)]{BK93} if we can establish 
\begin{equation}\label{eq5_1_19}
(\P_1(\Lambda')x\P_1(\Lambda'))\cap \G_\beta=\P_1(\Lambda'_\beta)x\P_1(\Lambda'_\beta),
\end{equation} 
for all~$x\in\G_\beta$. (Note that we have~$\G=\tG$ here.) We now prove~\eqref{eq5_1_17} and~\eqref{eq5_1_19}.

For~\eqref{eq5_1_17}: We need to show the exactness of the sequence
\begin{equation}\label{eqDuke3_17}
0\rightarrow\mathfrak{b}_1(\Gamma)\rightarrow\mathcal{J}^1(\beta,\Gamma)\rightarrow (\mathcal{H}(\beta,\Lambda))^*\rightarrow\mathfrak{b}_0(\Gamma)\rightarrow 0
\end{equation}
in~$\A=\End_\D\V$ 
for~$\Gamma=\Lambda,~\Lambda'$ to then follow the proof of~\cite[Proposition~(5.1.2)]{BK93}. We just establish the exactness for~$\Gamma=\Lambda$. By~\cite[Lemma 3.17]{St05} the exactness of~\eqref{eqDuke3_17} is known if~$\D=\F$, in particular the corresponding sequence in~$\End_\L\V=\A\otimes_\F\L$ is exact for any maximal unramified field extension~$\L/\F$ in~$\D$. Note that the Galois group of~$\L/\F$ acts on the second factor of~$\A\otimes_\F\L$. Taking the~$\Gal(\L/\F)$-fixed points of the latter sequence we obtain the exactness of~\eqref{eqDuke3_17}.

A similar idea applies for~\eqref{eq5_1_19} which is known for~$\D=\F$, by~\cite[Lemma~4.6]{RKSS}. Thus both terms in~\eqref{eq5_1_19} become equal after tensoring with~$\o_\L$ over~$\o_\F$. Taking the~$\Gal(\L/\F)$-fixed points on both sides we obtain the result by~\cite[Lemma~2.1]{StevensDouble}.
\end{proof}

Let~{$\eta_\P$} be the natural representation of{$\J^1_\P=(\H^1(\J^1\cap\P))$} on the set of~$(\J^1\cap\N)$-fixed points of~$\eta$.  

\begin{proposition}[{cf.~\cite[Proposition~8.6]{SkodlerackCuspQuart}}]\label{propetaP}
The~$\R$-representation~$\eta_\P$ is irreducible, and~$\ind_{\J^1_\P}^{\J^1}\eta_\P\simeq\eta$.
\end{proposition}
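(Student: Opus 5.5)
The proof follows the pattern of \cite[Proposition~8.6]{SkodlerackCuspQuart} and of \cite[Proposition~(7.2.4)]{BK93}, using the Iwahori decomposition of~$\J^1$ with respect to~$(\M,\P)$; this decomposition exists because the decomposition of~$\V$ defining~$\M$ is properly subordinate to~$[\Lambda,-,0,\b]$ \cite[Lemma~8.5]{SkodlerackCuspQuart}. Write~$\N^-$ for the unipotent radical of the opposite parabolic. Then
\[
\J^1=(\J^1\cap\N^-)(\J^1\cap\M)(\J^1\cap\N),\qquad \H^1=(\H^1\cap\N^-)(\H^1\cap\M)(\H^1\cap\N),
\]
and~$\theta$ is trivial on~$\H^1\cap\N^{\pm}$. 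Hence~$\J^1_\P=\H^1(\J^1\cap\P)$ is a group. Moreover~$\theta$ extends to a character~$\theta_\P$ of~$\H^1_\P:=\H^1(\J^1\cap\N)$ that is trivial on~$\J^1\cap\N$; this holds because~$\J^1\cap\M$ normalizes~$\theta$, and the commutators of~$\J^1\cap\N$ with~$\H^1$ land in the kernel of~$\theta$.

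The first step is a dimension count. From Lemma~\ref{lemkthetanondeg}, the form~$\k_\theta$ on~$\J^1/\H^1$ is non-degenerate. The subspaces~$(\J^1\cap\N)\H^1/\H^1$ and~$(\J^1\cap\N^-)\H^1/\H^1$ are totally isotropic, because their commutators lie in~$\H^1\cap\N^{\pm}$, where~$\theta$ is trivial. They are also paired against each other. The~$\M$-part~$(\J^1\cap\M)\H^1/\H^1$ is orthogonal to both. Therefore~$\J^1/\H^1$ decomposes orthogonally as
\[
\bigl(\text{the }\N\text{-part}\oplus\text{the }\N^-\text{-part}\bigr)\oplus\bigl(\text{the }\M\text{-part}\bigr),
\]
and~$\k_\theta$ restricted to the~$\M$-part is non-degenerate. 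This restricted form is~$\k_{\theta_\M}$, where~$\theta_\M=\theta|_{\H^1\cap\M}$ is a semisimple character for~$\M$. It follows that~$\J^1_\P/\H^1_\P$, which is isomorphic to~$(\J^1\cap\M)/(\H^1\cap\M)$, carries a non-degenerate form induced by~$\theta_\P$. So there is a unique irreducible~$\eta'_\P$ of~$\J^1_\P$ containing~$\theta_\P$, of dimension~$[\J^1\cap\M:\H^1\cap\M]^{1/2}$. We also get~$[\J^1:\J^1_\P]=[\J^1\cap\N^-:\H^1\cap\N^-]$, which is the square root of the index of the hyperbolic part.

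Next, we show that~$\ind_{\J^1_\P}^{\J^1}\eta'_\P$ is irreducible and isomorphic to~$\eta$. It contains~$\theta$, since~$\eta'_\P$ contains~$\theta_\P$ and~$\theta_\P$ extends~$\theta$. Its dimension is
\[
[\J^1:\J^1_\P]\dim\eta'_\P=[\J^1:\H^1]^{1/2}=\dim\eta.
\]
Every irreducible subquotient of a representation of~$\J^1$ containing~$\theta$ on~$\H^1$ is~$\eta$, by uniqueness in Lemma~\ref{lemkthetanondeg}. Since~$\theta$ occurs in~$\ind\eta'_\P$ and the dimensions agree, we have~$\ind\eta'_\P\simeq\eta$. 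This argument must be valid in characteristic~$\ell$: all groups are pro-$p$, so restriction to~$\H^1$ is semisimple and the multiplicity argument via Mackey (or Frobenius reciprocity) still works.

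Finally, we identify~$\eta'_\P$ with~$\eta_\P$. By Frobenius reciprocity,~$\eta|_{\J^1_\P}$ contains~$\eta'_\P$. Since~$\J^1\cap\N$ acts trivially on~$\eta'_\P$, the copy of~$\eta'_\P$ lies in the~$(\J^1\cap\N)$-fixed space, which is~$\eta_\P$. It remains to see that~$\dim\eta^{\J^1\cap\N}=\dim\eta'_\P$. By Mackey, the restriction of~$\ind\eta'_\P$ to~$\J^1\cap\N$ is computed over double cosets~$\J^1_\P\backslash\J^1/(\J^1\cap\N)$, which are represented by~$x\in\J^1\cap\N^-$. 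For~$x\notin\H^1$, the conjugate~$\theta_\P^x$ is non-trivial on some element of~$\J^1\cap\N$ that lies in~$\H^1_\P\cap(\J^1\cap\N)^{x^{-1}}$; this follows from the non-degenerate pairing between the~$\N$- and~$\N^-$-parts. So only the trivial coset contributes fixed vectors. I expect this coset analysis to be the main obstacle, in particular checking that the pairing argument, which is available over~$\mathbb{C}$ in \cite{SkodlerackCuspQuart}, transfers to~$\GL_m(\D)$. For that transfer I would use the reduction to~$\L\otimes\D$ employed in Lemma~\ref{lemkthetanondeg}.
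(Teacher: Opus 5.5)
Your proposal is correct and follows essentially the paper's route. Your $\eta'_\P$ is the paper's $\eta_\M\otimes 1_{\J^1\cap\N}$, the identification $\ind_{\J^1_\P}^{\J^1}\eta'_\P\simeq\eta$ uses the same dimension count, and your Mackey analysis over representatives in $\J^1\cap\N^-$ is the paper's observation that $\eta|_{\J^1_\P}$ contains every twist $\eta_\M\otimes\phi$, so the untwisted constituent occurs with multiplicity one and equals the $(\J^1\cap\N)$-fixed space. The obstacle you flag at the end is already settled by your first step: the non-degenerate pairing between the $\N$- and $\N^-$-parts follows from Lemma~\ref{lemkthetanondeg} together with the orthogonal decomposition of $\J^1/\H^1$, so no separate reduction to $\L$ is needed.
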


 \begin{proof}
  The proof is similar to~\cite[Proposition~8.6]{SkodlerackCuspQuart}. We do not need to apply {the} Glauberman correspondence, because
  the representations~$\eta_\M\otimes 1_{\J^1\cap\N}$ induces on~$\J^1$ a semisimple representation of the same rank as~$\eta$ which contains~$\eta$. Therefore we only need to prove that~$\eta_\P$ and~$\eta_\M\otimes 1_{\J^1\cap\N}$ coincide. This follows, because the restriction of~$\eta$ to~$\J^1_\P$ contains all the representations~$\eta_\M\otimes\phi$, where~$\phi$ {runs} through all characters of~$(\J^1\cap\N)/(\H^1\cap\N)$. Therefore~$\eta_\M\otimes 1_{\J^1\cap\N}$ occurs with multiplicity one in~$\eta$ and is equal to~$\eta_\P$. 
\end{proof}

%%%%%%%%%%%%%%%%%%%%%%%%%%%%%
\section{Compact subgroups adapted to Levi subgroups of~$\G$. }\label{AppendixC}
%%%%%%%%%%%%%%%%%%%%%%%%%%%%%

In this section we recall the subgroups~$\J_\P$, see~\cite[after Lemma~8.4]{SkodlerackCuspQuart} and~\cite[\S 5]{St08}.
We arbitrarily fix a linear order $<$ on the set of idempotents of~$\A=\End_\D(\V)$. 
Let~$\M$ be a Levi subgroup of~$\G$. In this paragraph we introduce the notion of a semisimple stratum adapted to~$\M$ and we construct compact subgroups~$\J_\P(\beta,\Lambda),\ \J_\P^{\st,\M}(\beta,\Lambda)$ for a parabolic subgroup ~$\P$ with Levi subgroup~$\M$. The latter are used for the construction of Bushnell--Kutzko--Stevens types for Bernstein components.  

%%%%%%%%%%%%%%%%%%%%%%%%%%%%%
\subsection{Strata adapted to~$\M$}
At first we treat general linear groups and then classical groups. 

%%%%%%%%%%%%%%%%%%%%%%%%%%%%%
\subsubsection{The case~$h=0$}\label{secAppJhiszero}
For~$\M$ there is exactly one tuple~$\ee^\M=(\be^\M_j)_{j\in S^\M}$ of pairwise orthogonal non-zero idempotents  in~$\A$ which sum to~$1$ and such that 
\begin{itemize}
\item $S^\M=\{1,2,3,\ldots,l\}$ and~$\be^\M_j <\be_k^\M$ for~$j<k$. 
\item $\M=\{g\in\G\mid\ g\be_j^\M=\be^\M_jg\ \text{ for all }j\in S^\M\}$
\end{itemize}
Note that the condition~$g\be_j^\M=\be^\M_jg$ for all $j\in S^\M$ is equivalent to~$g=\sum_{j\in S^\M} \be^\M_j g\be^\M_j$, and to~$\be^\M_k g\be^\M_j=0$ for~$k\ne j$. We also set
\[
\Z_\M^\circ = \left\{\sum_{j\in S^\M} \l_j \be^\M_j \mid \l_j\in F_\so^\times\right\}
\]
which is clearly central in~$\M$; indeed, it is the maximal split torus in the centre of~$\M$, but we will not use this fact.

\begin{definition}\label{defstratumAdaptedToMtG}
A semisimple stratum~$\Delta=[\Lambda,n,0,\beta]$ in~$\A$ is called (exactly) adapted to~$\M$ if~$\ee^\M$ is (exactly) properly subordinate to~$\Delta$, see~\cite[Definition~8.1]{SkodlerackCuspQuart}.
\end{definition}

%%%%%%%%%%%%%%%%%%%%%%%%%%
\subsubsection{The case~$h\neq 0$}\label{secAppJhisnotzero}
At first we need to find a tuple of idempotents describing~$\M$. 

\begin{definition}
Let~$S$ be a non-empty set satisfying
\[ \{\pm 1,\pm 2,\pm 3, \ldots,\pm l\}\subseteq S\subseteq \{0, \pm 1,\pm 2,\pm 3, \ldots,\pm l\}\] 
for some non-negative integer $l$. For~$l=0$ we interpret this to mean~$S=\{0\}$. 
A family~$(\be_j)_{j\in S}$ of pairwise orthogonal non-zero idempotents is called an~($h$-)\emph{self-dual partition} if~$\sigma_h(\be_j)=\be_{-j}$ for all~$j\in S$.
We call a self-dual partition~$\ee=(\be_j)_{j\in S}$ \emph{admissible} for $h$ if either~$0\not\in S$ or~$h|_{\be_0\V}$ is not an isotropic orthogonal form over~$\F=\D$ of type~$(1,1)$, i.e., it doesn't define~$\O(1,1)(\F)$, 
\end{definition}
Given a self-dual partition~$\ee=(\be_j)_{j\in S}$, we also set
\[
\Z_\ee^\circ = \left\{\sum_{j\in S} \l_j \be_j \mid \l_j\in F_\so^\times,\ \l_0=1,\ \l_{-j}=\l_j^{-1}\right\},
\]
which is a subgroup of~$\G$. Note that, since~$F_\so^\times$ contains more than the two elements~$\pm 1$, any element of~$\A$ which commutes with every element of~$\Z_{\ee}^\so$ also commutes with~$\be_j$, for all~$j\in S$.

\begin{lemma}\label{lemAdmissPartforM}
There exists exactly one admissible partition~$\ee=(\be_j)_{j\in S}$ for~$h$ such that
\begin{equation}\label{eqM}
\M=\{g\in\G\mid\ \forall_{j\in S}:g\be_j=\be_jg\}
\end{equation}
and~$\be_1<\be_2<\ldots<\be_l$ and~$\be_{-j}<\be_j$ for all~$j>0$. .
\end{lemma}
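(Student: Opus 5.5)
We prove existence and uniqueness separately. For existence, we start from the structure of~$\M$ as the stabilizer of a decomposition of~$\V$. A Levi subgroup of~$\G$ is (by definition, or by the standard description of parabolic subgroups of classical groups as stabilizers of self-dual flags of totally isotropic subspaces) the stabilizer of a decomposition
\[
\V=\V_{-l}\oplus\cdots\oplus\V_{-1}\oplus\V_0\oplus\V_1\oplus\cdots\oplus\V_l,
\]
where each~$\V_j$ with~$j\neq0$ is totally isotropic, $\V_j\oplus\V_{-j}$ is non-degenerate and orthogonal to the other summands, and~$\V_0$ (possibly zero) is non-degenerate. We let~$\be_j$ be the projection onto~$\V_j$ along the other summands. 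The compatibility of~$h$ with the decomposition gives~$\sigma_h(\be_j)=\be_{-j}$, and the~$\be_j$ are pairwise orthogonal idempotents summing to~$1$. The identity~\eqref{eqM} then holds because commuting with all~$\be_j$ is the same as stabilizing each~$\V_j$.

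Admissibility is arranged as follows. Suppose~$0\in S$ and~$h|_{\V_0}$ is the split orthogonal plane. Then~$\V_0=\V_0^+\oplus\V_0^-$ for its two isotropic lines, and~$\G\cap\Aut(\V_0)=\SO(1,1)(\F)\cong\F^\times$ consists exactly of the elements stabilizing both lines; these are the elements of determinant~$1$ in~$\O(1,1)$. Hence replacing~$\be_0$ by the two projections onto~$\V_0^\pm$, placed at new indices~$\pm(l+1)$, does not change the subgroup cut out in~$\G$. Finally we relabel the indices~$j>0$ so that~$\be_1<\cdots<\be_l$, and swap~$j\leftrightarrow -j$ where necessary to get~$\be_{-j}<\be_j$. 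Both operations preserve self-duality and~\eqref{eqM}.

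For uniqueness, we recover the unordered family~$\{\be_j\}$ intrinsically from~$\M$. The key tool is the remark preceding the lemma. By~\eqref{eqM}, $\Z_\ee^\circ\subseteq\M$ is central in~$\M$, and any element of~$\A$ commuting with~$\Z_\ee^\circ$ commutes with every~$\be_j$. We show that the~$\be_j$ are precisely the primitive idempotents of the commutative algebra~$\F_\so[\Z_\ee^\circ]$, and that this algebra is determined by~$\M$ as the span of the split part of the centre of~$\M$. Equivalently, we can characterize the~$\V_j$ as the isotypic components of~$\V$ under the maximal split central torus of~$\M$. Admissibility enters exactly here. Without it, the split plane~$\V_0$ would carry the nontrivial split torus~$\SO(1,1)$, which is central in~$\M$ and would split~$\V_0$ further. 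With admissibility, the centre of the~$\V_0$-factor of~$\M$ is anisotropic, so~$\V_0$ is a single isotypic component. In every case the resulting set of idempotents is determined by~$\M$. The ordering conditions then fix the labelling: the pairs~$\{\be_j,\be_{-j}\}$ are determined as sets, the condition~$\be_{-j}<\be_j$ chooses which idempotent gets the positive label, and~$\be_1<\cdots<\be_l$ orders the positive labels.

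The main obstacle is the uniqueness step, specifically proving that the centralizer condition~\eqref{eqM} determines the idempotents. There are small-rank degenerate cases to handle. For instance, if some~$\V_j$ has~$\D$-dimension~$1$ with~$\F_\so$ small, or in unitary and quaternionic cases, the centre of~$\GL(\V_j)$ might not separate~$\V_j$ from~$\V_{-j}$ within~$\G$. I would treat these directly using the explicit torus~$\Z_\ee^\circ$, whose elements~$\l\be_j+\l^{-1}\be_{-j}$ with~$\l\neq\pm1$ do separate~$\V_j$ from~$\V_{-j}$. I would also check by hand that~$\SO(1,1)$ is the only orthogonal obstruction.
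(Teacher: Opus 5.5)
Your proof is sound. The existence part is fine, and you handle the split plane $\V_0$ more explicitly than the paper does, by replacing it with its two isotropic lines. For uniqueness, however, you take a genuinely different route.

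\textbf{The paper's route.} The paper never characterizes the partition intrinsically. Given two admissible partitions $\ee,\ff$ defining $\M$, it uses $\Z_\ee^\circ\subseteq\M$ only to see that every $\rmf_k$ commutes with every $\be_j$. It then shows directly that $\be_j\rmf_k\in\{0,\be_j\}$, with exactly one nonzero value:
\begin{itemize}
\item for $j\neq0$, because $\M$ contains $\GL_\D(\be_j\V)$, so $\be_j\rmf_k$ is a central idempotent of $\End_\D(\be_j\V)$;
\item for $j=0$, by projecting to $\be_0\V$ and producing unipotent elements, or an element swapping $\rmf_1$ and $\rmf_{-1}$, that cannot commute with a nontrivial self-dual partition. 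Admissibility of $\ee$ excludes the split plane here.
\end{itemize}
The pairs $(j,k)$ with $\be_j\rmf_k\neq0$ then form the graph of a bijection.

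\textbf{Your route.} You read $\{\be_j\}$ off from $\M$ itself, as the weight spaces of its split centre. Uniqueness is then immediate, and admissibility appears transparently as the condition that the classical factor has no split central torus. This also identifies $\Z_\ee^\circ$ with the split centre, which the paper mentions but deliberately does not use.

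\textbf{What remains to be done.} The price is that all the content moves into a computation you only sketch. There are two points to settle.
\begin{itemize}
\item You must show the split centre of $\M\simeq\G_0\times\prod_{j>0}\GL_\D(\V_j)$ is no larger than $\Z_\ee^\circ$. Each $\GL$-factor contributes only $\F_\so^\times$. The real content is a case check (symplectic, unitary, special orthogonal including anisotropic planes, quaternionic): the centre of $\G_0$ is finite or an anisotropic torus unless $h|_{\V_0}$ is the split orthogonal plane.
\item You must make ``split centre of $\M$'' meaningful for a group of rational points. The simplest fix is to work instead with the $\F_\so$-algebra spanned by the abstract centre $\C(\M)$, which depends only on $\M$.
\begin{itemize}
\item This algebra contains each $\be_j$, via $\Z_\ee^\circ$ and the fact that $\F_\so^\times\neq\{\pm1\}$.
\item Its $\be_j$-component lies in the field $\F\be_j$ for $j\neq0$.
\item Its $\be_0$-component is the span of the centre of $\G_0$. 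This is a field exactly under the case check above; it is $\F\times\F$ for the split plane.
\end{itemize}
So its primitive idempotents are the $\be_j$, and by the same argument applied to $\ff$ they are also the $\rmf_k$. This gives uniqueness.
\end{itemize}

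Your worry about $\F_\so$ being small does not arise. $\F_\so$ is an infinite local field, which is all the separation argument with $\lambda\neq\pm1$ needs.
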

The order condition on~$(\be_j)_{j\in S}$ is equivalent to
\[
\be_j=\max\{\be_i\mid\ i\in\{\pm 1,\pm 2,\pm 3, \ldots,\pm j\}\},\ j\in S,\ j>0.
\]
The partition in the lemma will be denoted by~$\ee^\M=(\be_j^\M)_{j\in S^\M}$. 

\begin{proof}
We need to prove two parts: existence and uniqueness. The existence is straightforward from the description of~$\M$ via a Witt-basis for~$h$. For proving uniqueness, suppose there are two admissible partitions~$\ee=(\be_j)_{j\in S}$ and~$\ff=(\rmf_k)_{k\in K}$ satisfying the assertions of the lemma.
We are going to prove~$\ee=\ff$. 

Because, for~$k\in K$, the idempotent~$\rmf_k$ commutes with all elements of~$\M\supseteq\Z_\ee^\circ$, we see that~$\rmf_k$ commutes with all~$\be_j$. 
We consider the set
\[
\mathfrak{M}(\ee,\ff):=\{(j,k)\in S\times K\mid\ \be_j \rmf_k\neq 0\}.
\]
We claim that for every index~$j\in S$ there is exactly one index~$k\in K$ such that~$(j,k)\in\mathfrak{M}(\ee,\ff)$. Then, by symmetry, we see that~$\mathfrak{M}(\ee,\ff)$ is the graph of a bijection from which we deduce~$\ee=\ff$ from the order condition.

At first we consider~$j\neq 0$. The element~$\be_j\rmf_k$ is an idempotent commuting with all elements of the simple algebra~$\End_\D(\be_j\V)$ and therefore
$\be_j\rmf_k$ is equal to~$\be_j$ or~$0$. But if~$k,k'\in K$ are such that $\be_j\rmf_k=\be_j=\be_j\rmf_{k'}$ then also~$\be_j\rmf_k\rmf_{k'}=\be_j$ so~$\rmf_k\rmf_{k'}\ne0$ and the pairwise orthogonality of~$\ff$ gives~$k=k'$.

For the case~$j=0$, by projecting onto~$\be_0\V$ we can assume~$S=\{0\}$, i.e.~$\be_0$ is the identity of~$\V$ and~$\M=\G$. Assume that~$K$ contains a positive element. Then~$h$ has to be orthogonal, because otherwise we can find a unipotent element in~$\G$ which does not commute with~$f_1$. If~$K$ has at least four elements then $\G$ contains an element~$g$ which satisfies~$f_2 g f_1\neq 0$. A contradiction. Therefore we obtain~$\dim_\D f_1\V\geq 2$ or~$|K|=3$ by admissibility of~$e$. In the first case we obtain a unipotent element which does not commute with~$f_1$ and in the second an element which swaps~$f_1$ with~$f_{-1}$. A contradiction. Therefore~$|K|=1$. 
This proves the claim and by symmetry we finish the proof of the lemma. 
\end{proof}

We now obtain {definitions} similar to those in~\S\ref{secAppJhiszero} 

\begin{definition}\label{defstratumAdaptedToMG}
A self-dual semisimple stratum~$\Delta=[\Lambda,n,0,\beta]$ in~$\A$ is called (exactly) adapted to~$\M$ if~$e^\M$ is (exactly) properly self-dual subordinate to~$\Delta$, see~\cite[Definition~8.2]{SkodlerackCuspQuart}.
\end{definition} 

. 
\newcommand{\tN}{\tilde{\N}}
\newcommand{\btN}{\overline{\tilde{\N}}}
\newcommand{\bN}{\overline{\N}}
\newcommand{\tGbL}{\tP(\Lambda_\beta)}
\newcommand{\GbL}{\P(\Lambda_\beta)}
\newcommand{\GbLst}{\P^{\st}(\Lambda_\beta)}
\newcommand{\Nrd}{\text{Nrd}}
\newcommand{\Gb}{\G_\beta}
\newcommand{\tGb}{\tG_\beta}
\newcommand{\mfB}{\mathfrak{B}}
\newcommand{\MbL}{\P(\Lambda_{\M,\beta})}
\newcommand{\MbLst}{\P^{\st}(\Lambda_{\M,\beta})}
\newcommand{\antidiag}{\text{antidiag}}
\newcommand{\mf}[1]{\mathfrak{#1}}

%%%%%%%%%%%%%%%%%%%%%%%%%%%%%
\subsection{Construction of the group~$\J_\P$}
Let~$\Delta$ be {an}  $h$-self-dual semisimple stratum adapted to~$\M$ and~$\P=\M\N$ be the parabolic subgroup of~$\G$ given by the order of the indices of~$S^\M$, i.e.
\[\P=\{ g\in\G\mid \  e^\M_k g e^\M_j=0\ \text{for all}\ j<k,\  j, k\in S^\M\},\]
with opposite unipotent radical~$\bar{\N}$. 
Then~$\J=\J(\beta,\Lambda)$ has an Iwahori decomposition with respect to~$\bar{\N}\M\N$, see~\cite[Lemma~8.4]{SkodlerackCuspQuart}, i.e., 
\[\J=(\J\cap\bar{\N})(\J\cap\M)(\J\cap\N).\]
We need a finer version:
\begin{lemma}\label{lemIwahoriGbeta}
The group~$\GbL$ has an Iwahori decomposition with respect to~$\bar{\N}\M\N$, i.e., 
\[\GbL=(\GbL\cap\bar{\N})(\GbL\cap\M)(\GbL\cap\N)\]
\end{lemma}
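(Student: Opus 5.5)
The aim is to show that $\P(\Lambda_\beta)$, the full fixator in $\G_\beta$ of the point $\Lambda$, has an Iwahori decomposition with respect to $\bar\N\M\N$. The plan is to reduce this to the known Iwahori decomposition of $\J=\J(\beta,\Lambda)$ (\cite[Lemma~8.4]{SkodlerackCuspQuart}) and of its pro-$p$ radical, and then lift through the finite reductive quotient.

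The first step is the key structural input. Since $\Delta$ is adapted to $\M$, the idempotents $\be^\M_j$ are properly (self-dual) subordinate to $\Delta$, so each $\be^\M_j$ commutes with $\beta$ after rearrangement. Consequently $\M\cap\G_\beta$ is a Levi subgroup of $\G_\beta$, with parabolic $\P\cap\G_\beta$ and opposite unipotent radical $\bar\N\cap\G_\beta$. Moreover, $\Lambda$ is split by the decomposition $\V=\bigoplus_j\be^\M_j\V$, so $\Lambda_\beta$ lies in an apartment of $\mathfrak{B}(\G_\beta)$ adapted to this Levi.

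Next I intersect the decomposition $\J=(\J\cap\bar\N)(\J\cap\M)(\J\cap\N)$ with $\G_\beta$. Since $\J=\P(\Lambda_\beta)\J^1$, this gives $\P(\Lambda_\beta)=\J\cap\G_\beta$. Take $g\in\P(\Lambda_\beta)$ and write $g=u_-mu_+$ with factors in $\J$. The factors need not lie in $\G_\beta$, so instead I work at the level of the lattice-function stabilizer. Set $\tilde g=\sum_j\be^\M_j g\be^\M_j$, which lies in $\M\cap\G_\beta$ and stabilizes $\Lambda_\beta$. Then consider the block decomposition of $g$ relative to the idempotents. Here I would use the standard argument for stabilizers of split lattice functions: the stabilizer in $\GL$ of a lattice function split by $(\be_j)$ has an Iwahori decomposition with respect to any parabolic defined by an ordering of the $\be_j$. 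This holds because $\mathfrak{a}_0(\Lambda)=\bigoplus_{j,k}\be_k\mathfrak{a}_0(\Lambda)\be_j$, and because the block-triangular factorization $g=(1+x_-)m(1+x_+)$ of a matrix whose diagonal block part is invertible has factors with entries expressible in the blocks of $g$ and $m^{-1}$. Applying this inside $\B=\End_{\E\otimes\D}(\V)$ to the order $\mathfrak{b}_0(\Lambda)$ yields the decomposition for $\tilde\P(\Lambda_\beta)$, the stabilizer in $\tG_\beta$.

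The remaining part is $h\ne0$. Here I take $\Sigma$-fixed points. Because the ordering satisfies $\be_{-j}<\be_j$ and the partition is self-dual, $\sigma$ preserves $\bar\N$, $\M$ and $\N$, and the factorization is unique. So for $g$ that is $\Sigma$-fixed, each factor is $\Sigma$-fixed, and the decomposition descends to $\U(\V,h)\cap\G_\beta$. For the special orthogonal case I intersect further with $\SL$. The determinant of the unipotent factors is $1$, so $m\in\SL$ as well.

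The main obstacle is the invertibility claim: that $m=\tilde g$ lies in $\P(\Lambda_\beta)$, i.e.\ is invertible in $\mathfrak{b}_0$. My first approach is to reduce modulo $\mathfrak{b}_1$. The image of $g$ in the finite group $\P(\Lambda_\beta)/\P_1(\Lambda_\beta)$ is compatible with a parabolic decomposition of the finite reductive group. I would use the fact that the image of the block-triangular projection of a unit in a split hereditary order is again a unit. This is exactly where properly subordinate (rather than merely subordinate) is needed: it ensures that the off-diagonal blocks $\be_k\mathfrak{b}_0\be_j$ for the relevant $k,j$ reduce compatibly. If this fails, I would instead argue directly with the lattice chains $\be_j\Lambda$ and Nakayama's lemma.
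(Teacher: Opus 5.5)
Your treatment of $h\neq0$ is fine. Uniqueness of the factorization in $\tG$ forces the factors of a $\Sigma$-fixed element to be $\Sigma$-fixed, and unipotent factors have determinant $1$; this is even slightly cleaner than the reduced-norm congruence in the paper. The gap is in the case $h=0$, which is the heart of the lemma.

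\textbf{The facts you invoke are false as stated.}
\begin{itemize}
\item The stabilizer of a lattice function split by $(\be_j)$ need not admit an Iwahori decomposition. Take $\GL_2(\mathfrak{o}_\F)$ with the coordinate decomposition of $\F^2$, and the element $\left(\begin{smallmatrix}0&1\\1&0\end{smallmatrix}\right)$: its block-diagonal part is $0$.
\item For the same reason, ``the block-triangular projection of a unit is a unit'' is not usable.
\item Even invertibility of $\tilde g=\sum_j\be_jg\be_j$ does not suffice once there are three or more blocks, because the later pivots are Schur complements, not diagonal blocks.
\item $m\neq\tilde g$ in general. Already for two blocks $\left(\begin{smallmatrix}a&b\\c&d\end{smallmatrix}\right)$, the second Levi entry is $d-ca^{-1}b$.
\end{itemize}

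\textbf{What is missing} is the precise consequence of proper subordination. At each index at most one of the $\be_j\Lambda$ jumps, so $\be_k\mathfrak{a}_0\be_j\subseteq\mathfrak{a}_1$ for $k\neq j$. Since $\be_j\in\B$, this gives $\be_k\mathfrak{b}_0\be_j\subseteq\mathfrak{b}_1$. As $\mathfrak{b}_1$ is the Jacobson radical of $\mathfrak{b}_0$, every pivot in the block elimination of $g\in\mathfrak{b}_0^\times$ is congruent modulo $\mathfrak{b}_1$ to a diagonal block $\be_jg\be_j$, hence is a unit. This yields $g=(1+x_-)m(1+x_+)$ with $x_\pm$ in the off-diagonal parts of $\mathfrak{b}_1$ below and above the diagonal, and $m\equiv\tilde g\pmod{\mathfrak{b}_1}$ a unit of $\mathfrak{b}_0$. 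All factors lie in $\B$. With this inserted your route goes through; as written it only gestures at it (``reduce compatibly'', with a fallback via Nakayama).

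\textbf{You also dismissed too quickly the argument the paper actually uses.} You wrote that the factors of $g=u_-mu_+$ from the Iwahori decomposition of $\J$ need not lie in $\G_\beta$, but they do. The $\be^\M_j$ commute with $\beta$, so $\E^\times\subseteq\M$, and conjugation by any $x\in\E^\times$ preserves $\bar{\N}$, $\M$ and $\N$. Write $g=xgx^{-1}=(xu_-x^{-1})(xmx^{-1})(xu_+x^{-1})$. By uniqueness of the factorization in $\bar{\N}\M\N$, each factor commutes with $\E^\times$, which spans $\E=\F[\beta]$. Hence each factor commutes with $\beta$ and lies in $\J\cap\G_\beta=\P(\Lambda_\beta)$. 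This is the paper's entire proof for $h=0$. It needs no invertibility argument, because proper subordination is already built into the quoted Iwahori decomposition of $\J$.
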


\begin{proof}
{We first consider the case}~$h=0$. An element~$g$ of~$\P(\Lambda_\beta)$ has an Iwahori decomposition~$g=j_{\bar{\N}} j_\M j_\N$ in~$\J$. Since $g$ commutes with~$\beta$ the elements~$j_{\bar{\N}},j_\M,j_\N$ commute with~$\beta$ too. 

For~$h\neq 0$ we also consider the ambient general linear group~$\tG$. First we note that the centralizers satisfy~$\G_\beta=\tG_\beta\cap\G$ and by the first part of the proof we obtain:
$\P(\Lambda_\beta)$ is contained in~$(\tGbL\cap\btN)^\Sigma(\tGbL\cap\tM)^\Sigma(\tGbL\cap\tN)^\Sigma,$ where~$\btN\M\tN$ is the Iwahori decomposition in~$\tG$ defined by~$e^\M$. Further the first and the third factor 
are contained in~$\tP_1(\Lambda_\beta)$, respectively,~because~$e^\M$ is properly subordinate to~$\Delta$, thus in the orthogonal case no elements of reduced norm~$-1$ occur in~$\tP_1(\Lambda_\beta)\cap\btN$ and~$\tP_1(\Lambda_\beta)\cap\tN$. 
Let~$g$ be an element of~$\GbL$ with Iwahori decomposition~$g=g_{\bN} g_\M g_\N$ then 
\[\Nrd_{\A/\F}(g)\equiv\Nrd_{\A/\F}(g_\M) \pmod{{1+\p_\F}}\]
 and therefore~$g_\M$ is an element of~$\tGbL\cap\tM\cap\G=\GbL\cap\M$.
\end{proof}

We now want to define subgroups of~$\J(\beta,\Lambda)$ which are adapted to~$\bN\M\N$. We  have two choices because we have two poly-simplicial structures on the buidling of~$\Gb$. The first is where facets are defined by self-dual lattice chains (the \emph{weak structure}) and the second one obtains from the weak structure by removing all thin panels (the \emph{strong structure}), see the oriflame construction in~\cite[\S 8.2]{abramenkoNebe:02}.

\begin{definition}\label{defstrongPointsandFacets}
 We call a point~$\Lambda$~in the building of~$\G_{\beta}$ \emph{strong} if the closure of its facet with respect to the weak structure is the intersection of closures of thick panels. Otherwise, the point is called~\emph{weak}. 
\end{definition}

%%%%%%%%%%%%%%%%%%%%%%%%%%%%%
\subsubsection{The case~$\G=\P=\M$}
We have two groups
\begin{enumerate}
\item$\J(\beta,\Lambda)=\GbL\J^1(\beta,\Lambda)$ and
\item$\J^{\st}(\beta,\Lambda):=\GbLst\J^1(\beta,\Lambda)$
\end{enumerate}
where~$\GbLst$ is the pointwise stabilizer of the facet of~$\Lambda_\beta$ with respect to the strong  poly-simplicial structure on the building~$\mfB(\Gb)$ of~$\Gb$. 
We are going to omit the argument~$(\beta,\Lambda)$ and write~$\J$ and~$\J^{\st}$ instead if there is no reason for confusion. 
We need a fine enough sufficient condition for the groups~$\J$ and~$\J^{\st}$ to coincide, i.e., for~$\GbL=\GbLst$. Let~$\M(\Delta)$ be the Levi subgroup defined by~$\Delta$ and let us suppose that~$h|_{\V^{\I_0}}$ does not define the group~$\O(1,1)(\F)$, {where}~$\V^{\I_0}$ {is} the sum of all~$\V^i,\ i\in\I_0$.
We have
\[\M(\Delta)\simeq \M_0\times\prod_{i\in \I_+}\GL_\D(\V^i)\] 
where~$\M_0$ is the set of~$\F$-rational points of the algebraic connected component of~$\U(\V^{\I_0},h|_{\V^{\I_0}})$.
We have~$\GbLst=\GbL$ if~$\Lambda_{\beta_{i}}$ is not a weak point of~$\B(\G_{\beta_{i}})$ for all~$i\in \I_0$. 
So we obtain the following: 

\begin{proposition}\label{propJsteqJ}
Suppose~$\Lambda_{\beta_{i}}$ is a strong point in~$\B(\G_{\beta_{i}})$ for all~$i\in\I_0$. Then~$\J=\J^{\st}$. 
\end{proposition}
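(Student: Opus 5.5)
The plan is to prove $\GbLst=\GbL$. Once this holds, $\J^{\st}=\GbLst\J^1(\beta,\Lambda)=\GbL\J^1(\beta,\Lambda)=\J$ follows immediately from the definitions. In general one only has $\GbLst\subseteq\GbL$. The point fixator $\GbL$ may move the strong facet $\overline{\Lambda_\beta}$ containing $\Lambda_\beta$, whereas $\GbLst$ fixes that facet pointwise. So we must show that every element of $\GbL$ fixes the strong facet pointwise.

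First we decompose. The centralizer splits as
\[
\Gb=\prod_{i\in\I_0}\G_{\beta_i}\times\prod_{i\in\I_+}\GL_{\E_i\otimes\D}(\V^i),
\]
up to the determinant condition in the special orthogonal case. Correspondingly, the building $\mfB(\Gb)$ is the product of the buildings of the factors, and $\Lambda_\beta$ is the tuple of its components $\Lambda_{\beta_i}$. The strong and weak polysimplicial structures are compatible with this product decomposition. They differ only on the classical factors, since thin panels occur only there: for the general linear factors the building is the usual one, all panels are thick, and the two structures agree. Hence it suffices to compare point fixators and strong-facet fixators factor by factor. For the $\GL$ factors the two groups coincide, because an element fixing a point of a $\GL$ building fixes its (strong = weak) facet pointwise, up to the standard argument that the point fixator preserves the facet and acts on it through a type-preserving map.

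Next we treat the classical factors. Fix $i\in\I_0$; by hypothesis $\Lambda_{\beta_i}$ is a strong point of $\mfB(\G_{\beta_i})$. By Definition~\ref{defstrongPointsandFacets}, the closure of its weak facet is an intersection of closures of thick panels. An element $g\in\P(\Lambda_{\beta_i})$ stabilizes the weak facet of $\Lambda_{\beta_i}$, so it permutes the thick panels whose closures contain that facet. The key claim is that $g$ fixes each of these thick panels, hence fixes their intersection, which is the strong facet, pointwise.

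To establish the key claim, we use the fact that $\G_{\beta_i}$ is the connected component (respectively a unitary or symplectic group over $\E_i$), so it acts type-preservingly on the strong structure. Thick panels are distinguished by their types, so a type-preserving automorphism stabilizing a facet fixes each of the thick panels containing it. The non-type-preserving phenomena, namely swapping the two thin-panel-adjacent vertices of an oriflamme, only arise from elements of determinant $-1$ in orthogonal groups. These are excluded here: either $\G$ is the special orthogonal group, or we argue through the reduced norm as in the proof of Lemma~\ref{lemIwahoriGbeta}. The exceptional case $\O(1,1)(\F)$ is excluded by assumption.

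The main obstacle is making this type-preservation argument precise for the oriflamme (strong) structure of \cite[\S8.2]{abramenkoNebe:02}, particularly for the orthogonal factors where $\G_{\beta_i}$ is not connected. In that case one must check that elements of $\P(\Lambda_{\beta_i})\cap\G$ that are not in the identity component still preserve thick-panel types. I would handle this by working with the image of the determinant of $\G_{\beta_i}$ within $\G$, using that $\G\subseteq\SL$, together with the norm-congruence trick from Lemma~\ref{lemIwahoriGbeta}.
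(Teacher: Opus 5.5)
\textbf{There is a genuine gap in the classical-factor step.} Your outline is the same as the paper's: split $\G_\beta$ into general linear factors, where weak and strong facets agree, and classical factors $\G_{\beta_i}$, $i\in\I_0$. The paper simply asserts $\P(\Lambda_\beta)=\P^{\st}(\Lambda_\beta)$ factor by factor, using $\M(\Delta)\simeq\M_0\times\prod_{i\in\I_+}\GL_\D(\V^i)$. Your justification for the classical factors, however, does not work, for two reasons.

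First, your key claim that $g\in\P(\Lambda_{\beta_i})$ fixes each thick panel whose closure contains the facet confuses faces of the facet with cofaces. If $\Lambda_{\beta_i}$ is, say, a strong vertex, then $\P(\Lambda_{\beta_i})$ acts on its link through $\P(\Lambda_{\beta_i})/\P_1(\Lambda_{\beta_i})$ and moves the panels through it. Type-preservation only forces a stabilized facet to be fixed on its own vertices.

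Second, and more seriously, it is false that $\G_{\beta_i}$ acts type-preservingly on the strong structure when it is special orthogonal, and the offending elements have determinant $1$. This is exactly the phenomenon in Appendix~\ref{AppendixC}. Take the $\SO(3,3)(\F)$ example there (a null stratum, so $\G_\beta=\G$). The weak point $\Lambda$ lies in the interior of a strong edge whose endpoints correspond to the two isotropic lines of the $\O(1,1)(k_\F)$ residual factor. An element of $\P(\Lambda)$ with determinant $-1$ on both the $\O(2,2)(k_\F)$ and $\O(1,1)(k_\F)$ factors lies in $\SO(3,3)(\F)$ and fixes $\Lambda$. But it swaps those two endpoints, so it is not in $\P^{\st}(\Lambda)$.

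If your type-preservation claim were true, it would give $\P(\Lambda_\beta)=\P^{\st}(\Lambda_\beta)$ with no hypothesis at all. So your argument never genuinely uses that $\Lambda_{\beta_i}$ is strong. Nor can a reduced-norm argument as in Lemma~\ref{lemIwahoriGbeta} exclude these elements, since their determinant is $1$.

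\textbf{The missing idea} is that the rigidity holds for the \emph{weak} structure, not the strong one.
\begin{itemize}
\item An element of $\P(\Lambda_{\beta_i})$ fixes each lattice $\Lambda_{\beta_i}(r)$. Hence it fixes every self-dual lattice function with the same underlying self-dual lattice chain, i.e.\ it fixes the weak facet of $\Lambda_{\beta_i}$ pointwise.
\item The strong facet of $\Lambda_{\beta_i}$ contains its weak facet, since strong facets are unions of weak ones.
\item The strong-point hypothesis says the closure of the weak facet is an intersection of closures of thick panels, i.e.\ a closed face of the strong structure. So the two facets coincide, and $\P(\Lambda_{\beta_i})=\P^{\st}(\Lambda_{\beta_i})$.
\end{itemize}
Combine this with your product decomposition. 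That decomposition is exact: in the special orthogonal case the determinant condition only cuts the factor with $\beta_i=0$ down to its special orthogonal group. This gives $\P(\Lambda_\beta)=\P^{\st}(\Lambda_\beta)$, and hence $\J=\J^{\st}$.
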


%%%%%%%%%%%%%%%%%%%%%%%%%%%%%
\subsubsection{The general case}
In this paragraph we define and compare the more general groups~$\J_\P(\beta,\Lambda)$ and~$\J_\P^{\st,\M}(\beta,\Lambda)$.
We need a lemma for preparation. 
\begin{lemma}\label{lemJcapP}
\begin{enumerate}
\item $\J\cap\P=(\M\cap\GbL)(\J^1(\beta,\Lambda)\cap\P).$
\item $\J^{\st}\cap\P=(\M\cap\GbLst)(\J^1(\beta,\Lambda)\cap\P).$
\end{enumerate}
\end{lemma}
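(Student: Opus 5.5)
Both identities will come from a single computation that combines the Iwahori decomposition of $\P(\Lambda_\beta)$ (Lemma~\ref{lemIwahoriGbeta}) with the Iwahori decomposition of $\J^1$, followed by a short argument for $\P^{\st}(\Lambda_\beta)$.

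\emph{Inclusions ``$\supseteq$''.} These are immediate. The group $\M\cap\P(\Lambda_\beta)$ lies in $\P(\Lambda_\beta)\subseteq\J$ and in $\M\subseteq\P$. Likewise $\J^1\cap\P\subseteq\J\cap\P$. Since $\J\cap\P$ is a group, it contains the product. The same argument works with $\P^{\st}(\Lambda_\beta)$ and $\J^{\st}$.

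\emph{Inclusion ``$\subseteq$'' in (i).} Let $g\in\J\cap\P$ and write $g=bj$ with $b\in\P(\Lambda_\beta)$ and $j\in\J^1$, using $\J=\P(\Lambda_\beta)\J^1$.
\begin{enumerate}
\item Decompose $b=b_-b_\M b_+$ by Lemma~\ref{lemIwahoriGbeta}, with $b_-\in\P(\Lambda_\beta)\cap\bar\N$, $b_\M\in\P(\Lambda_\beta)\cap\M$ and $b_+\in\P(\Lambda_\beta)\cap\N$.
\item Because the stratum is adapted to $\M$, so that $e^\M$ is properly subordinate, the unipotent parts $b_\pm$ lie in $\P_1(\Lambda_\beta)\subseteq\J^1$. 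This is the same observation used in the proof of Lemma~\ref{lemIwahoriGbeta}.
\item Hence $g=b_-b_\M j'$ with $j'=b_+j\in\J^1$. Conjugating $b_-$ past $b_\M$ (which normalizes $\J^1\cap\bar\N$), we get $g=b_\M j''$ with $j''\in\J^1$.
\item Since $g\in\P$ and $b_\M\in\M\subseteq\P$, we have $j''=b_\M^{-1}g\in\J^1\cap\P$.
\end{enumerate}
This gives $g\in(\M\cap\P(\Lambda_\beta))(\J^1\cap\P)$, which proves (i).

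\emph{Inclusion ``$\subseteq$'' in (ii).} Run the same argument with $\P^{\st}(\Lambda_\beta)$ in place of $\P(\Lambda_\beta)$. Two points need checking. First, $b_\pm\in\P_1(\Lambda_\beta)\subseteq\P^{\st}(\Lambda_\beta)$, so $b_\M=b_-^{-1}bb_+^{-1}$ again lies in $\P^{\st}(\Lambda_\beta)$ and hence in $\M\cap\P^{\st}(\Lambda_\beta)$. Second, we need $\J^{\st}=\P^{\st}(\Lambda_\beta)\J^1$, which holds by definition.

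\emph{Main obstacle.} The delicate point is step 2: showing that the $\bar\N$- and $\N$-components of elements of $\P(\Lambda_\beta)$ lie in $\P_1(\Lambda_\beta)$.
\begin{itemize}
\item In the $\tG$ case this follows from proper subordination: the off-diagonal blocks $e_k\mathfrak b_0e_j$ with $k\neq j$ lie in $\mathfrak b_1$.
\item In the self-dual case we take $\Sigma$-fixed points, as in the proof of Lemma~\ref{lemIwahoriGbeta}. There we must also handle the reduced-norm issue in the orthogonal case, exactly as done there, to ensure that the $\M$-component really lies in $\G$.
\end{itemize}
I would cite \cite[Lemma~8.4]{SkodlerackCuspQuart} and \cite[\S5]{St08} for the block-structure facts.
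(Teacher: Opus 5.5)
Your proof is correct and follows essentially the same route as the paper: write $g=bj$, Iwahori-decompose the $\P(\Lambda_\beta)$-factor via Lemma~\ref{lemIwahoriGbeta}, and absorb its unipotent pieces into $\J^1$. The paper absorbs only the $\N$-part and removes the $\bar\N$-part using uniqueness of the Iwahori decomposition of $\J$ together with $g\in\P$. You instead absorb both $b_\pm$, since both lie in $\P_1(\Lambda_\beta)\subseteq\J^1$, and conclude directly from $b_\M\in\M\subseteq\P$; your explicit check that $b_\M\in\P^{\st}(\Lambda_\beta)$ in (ii) spells out what the paper leaves as ``similar''.
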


\begin{proof}
We prove the first assertion. The proof of the other one is similar. An element~$g\in\J\cap\P$ has the form~$g=g_\beta g_{\J^1}$, {with}~$g_\beta\in\GbL,\ g_{\J^1}\in\J^1$. We use the Iwahori decompositions:
\[g_\beta=g_{\beta,-}g_{\beta,0}g_{\beta,+},\qquad g_{\J^1}=g_-g_0 g_+,\]
using Lemma~\ref{lemIwahoriGbeta}. Note that~$g_{\beta,+}$ is an element of~$\GbL\cap\N$ which is a subset of~$\J^1$. We therefore can assume without loss of generality that~$g_{\beta,+}$ is trivial.  
We obtain 
\[g=g_{\b,-}(g_{\beta,0}g_- g_{\beta,0}^{-1})g_{\beta,0}g_{0}g_{+}.\]
Therefore, by the uniqueness of Iwahori decomposition, using that~$g$ is an element of~$\P$, we obtain that~$g_{\b,-}(g_{\beta,0}g_0 g_{\beta,0}^{-1})$ is trivial and therefore~$g=g_{\beta,0}g_{0}g_{+}$, showing the assertion. 
\end{proof} 

We now define:
\begin{enumerate}
\item $\J_\P^{\BK}(\beta,\Lambda)=\H^1(\beta,\Lambda)(\J(\beta,\Lambda)\cap\P)=(\H^1(\beta,\Lambda)\cap\bar{\N} )(\M\cap\GbL)(\J^1(\beta,\Lambda)\cap\P);$
\item $\J_\P(\beta,\Lambda)=\H^1(\beta,\Lambda)(\J^{\st}(\beta,\Lambda)\cap\P)=(\H^1(\beta,\Lambda)\cap\bar{\N} )(\M\cap\GbLst)(\J^1(\beta,\Lambda)\cap\P);$
\item $\J^{\st,\M}_\P(\beta,\Lambda)=(\H^1(\beta,\Lambda)\cap\bar{\N} )(\MbLst)(\J^1(\beta,\Lambda)\cap\P)$.
\end{enumerate}

We need a fine sufficient condition for  $\J_\P$ and~$\J_\P^{\st,\M}$ to coincide. For that we need to analyze the centralizer~$\M_\beta$. 
We have two decompositions of~$\V$. 
\begin{itemize}
\item one given by the idempotents~$ e^\M=(e^\M_j)_{j\in S}$ for~$\M$ and
\item one given by the idempotents~$\mathbbm{1}=(\mathbbm{1}_i)_{i\in\I}$
\end{itemize}

We get a possibly finer decomposition by the idempotents in
\[e_{j,i}:=e^\M_j\mathbbm{1}_i,\ (j,i)\in \mathfrak{M}(e^\M,\mathbbm{1})\]
where the latter set is the set of pairs~$(j,i)$ such that~$e^\M_j\mathbbm{1}_i$ is non-zero.
We compute~$\M_\beta$:

\begin{eqnarray*}
\M_\beta &=& \Gb\cap\prod_{(j,i)\in\mathfrak{M}(e^\M,\mathbbm{1})} \Aut_\D(e_{(j,i)}\V)\\
&\simeq& \prod_{(j,i)\in\mathfrak{M}(e^\M,\mathbbm{1}),j>0} \tG_{\beta_{(j,i)}}\times  \prod_{(0,i)\in\mathfrak{M}(e^\M,\mathbbm{1}),j\in\I_+} \tG_{\beta_{(0,i)}}\times \prod_{(0,i)\in\mathfrak{M}(e^\M,\mathbbm{1}),i\in\I_0} \G_{\beta_{(0,i)}}\
\end{eqnarray*}
Note that for~$\M=\M(\Delta)$ of $\G$ we get the decomposition of~$\Gb$ with respect to the associated splitting of~$\Delta$. For the stabilizer of~$\Lambda_\beta$ in~$\M$ we get\
 
\begin{eqnarray*}
\P(\Lambda_{\M,\beta}) &\simeq&\prod_{(j,i)\in\mathfrak{M}(e^\M,\mathbbm{1}),j>0} \tP(\La_{\beta_{(j,i)}})\times  \prod_{(0,i)\in\mathfrak{M}(e^\M,\mathbbm{1}),i\in\I_+}\tP(\La_{\beta_{(0,i)}})\times \prod_{(0,i)\in\mathfrak{M}(e^\M,\mathbbm{1}),i\in\I_0} \P(\La_{\beta_{(0,i)}})
\end{eqnarray*}
and for the stabilizer with respect to the strong poly-simplicial structure:
\begin{eqnarray*}
\P^{\st}(\Lambda_{\M,\beta}) &\simeq& \prod_{(j,i)\in\mathfrak{M}(e^\M,\mathbbm{1}),j>0} \tP(\La_{\beta_{(j,i)}})\times  \prod_{(0,i)\in\mathfrak{M}(e^\M,\mathbbm{1}),i\in\I_+}\tP(\La_{\beta_{(0,i)}})\times \prod_{(0,i)\in\mathfrak{M}(e^\M,\mathbbm{1}),i\in\I_0} \P^{\st}(\La_{\beta_{(0,i)}})
\end{eqnarray*}

Therefore we get
\begin{proposition}\label{propJPsteqJP}
The groups~$\J_\P^{\BK}$ and~$\J^{\st,\M}_\P$ coincide if~$\La_{\beta_{(0,i)}}$ is a strong point in~$\B(\G_{\beta_{(0,i)}})$ for all~$i\in\I_0$ such that~$(0,i)\in\mathfrak{M}(e^\M,\mathbbm{1})$.
\end{proposition}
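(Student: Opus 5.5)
The plan is to compare the two explicit factorizations of the groups, so that the claim reduces to an equality of stabilizers in the centralizer. By Lemma~\ref{lemJcapP} and the definitions,
\[
\J_\P^{\BK}=(\H^1(\beta,\Lambda)\cap\bar{\N})(\M\cap\GbL)(\J^1(\beta,\Lambda)\cap\P),\qquad
\J^{\st,\M}_\P=(\H^1(\beta,\Lambda)\cap\bar{\N})\,\MbLst\,(\J^1(\beta,\Lambda)\cap\P).
\]
The outer factors agree, so it suffices to show $\M\cap\GbL=\MbLst$ under the hypothesis.

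First identify $\M\cap\GbL$ with $\MbL$, the stabilizer of $\Lambda_\beta$ in $\M_\beta=\M\cap\Gb$. This is the definition, since $\M\cap\GbL=\M\cap\Gb\cap\P(\Lambda)$. Then use the displayed product decompositions of $\MbL$ and $\MbLst$ over the index set $\mathfrak{M}(e^\M,\mathbbm{1})$. These rest on the fact that $\Lambda$ is split by all the idempotents $e_{(j,i)}$, which holds because the stratum is adapted to $\M$ and so $\mathbbm{1}$ and $e^\M$ both split $\Lambda$.

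Compare the factors one at a time. The factors indexed by $j>0$, and by $j=0$ with $i\in\I_+$, are stabilizers in general linear groups $\tG_{\beta_{(j,i)}}$. There the weak and strong structures coincide: every panel is thick, so there is no difference between $\P$ and $\P^{\st}$. These factors are therefore literally identical in the two products.

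The only possible difference is in the factors $(0,i)$ with $i\in\I_0$, namely $\P(\La_{\beta_{(0,i)}})$ versus $\P^{\st}(\La_{\beta_{(0,i)}})$ in the classical group $\G_{\beta_{(0,i)}}$. If $\La_{\beta_{(0,i)}}$ is a strong point in the sense of Definition~\ref{defstrongPointsandFacets}, its weak facet closure is an intersection of closures of thick panels, so it equals its strong facet. The pointwise fixator of the strong facet then equals the stabilizer of the point. This is the same reasoning as in Proposition~\ref{propJsteqJ}, applied to $\G_{\beta_{(0,i)}}$ in place of $\Gb$. Hence $\P(\La_{\beta_{(0,i)}})=\P^{\st}(\La_{\beta_{(0,i)}})$, and multiplying the factors gives $\MbL=\MbLst$.

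The main obstacle is justifying the product decomposition of $\MbLst$. We need the strong-structure fixator in $\M_\beta$ to be the product of the strong fixators of the factors, with the general linear factors contributing full stabilizers. This amounts to compatibility of the oriflamme (thin-panel removal) construction with products of buildings. I would check this directly from the lattice-function description: a thin panel of a product building lies in a single factor, and only orthogonal-type classical factors have thin panels. I would also check that the $\O(1,1)$ degenerate case is excluded by admissibility of $e^\M$.
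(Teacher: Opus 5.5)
Your proposal is correct and is essentially the paper's argument: the paper also reduces to comparing the middle factors $\M\cap\GbL=\MbL$ and $\MbLst$ through the displayed product decompositions over $\mathfrak{M}(e^\M,\mathbbm{1})$. As in your plan, the general linear factors agree trivially and the classical factors agree because the points $\La_{\beta_{(0,i)}}$ are strong. One side remark is inaccurate but harmless: thin panels are not confined to orthogonal-type factors (ramified unitary factors can have them too), but your argument only uses that the general linear factors have none.
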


The main goal is the following proposition:
\begin{proposition}\label{propJPst}
The group~$\J_\P^{\st,\M}(\beta,\Lambda)$ is a subgroup of $\J^{\st}(\beta,\Lambda)$ and we have
\[\J_\P^{\st,\M}(\beta,\Lambda)\subseteq\J_\P(\beta,\Lambda)\subseteq\J_\P^{\BK}(\beta,\Lambda).\]
In particular, under the condition of Proposition~\ref{propJPsteqJP}, the group~$\J_\P^{\BK}(\beta,\Lambda)$ is a subgroup of~$\J^{\st}(\beta,\Lambda)$. 
\end{proposition}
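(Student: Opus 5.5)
The plan is to argue directly from the three product descriptions
\[
\J_\P^{\st,\M}=(\H^1\cap\bar{\N})\,\MbLst\,(\J^1\cap\P),\qquad
\J_\P=(\H^1\cap\bar{\N})\,(\M\cap\GbLst)\,(\J^1\cap\P),\qquad
\J_\P^{\BK}=(\H^1\cap\bar{\N})\,(\M\cap\GbL)\,(\J^1\cap\P).
\]
So the inclusions of the statement reduce to inclusions of the middle factors,
\[
\MbLst\subseteq\M\cap\GbLst\subseteq\M\cap\GbL .
\]
The second inclusion is immediate, since $\GbLst\subseteq\GbL$.

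The first inclusion is where the real content lies. It amounts to the following: an element of $\M_\beta$ that fixes pointwise the strong facet of $\Lambda_{\M,\beta}$ in $\mfB(\M_\beta)$ also fixes pointwise the strong facet of $\Lambda_\beta$ in $\mfB(\Gb)$. To prove this I would use the product description of $\P^{\st}(\Lambda_{\M,\beta})$ displayed above. The factors with $j>0$, or with $i\in\I_+$, are general linear, and there the strong and weak structures agree. It therefore suffices to treat a single $i\in\I_0$, where $\M_\beta$ contributes
\[
\prod_{j>0}\tG_{\beta_{(j,i)}}\times\G_{\beta_{(0,i)}}\subseteq\G_{\beta_i},
\]
which is a Levi subgroup of $\G_{\beta_i}$. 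The point $\Lambda_{\beta_i}$ lies in the image of the building of this Levi subgroup, and $\P(\La_{\beta_i})$ has an Iwahori decomposition with respect to it by Lemma~\ref{lemIwahoriGbeta}.

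The key claim is then the following. Let $g$ lie in the Levi subgroup and fix the thick panels of the Levi building through its facet. Then $g$ fixes the strong facet of $\Lambda_{\beta_i}$, i.e.\ $g\in\P^{\st}(\La_{\beta_i})$. My approach is to show that the only extra (weak-only) symmetries in $\P(\La_{\beta_i})/\P^{\st}(\La_{\beta_i})$ come from swapping the two vertices of a thin panel. In terms of self-dual lattice chains, this is the oriflamme phenomenon of \cite[\S8.2]{abramenkoNebe:02}: an element of determinant $-1$ on the two-dimensional quotient of the self-dual orthogonal lattices. Such a swap is visible only in the classical factor $\G_{\beta_{(0,i)}}$. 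I expect that matching the thin panels of the two buildings will be the main obstacle. This is the step I would work out carefully via the lattice chain description in a Witt basis adapted to $e^\M$.

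Next, the group $\J^{\st}(\beta,\Lambda)=\GbLst\J^1$ contains $\J^1\cap\P$ and $\H^1\cap\bar{\N}$. By the inclusion just established it also contains $\MbLst$, so $\J_\P^{\st,\M}\subseteq\J^{\st}$. That $\J_\P^{\st,\M}$ is actually a group follows as in \cite[\S5]{St08}: $\MbLst$ normalizes both $\H^1\cap\bar{\N}$ and $\J^1\cap\P$, and the Iwahori decomposition of $\J^1$ lets one rewrite products.

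Finally, under the hypothesis of Proposition~\ref{propJPsteqJP} we have $\J_\P^{\BK}=\J_\P^{\st,\M}$, so the last sentence of the statement follows from the inclusion $\J_\P^{\st,\M}\subseteq\J^{\st}$ just proved.
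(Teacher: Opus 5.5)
Your reduction is the paper's: compare the middle factors, use $\P^{\st}(\Lambda_\beta)\subseteq\P(\Lambda_\beta)$ for the second inclusion, discard the general linear blocks and reduce to a single $i\in\I_0$. The group property, the containment in $\J^{\st}$ and the final sentence are handled correctly, and more explicitly than in the paper. The gap is the one step with real content: that an element of the Levi stabilizer lying in the strong fixator downstairs cannot realize a thin-panel swap at $\Lambda_{\beta_i}$. You only assert this (``visible only in the classical factor'') and explicitly postpone it. That step is the whole of the paper's proof, so as written you have a correct plan rather than a proof.

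The step closes quickly, as follows.
\begin{enumerate}
\item Reduce to a depth-zero null stratum, so that $\Lambda$ stands for $\Lambda_{\beta_i}$. Write $e_j=e^\M_j$ and $\Lambda_0=e_0\Lambda$.
\item If $\Lambda$ is a strong point, then $\P^{\st}(\Lambda_0)\subseteq\P(\Lambda)=\P^{\st}(\Lambda)$ and you are done.
\item If $\Lambda$ is weak, use that $\Delta$ is adapted to $\M$. This lets you lift the central primitive idempotents of $\mathfrak{a}/\mathfrak{a}_1$ to idempotents $f_k\in\mathfrak{a}$ commuting with all $e_j$; your Witt basis adapted to $e^\M$ would provide exactly these.
\item An element $g$ of the Levi stabilizer outside $\P^{\st}(\Lambda)$ has determinant $-1$ on some $\O(1,1)(k_\D)$-factor, say the one cut out by $f_{k_0}$.
\item Then $f_{k_0}e_0=f_{k_0}$. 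Otherwise, for some $j\neq 0$, the images of $e_j$ and $e_{-j}$ split this hyperbolic plane into two isotropic lines. Both lines are preserved by $g\in\M$, which forces determinant $1$.
\item Hence the same $\O(1,1)(k_\D)$-factor appears in the reductive quotient for $\Lambda_0$ in $\U(e_0\V,h|_{e_0\V})$. The $e_0$-component of $g$ still has determinant $-1$ there, so $g\notin\P^{\st}(\Lambda_0)$.
\end{enumerate}
The same dichotomy disposes of the factors $\tG_{\beta_{(j,i)}}$, $j>0$, which you rightly keep in your Levi. They act trivially on $e_0\V$, and they preserve the isotropic lines of any self-dual block meeting $e_{\pm j}\V$, so they lie in $\P^{\st}(\Lambda_{\beta_i})$.
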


\begin{remark}\label{remJPst}
We will need to use this in Section~\ref{typesoverC}, where we start from a type for a cuspidal representation of~$\M$. In this case, the conditions of Proposition~\ref{propJPsteqJP} are indeed satisfied, by~\cite[Propositions~4.2,~4.4]{MiSt} when~$\D=\F$, by~\cite[Corollary 5.20]{SecherreStevensIV} for inner forms of general linear groups, and by~\cite[\S12.3]{SkodlerackCuspQuart} for inner forms of classical groups.
\end{remark}

Before proving Proposition~\ref{propJPst}, we illustrate the main insight for the above statement:
\begin{example}
We consider the group~$\G=\SO(3,3)(\F)$ defined by the symmetric form~$h$ with Gram matrix $\antidiag(1,1,1,1,1,1)$ {on}~$\V=\F^{6}$. Let~$\Lambda$ be the strict lattice sequence given by the hereditary order
\[\left(\begin{matrix}\o&\p&\p\\ \o&\o&\p \\ \p^{-1}&\o&\o\\ \end{matrix}\right)^{(2,2,2)}.\]
The stratum~$\Delta$ is the null-stratum of depth zero with lattice sequence~$\Lambda$. 
The Levi~$\M$ is given by the idempotents. 
\[e_0=\diag(1,1,0,0,1,1),\ e_1=\diag(0,0,1,0,0,0),\ e_{-1}=\diag(0,0,0,1,0,0).\]
We observe: 
\begin{itemize}
\item $h_{e_0\V}$ defines~$\SO(2,2)(\F)$, and
\item $\Lambda_0=e_0\Lambda$ is a strong vertex in~$\mathfrak{B}(\SO(2,2)(\F))$, even though~$\Lambda$ is a weak point in~$\mathfrak{B}(\G)$. 
\end{itemize} 
And we obtain
\[\P^{\st}(\Lambda_0)=\P(\Lambda_0)\subseteq \G\cap \left(\begin{matrix}\o& &\p\\ &1 & \\ \p^{-1}& &\o \\ \end{matrix}\right)^{(2,2,2)}\]
which is contained in~$\P^{\st}(\Lambda).$ 
\end{example}

\begin{proof}[Proof of Proposition~\ref{propJPst}]
We have to show:
\[\prod_{(0,i)\in\mathfrak{M}(e^\M,\mathbbm{1}),i\in\I_0} \P^{\st}(\La_{\beta_{(0,i)}})\subseteq \prod_{i\in\I_0} \P^{\st}(\La_{\beta_{i}})\]
Without loss of generality we can assume that~$\Delta$ is a null-stratum of depth zero with lattice sequence~$\Lambda$, in particular~$\I=\I_0$ is a singleton. 
We put~$\Lambda_0=e_0\Lambda$ and we have to prove
\[\P^{\st}(\Lambda_0)\subseteq \P^{\st}(\Lambda).\]
If~$\La$ is a strong point in~$\mathfrak{B}(\G)$ then~$\P(\Lambda)=\P^{\st}(\Lambda)$ and the inclusion follows because~$\P(\Lambda)$ contains~$\P^{\st}(\La_0)$.
Suppose that~$\Lambda$ is a weak point in~$\mathfrak{B}(\G)$. Then~$(\tP(\La)\cap\U(\V,h))/\P_1(\La)$ contains a direct factor isomorphic to~$\O(1,1)(k_\D)$. (Note that~$\P(\La)$ is a subgroup of~$\G$, so~$\tP(\La)\cap\U(\V,h)$ could be bigger.)
Let~$\mf{a}$ be the self-dual hereditary order defined by~$\Lambda$.  Then
\[\mf{a}/\mf{a}_1\simeq \left(\prod_{k\in \K_+}\M_{n_k}(k_\D)\right)\times \left(\prod_{k\in\K_0}\M_{n_k}(k_\D)\right)\]
where~$\K_0$ corresponds precisely to the blocks fixed by~$\sigma_h$. We lift the primitive idempotents of the right hand side to~$\mf{a}$ {to get}~$f=(f_k)_{k\in\K_0\cup\K_+\cup\K_-}$, such that idempotents of~$e^\M$ and~$f$ commute (Note that~$\Delta$ is adapted to~$\M$).
Assume that there is an element~$g\in\P(\Lambda_0)$ which is not an element of~$\P^{\st}(\Lambda)$. Then~$(\tP(\La)\cap\U(\V,h))/\P_1(\La)$ has a factor isomorphic to~$\O(1,1)(k_\D)$, say at some~$k_0\in\K_0$, such that  the projection of $g$ onto the~$k_0$th factor has determinant~$-1$. Thus~$f_{k_0}e_0=f_{k_0}$
and the~$k_0$th~$\O(1,1)(k_\D)$-factor occurs in~$(\tP(\La_0)\cap\U(e_0\V,h|_{e_0\V}))/\P_1(\La_0)$ and still the projection of~$g$ onto this factor has determinant~$-1$. Then~$g$ cannot be an element of~$\P^{\st}(\Lambda_0)$. 
\end{proof}

\bibliographystyle{plain}
\bibliography{Endosplitting}

\end{document}